\theoremstyle{plain}
\newtheorem{thm}{Theorem}[section]
\newtheorem*{thm*}{Theorem}
\newtheorem{prop}[thm]{Proposition}
\newtheorem{cor}[thm]{Corollary}
\newtheorem*{prop*}{Proposition}
\newtheorem{lemma}[thm]{Lemma}
\theoremstyle{definition}
\newtheorem{defin}[thm]{Definition}
\newtheorem{exam}[thm]{Example}
\newtheorem{rem}[thm]{Remark}
\newtheorem{quest}[thm]{Question}
\DeclareMathOperator{\Qsym}{Qsym}
\DeclareMathOperator{\Sym}{Sym}
\DeclareMathOperator{\rank}{rank}
\DeclareMathOperator{\rk}{rk}
\DeclareMathOperator{\id}{id}
\DeclareMathOperator{\Ker}{Ker}
\DeclareMathOperator{\im}{Im}
\DeclareMathOperator{\conv}{conv}
\DeclareMathOperator{\aff}{aff}
\DeclareMathOperator{\Hom}{Hom}
\DeclareMathOperator{\Coh}{H}
\DeclareMathOperator{\C}{H}
\DeclareMathOperator{\pt}{pt}
\DeclareMathOperator{\Lyn}{LYN}
\DeclareMathOperator{\R}{R}
\DeclareMathOperator{\F}{F}
\DeclareMathOperator{\Ps}{P}
\DeclareMathOperator{\Q}{Q}
\newcommand {\Qs}{\mathcal{Q}sym}
\newcommand {\ib}[1]{\textit{\textbf{#1}}}
\begin{document}
\title{Polytopes, Hopf algebras  and Quasi-symmetric functions}
\author{Victor M.~Buchstaber \and Nickolai Erokhovets}
\date{}
\maketitle
\begin{abstract}
In this paper we use the technique of Hopf algebras and quasi-symmetric
functions to study the combinatorial polytopes. Consider the free abelian
group $\mathcal{P}$ generated by all combinatorial polytopes. There are two
natural bilinear operations on this group defined by a direct product $\times
$ and a join $\divideontimes$ of polytopes. It turns out that
$(\mathcal{P},\times)$ is a commutative associative bigraded ring of
polynomials with graduations $(2n,2(m-n))$, where $n$ is the dimension, and
$m$ is the number of facets of a polytope.  $(\mathcal{P},\divideontimes)$ is
a commutative associative ring without a unit. If we add the empty set
$\varnothing$ as a $(-1)$-dimensional  polytope, then it gives a unit, and we
obtain the commutative associative threegraded ring of polynomials
$\mathcal{RP}$ with graduations $(2(n+1),2(v-n-1),2(m-n-1))$, where $v$ is
the number of vertices of a polytope. The ring $\mathcal{RP}$ has the
structure of a graded Hopf algebra and the natural correspondence $P\to L(P)$
that sends a polytope to its face lattice embeds $\mathcal{RP}$ into the
Rota-Hopf algebra $\mathcal{R}$ of graded posets as a Hopf subalgebra. It
turns out that $\mathcal{P}$ has a natural Hopf comodule structure over the
Hopf algebra $\mathcal{RP}$. Faces operators $d_k$ that send a polytope to
the sum of all its $(n-k)$-dimensional faces define on both rings the Hopf
module structures over the Hopf algebra $\mathcal{U}$ that is universal in
the category of Leibnitz-Hopf algebras with the antipode $U_k\to(-1)^k U_k$.
This structure gives a ring homomorphism $\R\to\Qs\otimes\R$, where $\R$ is
$\mathcal{P}$ or $\mathcal{RP}$. Composing this homomorphism with the
characters $P^n\to\alpha^n$ of $\mathcal{P}$, $P^n\to\alpha^{n+1}$ of
$\mathcal{RP}$, and with the counit we obtain the ring homomorphisms
$f\colon\mathcal{P}\to\Qs[\alpha]$,
$f_{\mathcal{RP}}\colon\mathcal{RP}\to\Qs[\alpha]$, and
$\F^*:\mathcal{RP}\to\Qs$, where $F$ is the Ehrenborg transformation. We
describe the images of these homomorphisms in terms of functional equations,
prove that these images are rings of polynomials over $\mathbb Q$, and find
the relations between the images, the homomorphisms and the Hopf comodule
structures. For each homomorphism $f,\;f_{\mathcal{RP}}$, and $\F$ the images
of two polytopes coincide if and only if they have equal flag $f$-vectors.
Therefore algebraic structures on the images give the information about flag
$f$-vectors of polytopes. The homomorphism $f$ is an isomorphism on the
graded group $BB$ generated by the polytopes introduced by M.~Bayer and
L.~Billera to find the linear span of flag $f$-vectors of convex polytopes.
This gives the group $BB$ a structure of the ring isomorphic to
$f(\mathcal{P})$. Developing this approach one can build new combinatorial
invariants of convex polytopes that can not be expresses in terms of flag
numbers.

\end{abstract}
\section{Introduction}
Convex polytopes is a classical object of convex geometry. In recent times
the solution of problems of convex geometry involves results of algebraic
geometry and topology, commutative and homological algebra. There are
remarkable results lying on the crossroads of the polytope theory, theory of
complex manifolds, equivariant topology and singularity theory (see the
survey \cite{BR}). The example of successful collaboration of the polytope
theory and differential equations can be found in \cite{Buch}.

To study the combinatorics of convex polytopes we consider the free abelian
group $\mathcal{P}$ generated by all combinatorial convex polytopes. It turns
out that this object has a rich algebraic nature. The direct product $\times$
defines on $\mathcal{P}$ the structure of a bigraded associative commutative
ring $(\mathcal{P},\times)$ with graduations $(2n,\,2(m-n))$, where $n$ is
the dimension, and $m$ is the number of facets of a polytope. The bigraded
structure gives the decomposition of $\mathcal{P}$ into a direct sum of
finitely generated free abelian groups
$$
\mathcal{P}=\mathcal{P}^0+\sum\limits_{m\geqslant 2}\sum\limits_{n=1}^{m-1}\mathcal{P}^{2n,\,2(m-n)}.
$$
The duality operator $*$ defines on $\mathcal{P}$ another multiplication
$P\circ Q=(P^*\times Q^*)^*$.

The join of two polytopes $P\divideontimes Q$ is a bilinear operation of
degree $+2$. Then $(\mathcal{P},\divideontimes)$ is a commutative associative
ring without a unit.  Set $\mathcal{RP}=\mathbb Z\varnothing
\oplus\mathcal{P}$. The empty set $\varnothing$ may be considered as a
$(-1)$-dimensional polytope such that $\varnothing \divideontimes P=P$. Then
the free abelian group $\mathcal{RP}$ has the structure of a threegraded
commutative associative ring with graduations
$(2(n+1),\,2(v-n-1),\,2(m-n-1))$, where $v$ is the number of vertices of a
polytope. It turns out that $\mathcal{RP}$ is a graded Hopf algebra with
graduation $2(n+1)$ and the comultiplication
$$
\Delta(P)=\sum\limits_{\varnothing\subseteq F\subseteq P}F\otimes (P/F)
$$

The correspondence $P\to L(P)$, where $L(P)$ is a face lattice of a polytope
$P$ defines an embedding of $\mathcal{RP}$ as a Hopf subalgebra into the Hopf
algebra $\mathcal{R}$ of graded posets introduces by Joni and Rota in
\cite{JR}.

In the focus of our interest we put the ring of linear operators
$\mathcal{L}(\R)=\Hom_{\mathbb Z}(\R,\R)$, where $\R=\mathcal{P}$ or
$\mathcal{RP}$, and its subring $\mathcal{D}$ generated by the operators
$d_k,\,k\geqslant 1$ that map an $n$-dimensional polytope to the sum of all
its $(n-k)$-dimensional faces. $d=d_1$ is a derivation, so $\R$ is a
differential ring.

The ring $\mathcal{D}$ has a canonical structure of a Leibnitz-Hopf algebra
(see Section 3.2). We prove that $\mathcal{D}$ is isomorphic to the universal
Hopf algebra in the category of Leibnitz-Hopf algebras with the antipode
$\chi(d_k)=(-1)^kd_k$.
$$
\mathcal{D}\simeq\mathcal{Z}/J_{\mathcal{D}}
$$
where $\mathcal{Z}=\mathbb Z\langle{Z_1,Z_2,\dots\rangle}$ is the universal
Leibnitz-Hopf algebra -- the free associative Hopf algebra with the
comultiplication
$$
\Delta Z_k=\sum\limits_{i+j=k}Z_i\otimes Z_j,\quad Z_0=1,
$$
and $J_{\mathcal{D}}$ is the two-sided Hopf ideal generated by the relations
$\sum\limits_{i+j=n}(-1)^iZ_iZ_j=0,\quad n\geqslant 1$.

This algebra appears in various application of theory of Hopf algebras in
combinatorics: over the rationals it is isomorphic to the graded dual of the
odd subalgebra $S_{-}(\Qsym[t_1,t_2,\dots],\zeta_{\mathcal{Q}})$ \cite[Remark
6.7]{ABS}, to the algebra of forms on chain operators \cite[Proposition
3.2]{BLiu}, to the factor algebra of the algebra of Piery operators on a
Eulerian poset by the ideal generated by the Euler relations \cite[Example
5.3]{BMSW}.

The action of $\mathcal{D}$ on $\R$ satisfies the property
$D_{\omega}(PQ)=\mu\circ(\Delta D_{\omega})(P\otimes Q)$, so the ring $\R$
has the structure of a Milnor module over $\mathcal{D}$.

The graded dual Hopf algebra to $\mathcal{Z}$ is the algebra of
quasi-symmetric functions $\Qsym[t_1,t_2,\dots]$. The character
$\xi_{\alpha}\colon\mathcal{P}\to\mathbb Z[\alpha]$, $P^n\to\alpha^n$ induces
the homomorphism $\mathcal{P}\to\mathcal{D}^*[\alpha]$, and the composition
with the embedding $\mathcal{D}^*\subset\Qsym[t_1,t_2,\dots]$ gives the ring
homomorphism
$$
f:\mathcal{P}\to\Qsym[t_1,t_2,\dots][\alpha]
$$
We call $f(\alpha,t_1,t_2,\dots)$ the generalized $f$-polynomial. In can be
shown that $f(\alpha,t_1,0,0,\dots)=f_1(\alpha,t)$ is a homogeneous
$f$-polynomial in two variables introduced in \cite{Buch}. For  simple
polytopes we have $f(\alpha,t_1,t_2,\dots)=f_1(\alpha,t_1+t_2+\dots)$.

The homomorphism $f$ can be considered in a more general context. The
structure of a right Milnor module on the ring $\R$ over the universal Hopf
algebra $\mathcal{Z}$ defines a left Hopf comodule structure $\R\to\Qs\otimes
\R$. Then any ring homomorphism $\R\to\mathcal{R}_1$ defines the ring
homomorphism $\R\to\Qs\otimes\mathcal{R}_1$. In particular, the character
$P^n\to\alpha^n$ defines the ring homomorphism
$f\colon\mathcal{P}\to\Qs[\alpha]$, the character
$\varepsilon_{\alpha}(P^n)=\alpha^{n+1}$ defines a ring homomorphism
$f_{\mathcal{RP}}\colon\mathcal{RP}\to\Qs[\alpha]$, the counit $\varepsilon$
defines a ring homomorphism $\F^*\colon\mathcal{RP}\to\Qs$ such that $\F(P)$
is the Ehrenborg $F$-quasi-symmetric function of a graded poset $L(P)$. The
image of two polytopes under any of homomorphisms $f,f_{\mathcal{RP}}$, or
$\F^*$ coincide if and only if the polytopes have equal flag $f$-numbers.

It turns out that there is a nice algebraic structure including the rings
$\mathcal{P}$ and $\mathcal{RP}$. Namely, the formula
$$
\Delta_{\mathcal{RP}}P=\sum\limits_{F\subseteq P}F\otimes (P/F)
$$
defines on the ring $\mathcal{P}$ the structure of a graded Hopf comodule
over the Hopf algebra $\mathcal{RP}$ with respect to graduations defined by
dimension of a polytope.  This approach clarifies the structures mentioned
above and allows us to build new ring homomorphisms from the ring of
polytopes $\mathcal{P}$ to rings of polynomials such that the image of a
polytope is not determined by its flag $f$-vector. On this way one can build
new combinatorial invariants of polytopes that can not be expressed in terms
of flag numbers. This will be the topic of our next article.

The paper has the following structure:

Section 2 describes the main definitions and constructions involving convex
polytopes.
    \begin{itemize}
    \item[-]part $2.1$ contains the main definitions and considers the
        basic constructions involving polytopes;
    \item[-]in part $2.2$ the ring of polytopes $\mathcal{P}$ is
        defined.
    \item[-]in part $2.3$ we establish the structure of a graded Hopf
        algebra on the ring $\mathcal{RP}$ and consider it as a Hopf
        subalgebra of the Rota-Hopf algebra $\mathcal{R}$.
    \item[-]part $2.4$ is devoted to linear operators defined on the
        rings $\mathcal{P}$ and $\mathcal{RP}$. In particular, the faces
        operators $d_k$ are defined.
    \item[-]part $2.5$  recalls important facts about flag
        $f$-vectors and emphasizes some peculiarities we need in this
        paper.
    \end{itemize}

Section 3 is devoted to necessary definitions and important facts
    about quasi-symmetric functions and Hopf algebras.
\begin{itemize}
\item[-] part $3.1$ contains the definition of quasi-symmetric functions;
\item[-] part $3.2$ includes the definition of Leibnitz-Hopf algebras;
\item[-] part $3.3$ -- of Lie-Hopf algebras;
\item[-] part $3.4$ describes the idea of the proof of the
    shuffle-algebra theorem.
\end{itemize}

Section 4 contains topological realizations of the Hopf algebras
    we study.

In Section 5 the structure theorem for the ring $\mathcal{D}$ is
    proved.

In Section 6 we define and study the generalized $f$-polynomial.
    \begin{itemize}
    \item[-]part $6.1$ contains the construction of $f$;
    \item[-]in part $6.2$ we find the functional equations that describe
        the image of the ring $\mathcal{P}$ in
        $\Qsym[t_1,t_2,\dots][\alpha]$. These equations are equivalent to
        the generalized Dehn-Sommerville equations discovered by M.~Bayer
        and L.~Billera in \cite{BB}.
    \item[-]part $6.3$ describes the properties that define the
        generalized $f$-polynomial in a unique way.
    \end{itemize}

In Section 7 we study the Hopf algebra $\mathcal{D}^*$ as a Hopf subalgebra
in $\Qs$.
\begin{itemize}
\item[-]in part $7.1$ the main theorem is proved;
\item[-]part $7.2$ contains the applications of the main theorem. In
    particular, the image of the ring of polytopes $\mathcal{P}$ in
    $\mathcal{D}^*[\alpha]$ under the natural mapping $\varphi_{\alpha}$
    is found.
\end{itemize}

In Section $8$  the multiplicative structure of the ring
$f(\mathcal{P}\otimes \mathbb Q)$ is described.  We prove that
$f(\mathcal{P}\otimes \mathbb Q)$ is a free polynomial algebra with dimension
of the $2n$-th graded component equal to the $n$-th Fibonacci number $c_n$
($c_0=c_1=1$, $c_{n+1}=c_n+c_{n-1}$, $n\geqslant 1$). This gives the
decomposition of the Fibonacci series into the infinite product
$$
\frac{1}{1-t-t^2}=\sum\limits_{n=0}^{\infty}c_nt^n=\prod\limits_{i=1}^{\infty}\frac{1}{(1-t^i)^{k_i}},
$$
where $k_i$ is the number of generators of degree $2i$. The infinite product
converges absolutely in the interval $|t|<\frac{\sqrt{5}-1}{2}$. The numbers
$k_n$ satisfy the inequalities $k_{n+1}\geqslant k_n\geqslant N_n-2$, where
$N_n$ is the number of decompositions of $n$ into the sum of odd numbers.
Moreover, standard methods allow one to find the explicit formula for $k_n$,
which in the case of prime $p$ has the form
$$
k_p=\sum\limits_{j=1}^{[\frac{p}{2}]}\frac{{p-j\choose j}}{p-j}
$$

In Section $9$ we introduce the multiplicative structure on the graded group
$BB$ generated by the Bayer-Billera polytopes arising from the isomorphism
with the ring $f(\mathcal{P})$.

Nowadays Hopf algebras is one of the central tools in combinatorics.  There
is a well-known Hopf algebra $\mathcal{R}$ of posets introduces by Joni and
Rota in \cite{JR}. Various aspects of this algebra were studied in
\cite{Ehr,ABS,Sch1,Sch2}. The generalization of the Rota-Hopf algebra was
proposed in  \cite{RS}. In \cite{Ehr} R.~Ehrenborg introduced the
$F$-quasi-symmetric function, which gives a Hopf algebra homomorphism from
the Rota-Hopf algebra to $\Qsym[t_1,t_2,\dots]$.  Section $10$ contains the
following results:
\begin{itemize}
\item[-] in part $10.1$ we consider the Hopf comodule structures arising
    from the Hopf module structures on the rings $\mathcal{P}$ and
    $\mathcal{RP}$ over the Hopf algebras $\mathcal{Z}$ and
    $\mathcal{D}$;
\item[-] in part $10.2$ we describe the ring homomorphisms that arise
    from this structures. As an example, we show how the generalized
    $f$-polynomial and the Ehrenborg $F$-quasi-symmetric function appear
    in this context;
\item[-] part $10.3$ contains the definition and the main properties of
    the natural Hopf comodule structure on the ring $\mathcal{P}$ over
    the Hopf algebra $\mathcal{RP}\subset\mathcal{R}$;
\item[-] part $10.4$ describes the interrelations between the Hopf
    comodule structures and the homomorphisms arising in this section. In
    particular, it is shown that the Hopf comodule structures from parts
    $10.1$ and $10.3$  are related by the Ehrenborg transformation.
\end{itemize}

Section $11$ is devoted  to the cone and the bipyramid operators $C$ and $B$
used by M.~Bayer and L.~Billera \cite{BB} to find the linear span of flag
$f$-vectors of polytopes. We show that the operations $C$ and $B$ can be
defined on the ring $\Qsym[t_1,t_2,\dots][\alpha]$ in such a way that
$f(CP)=Cf(P)$ and $f(BP)=Bf(P)$. The same idea works for the ring
$\mathcal{RP}$ and the homomorphisms $f_{\mathcal{RP}}$, and $\F^*$.
\begin{itemize}
\item[-]in part $11.1$ we consider the case of $\mathcal{P}$;
\item[-]in part $11.2$ -- the case of $\mathcal{RP}$.
\end{itemize}

Section $12$ deals with the problem of the description of flag $f$-vectors of
polytopes. For simple polytope  $g$-theorem gives the full description of the
set of flag $f$-vectors. As it is mentioned in \cite{Z1} even for
$4$-dimensional non-simple polytopes the corresponding problem is extremely
hard. We show how this problem can be stated in terms of the ring of
polytopes.

This article contains the results and corrections of some inaccuracies made
in the work \cite{BuchE}.
\section{Polytopes}
\subsection{Definitions and constructions}
This section contains main definitions and constructions from the polytope
theory. For details see the books \cite{Gb}, \cite{Z2}, \cite{BP}.

There are two algorithmically different ways to define a {\itshape convex
polytope}.
\begin{defin}
A {\itshape $\mathcal{V}$-polytope} is a convex hull of a finite set of
points in some $\mathbb R^d$.
\end{defin}
\begin{defin}
An {\itshape $\mathcal{H}$-polyhedron} is an intersection of finitely many
closed halfspaces in some $\mathbb R^d$. An {\itshape $\mathcal{H}$-polytope}
is a bounded $\mathcal{H}$-polyhedron.
\end{defin}

The classical fact that these two definitions are equivalent is proved in
different books, for example in \cite[Theorem 1.1]{Z2}.

The {\itshape dimension} of a convex polytope is the dimension of its affine
hull. Without loss of generality in most cases we will assume that an
$n$-dimensional polytope $P$ lies in the space $\mathbb R^n$.

It is not difficult to prove the following lemma:
\begin{prop}
Any $n$-dimensional convex polytope $P^n$ up to a translation can be
represented in the form:
\begin{equation}\label{Polytope}
P^n=\{\ib{x}\in\mathbb R^n:\langle \ib{a}_i,\ib{x}\rangle+1\geqslant
0,\;i=1,\dots,m\},
\end{equation}
where the system of inequalities is irredundant, that is the absence of any
one of them changes the set $P^n$.
\end{prop}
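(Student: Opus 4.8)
The plan is to begin from an $\mathcal{H}$-representation of $P^n$, which exists by the equivalence of the two definitions \cite{Z2}, and then to normalize it by a translation followed by a rescaling of each inequality. Since $P^n$ has dimension $n$, its interior in $\mathbb R^n$ is non-empty; I would choose any interior point $\ib{p}$ and replace $P^n$ by $P^n-\ib{p}$. This is exactly the translation permitted by the statement, and after it we may assume that $\ib{0}$ lies in the interior of $P^n$.

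With the origin interior, write each defining halfspace as $\langle\ib{b}_i,\ib{x}\rangle\leqslant\beta_i$. Evaluating at $\ib{x}=\ib{0}$ and using that the origin is an interior point yields the strict inequality $0<\beta_i$, so every constant $\beta_i$ is positive. Dividing the $i$-th inequality by $\beta_i$ and setting $\ib{a}_i=-\ib{b}_i/\beta_i$ turns it into $\langle\ib{a}_i,\ib{x}\rangle+1\geqslant 0$, which is precisely the form (\ref{Polytope}). The positivity of $\beta_i$ is what makes the division legitimate, and it is exactly the reason the translation cannot be omitted: if the origin lay on the boundary, some $\beta_i$ would vanish and the constant term could not be normalized to $1$.

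It remains to arrange irredundancy, and here I would invoke the standard description of an $n$-dimensional polytope as the intersection of the closed halfspaces bounded by the affine hulls of its facets. This facet description is irredundant, since each facet lies in a unique supporting hyperplane and omitting the corresponding inequality admits points on the far side of that hyperplane, thereby enlarging the set. Hence I would keep exactly one normalized inequality per facet and discard the remaining (redundant) ones. As neither a translation nor a positive rescaling alters which hyperplanes support facets, the surviving system is of the required form and irredundant. I expect no genuine obstacle, in agreement with the remark that the statement is not difficult; the only points that require care are the order of operations—the translation must precede the normalization so that all constants are positive—and the invariance of facets under these affine changes, both of which are routine.
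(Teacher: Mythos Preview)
Your proof is correct and follows exactly the approach the paper takes: the paper's proof is the single sentence ``It is enough to make by a translation the origin $\mathbf{0}$ the inner point of $P$,'' and you have filled in precisely the details that this hint intends (positivity of the constants after translation, rescaling to normalize them to $1$, and keeping one inequality per facet for irredundancy).
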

In this case the polytope $P^n$ has exactly $m$ {\itshape facets}
$$
F_i=P^n\cap\{\ib{x}\in\mathbb R^n:\langle \ib{a}_i,\ib{x}\rangle+1=0\},\quad i=1,\dots,m.
$$
\begin{proof}
It is enough to make by a translation the origin $\mathbf{0}$ the inner point
of $P$.
\end{proof}

Two polytopes $P^n$ and $Q^n$ are said to be {\itshape combinatorially
equivalent} if there exists a bijection between their face lattices $L(P)$
and $L(Q)$ that preserves an inclusion relation. A {\itshape combinatorial
polytope} is an equivalence class of combinatorially equivalent convex
polytopes.

The notion of the general position from the point of view of two different
definitions of a convex polytope give two special classes of polytopes.
\begin{defin}
A polytope $P^n$ is called {\itshape simple} if any vertex of $P$ belongs to
exactly $n$ facets.

A polytope $P^n$ is called {\itshape simplicial} if any facet of $P$ is an
$(n-1)$-dimensional simplex (i.e. contains exactly $n$ vertices).
\end{defin}

\begin{defin}[dual polytope]
Let $P^n$ be an $n$-dimensional polytope (\ref{Polytope}). Then a {\itshape
dual {\upshape (or {\itshape polar})} polytope} is defined as
$$
(P^n)^*=\{\ib{y}\in\mathbb R^n:\langle \ib{y},\ib{x}\rangle+1\geqslant
0,\;\forall \ib{x}\in P^n\}.
$$
It can be shown (see, e.g. \cite{Z2}) that
$$
(P^n)^*=\conv\{\ib{a}_i,\,i=1,\dots,m\},
$$
\end{defin}
and $(P^*)^*=P$.

There is a bijection $F\longleftrightarrow F^{\diamondsuit}$ between the
$i$-faces of $P$ and the $(n-1-i)$-faces of $P^*$ such that
$$
F\subset G\Leftrightarrow G^{\diamondsuit}\subset
F^{\diamondsuit}.
$$

For any simple polytope $P$ its polar polytope $P^*$ is simplicial and vice
versa.

\begin{defin}[face polytope]
For the face $F$ of the polytope $P$ let us define a {\itshape face polytope
$P/F$} as
$$
P/F=(F^{\diamondsuit})^*
$$
It is easy to see that the face polytope has dimension $\dim P-\dim F-1$ and
the face lattice
$$
L(P/F)=[F,P]=\{G\in L(P)\colon F\subseteq G\subseteq P\}.
$$
\end{defin}
\begin{exam}
Let $P^n$ be a simple polytope. Then for any proper $k$-dimensional face $F$
of $P$ we have $P/F=\Delta^{n-k-1}$.
\end{exam}

\begin{defin}[direct product]
For two polytopes $P^{n_1}\subset\mathbb\mathbb R^{n_1}$ and
$Q^{n_2}\subset\mathbb R^{n_2}$ with $m_1$ and $m_2$ facets respectively the
{\itshape direct product}
$$
P^{n_1}\times Q^{n_2}=\{(\ib{x},\,\ib{y})\subset\mathbb R^{n_1}\times\mathbb R^{n_2}\colon \ib{x}\in P^{n_1},\,\ib{y}\in\mathbb R^{n_2}\}
$$
is an $(n_1+n_2)$-dimensional convex polytope. Its faces are direct products
of faces of $P^{n_1}$ and $Q^{n_2}$. In particular, it has $m_1+m_2$ facets.
\end{defin}
\begin{defin}[operation $\circ$]
For two polytopes
\begin{gather*}
P^{n_1}=\{\ib{x}\in\mathbb R^{n_1}:\;\langle
\ib{a}_i,\ib{x}\rangle+1\geqslant 0,\;i=1,\dots,m_1\},\\
Q^{n_2}=\{\ib{y}\in\mathbb R^{n_2}:\;\langle
\ib{b}_j,\ib{y}\rangle+1\geqslant 0,\;j=1,\dots,m_2\}
\end{gather*}
let us define the polytope
$$
P^{n_1}\circ Q^{n_2}=\conv(P^{n_1}\times\{\mathbf 0\}\cup\{\mathbf 0\}\times Q^{n_2})\subset\mathbb R^{n_1}\times\mathbb R^{n_2}
$$
\end{defin}
\begin{prop}
For two polytopes $P^{n_1}$ and $Q^{n_2}$ represented in the form
(\ref{Polytope}) we have
$$
(P^{n_1}\times Q^{n_2})^*=(P^{n_1})^*\circ(Q^{n_2})^*
$$
\end{prop}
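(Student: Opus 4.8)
The plan is to compute both sides as the convex hull of one and the same finite point set, obtained from the defining vectors of $P^{n_1}$ and $Q^{n_2}$. First I would write the direct product in the standard irredundant form (\ref{Polytope}) inside $\mathbb R^{n_1}\times\mathbb R^{n_2}$. The inequality $\langle\ib{a}_i,\ib{x}\rangle+1\geqslant 0$ becomes $\langle(\ib{a}_i,\mathbf 0),(\ib{x},\ib{y})\rangle+1\geqslant 0$, and $\langle\ib{b}_j,\ib{y}\rangle+1\geqslant 0$ becomes $\langle(\mathbf 0,\ib{b}_j),(\ib{x},\ib{y})\rangle+1\geqslant 0$, so $P^{n_1}\times Q^{n_2}$ is cut out by the $m_1+m_2$ halfspaces with defining vectors $(\ib{a}_i,\mathbf 0)$ and $(\mathbf 0,\ib{b}_j)$. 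Since $\mathbf 0$ is an interior point of each factor, the point $(\mathbf 0,\mathbf 0)$ is interior to the product, and since the product has exactly $m_1+m_2$ facets (by the direct-product definition), this system is irredundant, i.e.\ of the form (\ref{Polytope}).

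Then I would apply the formula $(P^n)^*=\conv\{\ib{a}_i\}$ from the definition of the dual polytope to this representation, obtaining
$$(P^{n_1}\times Q^{n_2})^*=\conv\{(\ib{a}_i,\mathbf 0),\,(\mathbf 0,\ib{b}_j):i=1,\dots,m_1,\;j=1,\dots,m_2\}.$$

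On the other side, unwinding the definition of the operation $\circ$ together with $(P^{n_1})^*=\conv\{\ib{a}_i\}$ and $(Q^{n_2})^*=\conv\{\ib{b}_j\}$, and using that forming convex hulls commutes with the affine embeddings $\ib{x}\mapsto(\ib{x},\mathbf 0)$ and $\ib{y}\mapsto(\mathbf 0,\ib{y})$, I get
$$(P^{n_1})^*\circ(Q^{n_2})^*=\conv\bigl(\conv\{(\ib{a}_i,\mathbf 0)\}\cup\conv\{(\mathbf 0,\ib{b}_j)\}\bigr).$$
The final step is the elementary identity $\conv(\conv A\cup\conv B)=\conv(A\cup B)$, which collapses the right-hand side to $\conv\{(\ib{a}_i,\mathbf 0),(\mathbf 0,\ib{b}_j)\}$, matching the previous display and proving the equality.

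The only genuinely substantive point is the verification that the combined inequality system for $P^{n_1}\times Q^{n_2}$ is irredundant, so that the stated form of the duality formula applies verbatim; I expect this to be the main obstacle, and I would settle it by the facet count supplied by the direct-product definition rather than by checking each inequality individually. Everything else is a bookkeeping of convex hulls and of the coordinate splitting of $\mathbb R^{n_1}\times\mathbb R^{n_2}$.
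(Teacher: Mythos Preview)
Your proof is correct and follows essentially the same route as the paper: write $P^{n_1}\times Q^{n_2}$ by the combined inequality system with defining vectors $(\ib{a}_i,\mathbf 0)$ and $(\mathbf 0,\ib{b}_j)$, apply the duality formula $(P^n)^*=\conv\{\ib{a}_i\}$, and recognise the resulting convex hull as $(P^{n_1})^*\circ(Q^{n_2})^*$. You are more careful than the paper in explicitly checking irredundancy via the facet count and in spelling out the identity $\conv(\conv A\cup\conv B)=\conv(A\cup B)$, both of which the paper leaves implicit.
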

\begin{proof}
The polytope $P^{n_1}\times Q^{n_2}$ is defined by the inequalities
$$
\langle \ib{a}_i,\ib{x}\rangle+1\geqslant 0,\,i=1,\dots,m_1,\quad\langle \ib{b}_j,\ib{y}\rangle+1\geqslant 0,\,j=1,\dots,m_2,
$$
therefore,
$$
(P^{n_1}\times Q^{n_2})^*=\conv\{(\ib{a}_i,\mathbf{0}),(\mathbf{0},\ib{b}_j),\;i=1,\dots,m_1,\;j=1,\dots,m_2\}=(P^{n_1})^*\circ (Q^{n_2})^*
$$
\end{proof}

Since the product of two simple polytopes is again a simple polytope, we see
that for any two simplicial polytopes $P$ and $Q$ the polytope $P\circ Q$ is
again simplicial.

\begin{exam}[bipyramid]
Let us define a  {\itshape bipyramid} as $BP=I\circ P$. Then we have
$(BP)^*=I\times P^*$.
\end{exam}

\begin{defin}[join]
For the space $\mathbb R^n$ let $\Delta^{n-1}$ be the regular simplex
$\conv(e_1,\dots,e_n)$, where \linebreak $e_i=(0,\dots,0,1,0,\dots,0)$ is the
$i$-th basis vector. Let $P^{n_1}\subset \mathbb R^{n_1+1}$, and
$Q^{n_2}\subset \mathbb R^{n_2+1}$ be polytopes lying in the regular
simplices $\Delta^{n_1}$ and $\Delta^{n_2}$. A {\itshape join}
$P\divideontimes Q$ is defined as
$$
P\divideontimes Q=\conv(P\times\{\mathbf{0}\}\cup\{\mathbf{0}\}\times Q)\subset\mathbb R^{n_1+1}\times\mathbb R^{n_2+1}
$$
It is easy to see that the polytope $P\divideontimes Q$ lies in the regular
simplex $\Delta^{n_1+n_2+1}$.
\end{defin}
From this construction is evident that the join is an associative operation.
\begin{rem}\label{Join}
In the definition of the join we used the polytopes that lie in the regular
simplices. In fact, the affine type of $P\divideontimes Q$ does not depend on
the positions of $P$ and $Q$ in the regular simplices, moreover, it depends
only on the affine types of $P$ and $Q$.

Indeed, let $\{\ib{v}_1,\dots,\ib{v}_{n_1+1}\}$ be the set of vertices of $P$
in general position, and $\{\ib{w}_1,\dots,\ib{w}_{n_2+1}\}$ be the set of
vertices of $Q$ in general position. Since the origin $O=\mathbf 0\in\mathbb
R^{n_1+1}\subset\mathbb R^{n_1+1}\times\mathbb R^{n_2+1}$ does not belong to
$\aff P$, the vectors
$\overrightarrow{O\ib{v}_1},\dots,\overrightarrow{O\ib{v}_{n_1+1}}$ are
linearly independent. Similarly the vectors
$\overrightarrow{O\ib{w}_1},\dots,\overrightarrow{O\ib{w}_{n_2+1}}$ form a
basis in $\mathbb R^{n_2+1}$. Then the vectors
$\overrightarrow{O\ib{v}_1},\dots,\overrightarrow{O\ib{v}_{n_1+1}},\overrightarrow{O\ib{w}_1},\dots,\overrightarrow{O\ib{w}_{n_2+1}}$
are linearly independent in $\mathbb R^{n_1+1}\times \mathbb R^{n_2+1}$, and
the set of points
$\ib{v}_1,\dots,\ib{v}_{n_1+1},\ib{w}_1,\dots,\ib{w}_{n_2+1}$ is in general
position. This implies that $\dim P^{n_1}\divideontimes Q^{n_2}=n_1+n_2+1$.
Now for any two realizations $(P\in\Delta^{n_1},Q\in\Delta^{n_2})$ and
$(P'\in\Delta^{n_1},Q'\in\Delta^{n_2})$ the correspondence
$\ib{v}_i\to\ib{v}_i',\,\ib{w}_j\to\ib{w}_j'$ defines an affine isomorphism
$\aff P\divideontimes Q\simeq\aff P'\divideontimes Q'$ such that $P\to P'$,
$Q\to Q'$. Then $\conv(P\cup Q)\to\conv(P'\cup Q')$, so $P\divideontimes
Q\simeq P'\divideontimes Q'$. Since the affine types of $P$ and $Q$ are
uniquely defined by the sets of points $\{\ib{v}_i\}$ and $\{\ib{w}_j\}$, we
see, that the affine type of $P\divideontimes Q$ depends only on the affine
types of $P$ and $Q$.
\end{rem}

\begin{prop}
$P^{n_1}\divideontimes Q^{n_2}$ is an $(n_1+n_2+1)$-dimensional polytope.

Faces of $P\divideontimes Q$ up to an affine equivalence are exactly
$F\divideontimes G$, where $\varnothing\subseteq F\subseteq
P,\;\varnothing\subseteq G\subseteq Q$ are faces of $P$ and $Q$ respectively.
\end{prop}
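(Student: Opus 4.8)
The plan is to read off the dimension statement directly from Remark~\ref{Join}, which already establishes $\dim(P\divideontimes Q)=n_1+n_2+1$; so the substance of the proposition is the description of the faces, and that is where I would concentrate.

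First I would fix coordinates adapted to the two simplices. Since $P\subseteq\Delta^{n_1}$ and $Q\subseteq\Delta^{n_2}$, every point of $P$ has first-block coordinates summing to $1$, and every point of $Q$ has second-block coordinates summing to $1$. Hence an arbitrary point of $P\divideontimes Q=\conv(P\times\{\mathbf 0\}\cup\{\mathbf 0\}\times Q)$ can be written as $(\mu p,\nu q)$ with $p\in P$, $q\in Q$, $\mu,\nu\geqslant 0$, $\mu+\nu=1$, where $\mu$ equals the sum of the first-block coordinates (so $\mu,\nu$ are recovered from the point). This normalization is the feature that makes the subsequent functional computation transparent.

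Next I would describe faces via supporting linear functionals. Any linear functional on $\mathbb R^{n_1+1}\times\mathbb R^{n_2+1}$ splits as $\varphi(x,y)=\varphi_1(x)+\varphi_2(y)$, and on $(\mu p,\nu q)$ it evaluates to $\mu\varphi_1(p)+\nu\varphi_2(q)$. Writing $a=\max_{p\in P}\varphi_1(p)$, attained on a face $F\subseteq P$, and $b=\max_{q\in Q}\varphi_2(q)$, attained on a face $G\subseteq Q$, the maximum of $\varphi$ over the join is $\max(a,b)$. I would then split into three cases: if $a>b$ the maximizing set is $F\times\{\mathbf 0\}=F\divideontimes\varnothing$; if $a<b$ it is $\{\mathbf 0\}\times G=\varnothing\divideontimes G$; and if $a=b$ it is exactly $\{(\mu p,\nu q):p\in F,\ q\in G,\ \mu\in[0,1]\}$, which by Remark~\ref{Join} is affinely $F\divideontimes G$. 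This shows that every face has the claimed form.

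For the converse I would start from functionals $\varphi_1,\varphi_2$ cutting out prescribed faces $F$ and $G$ and then equalize their maxima: because $P$ lies in $\{\sum x_i=1\}$, adding a constant to $\varphi_1$ along $P$ amounts to adding a multiple of the globally linear coordinate-sum functional, and likewise for $Q$, so I may arrange $a=b$ without disturbing $F,G$; the third case then realizes $F\divideontimes G$ as a genuine face. Finally, to see the correspondence is a bijection onto all faces (including the empty and improper ones, with $\varnothing\divideontimes\varnothing$ the empty face and $P\divideontimes Q$ the whole polytope), I would recover $F,G$ from a given face $H$ by intersecting with the coordinate subspaces: since points of $Q$ have nonzero second block, $H\cap\{y=\mathbf 0\}$ projects onto $F\times\{\mathbf 0\}$ and $H\cap\{x=\mathbf 0\}$ onto $\{\mathbf 0\}\times G$. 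The main obstacle is not any single estimate but the careful bookkeeping of the degenerate cases --- the empty faces $\varnothing\divideontimes G$, $F\divideontimes\varnothing$ and the improper faces where $\varphi_1$ or $\varphi_2$ is constant --- so that the three cases above assemble into a clean bijection rather than overlapping or omitting boundary situations.
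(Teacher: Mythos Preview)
Your proposal is correct and follows essentially the same route as the paper's proof in Appendix~A: both split a linear functional on the ambient product space into its two blocks, compare the extrema on $P$ and $Q$, and handle the three cases $a>b$, $a<b$, $a=b$, then use the coordinate-sum functionals to equalize the extrema and realize every $F\divideontimes G$ as a face. The only cosmetic differences are that you work with maxima rather than minima and add an explicit recovery of $F,G$ from a given face, which the paper leaves implicit.
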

\begin{proof}
For the proof see Appendix A.
\end{proof}

\begin{cor}
The face lattice  $L(P\divideontimes Q)$ is the direct product of the face
lattices $L(P)$ and $L(Q)$. Therefore the join operation is well defined on
combinatorial polytopes.
\end{cor}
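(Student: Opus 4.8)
The plan is to upgrade the set-level description of faces furnished by the preceding Proposition to a statement about the partial order, by exhibiting an explicit order isomorphism
$$
L(P)\times L(Q)\;\longrightarrow\;L(P\divideontimes Q),\qquad (F,G)\longmapsto F\divideontimes G,
$$
where on the left we take the product of posets, ordered componentwise. By the Proposition the map is onto the set of faces (the bottom pair $(\varnothing,\varnothing)$ going to $\varnothing$ and the top pair $(P,Q)$ to the whole polytope), so the content is to check that it is a bijection and that it together with its inverse preserves inclusions.

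First I would establish injectivity by recovering $F$ and $G$ from $F\divideontimes G$. Recall that $P$ and $Q$ were placed inside the regular simplices $\Delta^{n_1}$ and $\Delta^{n_2}$, so every point of $P$ (resp. $Q$) has coordinate sum $1$ and in particular is nonzero. A point of $F\divideontimes G$ has the form $(\lambda f,(1-\lambda)g)$ with $f\in F$, $g\in G$, $\lambda\in[0,1]$; intersecting with the coordinate subspace $\mathbb R^{n_1+1}\times\{\mathbf 0\}$ forces $(1-\lambda)g=\mathbf 0$, hence $\lambda=1$ since $g\neq\mathbf 0$, so
$$
(F\divideontimes G)\cap(\mathbb R^{n_1+1}\times\{\mathbf 0\})=F\times\{\mathbf 0\},
$$
and symmetrically the intersection with $\{\mathbf 0\}\times\mathbb R^{n_2+1}$ equals $\{\mathbf 0\}\times G$. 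Thus $F$ and $G$ are determined by $F\divideontimes G$, which yields both injectivity of the correspondence and the key tool for the order statement.

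Next I would verify that the correspondence is an order isomorphism. One direction is immediate from monotonicity of the convex hull: if $F_1\subseteq F_2$ and $G_1\subseteq G_2$, then $\conv(F_1\times\{\mathbf 0\}\cup\{\mathbf 0\}\times G_1)\subseteq\conv(F_2\times\{\mathbf 0\}\cup\{\mathbf 0\}\times G_2)$, i.e.\ $F_1\divideontimes G_1\subseteq F_2\divideontimes G_2$. For the converse, an inclusion $F_1\divideontimes G_1\subseteq F_2\divideontimes G_2$ restricts, upon intersecting with the two coordinate subspaces and using the recovery formulas above, to $F_1\times\{\mathbf 0\}\subseteq F_2\times\{\mathbf 0\}$ and $\{\mathbf 0\}\times G_1\subseteq\{\mathbf 0\}\times G_2$, whence $F_1\subseteq F_2$ and $G_1\subseteq G_2$. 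Since an order isomorphism between finite lattices automatically respects meets and joins, this identifies $L(P\divideontimes Q)$ with the lattice product $L(P)\times L(Q)$.

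Finally, well-definedness on combinatorial polytopes is a formal consequence: combinatorial type is by definition the isomorphism type of the face lattice, and $L(P)\cong L(P')$, $L(Q)\cong L(Q')$ give $L(P)\times L(Q)\cong L(P')\times L(Q')$, hence $L(P\divideontimes Q)\cong L(P'\divideontimes Q')$ and $P\divideontimes Q\simeq P'\divideontimes Q'$. I do not expect a serious obstacle here, as the Proposition carries the geometric weight; the only point demanding care is the backward direction of order preservation, where one must genuinely use that $P,Q$ avoid the origin (via the simplex normalization) so that the intersections with the coordinate subspaces cleanly recover the factors rather than collapsing them.
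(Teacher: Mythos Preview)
Your proof is correct and follows the natural line the paper has in mind: the paper states this Corollary without an explicit argument, treating the face-lattice identification as immediate once the preceding Proposition has listed the faces as all $F\divideontimes G$. Your explicit recovery of $F$ and $G$ via intersection with the coordinate subspaces (using that points of $P$ and $Q$ lie in the standard simplices and hence avoid the origin) cleanly supplies the injectivity and the backward order-preservation that the paper leaves implicit, so your write-up is simply a more detailed version of the same approach.
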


\begin{rem}
The {\itshape join} $P\divideontimes Q$ can be defined in more invariant
terms -- as the convex hull of $P$ and $Q$ provided they are placed into
affine subspaces of some $\mathbb R^N$ such that their affine hulls $\aff(P)$
and $\aff(Q)$ are skew.
\end{rem}

\begin{exam}
For the simplices $\Delta^k$ and $\Delta^l$ we have
$\Delta^k\divideontimes\Delta^l=\Delta^{k+l+1}$.
\end{exam}

\begin{prop}\label{Dual}
For two combinatorial polytopes $P$ and $Q$
$$
(P\divideontimes Q)^*=P^*\divideontimes Q^*
$$
\end{prop}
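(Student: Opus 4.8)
The plan is to reduce the claim to an isomorphism of face lattices, since two combinatorial polytopes coincide precisely when their face lattices are isomorphic as posets. The essential inputs are already at hand: the duality $F\leftrightarrow F^{\diamondsuit}$ identifies $L(R^*)$ with the order-dual $L(R)^{\mathrm{op}}$ of $L(R)$ for any polytope $R$, and the Corollary above gives $L(P\divideontimes Q)\cong L(P)\times L(Q)$.

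First I would record the duality statement at the level of lattices. The bijection $F\mapsto F^{\diamondsuit}$ between $i$-faces of $R$ and $(n-1-i)$-faces of $R^*$ reverses inclusion, so it is an order-reversing bijection $L(R)\to L(R^*)$; taking $i=-1$ and $i=n$ shows that it carries the empty face $\varnothing$ of $R$ to the top face $R^*$ and the top face $R$ to the empty face of $R^*$. Hence $L(R^*)\cong L(R)^{\mathrm{op}}$, with the whole chain of ranks respected.

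Next I would invoke the elementary poset identity $(A\times B)^{\mathrm{op}}=A^{\mathrm{op}}\times B^{\mathrm{op}}$, valid for the componentwise order on any product of posets: indeed $(a_1,b_1)\leqslant(a_2,b_2)$ in $A\times B$ iff $a_1\leqslant a_2$ and $b_1\leqslant b_2$, and reversing both relations is exactly the componentwise order of $A^{\mathrm{op}}\times B^{\mathrm{op}}$. Combining the three facts, I would compute
$$
L\bigl((P\divideontimes Q)^*\bigr)\cong L(P\divideontimes Q)^{\mathrm{op}}\cong (L(P)\times L(Q))^{\mathrm{op}}\cong L(P)^{\mathrm{op}}\times L(Q)^{\mathrm{op}}\cong L(P^*)\times L(Q^*)\cong L(P^*\divideontimes Q^*),
$$
where the last isomorphism is again the Corollary, applied to $P^*$ and $Q^*$. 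Since the composite is an inclusion-preserving bijection of face lattices, $(P\divideontimes Q)^*$ and $P^*\divideontimes Q^*$ are combinatorially equivalent, which is the assertion.

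The routine steps are the two lattice identifications; the only point demanding care is the bookkeeping at the extreme ranks, namely checking that the duality anti-isomorphism genuinely exchanges the empty face and the full polytope on each factor, so that the bottom element $\varnothing=\varnothing\divideontimes\varnothing$ and the top element $P\divideontimes Q$ of the product are matched correctly after dualizing. I expect this to be the main (though minor) obstacle; once the extreme ranks are handled, the argument is a formal manipulation of posets and needs no geometry beyond what the preceding Corollary supplies.
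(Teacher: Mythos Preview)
Your proof is correct and takes essentially the same approach as the paper: both arguments rest on the face description $L(P\divideontimes Q)\cong L(P)\times L(Q)$ together with the order-reversing bijection $F\leftrightarrow F^{\diamondsuit}$ between $L(R)$ and $L(R^*)$. The paper simply writes out the explicit bijection $(F\divideontimes G)^{\diamondsuit}\leftrightarrow F^{\diamondsuit}\divideontimes G^{\diamondsuit}$ and checks inclusion directly, whereas you factor the same computation through the poset identities $L(R^*)\cong L(R)^{\mathrm{op}}$ and $(A\times B)^{\mathrm{op}}\cong A^{\mathrm{op}}\times B^{\mathrm{op}}$; the content is identical.
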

\begin{proof}
Indeed, the faces of the polytope $(P\divideontimes Q)^*$ have the form
$(F\divideontimes G)^{\diamondsuit}$, and $(F\divideontimes
G)^{\diamondsuit}\subseteq(F'\divideontimes G')^{\diamondsuit}$ if and only
if $F'\divideontimes G'\subseteq F\divideontimes G$, i.e. $F'\subseteq F,
G'\subseteq G$, which is equivalent to
$F^{\diamondsuit}\subseteq(F')^{\diamondsuit},
G^{\diamondsuit}\subseteq(G')^{\diamondsuit}$. Therefore the correspondence
$(F\divideontimes G)^{\diamondsuit}\longleftrightarrow
F^{\diamondsuit}\divideontimes G^{\diamondsuit}$ is a combinatorial
equivalence of the polytopes $(P\divideontimes Q)^*$ and $P^*\divideontimes
Q^*$.
\end{proof}

\begin{exam}[cone]
Let us define a {\itshape cone} $CP$ as $\pt*P$, where $\pt$ is a point. Then
$(CP)^*=C(P^*)$.
\end{exam}

\subsection{Ring of polytopes}
\begin{defin}
Denote by $\mathcal{P}^{2n}$ the free abelian group generated by all
$n$-dimensional combinatorial polytopes. For $n>1$ we have the direct sum
$$
\mathcal{P}^{2n}=\sum\limits_{m\geqslant n+1}\mathcal{P}^{2n,\,2(m-n)},
$$
where $P^{n}\in\mathcal{P}^{2n,\,2(m-n)}$ if it is a polytope with $m$
facets, and $\rank \mathcal{P}^{2n,\,2(m-n)}<\infty$ for any fixed $n$ and
$m$ such that $n<m$. The product of polytopes $P\times Q$ turns the direct
sum
$$
\mathcal{P}=\sum\limits_{n\geqslant 0}\mathcal{P}^{2n} =\mathcal{P}^0+\sum\limits_{m\geqslant2}\sum\limits_{n=1}^{m-1}\mathcal{P}^{2n,\,2(m-n)}
$$
into a bigraded commutative associative ring, a {\itshape ring of polytopes}.
The unit is $P^0=\pt$, a point.

Let us denote
$\mathcal{P}^{[2n]}=\mathcal{P}^0\oplus\mathcal{P}^2\oplus\dots\oplus\mathcal{P}^{2n}$.

The direct product $P\times Q$ of simple polytopes $P$ and $Q$ is a simple
polytope as well. Thus the abelian subgroup $\mathcal{P}_s\subset
\mathcal{P}$ generated by all simple polytopes is a subring in $\mathcal{P}$.
\end{defin}

\begin{defin}
A polytope $P^n$ is called {\itshape indecomposable} if it can not be
decomposed into a direct product $P_1\times P_2$ of two polytopes of positive
dimensions.
\end{defin}

\begin{prop}
$\mathcal{P}$ is a polynomial ring generated by indecomposable combinatorial
polytopes.
\end{prop}
For the proof see Appendix B.

Since the polytope $P=P_1\times P_2$ is simple if and only if the polytopes
$P_1$ and $P_2$ are simple, $\mathcal{P}_s$ is a polynomial ring generated by
all indecomposable simple polytopes.

Since $(P^*)^*=P$, the correspondence $P\to P^*$ defines an involutory linear
isomorphism  of the ring $\mathcal{P}$.

Any isomorphism of graded abelian groups $A\colon\mathcal{P}\to\mathcal{P}$
defines a new multiplication
$$
P\times_{A}Q=A^{-1}(AP\times AQ)
$$
such that $A$ is a graded ring isomorphism
$(\mathcal{P},\times_{A})\to(\mathcal{P},\times)$.

In particular, $*$ defines the multiplication
$$
P\circ Q=\left(P^*\times Q^*\right)^*
$$

\begin{defin}
Let us define an {\itshape alpha-character}
$\xi_{\alpha}\colon\mathcal{P}\to\mathbb Z[\alpha]$:
$$
\xi_{\alpha}(P^n)=\alpha^n
$$
\end{defin}

\subsection{The Rota-Hopf algebra}
\begin{defin}
Let $\Ps$ be a finite poset with a minimal element $\hat 0$ and a maximal
element $\hat 1$.

An element $y$ in $\Ps$ {\itshape covers} another element $x$ in $\Ps$ if
$x<y$ and there is no $z$ in $\Ps$ such that $x<z<y$.

A poset $\Ps$ is called {\itshape graded} if there exists a rank function
$\rho\colon\Ps\to\mathbb Z$ such that $\rho(\hat0)=0$ and $\rho(y)=\rho(x)+1$
if $y$ covers $x$. It is easy to see that if a rank function exists, then it
is determined in a unique way.

Set $\rho(\Ps)=\rho(\hat 1)$, and $\deg(\Ps)=2\rho(\Ps)$.

Two finite graded posets are isomorphic if there exists an order preserving
bijection between them.

Let us denote by $\mathcal{R}$ the graded free abelian group with basis the
set of all isomorphism classes of finite graded posets.
\end{defin}

$\mathcal{R}$ has the structure of a {\itshape graded connected Hopf
algebra}:
\begin{itemize}
\item The multiplication $\Ps\cdot \Q$ is a cartesian product $\Ps\times
    \Q$ of posets $\Ps$ and $\Q$: let $x,u\in \Ps$, and $y,v\in \Q$. Then
$$
(x,y)\leqslant (u,v)\;\mbox{ if and only if $x\leqslant u$ and $y\leqslant v$}
$$

\item The unit element in $\mathcal{R}$ is the poset with one element
    $\hat 0=\hat 1$

\item The comultiplication is
$$
\Delta(\Ps)=\sum\limits_{\hat 0\leqslant z\leqslant \hat 1}[\hat 0,z]\otimes[z,\hat 1],
$$
where $[x,y]$ is the subposet $\{z\in \Ps|x\leqslant z\leqslant y\}$.

\item The counit $\varepsilon$ is
$$
\varepsilon(\Ps)=\begin{cases}
1,&\mbox{if }\hat 0=\hat 1;\\
0,& \mbox{else }
\end{cases}
$$

\item The antipode $\chi$ is
$$
\chi(\Ps)=\sum\limits_{k\geqslant 0}\sum\limits_{C_k}(-1)^k[x_0,x_1]\cdot[x_1,x_2]\dots[x_{k-1},x_k],
$$
where $C_k=(\hat 0=x_0<x_1<\dots<x_k=\hat 1)$
\end{itemize}
The Hopf algebra of graded posets was originated in the work by Joni and Rota
\cite{JR}.  Variations of this construction were studied in
\cite{Ehr,ABS,Sch1,Sch2}. The generalization of this algebra can be found in
\cite{RS}.

\begin{exam} The simplest Boolean algebra $B_1=\{\hat 0,\hat 1\}$ is the face lattice of the point $\pt$.
\begin{gather*}
\Delta(B_1)=1\otimes B_1+B_1\otimes 1\\
\chi(B_1)=-B_1
\end{gather*}
\end{exam}

\begin{defin}
There is a natural involutory graded ring isomorphism $*$ of the ring
$\mathcal{R}$. For a graded poset $\Ps$ let $\Ps^*$ be a graded poset
consisting of the same elements but with the reverse inclusion relation, i.e.
$x\leqslant_{\Ps} y$ if and only is $x\geqslant_{\Ps^*} y$. Then
$\rho(\Ps^*)=\rho(P)$, and $\Delta(\Ps^*)=*\otimes *(\tau_{\mathcal{R}}\Delta
\Ps)$, where $\tau_{\mathcal{R}}$ is a ring homomorphism
$\mathcal{R}\otimes\mathcal{R}\to\mathcal{R}\otimes\mathcal{R}$ that
interchanges the tensor factors $\tau_{\mathcal{R}}(x\otimes y)=y\otimes x$.
\end{defin}

\begin{defin}
A join of polytopes defines a bilinear operation of degree $+2$ on the ring
$\mathcal{P}$. It is associative and commutative, so
$(\mathcal{P},\divideontimes)$ is a  commutative associative ring without a
unit.

Let us add a formal unit $\varnothing$ of degree $-2$ corresponding to the
empty set. Thus $\varnothing\divideontimes P=P\divideontimes\varnothing=P$.

Let us denote the ring $(\mathcal{P}\oplus\mathbb
Z\varnothing,\divideontimes)$ by $\mathcal{RP}$. Then $\mathcal{RP}$ is a
commutative associative ring.

Given two polytopes: $P^{n_1}$ with $v_1$ vertices and $m_1$ facets, and
$Q^{n_2}$ with $v_2$ vertices and $m_2$ facets, the polytope
$P^{n_1}\divideontimes Q^{n_2}$ has dimension $n_1+n_2+1$, while its number
of vertices is $v_1+v_2$, and its number of facets is $m_1+m_2$.

Therefore $\mathcal{RP}$ is a threegraded ring with graduations $2(n+1)$,
$2(v-n-1)$, and $2(m-n-1)$, where $n=\dim P$, $v$ is the number of vertices,
and $m$ is the number of facets. The duality operator $*$ interchanges the
graduations $2(v-n-1)$ and $2(m-n-1)$.
\end{defin}

\begin{rem}
Let us recall that in the ring $\mathcal{P}$ the polytope $P$ has two
graduations $2n$ and $2(m-n)$. We have $v(P\times Q)=v(P)\cdot v(Q)$, so the
number of vertices does not give the third graduation in this case.
\end{rem}

\begin{defin}
Let us denote by $\mathcal{SP}$ the graded abelian subgroup consisting of all
self-dual elements of the ring $\mathcal{RP}$. According to Proposition
\ref{Dual} it is a subring in $\mathcal{RP}$.
\end{defin}

\begin{exam}
$\mathcal{SP}$ contains the subring generated by simplices and all polygons
$M^2_n$.
\end{exam}

The correspondence $P\to L(P)$ defines the homomorphism of abelian groups
$L:\mathcal{P}\oplus\mathbb Z\varnothing\to\mathcal{R}$. It is an injection.
Since $L(P\divideontimes Q)=L(P)\cdot L(Q)$, it is a homomorphism of graded
rings $\mathcal{RP}\to\mathcal{R}$. Moreover, $L(P^*)=L(P)^*$.

\begin{defin}
Since
$$
\Delta(L(P))=\sum\limits_{\varnothing\subseteq F\subseteq P}[\varnothing,F]\otimes[F,P]=\sum\limits_{\varnothing\subseteq F\subseteq P}L(F)\otimes L(P/F),
$$
we see that $L(\mathcal{RP})$ is a Hopf subalgebra in $\mathcal{R}$,
therefore it induces a Hopf algebra structure on the ring $\mathcal{RP}$:
\begin{itemize}
\item The comultiplication in $\mathcal{RP}$ is
$$
\Delta(P)=\sum\limits_{\varnothing\subseteq F\subseteq P}F\otimes P/F;
$$
\item The counit is
$$\varepsilon(P)=\begin{cases}
1,&\mbox{ if } P=\varnothing,\\
0,&\mbox{else};
\end{cases}
$$
\item The antipode $\chi$ is defined as
$$
\chi(P)=\sum\limits_{k\geqslant 0}\sum\limits_{\varnothing=F_0\subset F_1\subset\dots\subset F_k=P}(-1)^k(F_1/F_0)\divideontimes(F_2/F_1)\divideontimes\dots\divideontimes(F_{k}/F_{k-1}).
$$
\end{itemize}
\end{defin}

\begin{rem}
Let
$\tau_{\mathcal{RP}}:\mathcal{RP}\otimes\mathcal{RP}\to\mathcal{RP}\otimes\mathcal{RP}$
be the ring homomorphism that interchanges the tensor factors:
$\tau_{\mathcal{RP}}(x\otimes y)=y\otimes x$. Then $\Delta (P^*)=*\otimes
*(\tau_{\mathcal{RP}}\Delta P)$.
\end{rem}

\begin{prop}
$\mathcal{RP}$ is a ring of polynomials generated by all join-indecomposable
polytopes.
\end{prop}
See Appendix B for the proof.

\begin{defin}
For the ring $\mathcal{RP}$ let us define an {\itshape $\alpha$-character}
$\varepsilon_{\alpha}$ by the formula
$$
\varepsilon_{\alpha}(P^n)=\alpha^{n+1}=\alpha^{\rho (P)}
$$
Then $\varepsilon_{0}$ is the counit $\varepsilon$.
\end{defin}

\begin{defin}
Let $\mathcal{B}$ be the abelian subgroup in $\mathcal{RP}$ generated by
$\varnothing $, and all the simplices $\Delta^n, n\geqslant 0$. Then
$L(\Delta^{n-1})=B_n$ is a Boolean algebra $\{\hat 0,\hat 1\}^n$. We have
$\Delta^k\divideontimes\Delta^l=\Delta^{k+l+1}$, so it is a subring in
$\mathcal{RP}$.

Let us denote $x=\Delta^0=\pt$. Then $\Delta^{n-1}=x^n$. We have $\Delta
x=1\otimes x+x\otimes 1$, $\chi(x)=-x$, so $\mathcal{B}$ is a Hopf subalgebra
isomorphic to the Hopf algebra of polynomials $\mathbb Z[x]$ with the
comultiplication $\Delta x=1\otimes x+x\otimes 1$.
\end{defin}

\begin{defin}
We will denote by $\R$ the ring $\mathcal{P}$ or the ring $\mathcal{RP}$, and
by $\R^{[2n]}$ the sum
$$
\R^0\oplus\R^{2}\oplus\R^4\oplus\dots\oplus\R^{2n}
$$
\end{defin}

\subsection{Operators}

\begin{defin}
Let us define the ring of linear operators $\mathcal{L}(\R)=\Hom_{\mathbb
Z}(\R,\R)$ with the multiplication given by a composition of operators.
\end{defin}

\begin{defin}
For $k\geqslant 0$ let us define {\itshape faces operators} $d_k$ that sent a
polytope $P^n$ to the sum of all its $(n-k)$-dimensional faces.
$$
d_kP^n=\sum\limits_{F^{n-k}\subseteq P^n}F^{n-k}
$$
In particular, $d_{n+1}P^n=
\begin{cases}\varnothing& \mbox{ in } \mathcal{RP},\\
0&\mbox{ in }\mathcal{P}
\end{cases}$,\quad
$d_k\varnothing=0,\;k\geqslant 1$.
\end{defin}
\begin{exam}
We have $d_1\pt=\varnothing$ in $\mathcal{RP}$, and $0$ in $\mathcal{P}$.
\end{exam}

\begin{defin}
Let us define by $\mathcal{D}(\R)\subset\mathcal{L}(\R)$ the ring of
operators generated by faces operators $d_k$, $k\geqslant 0$, and by
$\mathcal{D}(\R^{2n},\R^{2(n-k)})$ the set of all operators in
$\mathcal{D}(\R)$ of degree $-2k$ acting on the abelian group $\R^{2n}$.
\end{defin}

\begin{prop}
For any two polytopes $P$ and $Q$, and $d=d_1$ we have:
$$
d(P\times Q)=(dP)\times Q+P\times (dQ) \mbox{ in }\mathcal{P},\mbox{ and }d(P\divideontimes Q)=(dP)\divideontimes Q+P\divideontimes(dQ)\mbox{ in }\mathcal{RP}.
$$
Therefore $\mathcal{P}$ and $\mathcal{RP}$ are differential rings.
\end{prop}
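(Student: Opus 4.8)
The plan is to deduce the Leibniz identity for $d=d_1$ directly from the combinatorial descriptions of the facets of a product and of a join, both of which were determined earlier in this section. Since $d$ is defined on generators and extended $\mathbb{Z}$-linearly, it is additive; and since $\times$ and $\divideontimes$ are bilinear, it suffices to verify each identity on single polytopes, after which it propagates to all of $\mathcal{P}$ and $\mathcal{RP}$ by bilinearity and linearity. The final assertion is then immediate: a differential ring is by definition a ring equipped with a derivation, and the two identities say precisely that $d$ is a derivation for $\times$ on $\mathcal{P}$ and for $\divideontimes$ on $\mathcal{RP}$.

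For the product case I would recall that the faces of $P^{n_1}\times Q^{n_2}$ are exactly the products $F\times G$ of a face $F$ of $P$ and a face $G$ of $Q$, with $\dim(F\times G)=\dim F+\dim G$. A facet of $P\times Q$ has dimension $n_1+n_2-1$, so I must enumerate the pairs with $\dim F+\dim G=n_1+n_2-1$ subject to $\dim F\le n_1$ and $\dim G\le n_2$. Writing $\dim F=n_1-k$ forces $\dim G=n_2-1+k$, and $\dim G\le n_2$ leaves only $k\in\{0,1\}$: either $F=P$ and $G$ is a facet of $Q$, or $F$ is a facet of $P$ and $G=Q$. Summing $F\times G$ over these two disjoint families gives exactly $(dP)\times Q+P\times(dQ)$.

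For the join the argument is parallel, using the Proposition that the faces of $P\divideontimes Q$ are the joins $F\divideontimes G$ with $\varnothing\subseteq F\subseteq P$ and $\varnothing\subseteq G\subseteq Q$, and $\dim(F\divideontimes G)=\dim F+\dim G+1$. A facet now satisfies $\dim F+\dim G=n_1+n_2-1$ with $-1\le\dim F\le n_1$ and $-1\le\dim G\le n_2$; the same bookkeeping (now with $\dim F=n_1-k$, $k\in\{0,1\}$) isolates the two families $F\divideontimes Q$ with $F$ a facet of $P$, and $P\divideontimes G$ with $G$ a facet of $Q$, yielding $(dP)\divideontimes Q+P\divideontimes(dQ)$.

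The only delicate points, and where I would take the most care, are the low-dimensional and degenerate cases. In $\mathcal{P}$ one checks that the identity persists when $P$ or $Q$ is a point, using $d\,\pt=0$; in $\mathcal{RP}$ one must treat the unit $\varnothing$, with $d\varnothing=0$, and the point, with $d_1\,\pt=\varnothing$, consistently, since there the $(-1)$-dimensional face $\varnothing$ enters the face enumeration (indeed, when $n_1=0$ the ``facet'' $\dim F=n_1-1=-1$ is precisely $F=\varnothing$). In each such case the Leibniz identity reduces to the defining conventions for $d$ on $\pt$ and on $\varnothing$, so no genuine obstruction appears; the entire content of the proof is the codimension-one face count carried out above.
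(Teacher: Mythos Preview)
Your proposal is correct and follows essentially the same approach as the paper: both identify the facets of $P\times Q$ (respectively $P\divideontimes Q$) as precisely $F\times Q$ and $P\times G$ (respectively $F\divideontimes Q$ and $P\divideontimes G$) with $F,G$ facets, and sum. Your treatment is more detailed about the degenerate low-dimensional cases, but the paper's proof is otherwise the same codimension-one face count you describe.
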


\begin{proof}
Each facet of $P\times Q$ has the form $F\times Q$ or $P\times G$, where
$F,G$ are facets of $P$ and $Q$. Therefore,
$$
d(P\times Q)=\sum\limits_{F\subset P}F\times Q+\sum\limits_{G\subset Q}P\times G=(dP)\times Q+P\times (dQ)
$$
Each facet of $P\divideontimes Q$ has the form $F\divideontimes Q$ or
$P\divideontimes G$,  where $F,G$ are facets of $P$ and $Q$. Therefore,
$$
d(P\divideontimes Q)=(dP)\divideontimes Q+P\divideontimes(dQ)
$$
\end{proof}

\begin{exam}
$xP=\pt\divideontimes P=CP$ is a {\itshape cone}. For a polytope $P$ of
positive dimension we have
$$
d(CP)=d(\pt\divideontimes P)=(d\pt)\divideontimes P+\pt\divideontimes(dP)=\varnothing\divideontimes P+C(dP)=P+C(dP)
$$
Thus, the multiplication by $x=\pt$ in the ring $\mathcal{RP}$ defines a
{\itshape cone operator} $C\in\mathcal{L}(\mathcal{RP})$ of degree $+2$ such
that
$$
[d,C]=\id
$$
On the other hand, in the ring $\mathcal{P}$ the same relation holds for all
the polytopes except for $P=\pt$. In this case we have $d(C\pt)=dI=2\pt$,
while $C(d\pt)+\pt=\pt$. Therefore, in the ring $\mathcal{P}$ we have
$$
[d,C]=\id+\xi_0
$$
\end{exam}

\begin{exam}
We have
$$
d(\Delta^k\divideontimes\Delta^l)=(k+1)\Delta^{k-1}\divideontimes\Delta^l+(l+1)\Delta^k\divideontimes\Delta^{l-1}=(k+l+2)\Delta^{k+l}=d\Delta^{k+l+1}
$$
\end{exam}

\begin{exam}
Since $\Delta^k=x^{k+1}$, we see that $dx^{k+1}=(k+1)x^k$. Therefore $\mathbb
Z[x]$ is a differential subring in $(\mathcal{RP},d)$ with $d=\frac{d}{dx}$.
\end{exam}

\begin{defin}
The correspondence $P\to\ BP$ defines a linear {\itshape bipyramid operator}
of degree $+2$ on the rings $\mathcal{P}$ and $\mathcal{RP}$. By definition
$B\varnothing=\pt$.
\end{defin}

\begin{rem}
It is easy to see that if $P$ is simple and \emph{self-dual}, i.e. $P^*$ is
combinatorially equivalent to $P$, then $P$ is a simplex. There are many
examples of self-dual non-simple polytopes, with simplest infinite family
given by $k$-gonal pyramids for $k\ge4$. The next example gives a more
interesting \emph{regular} self-dual polytope.

\begin{exam}[24-cell]
Let $Q$ be the 4-polytope obtained by taking the convex hull of the following
24 points in $\mathbb{R}^4$: endpoints of 8 vectors $\pm {\bf e}_i$, \ $1\le
i\le 4$, and 16 points of the form
$(\pm\frac12,\pm\frac12,\pm\frac12,\pm\frac12)$. It can be shown that all the
facets of $Q$ are octahedra.

The polar polytope $Q^*$ is specified by the following 24 inequalities:
\begin{equation}\label{24ineq}
  {}\mathbin\pm x_i+1\ge0\;\text{ for }1\le i\le 4,\quad\text{and
  }\;
  \textstyle{\frac12}({}\mathbin\pm x_1\mathbin\pm x_2\mathbin\pm x_3\mathbin\pm x_4)+1\ge0.
\end{equation}
Each of these inequalities turns into equality in exactly one of the
specified 24 points, so it defines a supporting hyperplane whose intersection
with $Q$ is only one point. This implies that $Q$ has exactly 24 vertices.
The vertices of $Q^*$ may be determined using the ``elimination process''
to~\eqref{24ineq} (see~\cite[Section 1.2]{Z2}), and as the result we obtain
24 points of the form ${}\mathbin\pm{\bf e}_i\mathbin\pm{\bf e}_j$ for $1\le
i<j\le 4$. Each supporting hyperplane defined by~\eqref{24ineq} contains
exactly 6 vertices of~$Q^*$, which form an octahedron. So both $Q$ and $Q^*$
have 24-vertices and 24 octahedral facets. In fact, both $Q$ and $Q^*$
provide examples of a \emph{regular 4-polytope} called a \emph{24-cell}. It
is the only regular self-dual polytope different from a simplex. For more
details on 24-cell and other regular polytopes see~\cite{C}.
\end{exam}
In this example we have $dQ=24BI^2$, and $(dQ)^*=24I^3$. Therefore for a
self-dual element $P\in\mathcal{SP}$, $dP$ can be non self-dual. So
$\mathcal{SP}$ is not a differential subring in $\mathcal{RP}$.

However there is another derivation defined on $\mathcal{SP}$. Since the
duality $*$ is a ring homomorphism,  $\delta=*d*$ is a derivation of the ring
$\mathcal{RP}$, as well as $d$.

\begin{prop}
$d+\delta$ is a derivation of the ring $\mathcal{SP}$
\end{prop}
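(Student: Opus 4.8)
The plan is to exploit the fact that, although neither $d$ nor $\delta$ individually preserves the subgroup $\mathcal{SP}$ of self-dual elements (as the $24$-cell example immediately preceding the statement shows), the symmetric combination $d+\delta$ commutes with the duality operator $*$ and therefore does map $\mathcal{SP}$ into itself. Once this invariance is established, the Leibniz rule on $\mathcal{SP}$ will come for free from the ambient ring $\mathcal{RP}$.

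First I would record that $d+\delta$ is a derivation of the whole ring $\mathcal{RP}$. Since $d$ is a derivation and $*$ is an involutory ring isomorphism (Proposition \ref{Dual}), the operator $\delta=*d*$ is again a derivation of $\mathcal{RP}$, as already noted before the statement; and a sum of two derivations is a derivation. Hence $d+\delta$ satisfies
$$
(d+\delta)(x\divideontimes y)=\big((d+\delta)x\big)\divideontimes y+x\divideontimes\big((d+\delta)y\big)
$$
for all $x,y\in\mathcal{RP}$, in particular for all $x,y$ in the subring $\mathcal{SP}$.

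The key step is to show that $d+\delta$ maps $\mathcal{SP}$ into itself. Because $*$ is an involution, $**=\id$, so the defining relation $\delta=*d*$ yields at once the two operator identities $*d=\delta*$ and $*\delta=d*$. Adding them gives $*(d+\delta)=(d+\delta)*$, that is, $d+\delta$ commutes with duality. Consequently, for any self-dual element $x\in\mathcal{SP}$, so that $x^*=x$, we obtain
$$
\big((d+\delta)x\big)^*=*(d+\delta)x=(d+\delta)(*x)=(d+\delta)(x^*)=(d+\delta)x,
$$
so $(d+\delta)x$ is again self-dual, i.e. $(d+\delta)x\in\mathcal{SP}$.

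Combining the two steps, the restriction of $d+\delta$ to the subring $\mathcal{SP}$ is a well-defined additive operator obeying the Leibniz rule, hence a derivation of $\mathcal{SP}$. I expect no genuine obstacle: the only subtlety is recognizing that what must be checked is the invariance $(d+\delta)(\mathcal{SP})\subseteq\mathcal{SP}$ rather than the product rule, and that this invariance is precisely the commutation $*(d+\delta)=(d+\delta)*$, which in turn is the elementary consequence $*d=\delta*$, $*\delta=d*$ of $\delta=*d*$.
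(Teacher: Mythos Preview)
Your proposal is correct and follows essentially the same approach as the paper: the paper's proof also reduces to checking that $(d+\delta)P$ is self-dual whenever $P$ is, via the computation $\bigl((d+\delta)P\bigr)^*=*dP+**d*P=*d*P+dP=(\delta+d)P$ using $P^*=P$. You are simply a bit more explicit, spelling out the operator commutation $*(d+\delta)=(d+\delta)*$ and noting that the Leibniz rule is inherited from the ambient ring, points the paper leaves implicit.
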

\begin{proof}
Since for any $P\in\mathcal{SP}$, $P^*=P$, we have
$$
\left((d+\delta)P\right)^*=*(d+*d*)P=*dP+**dP=*d*P+dP=(\delta+d)P.
$$
\end{proof}
\end{rem}

\begin{prop}\label{MM}
\begin{align*}
d_k(P\times Q)&=\sum\limits_{i+j=k}(d_iP)\times(d_jQ)\quad\mbox{in }\mathcal{P}\\
d_k(P\divideontimes
Q)&=\sum\limits_{i+j=k}(d_iP)\divideontimes(d_jQ)\quad\mbox{in }\mathcal{RP}.
\end{align*}
\end{prop}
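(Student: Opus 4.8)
The plan is to prove both identities by the same bookkeeping argument, reducing each to the combinatorial description of the faces of a product (respectively a join) together with the corresponding dimension formula. Neither case requires anything beyond the face structure already established above.

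First I would treat the direct product. Recall from the definition of $P\times Q$ that every face of $P^{n_1}\times Q^{n_2}$ is uniquely of the form $F\times G$, where $F$ is a face of $P$ and $G$ is a face of $Q$, and that $\dim(F\times G)=\dim F+\dim G$. Since $\dim(P\times Q)=n_1+n_2$, a face $F\times G$ has dimension $n_1+n_2-k$ exactly when $\dim F=n_1-i$ and $\dim G=n_2-j$ for some $i,j\geqslant 0$ with $i+j=k$. I would therefore partition the set of $(n_1+n_2-k)$-dimensional faces according to the pair $(i,j)$ with $i+j=k$; the faces in the block indexed by $(i,j)$ are precisely the $F^{n_1-i}\times G^{n_2-j}$, and their sum equals $(d_iP)\times(d_jQ)$ by the definitions of $d_i$ and $d_j$ together with the bilinearity of $\times$. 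Summing over all admissible $(i,j)$ gives the first identity. The only boundary terms are those with $i=n_1+1$ or $j=n_2+1$: in $\mathcal{P}$ we have $d_{n_1+1}P=0$, and correspondingly $P\times Q$ has no face whose $P$-part is empty, so these terms drop out consistently.

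Next I would treat the join in $\mathcal{RP}$ identically. By the Proposition on faces of a join, the faces of $P^{n_1}\divideontimes Q^{n_2}$ (the empty face included) are exactly the $F\divideontimes G$ with $\varnothing\subseteq F\subseteq P$ and $\varnothing\subseteq G\subseteq Q$, and $\dim(F\divideontimes G)=\dim F+\dim G+1$ (with the convention $\dim\varnothing=-1$). Since $\dim(P\divideontimes Q)=n_1+n_2+1$, a face $F\divideontimes G$ has dimension $n_1+n_2+1-k$ exactly when $\dim F=n_1-i$, $\dim G=n_2-j$, and $i+j=k$. Grouping the $(n_1+n_2+1-k)$-dimensional faces by the pair $(i,j)$ with $i+j=k$ and summing each block yields $(d_iP)\divideontimes(d_jQ)$, which gives the second identity.

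The point requiring the most care is the treatment of the degenerate factors, and this is exactly where the conventions for $d_k$ do the work, so I expect no serious obstacle beyond this boundary bookkeeping. In the join case the index $i=n_1+1$ gives $d_{n_1+1}P=\varnothing$ in $\mathcal{RP}$, and the corresponding faces of $P\divideontimes Q$ are those with empty $P$-part, namely $\varnothing\divideontimes G=G$ for faces $G$ of $Q$; these are correctly produced by $\varnothing\divideontimes(d_jQ)=d_jQ$. Symmetrically for $j=n_2+1$, while the single term $i=n_1+1,\ j=n_2+1$ (with $k=n_1+n_2+2$) reproduces the empty face $\varnothing\divideontimes\varnothing=\varnothing$. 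Thus no face is omitted or double-counted, and the convention $d_{n+1}P^n=\varnothing$ in $\mathcal{RP}$ (respectively $=0$ in $\mathcal{P}$) is precisely what makes the two cases uniform.
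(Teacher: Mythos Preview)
Your proposal is correct and takes essentially the same approach as the paper: both enumerate the faces of $P\times Q$ (respectively $P\divideontimes Q$) as $F\times G$ (respectively $F\divideontimes G$), partition them by the pair of codimensions $(i,j)$ with $i+j=k$, and factor each block as $(d_iP)\cdot(d_jQ)$. Your treatment of the boundary terms involving $\varnothing$ in the join case is in fact more explicit than the paper's, which simply remarks that such faces are allowed and handled by the convention $d_{n+1}P^n=\varnothing$ in $\mathcal{RP}$.
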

\begin{proof}
We have
\begin{multline*}
d_k(P^n\times Q^r)=\sum\limits_{F^{n-i}\subseteq P^n,\,G^{r-j}\subseteq
Q^r,\,i+j=k}F^{n-i}\times G^{r-j}=
\sum\limits_{i+j=k}\sum\limits_{F^{n-i}\subseteq
P^n}F^{n-i}\times\left(\sum\limits_{G^{r-j}\subseteq
Q^r}G^{r-j}\right)=\\
=\sum\limits_{i+j=k}\left(\sum\limits_{F^{n-i}\subseteq
P^n}F^{n-i}\right)\times\left(\sum\limits_{G^{r-j}\subseteq
Q^r}G^{r-j}\right)=\sum\limits_{i+j=k}(d_iP)\times (d_jQ)
\end{multline*}

The same argument works also for $P^n\divideontimes Q^r$. The only difference
is that faces of the form $\varnothing \divideontimes G^{r-j}$, $i=n+1,
i+j=k$, and $F^{n-i}\divideontimes \varnothing$, $j=r+1,i+j=k$ are allowed.
Here $\varnothing=d^{n+1}P^n=d^{r+1}Q^r$.
\end{proof}

\begin{defin}
Let us define an operator $\Phi:\R\to\R[t]$, $\Phi(t)\in\mathcal{D}(\R)[[t]]$
by the formula
$$
\Phi(t)=1+dt+d_2t^2+\dots+d_kt^k+\dots
$$
\end{defin}

\begin{defin}
Let $f_i(P^n)$ be the number of $i$-dimensional faces of $P^n$.
\end{defin}

\begin{prop}\label{PM}
We have $\Phi(-t)\Phi(t)=1$ on the ring $\R$, that is for any $n\geqslant 1$
$$
d_n-dd_{n-1}+\dots+(-1)^{n-1}d_{n-1}d+(-1)^nd_n=0.
$$
\end{prop}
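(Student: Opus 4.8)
The plan is to establish the equivalent single identity $\Phi(-t)\Phi(t)=1$ in $\mathcal{D}(\R)[[t]]$, since the coefficient of $t^n$ in $\Phi(-t)\Phi(t)$ is exactly $\sum_{i+j=n}(-1)^i d_i d_j$ and the constant term is $d_0 d_0=\id$; the displayed family of relations for $n\geqslant 1$ is then simply the vanishing of these coefficients. As everything is linear and each operator lowers degree, it suffices to check that $\Phi(-t)\Phi(t)P=P$ for an arbitrary combinatorial polytope $P$, where both sides are honest polynomials in $t$.

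First I would put $\Phi(t)$ into closed form. Summing the defining formula $d_kP=\sum_{\dim F=\dim P-k}F$ over all $k$ gives
$$
\Phi(t)P=\sum_{F\subseteq P}t^{\dim P-\dim F}F,
$$
the sum running over all faces $F$ of $P$ (in $\mathcal{RP}$ this range includes $F=\varnothing$, in $\mathcal{P}$ it does not). Applying $\Phi(-t)$ to this and expanding produces a double sum over flags $G\subseteq F\subseteq P$:
$$
\Phi(-t)\Phi(t)P=\sum_{G\subseteq F\subseteq P}(-1)^{\dim F-\dim G}\,t^{\dim P-\dim G}\,G.
$$
Collecting terms by the inner face $G$, the coefficient of $t^{\dim P-\dim G}G$ is the alternating sum $\sum_{F:\,G\subseteq F\subseteq P}(-1)^{\dim F-\dim G}$.

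The key step is to recognize this alternating sum. By the definition of the face polytope, $[G,P]=L(P/G)$, and a face $F$ with $G\subseteq F\subseteq P$ corresponds to a face of $P/G$ of dimension $\dim F-\dim G-1$; hence
$$
\sum_{F:\,G\subseteq F\subseteq P}(-1)^{\dim F-\dim G}=-\sum_{e=-1}^{\dim(P/G)}(-1)^e f_e(P/G),
$$
which is, up to sign, the Euler--Poincar\'e relation for $P/G$ and therefore equals $0$ whenever $\dim(P/G)\geqslant 0$, i.e. whenever $G\subsetneq P$. The only surviving contribution comes from $G=P$, where $[P,P]$ is a single point and the coefficient is $1$, giving $t^0P=P$. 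This yields $\Phi(-t)\Phi(t)P=P$, as required.

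I expect the only real difficulty to be the boundary bookkeeping of the flag. One must invoke the Euler relation in its reduced form $\sum_{e=-1}^{d}(-1)^e f_e=0$, with the empty face counted ($f_{-1}=1$), treat the degenerate interval $G=P$ (where $P/G=\varnothing$) separately, and verify that the two conventions $d_{N+1}P^N=\varnothing$ in $\mathcal{RP}$ and $d_{N+1}P^N=0$ in $\mathcal{P}$ give the same result --- in $\mathcal{RP}$ because the formal empty face enters the Euler sum and cancels, and in $\mathcal{P}$ because the role of $f_{-1}(P/G)$ is played by the genuine face $F=G$.
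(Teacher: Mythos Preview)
Your proof is correct and follows essentially the same approach as the paper: both arguments expand $\Phi(-t)\Phi(t)P$ as a double sum over flags $G\subseteq F\subseteq P$, collect by the inner face $G$, and then invoke the Euler formula for the face polytope $P/G$ to kill every term except $G=P$. Your presentation is slightly more streamlined in that you first write $\Phi(t)P=\sum_{F\subseteq P}t^{\dim P-\dim F}F$ in closed form rather than working coefficient-by-coefficient, but the substance is identical.
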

\begin{proof}
The relation is clear for $\varnothing\in\mathcal{RP}$ and for
$\pt\in\mathcal{P}$. For $\pt\in\mathcal{RP}$ we have
$$
\Phi(-t)\Phi(t)\pt=(1-dt+\dots)(\pt+\varnothing t)=\pt-\varnothing
t+\varnothing t=\pt
$$
Let us consider the polytope $P^n$ of positive dimension.
$$
\Phi(-t)\Phi(t)=\sum\limits_{k=0}^{\infty}\left(\sum\limits_{i=0}^k(-1)^id_id_{k-i}\right)t^k.
$$
If $k>n+1$ then the coefficient of $t^k$ in the series $\Phi(-t)\Phi(t)P^n$
is equal to $0$. If $k=0$, then it is equal to $P^n$. Let $1\leqslant
k\leqslant n$. Then
\begin{multline*}
\left(dd_{k-1}-d_2d_{k-2}+\dots+(-1)^{k-2}d_{k-1}d\right)P^n=\\
=\sum\limits_{F^{n-k}\subset P^n}\left(\sum\limits_{F^{n-k+1}\supset
F^{n-k}}1-\sum\limits_{F^{n-k+2}\supset
F^{n-k}}1+\dots+(-1)^{k-2}\sum\limits_{F^{n-1}\supset
F^{n-k}}1\right)F^{n-k}=\\
=\sum\limits_{F^{n-k}\subset
P^n}\left(f_0(P^n/F^{n-k})-f_1(P^n/F^{n-k})+\dots+(-1)^{k-2}f_{k-2}(P^n/F^{n-k})\right)F^{n-k}
\end{multline*}
Since  $P^n/F^{n-k}$ is a $(k-1)$-dimensional polytope, the Euler formula
gives the relation
$$
f_0(P^n/F^{n-k})-f_1(P^n/F^{n-k})+\dots+(-1)^{k-2}f_{k-2}(P^n/F^{n-k})=1+(-1)^k.
$$
Thus we obtain
$$
\left(dd_{k-1}-d_2d_{k-2}+\dots+(-1)^{k-2}d_{k-1}d\right)P^n=(1+(-1)^k)d_kP^n\quad\Leftrightarrow\quad\sum\limits_{i=0}^k(-1)^id_id_{k-i}P^n=0
$$
The coefficient of $t^{n+1}$ is equal to $0$ in $\mathcal{P}$, and to
$(1-f_0+f_1+\dots+(-1)^{n}f_{n-1}+(-1)^{n+1})\varnothing$ in $\mathcal{RP}$.
Then the Euler formula implies that it is equal to $0$.
\end{proof}

\begin{prop}\label{Euler}
\begin{align*}
\xi_{-\alpha}\Phi(\alpha)&=\xi_{\alpha}\quad\mbox{in the ring }\mathcal{P}\\
\varepsilon_{-\alpha}\Phi(\alpha)&=\varepsilon_{0}\quad\mbox{in the ring }\mathcal{RP}.
\end{align*}
\end{prop}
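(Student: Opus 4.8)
The plan is to prove both identities by direct computation: expand the operator $\Phi(\alpha)$, push the character through the sum, and recognize the resulting alternating sums of face numbers as instances of the Euler--Poincar\'e relation. Throughout I will use that $d_kP^n$ is by definition the formal sum of the $f_{n-k}(P^n)$ faces of dimension $n-k$, together with the fact that a character sends each such face to a monomial determined only by its dimension.

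For the first identity, fix a polytope $P^n$ of dimension $n$ in $\mathcal{P}$. First I would write $\Phi(\alpha)P^n=\sum_{k=0}^{n}\alpha^k d_kP^n$, recalling $d_kP^n=0$ for $k>n$ in $\mathcal{P}$. Applying $\xi_{-\alpha}$ (extended $\mathbb Z[\alpha]$-linearly, so that a face $F^{n-k}$ contributes $(-\alpha)^{n-k}$) gives
$$\xi_{-\alpha}\Phi(\alpha)P^n=\sum_{k=0}^{n}\alpha^k f_{n-k}(P^n)(-\alpha)^{n-k}=\alpha^n\sum_{j=0}^{n}(-1)^j f_j(P^n),$$
where the substitution $j=n-k$ collapses both powers of $\alpha$ into the single factor $\alpha^n$. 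Then I would invoke the Euler formula $\sum_{j=0}^{n-1}(-1)^j f_j=1-(-1)^n$ and add the top term $(-1)^nf_n=(-1)^n$ to conclude $\sum_{j=0}^{n}(-1)^j f_j(P^n)=1$, whence $\xi_{-\alpha}\Phi(\alpha)P^n=\alpha^n=\xi_\alpha P^n$.

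For the second identity in $\mathcal{RP}$ the computation is the same, with two modifications. The sum now runs to $k=n+1$, the extra term being $d_{n+1}P^n=\varnothing$, the empty face; and the character is $\varepsilon_{-\alpha}$, under which a face $F^{n-k}$ contributes $(-\alpha)^{(n-k)+1}$ while $\varnothing$ contributes $\varepsilon_{-\alpha}(\varnothing)=(-\alpha)^0=1$. Treating $\varnothing$ as the $(-1)$-dimensional face with $f_{-1}=1$, I would collect powers of $\alpha$ as before to obtain
$$\varepsilon_{-\alpha}\Phi(\alpha)P^n=\sum_{k=0}^{n+1}\alpha^k f_{n-k}(P^n)(-\alpha)^{(n-k)+1}=\alpha^{n+1}\sum_{j=-1}^{n}(-1)^{j+1}f_j(P^n).$$
The augmented alternating sum now includes both the empty face and the polytope itself and equals $f_{-1}-\bigl(1-(-1)^n\bigr)+(-1)^{n+1}=0$, so $\varepsilon_{-\alpha}\Phi(\alpha)P^n=0=\varepsilon_0(P^n)$ for every $P^n$ with $n\geqslant 0$. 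Finally I would check the unit separately: since $d_k\varnothing=0$ for $k\geqslant 1$ we have $\Phi(\alpha)\varnothing=\varnothing$ and $\varepsilon_{-\alpha}\varnothing=1=\varepsilon_0(\varnothing)$, completing the verification on all of $\mathcal{RP}$.

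I expect the main (minor) obstacle to be purely bookkeeping: consistently handling the empty-face convention in $\mathcal{RP}$, in particular verifying that the $k=n+1$ term contributes exactly the $j=-1$ summand with $f_{-1}=1$ and sign $(-1)^0$, and choosing the correct augmented form of Euler's relation (with $f_{-1}=f_n=1$) so that the two alternating sums evaluate to $1$ and $0$ respectively. Conceptually both identities are the character-level shadow of the operator identity $\Phi(-t)\Phi(t)=1$ from Proposition~\ref{PM}, so no idea beyond Euler's formula is needed.
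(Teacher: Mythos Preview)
Your proof is correct and follows essentially the same approach as the paper: both expand $\Phi(\alpha)P^n$, apply the character term by term to produce an alternating sum of face numbers times $\alpha^n$ (respectively $\alpha^{n+1}$), and then invoke the Euler formula. The only differences are cosmetic---you make the index substitution $j=n-k$ explicit and treat $\varnothing$ separately, whereas the paper simply writes out the sums directly.
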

\begin{proof}
Let us note that $\xi_{\alpha}d_kP^n=f_{n-k}(P^n)\alpha^{n-k}$. Then for any
polytope $P^n$ we have
$$
\xi_{-\alpha}\Phi(\alpha)P^n=(-\alpha)^n+(-\alpha)^{n-1}f_{n-1}\alpha+\dots+f_0\alpha^n=\left(\sum\limits_{i=0}^n(-1)^if_i\right)\alpha^n=\alpha^n=\xi_{\alpha}P^n,
$$
since the Euler formula gives the relation:
$$
f_0-f_1+\dots+(-1)^{n-1}f_{n-1}+(-1)^n=1
$$

On the other hand, in the ring $\mathcal{RP}$ we have
$\varepsilon_{\alpha}d_kP^n=f_{n-k}(P^n)\alpha^{n-k+1}$, so
$$
\varepsilon_{-\alpha}\Phi(\alpha)P^n=(-\alpha)^{n+1}+(-\alpha)^nf_{n-1}\alpha+\dots+(-\alpha)f_0\alpha^n+\alpha^{n+1}
$$
According to the Euler formula this expression is equal to $0$ for all the
polytopes but $\varnothing$. Thus
$\varepsilon_{-\alpha}\Phi(\alpha)=\varepsilon_{0}$.
\end{proof}

\begin{exam}
On the ring of simple polytopes $\mathcal{P}_s\subset\mathcal{P}$ we have the
relations
$\left.k!d_k\right|_{\mathcal{P}_s}=\left.d^k\right|_{\mathcal{P}_s}$,
therefore $\mathcal{D}(\mathcal{P}_s)$ is isomorphic to the divided power
ring $d_kd_l={k+l\choose k}d_{k+l}$, and
$\left.\Phi(t)\right|_{\mathcal{P}_s}=e^{dt}$.

Let us note that this equality is not valid for
$\mathcal{P}_s\subset\mathcal{RP}$. The problem is that for a simple polytope
$P^n$ we have $d_k=\frac{d^k}{k!}$ for all $1\leqslant k\leqslant n$. But
$d_{n+1}P^n=\varnothing$, while
$$
\frac{d^{n+1}}{(n+1)!}P^n=\frac{d}{n+1}\left(\frac{d^n}{n!}P^n\right)=\frac{d}{n+1}\left(d_nP^n\right)=\frac{d}{n+1}f_0(P^n)\pt=\frac{f_0(P^n)}{n+1}\varnothing
$$
\end{exam}

Another example of operators in $\mathcal{L}(\R)$ is given by the
multiplication by elements of $\R$:
$$
[P](Q)=PQ
$$
\begin{prop}
The following relation holds
$$
d_k[P]=\sum\limits_{i=0}^k[d_iP]d_{k-i}
$$
\end{prop}
\begin{proof}
$$
d_k(PQ)=\sum_{i=0}^{k}(d_iP)d_{k-i}Q
$$
\end{proof}
Thus any operator in the ring $\R\mathcal{D}(\R)$ generated by
$\mathcal{D}(\R)$ and $\{[P],P\in\R\}$ can be expressed as a sum of operators
$$
\sum\limits_{P,\,\omega}[P]D_{\omega},\quad P\in\R,\; D_{\omega}\in\mathcal{D}(\R).
$$

Let us remind that $[\pt]=x$ is a cone operator in
$\mathcal{L}(\mathcal{RP})$.

\subsection{Flag $f$-vectors}
\begin{defin}
Let $P^n$ be an $n$-dimensional polytope and
$S=\{a_1,\dots,a_k\}\subset\{0,1,\dots,n-1\}$.\\
A {\itshape flag number} $f_{S}=f_{a_1,\,\dots,\,a_k}$ is the number of
increasing sequences of faces
$$
F^{a_1}\subset F^{a_2}\subset\dots\subset F^{a_k},\quad\dim F^{a_i}=a_i.
$$
It is easy to see that for $S=\{i\}$ the number $f_{\{i\}}=f_i$ is just the
number of $i$-dimensional faces. We have already defined this number above.
Let us denote $l(S)=k$. Then $0\leqslant k\leqslant n$.

The collection $\{f_{S}\}$ of all the flag numbers is called a {\itshape flag
$f$-vector} (or {\itshape extended $f$-vector}) of the polytope $P^n$. By the
definition $f_{\varnothing}=1$.
\end{defin}
\begin{rem}
For the set $\{a_1,\,\dots,\,a_k\}\subset \{0,1,\dots,n-1\}$ let us denote
$a_0=-1$, and $a_{k+1}=n$. Then
$$
f_{-1,\,a_1,\,\dots,\,a_k}=f_{-1,\,a_1,\,\dots,\,a_k,\,n}=f_{a_1,\,\dots,\,a_k,\,n}=f_{a_1,\,\dots,\,a_k}
$$
This notation corresponds to the fact that $\varnothing$ is the only face of
$P$ of dimension $-1$, and any other face contains $\varnothing$, while the
polytope $P$ itself is the only face of dimension $n$, and it contains any
other face.
\end{rem}

Flag $f$-vectors have been extensively studied by M.~Bayer and L.~Billera in
\cite{BB}, where the generalized Dehn-Sommerville relations were proved:
\begin{thm*}[\cite{BB}, Theorem 2.1]
Let $P^n$ be an $n$-dimensional polytope, and $S\subset\{0,\dots,n-1\}$. If
$\{i,k\}\subseteq S\cup\{-1,n\}$ such that $i<k-1$ and
$S\cap\{i+1,\dots,k-1\}=\varnothing$, then
\begin{equation}\label{BBR}
\sum\limits_{j=i+1}^{k-1}(-1)^{j-i-1}f_{S\cup\{j\}}=(1-(-1)^{k-i-1})f_S.
\end{equation}
\end{thm*}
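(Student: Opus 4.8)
The plan is to reduce the relation to the ordinary Euler relation applied to the quotient polytopes $F^k/F^i$ attached to flags, following the spirit of the original Bayer--Billera argument. The starting point is the combinatorial meaning of $f_S$: it counts the flags of faces $G^{a_1}\subset\dots\subset G^{a_l}$ whose dimension set is exactly $S=\{a_1,\dots,a_l\}$. So I would first fix such a flag $\phi$ with dimension set $S$ and isolate the two faces bracketing the gap $\{i+1,\dots,k-1\}$. Since $i,k\in S\cup\{-1,n\}$ are consecutive in $S\cup\{-1,n\}$, the flag $\phi$ determines a unique face $F^i_{\phi}$ of dimension $i$ (with $F^i_{\phi}=\varnothing$ when $i=-1$) and a unique face $F^k_{\phi}$ of dimension $k$ (with $F^k_{\phi}=P$ when $k=n$); the Remark following the definition of the flag $f$-vector makes these boundary cases consistent.

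Next I would invoke the face-polytope structure. For $i<j<k$, a face $G$ of dimension $j$ can be inserted into $\phi$ to produce a flag with dimension set $S\cup\{j\}$ precisely when $F^i_{\phi}\subset G\subset F^k_{\phi}$. By the definition of the face polytope, the interval $[F^i_{\phi},F^k_{\phi}]$ in $L(P)$ is the face lattice $L(F^k_{\phi}/F^i_{\phi})$ of a polytope of dimension $d:=k-i-1$; under this identification the rank shifts by $i+1$, so a face $G$ of dimension $j$ corresponds to a face of dimension $j-i-1$ of $F^k_{\phi}/F^i_{\phi}$. Hence the number of admissible $G$ equals $f_{j-i-1}(F^k_{\phi}/F^i_{\phi})$, and summing over all $f_S$ flags $\phi$ with dimension set $S$ gives
$$
f_{S\cup\{j\}}=\sum_{\phi}f_{j-i-1}(F^k_{\phi}/F^i_{\phi}).
$$

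Finally I would form the alternating sum. Writing $l=j-i-1$, so that $l$ runs from $0$ to $d-1$, and interchanging the order of summation,
$$
\sum_{j=i+1}^{k-1}(-1)^{j-i-1}f_{S\cup\{j\}}=\sum_{\phi}\sum_{l=0}^{d-1}(-1)^l f_l(F^k_{\phi}/F^i_{\phi}).
$$
For each $\phi$ the inner sum is the alternating face count of the $d$-dimensional polytope $F^k_{\phi}/F^i_{\phi}$, which by the Euler relation equals $1-(-1)^d=1-(-1)^{k-i-1}$, independently of $\phi$. Summing this constant over the $f_S$ flags yields $(1-(-1)^{k-i-1})f_S$, which is exactly the asserted relation~\eqref{BBR}.

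The only genuinely delicate points are bookkeeping rather than conceptual: confirming that the dimension shift in the interval identification $[F^i_{\phi},F^k_{\phi}]=L(F^k_{\phi}/F^i_{\phi})$ is exactly $i+1$, and checking that the boundary instances $i=-1$ and $k=n$ behave correctly, so that the Euler relation is applied to an honest $(k-i-1)$-dimensional polytope in every case. Once the interval identification is in hand, the remainder is a term-by-term application of Euler's formula.
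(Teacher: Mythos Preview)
Your argument is correct and is precisely the classical Bayer--Billera proof: fix a flag with dimension set $S$, identify the interval $[F^i,F^k]$ with the face lattice of the quotient polytope $F^k/F^i$ of dimension $k-i-1$, and apply the Euler relation to this quotient. The bookkeeping concerns you flag (the dimension shift by $i+1$, and the boundary cases $i=-1$, $k=n$) are handled exactly as you indicate, via the conventions $f_{-1,a_1,\dots,a_k}=f_{a_1,\dots,a_k,n}=f_{a_1,\dots,a_k}$ recorded in the Remark after the definition of the flag $f$-vector.

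Note, however, that the paper does not give its own proof of this theorem: it is quoted from \cite{BB} as a known result and then used. The closest the paper comes to reproving it is Proposition~\ref{PM}, where the relation $\Phi(-t)\Phi(t)=1$ on $\R$ is established by exactly the same mechanism you use---applying the Euler formula to the quotient polytopes $P^n/F^{n-k}$---and Lemma~\ref{equations}, which shows a posteriori that the functional equations for the generalized $f$-polynomial are equivalent to the full set of relations~\eqref{BBR}. So your proof is consistent with, and in fact anticipated by, the techniques the paper deploys elsewhere, but there is no paper-side proof of this particular statement to compare against.
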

\begin{defin}
For $n\geqslant 1$ let $\Psi^n$ be the set of subsets
$S\subset\{0,1,\dots,n-2\}$ such that $S$ contains no two consecutive
integers.
\end{defin}
It easy to show by induction that the cardinality of $\Psi^n$ is equal to the
$n$-th Fibonacci number $c_n$ ($c_n=c_{n-1}+c_{n-2}$, $c_0=1,\;c_1=1$).

Let us remind that for any polytope $P$ we have defined  a {\itshape cone}
(or a {\itshape pyramid}) $CP$ and a {\itshape bipyramid} (or a {\itshape
suspension}) $BP$. These two operations are defined on combinatorial
polytopes and can be extended to linear operators on the rings $\mathcal{P}$
and $\mathcal{RP}$. By definition $B\varnothing=\pt=C\varnothing$.

The face lattice of the polytope $CP^n$ is:
$$
\{\varnothing\};\;\{F^0_i,
C\varnothing\};\;\{F^1_i,CF^0_j\};\;\dots;\;\{F^{n-1}_i,CF^{n-2}_j\};\;\{P^n,CF^{n-1}_i\};\{CP^n\},
$$
where $F_i^k$ are $k$-dimensional faces of $P^n$, and
\begin{gather*}
F^{a_1}\subset F^{a_2}\;\mbox{in $CP$} \;\Leftrightarrow\; F^{a_1}\subset
F^{a_2}\;\mbox{in $P$},\\
F^{a_1}\subset CF^{a_2}\;\Leftrightarrow \;F^{a_1}\subset
F^{a_2},\\
CF^{a_1}\subset CF^{a_2}\;\Leftrightarrow\;F^{a_1}\subset F^{a_2}.
\end{gather*}

The polytope $BP^n$ has the face lattice:
$$
\{\varnothing\},\{C_{-1}\varnothing, C_1\varnothing
,F^0_i\},\{F^1_i,C_{-1}F^0_j,C_1F^0_k\},\dots,\{F_k^{n-1},C_{-1}F^{n-2}_j,
C_1F^{n-2}_k\},\{C_{-1}F^{n-1}_i,C_1F^{n-1}_j\},\{BP^n\}.
$$
where $C_{-1}$ and $C_1$ are the lower and the upper cones, and
\begin{gather*}
F^{a_1}\subset F^{a_2}\;\mbox{in $BP$}\Leftrightarrow\; F^{a_1}\subset F^{a_2}\;\mbox{in $P$},\\
F^{a_1}\subset C_sF^{a_2}\;\Leftrightarrow F^{a_1}\;\subset F^{a_2};\\
C_sF^{a_1}\subset C_tF^{a_2}\;\Leftrightarrow\; s=t\mbox{
and}\;F^{a_1}\subset F^{a_2};
\end{gather*}
\begin{defin}
For $n\geqslant 1$ let  $\Omega^n$ be the set of $n$-dimensional polytopes
that arises when we apply  words in $B$ and $C$ that end in $C^2$ and contain
no adjacent $B$'s to the empty set $\varnothing$.
\end{defin}
Each word of length $n+1$ from the set $\Omega^n$ either has the form $CQ$,
$Q\in\Omega^{n-1}$, or $BCQ$, $Q\in\Omega^{n-2}$, so cardinality of the set
$\Omega^n$ satisfies the Fibonacci relation
$|\Omega^n|=|\Omega^{n-1}|+|\Omega^{n-2}|$. Since $|\Omega^1|=|\{C^2\}|=1$,
and $|\Omega^2|=|\{C^3,BC^2\}|=2$, we see that $|\Omega^n|=c_n=|\Psi^n|$.

M.~Bayer and L.~Billera proved the following fact:
\begin{thm*}
Let $n\geqslant 1$. Then
\begin{enumerate}
\item For all $T\subseteq\{0,1,\dots,n-1\}$ there is a nontrivial linear
    relation expressing $f_T(P)$ in terms of $f_S(P),\;S\in\Psi^n$, which
    holds for all $n$-dimensional polytopes (see \cite[Proposition
    2.2]{BB}).
\item The extended $f$-vectors of the $c_n$ elements of $\Omega^n$ are
    affinely independent \cite[Proposition 2.3]{BB}. Thus the flag
    $f$-vectors
$$
\{f_S(P^n)\}_{S\in\Psi^n}:P^n\mbox{ -- an $n$-dimensional
polytope}
$$
span an $(c_n-1)$-dimensional affine hyperplane defined by the equation
$f_{\varnothing}=1$ (see \cite[Theorem 2.6]{BB})
\end{enumerate}
\end{thm*}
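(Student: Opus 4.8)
The two assertions are essentially independent, and I would treat them separately; the first is pure bookkeeping with the relations \eqref{BBR}, while the second is the substantial part.

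\emph{Assertion (1): reduction to sparse sets.} To show that every $f_T$ with $T\subseteq\{0,1,\dots,n-1\}$ is a linear combination of the $f_S$, $S\in\Psi^n$, I would induct on the weight $w(T)=\sum_{t\in T}2^{t}$, proving that whenever $T\notin\Psi^n$ some instance of \eqref{BBR} expresses $f_T$ through flag numbers of strictly smaller weight. If $n-1\in T$, set $S=T\setminus\{n-1\}$ and apply \eqref{BBR} with $i=\max S$ (or $i=-1$) and $k=n$; here $S\cap\{i+1,\dots,n-1\}=\varnothing$, the summand $j=n-1$ equals $\pm f_T$, and solving for it writes $f_T$ through $f_S$ and the $f_{S\cup\{j\}}$ with $i<j<n-1$, all of weight $<w(T)$ because $2^{j}\le 2^{n-2}<2^{n-1}$. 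If instead $T\subseteq\{0,\dots,n-2\}$ is not sparse, let $a$ be minimal with $a,a+1\in T$; then minimality forces $a-1\notin T$, so with $S=T\setminus\{a\}$, $k=a+1\in S$, and $i=\max(S\cap\{0,\dots,a-1\})$ the set $S$ misses the whole gap $\{i+1,\dots,a\}$, the summand $j=a$ of \eqref{BBR} equals $\pm f_T$, and the remaining terms $f_S$ and $f_{S\cup\{j\}}$ ($i<j<a$) again have weight $<w(T)$ since $2^{j}<2^{a}$. In both cases the coefficient of $f_T$ is $\pm1$, and $T=\varnothing$ (the minimal weight) already lies in $\Psi^n$, so the induction terminates and yields (1).

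\emph{Assertion (2): affine independence of $\Omega^n$.} Because $f_{\varnothing}(P)=1$ for every $P$, the $c_n\times c_n$ matrix $M^n=\bigl(f_S(P)\bigr)_{P\in\Omega^n,\,S\in\Psi^n}$ has an all-ones column, and hence the $c_n$ points $(f_S(P))_{S\in\Psi^n}$ are affinely independent if and only if $\det M^n\ne0$; this is the quantity I would control. The plan is induction on $n$ via the parallel splittings $\Omega^n=C\Omega^{n-1}\sqcup BC\,\Omega^{n-2}$ and $\Psi^n=\{S:n-2\notin S\}\sqcup\{S:n-2\in S\}$, where the first block of columns is identified with $\Psi^{n-1}$ and the second, through $S\mapsto S\setminus\{n-2\}$, with $\Psi^{n-2}$. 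These splittings present $M^n$ as a $2\times2$ block matrix whose rows record the action of the cone and bipyramid operators on flag numbers. I would read off explicit linear rules for $f_S(CP)$ and $f_S(BP)$ in terms of the flag numbers of $P$ directly from the face-lattice descriptions of $CP$ and $BP$ given above: a chain in $CP$ splits uniquely into a lower part of $P$-faces and an upper part of cone-faces $CG$ with $\dim CG=\dim G+1$, so $f_S(CP)$ becomes a sum of flag numbers of $P$; the two-apex description of $BP$ gives the analogous rule with the apex level doubled.

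The heart of the argument, and the step I expect to be the main obstacle, is to order rows and columns so that $M^n$ becomes block triangular with diagonal blocks that are invertible transforms of $M^{n-1}$ and $M^{n-2}$. Concretely, I would argue that the columns with $n-2\in S$ are genuinely detected by the suspended polytopes $BC\,\Omega^{n-2}$ while contributing only lower-order terms on the cones $C\Omega^{n-1}$, so that one off-diagonal block vanishes or can be cleared by column operations built from \eqref{BBR}; this would give $\det M^n=\pm\,u\,\det M^{n-1}\det M^{n-2}$ with $u$ a product of nonzero integers coming from the cone and bipyramid rules. Combined with the base cases $n=1,2$ (where $|\Omega^1|=c_1=1$ and $|\Omega^2|=c_2=2$ are checked directly), the induction then yields $\det M^n\ne0$. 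Finally, since every polytope satisfies $f_{\varnothing}=1$, all flag $f$-vectors lie in the affine hyperplane $f_{\varnothing}=1$ of dimension $c_n-1$ inside $\mathbb R^{\Psi^n}$, so the $c_n$ affinely independent points coming from $\Omega^n$ must span this whole hyperplane; by (1) the same hyperplane records the full unrestricted flag $f$-vector, which is precisely the asserted conclusion.
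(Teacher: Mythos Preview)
Your argument for Assertion~(1) is correct and is a clean variant of the paper's: the paper eliminates $n-1$ first, then removes consecutive pairs from right to left in several passes, whereas your single induction on the binary weight $w(T)=\sum 2^t$ accomplishes the same thing more uniformly. Both routes rely on exactly the same instances of \eqref{BBR}.

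For Assertion~(2) you have the right scaffolding---the splittings $\Omega^n=C\Omega^{n-1}\sqcup BC\,\Omega^{n-2}$ and $\Psi^n=\{n-2\notin S\}\sqcup\{n-2\in S\}$, the block presentation of $M^n$, and induction---and this is precisely the paper's framework. But the step you flag as ``the main obstacle'' is a genuine gap: you propose to clear an off-diagonal block by \emph{column} operations coming from \eqref{BBR}, yet the columns are already indexed by the sparse sets $\Psi^n$, among which no nontrivial Dehn--Sommerville relations hold, so there is nothing to use. The paper instead performs \emph{row} operations, and the engine is a lemma you are missing: for an $(n-2)$-polytope $P$ and $S\in\Psi^n$,
\[
f_S(BCP)-f_S(CBP)=\begin{cases}0,& n-2\notin S,\\ f_{S\setminus\{n-2\}}(P),& n-2\in S.\end{cases}
\]
Subtracting from each row $BCQ$ the (possibly virtual) row $CBQ$---which, when $Q$ begins with $B$, is itself written as an integer combination of the rows $CQ'$, $Q'\in\Omega^{n-1}$, using the inductive hypothesis $\det K^{n-1}=1$---zeroes the lower-left block and leaves exactly $K^{n-2}$ in the lower-right. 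Together with the fact that the upper-left block equals $K^{n-1}T$ for an upper-unitriangular $T$ (your cone formula, after rewriting non-sparse sets via \eqref{BBR}), this yields $\det K^n=\det K^{n-1}\det K^{n-2}=1$, which is stronger than the affine independence you need.
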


\begin{rem} The first part of the theorem follows from the generalized
Dehn-Sommerville relations (\ref{BBR}). In fact, we have
$f_{S,\,n-2,\,n-1}=2f_{S,\,n-2}$, and
$$
f_{S,\,i,\,n-1}=(1+(-1)^{n-i})f_{S,\,i}+(-1)^{n-i-1}\sum\limits_{j=i+1}^{n-2}(-1)^{j-i-1}f_{S,\,i,\,j},\quad i<n-2.
$$
Therefore, we can get rid of the index $n-1$. Then
$f_{S_1,\,i-1,\,i,\,i+1,\,S_2}=2f_{S_1,\,i-1,\,i+1,\,S_2}$. Therefore, we can
get rid of triples of consecutive indices.

Now for each set $S$ in the sum we obtain let us take the last pair of
consecutive indices $\{i,i+1\}$. Then $f_S=f_{S_1,\,j,\,i,\,i+1,\,S_2}$.
$$
f_{S_1,\,j,\,i,\,i+1,\,S_2}=(1+(-1)^{i+1-j})f_{S_1,\,j,\,i+1,\,S_2}+(-1)^{i-j}\sum\limits_{k=j+1}^{i-1}(-1)^{k-j-1}f_{S_1,\,j,\,k,\,i+1,\,S_2}
$$
Then the last pair of consecutive indices for each set in the sum consists of
smaller numbers. Then after the sequence if such steps and removing the
triples of consecutive indices on each step we will obtain flag numbers of
the form $f_{i,\,i+1,\,S_2}$, where the set $\{i+1\}\cup S_2$ contains no
consecutive indices. Then
$$
f_{i,\,i+1,\,S_2}=(1+(-1)^i)f_{i+1,\,S_2}-(-1)^i\sum\limits_{j=0}^{i-1}(-1)^jf_{j,\,i+1,\,S_2}
$$
where all the sets $\{j,\,i+1\}\cup S_2$ belong to $\Psi^n$. This finishes
the proof.
\end{rem}

In fact, a little bit more stronger statement, which can be easily extracted
from the original Bayer-Billera's proof, is true.

Let us identify the words in $\Omega^n$ with the sets in $\Psi^n$ in such a
way that the word \linebreak $C^{n+1-a_k}BC^{a_k-a_{k-1}-1}B\dots BC^{a_1-1}$
corresponds to the set $\{a_1-3,\dots,a_k-3\}$. Let us set $C<B$ and order
the words lexicographically. Consider a matrix $K^n$ of sizes $c_n\times
c_n$, $k_{Q,\,S}=f_{S}(Q),\;Q\in\Omega^n,\,S\in \Psi^n, $.
\begin{prop}
$$
\det(K^n)=1.
$$
\end{prop}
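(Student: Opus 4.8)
The plan is to prove $\det(K^n)=1$ by induction on $n$, exploiting the Fibonacci block decompositions of both index sets. The base cases are immediate: $K^1=(1)$, and one computes $K^2=\left(\begin{smallmatrix}1&3\\1&4\end{smallmatrix}\right)$ with $\det=1$. For the inductive step I would use the decompositions $\Omega^n=C\Omega^{n-1}\sqcup BC\Omega^{n-2}$ on rows and $\Psi^n=\Psi^{n-1}\sqcup(\{n-2\}+\Psi^{n-2})$ on columns, the second summand consisting of the $S$ that contain $n-2$. These match the chosen $C<B$ lexicographic order: a word of $\Omega^n$ begins with $B$ exactly when $n+1-a_k=0$, i.e. $a_k-3=n-2$, i.e. its associated set contains $n-2$; so the all-$C$-leading rows and columns come first in both orderings and the two partitions are compatible.

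The second ingredient is the effect of the cone and bipyramid on flag numbers, read off from the face lattices of $CP$ and $BP$ displayed above. In any chain of faces of $CP$ the faces of $P$ form an initial segment and the cone-faces a final segment, which gives $f_S(CP)=\sum_{p=0}^{|S|}f_{\sigma_p(S)}(P)$, where for $S=\{s_1<\dots<s_r\}$ one puts $\sigma_p(S)=\{s_1,\dots,s_p\}\cup\{s_{p+1}-1,\dots,s_r-1\}$ (an index $-1$ or $\dim P$ being dropped). The same bookkeeping for $BP$, where the two cones contribute a sign, yields $f_S(BP)=2f_S(CP)-f_S(P)$. After reducing every flag number to the basis $\{f_T:T\in\Psi^{\dim}\}$ via the generalized Dehn--Sommerville relations (the reduction carried out in the Remark preceding this Proposition), these become \emph{polytope-independent} integer operators $\widehat C$ and $\widehat B=2\widehat C-\widehat\iota$ on reduced flag vectors, where $\widehat\iota$ is the Dehn--Sommerville inclusion of $(\dim-1)$-flag data.

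Expressing each row of $K^n$ through these operators then gives a clean factorization. The top $c_{n-1}$ rows are $v(CQ)=\widehat C_{n-1}\,v(Q)$ with $Q\in\Omega^{n-1}$, and the bottom $c_{n-2}$ rows are $v(BCQ)=\widehat B_{n-1}\widehat C_{n-2}\,v(Q)$ with $Q\in\Omega^{n-2}$, so
\[
K^n=\begin{pmatrix}K^{n-1}&0\\0&K^{n-2}\end{pmatrix}\Gamma_n^{T},\qquad \Gamma_n=\bigl[\,\widehat C_{n-1}\ \big|\ \widehat B_{n-1}\widehat C_{n-2}\,\bigr].
\]
Hence $\det(K^n)=\det(K^{n-1})\,\det(K^{n-2})\,\det(\Gamma_n)$, and by the induction hypothesis the whole problem reduces to showing $\det(\Gamma_n)=1$ for the fixed combinatorial matrix $\Gamma_n$.

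Finally, to evaluate $\det(\Gamma_n)$ I would column-reduce using $\widehat B\widehat C=2\widehat C\widehat C-\widehat\iota\widehat C$: the columns of $\widehat C_{n-1}\widehat C_{n-2}$ are integer combinations of the columns of the first block $\widehat C_{n-1}$, so subtracting twice them is determinant-preserving and leaves $\det(\Gamma_n)=(-1)^{c_{n-2}}\det\bigl[\,\widehat C_{n-1}\mid\widehat\iota_{n-1}\widehat C_{n-2}\,\bigr]$. The step I expect to be the real obstacle is showing that this remaining explicit integer matrix is unimodular: here the cone formula supplies the diagonal term $\sigma_r(S)=S$ with coefficient $1$ while every other $\sigma_p(S)$ has strictly smaller maximal element, and, combined with the matching of the $C<B$ order to the splitting by ``$n-2\in S$'', this should render the matrix triangular with unit diagonal once the Dehn--Sommerville reductions are kept under control, giving $\det\bigl[\,\widehat C_{n-1}\mid\widehat\iota_{n-1}\widehat C_{n-2}\,\bigr]=(-1)^{c_{n-2}}$ and hence $\det(\Gamma_n)=1$. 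Verifying that the Dehn--Sommerville reductions never disturb this unit diagonal, and that the accumulated sign is exactly $+1$ rather than $-1$, is the delicate point; the computations $\det(\Gamma_2)=\det(\Gamma_3)=1$ confirm the pattern.
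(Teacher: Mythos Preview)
Your factorization $K^n=\mathrm{diag}(K^{n-1},K^{n-2})\,\Gamma_n^{T}$ is correct, and reducing the problem to $\det\Gamma_n=1$ is a clean reformulation. The relation $\widehat B=2\widehat C-\widehat\iota$ also holds in the relevant range (the sets $S\in\Psi^n$ never contain $\dim P$), so the column reduction to $(-1)^{c_{n-2}}\det[\,\widehat C_{n-1}\mid\widehat\iota_{n-1}\widehat C_{n-2}\,]$ is valid.

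The gap is in the final step: the matrix $[\,\widehat C_{n-1}\mid\widehat\iota_{n-1}\widehat C_{n-2}\,]$ is \emph{not} triangular with unit diagonal in the ordering you describe, so the argument cannot be completed along this route. Already for $n=3$ one computes
\[
\widehat C_{2}=\begin{pmatrix}1&0\\1&1\\0&2\end{pmatrix},\qquad
\widehat\iota_{2}\widehat C_{1}=\begin{pmatrix}1\\3\\3\end{pmatrix},\qquad
\bigl[\,\widehat C_{2}\mid\widehat\iota_{2}\widehat C_{1}\,\bigr]=\begin{pmatrix}1&0&1\\1&1&3\\0&2&3\end{pmatrix},
\]
whose $(\{1\},\varnothing)$ ``diagonal'' entry is $3$, not $1$, and which is not triangular in any ordering compatible with the splitting. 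The culprit is exactly what you flagged: the cone formula applied to $S'\cup\{n-2\}$ produces $\sigma_p$'s that hit $\dim P'$ and the DS reductions then feed back into $f_{S'}$ with nontrivial coefficients.

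What the paper does instead is use the \emph{commutator}, not the difference $B=2C-\iota$. The key lemma is
\[
f_S(BCP)-f_S(CBP)=\begin{cases}0,& n-2\notin S,\\ f_{S\setminus\{n-2\}}(P),& n-2\in S,\end{cases}
\]
which in your language reads $\widehat B_{n-1}\widehat C_{n-2}-\widehat C_{n-1}\widehat B_{n-2}=\begin{pmatrix}0\\ I_{c_{n-2}}\end{pmatrix}$. Column-reducing $\Gamma_n$ with \emph{this} identity (subtracting the columns of $\widehat C_{n-1}\widehat B_{n-2}$, which lie in the image of $\widehat C_{n-1}$) gives
\[
\det\Gamma_n=\det\Bigl[\,\widehat C_{n-1}\;\Big|\;\begin{pmatrix}0\\ I\end{pmatrix}\,\Bigr]=\det\bigl(\widehat C_{n-1}\bigr)_{\Psi^{n-1}},
\]
and \emph{this} $c_{n-1}\times c_{n-1}$ block is the one that is genuinely unitriangular by the cone formula plus DS reductions---it is precisely the transpose of the paper's matrix $T$ with $K_{11}=K^{n-1}T$. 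In the paper's own presentation this is done by row operations on $K^n$: from each bottom row $BCQ$ one subtracts the (virtual) row $CBQ$; when $Q$ begins with $C$ that row is literally present in the top block, and when $Q$ begins with $B$ the inductive hypothesis $\det K^{n-1}=1$ is invoked to write $f(BQ)$ as an integer combination of the rows $f(Q')$, $Q'\in\Omega^{n-1}$. In your factored setup this second use of induction is absorbed into the fact that $\widehat B_{n-2}$ is a well-defined integer matrix on reduced flag vectors.

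So the missing idea is: replace $B=2C-\iota$ by the $BC-CB$ lemma.
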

\begin{proof}
For $n=1$ the matrix $K^1=(f_{\varnothing}(I))=(1)$.

For $n=2$
$$
K^2=\begin{pmatrix}
1&f_{0}(\Delta^2)\\
1&f_{0}(I^2)\\
\end{pmatrix}=
\begin{pmatrix}
1&3\\
1&4\\
\end{pmatrix};\quad\det K^2=1
$$
Let us prove the statement by induction. Consider the matrix $K^n$:
$$
K^n=\begin{pmatrix}
K_{11}& K_{12}\\
K_{21}& K_{22}\\
\end{pmatrix}
$$
where the block $K_{11}$ corresponds to the words of the form
$CQ,\;Q\in\Omega^{n-1}$ and the sets $S$, that do not contain $n-2$.
Similarly, the block $K_{22}$ corresponds to the words of the form
$BCQ,\;Q\in\Omega^{n-2}$ and the sets containing $n-2$.

Each increasing sequence of faces of the polytope $CP^{n-1}$ without two
faces of adjacent dimensions has the form
$$
F^{l_1}\subset\dots\subset F^{l_i}\subset
CF^{l_{i+1}}\subset\dots\subset CF^{l_k},
$$
so for $\{a_1,\dots,a_k\}\in\Psi^n$
\begin{equation}\label{X2}
f_{a_1,\,\dots,\,a_k}(CP^{n-1})=f_{a_1-1,\,a_2-1,\,\dots,\,a_k-1}(P^n)+\dots+f_{a_1,\,\dots,\,a_i,\,a_{i+1}-1,\,\dots,\,a_k-1}(P^n)+\dots+f_{a_1,\,\dots,\,a_k}(P^n),
\end{equation}
where $f_{-1,\,a_2,\,\dots,\,a_k}=f_{a_2,\,\dots,\,a_k}$.

The sets of the form $\{a_1,\dots,a_l,a_{l+1}-1,\dots,a_k-1\}$ may not belong
to $\Psi^{n-1}$. But we can express the corresponding flag numbers in terms
of $\{f_S,\; S\in\Psi^{n-1}\}$ using the generalized Dehn-Sommerville
relations. Let $a_k<n-2$. Each relation has the form:
$$
f_{S_1,\,i,\,k-1,\,k,\,S_2}=(-1)^{k-i}\left(\left(1-(-1)^{k-i-1}\right)f_{S_1,\,i,\,k,\,S_2}-\sum\limits_{j=i+1}^{k-2}(-1)^{j-i-1}f_{S_1,\,i,\,j,\,k,\,S_2}\right),
$$
where all the sets on the right side are lower than the set on the left side.
Thus for $a_k<n-2$
$$
f_{a_1\,\dots,\,a_k}(CP^n)=f_{a_1,\,\dots,\,a_k}(P^n)+\mbox{lower
summands},
$$
so the matrix $K_{11}$ can be represented as $K^{n-1}T$, where $T$ is an
upper unitriangular matrix. In particular, $\det K_{11}=\det K^{n-1}\cdot\det
T=1$ by induction.

\begin{lemma}
Let $P$ be an $(n-2)$-dimensional polytope, and $S\in\Psi^n$. Then for the
polytopes $BCP$ and $CBP$
$$
f_{S}(BCP)=
\begin{cases}
f_{S}(CBP),&\mbox{if $n-2\notin S$},\\
f_{S}(CBP)+f_{S\setminus\{n-2\}}(P),&\mbox{if $n-2\in S$},
\end{cases}\label{BC-CB}
$$
\end{lemma}
\begin{proof}
The polytope $BCP$ has faces
$$
\{F,\,\varnothing\subseteq F\subseteq P\};\;\{CF,\,\varnothing\subseteq F\subsetneq P\};\;\{C_aF,\,\varnothing\subseteq F\subseteq P\};\;\{C_aCF,\,\varnothing\subseteq F\subsetneq P\};\;BCP,
$$
where $a=\pm1$

Then each increasing sequence of faces corresponding to the set $S$ has one
of the forms
\begin{itemize}
\item $F^{l_1}\subset\dots\subset F^{l_i}$
\item $F^{l_1}\subset\dots\subset F^{l_i}\subset
    CF^{l_{i+1}}\subset\dots\subset CF^{l_j}$,
\item $F^{l_1}\subset\dots\subset F^{l_i}\subset
    C_aF^{l_{i+1}}\subset\dots\subset C_aF^{l_j}$
\item $F^{l_1}\subset\dots \subset F^{l_i}\subset CF^{l_{i+1}}\subset
    \dots\subset CF^{l_j}\subset C_aCF^{l_{j+1}}\subset\dots\subset
    C_aCF^{l_k}$
\item $F^{l_1}\subset\dots\subset F^{l_i}\subset C_aF^{l_{i+1}}\subset
    \dots\subset C_aF^{l_j}\subset C_aCF^{l_{j+1}}\subset\dots\subset
C_aCF^{l_k}$
\item $F^{l_1}\subset\dots\subset F^{l_j}\subset
    C_aCF^{l_{j+1}}\subset\dots\subset C_aCF^{l_k}$
\end{itemize}

The polytope $CBP$ has  faces
$$
\{F,\,\varnothing\subseteq F\subsetneq P\};\;\{C_aF,\,\varnothing\subseteq F\subsetneq P\};\;BP;\{CF,\,\varnothing\subseteq F\subsetneq P\};\;\{CC_aF,\,\varnothing\subseteq F\subsetneq P\};\;\;CBP,
$$
where $a=\pm1$

Then each increasing sequence of faces corresponding to the set $S$ has one
of the forms

\begin{itemize}
\item $F^{l_1}\subset\dots\subset F^{l_i}$, $F^{l_i}\ne P$,
\item $F^{l_1}\subset\dots\subset F^{l_i}\subset
    CF^{l_{i+1}}\subset\dots\subset CF^{l_j}$,
\item $F^{l_1}\subset\dots\subset F^{l_i}\subset
    C_aF^{l_{i+1}}\subset\dots\subset C_aF^{l_j}$
\item $F^{l_1}\subset\dots \subset F^{l_i}\subset CF^{l_{i+1}}\subset
    \dots\subset CF^{l_j}\subset CC_aF^{l_{j+1}}\subset\dots\subset
    C_aCF^{l_k}$
\item $F^{l_1}\subset\dots\subset F^{l_i}\subset C_aF^{l_{i+1}}\subset
    \dots\subset C_aF^{l_j}\subset CC_aF^{l_{j+1}}\subset\dots\subset
CC_aF^{l_k}$
\item $F^{l_1}\subset\dots\subset F^{l_j}\subset
    CC_aF^{l_{j+1}}\subset\dots\subset CC_aF^{l_k}$
\end{itemize}

We can interchange $CC_a$ and $C_aC$  to obtain a one-to-one correspondence
between the increasing sequences containing both $C$ and $C_a$. There is a
natural bijection between sequences containing exactly one of these
operations, as well as between sequences containing neither operations nor
$P$. So $f_S(BCP)=f_S(CBP)$, if $n-2\notin S$.

The only difference appears in the case when the sequence of faces in the
polytope $BCP$ has the form
$$
F^{a_1}\subset\dots\subset F^{a_{k-1}}\subset P
$$
The sequences of this type give exactly $f_{a_1,\,\dots,\,a_{k-1}}(P)$, so
$f_{S}(BCP)=f_{S}(CBP)+f_{S\setminus\{n-2\}}(P)$, if $n-2\in S$.
\end{proof}

Let us consider the polytope $BCQ$ corresponding to one of the lower rows of
$K^n$. If $Q$ starts with $C$, then there is a row $CBQ$ in the upper part of
the matrix. Let us subtract the row $CBQ$ from the row $BCQ$. Then Lemma
\ref{BC-CB} implies that the resulting row is
\begin{equation}\label{X3}
k'_{BCQ,\,S}=
\begin{cases}
0,& \mbox{if }n-2\notin S;\\
f_{S\setminus\{n-2\}}(Q),&\mbox{if } n-2\in S.
\end{cases}
\end{equation}

If $Q$ starts with $B$, then there is no row $CBQ$ in the matrix. But since
$\det K^{n-1}=1$ and all flag $f$-numbers of $(n-1)$-dimensional polytopes
can be expressed in terms of $\{f_{S},\;S\in\Psi^{n-1}\}$, the flag
$f$-vector of any $(n-1)$-dimensional polytope is an integer combination of
flag $f$-vectors of the polytopes from $\Omega^{n-1}$. So the flag $f$-vector
of the polytope $BQ$ can be expressed as an integer combination of flag
$f$-vectors of the polytopes $Q'\in\Omega^{n-1}$:
$$
f(BQ)=\sum\limits_{Q'}n_{Q'}f(Q')
$$
Using Formula (\ref{X2}) we obtain:
$$
f(CBQ)=\sum\limits_{Q'}n_{Q'}f(CQ')
$$
If we subtract the corresponding integer combination of the rows of the upper
part of the matrix from the row $BCQ$, we obtain the row (\ref{X3}).

Thus we see, that using elementary transformations of rows the matrix
$K^{n-1}$ can be transformed to the matrix
$$
\begin{pmatrix}
K^{n-1}T&K_{12}\\
0&K^{n-2}
\end{pmatrix}
$$
By the inductive assumption $\det K^{n-1}=\det K^{n-2}=1$, so $\det K^n=1$.
\end{proof}
\begin{rem}
In the proof we follow the original Bayer and Billera's idea, except for the
fact that they use the additional matrix of face numbers of the polytopes
$Q\in\Psi^n$, and our proof is direct.
\end{rem}
\begin{cor}
The flag $f$-vector of any $n$-dimensional polytope $P^n$ is an integer
combination of the flag $f$-vectors of the polytopes $Q\in\Omega^n$.
\end{cor}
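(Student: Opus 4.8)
The plan is to combine the just-proved identity $\det(K^n)=1$ with the universal, polytope-independent linear relations coming from the generalized Dehn-Sommerville equations (\ref{BBR}). The essential observation is that $\det(K^n)=1$ makes $K^n$ invertible \emph{over $\mathbb Z$}, so the reduced flag $f$-vectors $(f_S(Q))_{S\in\Psi^n}$ of the polytopes $Q\in\Omega^n$ form a $\mathbb Z$-basis of $\mathbb Z^{\Psi^n}$; and the first part of the Bayer--Billera theorem lets one recover the \emph{full} flag $f$-vector from the reduced one by integer-coefficient formulas that do not depend on the polytope. Putting these together upgrades a relation among reduced flag $f$-vectors to a relation among full flag $f$-vectors.

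First I would fix $P^n$ and regard its reduced flag $f$-vector $v=(f_S(P^n))_{S\in\Psi^n}$ as an element of $\mathbb Z^{\Psi^n}$, which is legitimate because every flag number is an integer. Since $\det(K^n)=1$, the matrix $K^n$ is invertible over $\mathbb Z$, i.e. $(K^n)^{-1}$ has integer entries. Hence there is an integer row vector $c=(c_Q)_{Q\in\Omega^n}=v\,(K^n)^{-1}$ with
$$
f_S(P^n)=\sum\limits_{Q\in\Omega^n}c_Q\,f_S(Q)\qquad\text{for every }S\in\Psi^n,
$$
and all coefficients $c_Q\in\mathbb Z$.

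It remains to extend this identity from $S\in\Psi^n$ to an arbitrary $T\subseteq\{0,1,\dots,n-1\}$. By the first part of the Bayer--Billera theorem (equivalently, by the reduction carried out in the Remark following (\ref{BBR})), for each such $T$ there are integers $\lambda_{T,S}$, depending only on $T$ and $n$ but not on the polytope, such that $f_T(R)=\sum_{S\in\Psi^n}\lambda_{T,S}f_S(R)$ holds for every $n$-dimensional polytope $R$. Applying this with $R=P^n$ and with $R=Q$ for each $Q\in\Omega^n$, and substituting the displayed identity, gives
$$
f_T(P^n)=\sum\limits_{S\in\Psi^n}\lambda_{T,S}f_S(P^n)=\sum\limits_{S\in\Psi^n}\lambda_{T,S}\sum\limits_{Q\in\Omega^n}c_Q\,f_S(Q)=\sum\limits_{Q\in\Omega^n}c_Q\,f_T(Q).
$$
Thus the \emph{same} integer vector $(c_Q)$ expresses every flag number of $P^n$ as an integer combination of the corresponding flag numbers of the polytopes $Q\in\Omega^n$, which is the assertion.

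The only delicate point is the universality of the coefficients $\lambda_{T,S}$: the whole argument collapses unless the reduction of $f_T$ to $\{f_S:S\in\Psi^n\}$ uses coefficients independent of the polytope, and integer ones at that. This is exactly what the generalized Dehn-Sommerville relations (\ref{BBR}) guarantee, since each rewriting step in the Remark uses fixed integer coefficients; I would cite that reduction rather than reprove it. Everything else is $\mathbb Z$-linear algebra enabled by $\det(K^n)=1$.
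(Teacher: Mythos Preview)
Your proof is correct and follows essentially the same approach as the paper: use $\det(K^n)=1$ to write the reduced flag $f$-vector $(f_S(P^n))_{S\in\Psi^n}$ as an integer combination of the reduced flag $f$-vectors of the $Q\in\Omega^n$, then invoke the universal integer-coefficient reduction of an arbitrary $f_T$ to the $f_S$, $S\in\Psi^n$, to propagate the same coefficients to the full flag $f$-vector. The paper's proof is more terse but structurally identical; your version simply spells out explicitly why the reduction coefficients $\lambda_{T,S}$ are integral and polytope-independent.
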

\begin{proof}
Indeed, any flag $f$-number is a linear combination of $f_S,\;S\in\Psi^n$.
Since $\det K^n=1$, the vector $\{f_S(P),\;S\in\Psi^n\}$ is an integer
combination of the vectors $\{f_S(Q),\;S\in\Psi^n\},\;Q\in\Omega^n$.

This implies that the whole vector $\{f_S(P)\}$ is an integer combination of
the vectors $\{f_S(Q)\},\;Q\in\Omega^n$ with the same coefficients.
\end{proof}
This fact is  important, when we try to describe the image of the generalized
$f$-polynomial in the ring of quasi-symmetric functions (see below).

Generalized $f$-vectors are also connected with a very interesting
construction of a $cd$-index invented by J.~Fine (see \cite[Prop. 2]{BK}, see
also the papers by R.~Stanley \cite{St3} and M.~Bayer, A.~Klapper \cite{BK}).

\section{Hopf algebras}
In this part we follow mainly the notations of \cite{H}. See also \cite{BR}
and \cite{CFL}.

\subsection{Quasi-symmetric functions}
\begin{defin}
A {\itshape composition} $\omega$ of a number $n$ is an ordered set
$\omega=(j_1,\dots,j_k),\;j_i\geqslant 1$, such that $n=j_1+\dots+j_k$. Let
us denote $|\omega|=n$, $l(\omega)=k$.

Let us denote by $()$ the empty composition of the number $0$. Then $|()|=0$,
$l\left(()\right)=0$.
\end{defin}

\begin{defin} Let $t_1,t_2,\dots$ be a finite or an infinite set of
variables, $\deg t_i=2$. For a composition $\omega=(j_1,\dots,j_k)$ consider
a {\itshape quasi-symmetric monomial}
$$
M_{\omega}=\sum\limits_{l_1<\dots<l_k}t_{l_1}^{j_1}\dots
t_{l_k}^{j_k},\quad M_{()}=1.
$$
\end{defin}
Degree of the monomial $M_{\omega}$ is equal to $2|\omega|=2(j_1+\dots+j_k)$.

For any two monomials $M_{\omega'}$ and $M_{\omega''}$ their product in the
ring of polynomials $\mathbb Z[t_1,t_2,\dots]$ is equal to
$$
M_{\omega'}M_{\omega''}=\sum\limits_{\omega}\left(\sum\limits_{\Omega'+\Omega''=\omega}1\right)M_{\omega},
$$
where for the compositions $\omega=(j_1,\dots,j_k)$,
$\omega'=(j_1',\dots,j_{l'}')$, $\omega''=(j_1'',\dots,j_{l''}'')$, $\Omega'$
and $\Omega''$ are all the $k$-tuples such that
$$
\Omega'=(0,\dots,j_1',\dots,0,\dots,j_{l'}'\dots,0),\quad \Omega''=(0,\dots,j_1'',\dots,0,\dots,j_{l''}''\dots,0),
$$

This multiplication rule of compositions is called {\itshape the overlapping
shuffle multiplication}.

For example,
\begin{enumerate}
\item
$$
M_{(1)}M_{(1)}=\left(\sum\limits_{i}t_i\right)\left(\sum\limits_{j}t_j\right)=\sum\limits_{i}t_i^2+2\sum\limits_{i<j}t_it_j=M_{(2)}+2M_{(1,\,1)}.
$$
This corresponds to the decompositions
$$
(2)=(1)+(1),\; (1,1)=(1,0)+(0,1)=(0,1)+(1,0).
$$
\item
\begin{multline*}
M_{(1)}M_{(1,\,1)}=\left(\sum\limits_{i}t_i\right)\left(\sum\limits_{j<k}t_jt_k\right)=\sum\limits_{i<j}t_i^2t_j+\sum\limits_{i<j}t_it_j^2+3\sum\limits_{i<j<k}t_it_jt_k=\\
=M_{(2,\,1)}+M_{(1,\,2)}+3M_{(1,\,1,\,1)}.
\end{multline*}
This corresponds to the decompositions
\begin{gather*}
(2,1)=(1,0)+(1,1),\;(1,2)=(1,0)+(1,1),\\
(1,1,1)=(1,0,0)+(0,1,1)=(0,1,0)+(1,0,1)=(0,0,1)+(1,1,0).
\end{gather*}
\item
\begin{multline*}
M_{(1,\,1)}M_{(1,\,1)}=\left(\sum\limits_{i<j}t_it_j\right)\left(\sum\limits_{k<l}t_kt_l\right)=\sum\limits_{i<j}t_i^2t_j^2+2\sum\limits_{i<j<k}t_i^2t_jt_k+\\
+2\sum\limits_{i<j<k}t_it_j^2t_k+2\sum\limits_{i<j<k}t_it_jt_k^2+6\sum\limits_{i<j<k<l}t_it_jt_kt_l=\\
=M_{(2,\,2)}+2M_{(2,\,1,\,1)}+2M_{(1,\,2,\,1)}+2M_{(1,\,1,\,2)}+6M_{(1,\,1,\,1,\,1)}.
\end{multline*}
This corresponds to the decompositions
\begin{gather*}
(2,2)=(1,1)+(1,1),\;(2,1,1)=(1,1,0)+(1,0,1)=(1,0,1)+(1,1,0),\\
(1,2,1)=(1,1,0)+(0,1,1)=(0,1,1)+(1,1,0),\;(1,1,2)=(1,0,1)+(0,1,1)=(0,1,1)+(1,0,1),\\
(1,1,1,1)=(1,1,0,0)+(0,0,1,1)=(1,0,1,0)+(0,1,0,1)=(1,0,0,1)+(0,1,1,0)=\\
=(0,1,1,0)+(1,0,0,1)=(0,1,0,1)+(1,0,1,0)=(0,0,1,1)+(1,1,0,0).
\end{gather*}
\end{enumerate}

Thus finite integer combinations of quasi-symmetric monomials form a ring.
This ring is called {\itshape a ring of quasi-symmetric functions} and is
denoted by $\Qsym[t_1,\dots,t_n]$, where $n$ is the number of variables. In
the case of an infinite number of variables it is denoted by
$\Qsym[t_1,t_2,\dots]$ or $\Qs$.

The diagonal mapping $\Delta\colon\Qs\to\Qs\otimes\Qs$
$$
\Delta M_{(a_1,\,\dots,\,a_k)}=\sum\limits_{i=0}^kM_{(a_1,\,\dots,\,a_i)}\otimes M_{(a_{i+1},\,\dots,\,a_k)}
$$
defines on $\Qs$ the structure of a graded Hopf algebra.

\begin{prop}\label{Crn}
A polynomial $g\in\mathbb Z[t_1,\dots,t_r]$ is a finite linear combination of
quasi-symmetric monomials if and only if
$$
g(0,t_1,t_2,\dots,t_{r-1})=g(t_1,0,t_2,\dots,t_{r-1})=\dots=g(t_1,\dots,t_{r-1},0)
$$
\end{prop}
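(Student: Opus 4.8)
The plan is to reduce the statement to a condition on the coefficients of $g$ and then verify that condition directly. Write $g=\sum_{\alpha}c_{\alpha}t^{\alpha}$, where $\alpha=(\alpha_1,\dots,\alpha_r)$ runs over exponent vectors and $t^{\alpha}=t_1^{\alpha_1}\cdots t_r^{\alpha_r}$. To each $\alpha$ I associate the composition $\omega(\alpha)$ obtained by listing its nonzero entries from left to right. Since the quasi-symmetric monomial $M_{\omega}$ in the variables $t_1,\dots,t_r$ is precisely $\sum_{\omega(\alpha)=\omega}t^{\alpha}$, the polynomial $g$ is a finite linear combination of quasi-symmetric monomials if and only if $c_{\alpha}$ depends only on $\omega(\alpha)$. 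So it suffices to show that the chain of equalities in the statement is equivalent to this last coefficient condition.

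For the forward implication I would argue monomial by monomial: setting $t_i=0$ in $M_{\omega}$ kills exactly the terms whose index set meets $i$, and after relabelling the surviving variables $t_1,\dots,t_{i-1},t_{i+1},\dots,t_r$ as $t_1,\dots,t_{r-1}$ one recovers $M_{\omega}$ in $r-1$ variables, independently of $i$. As the functional equations are linear in $g$, any linear combination of the $M_{\omega}$ satisfies them.

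For the converse, denote by $g^{(i)}(t_1,\dots,t_{r-1})$ the $i$-th specialization, obtained by putting $0$ in slot $i$ and filling the remaining slots in order. The full chain of equalities in the statement is equivalent to the consecutive ones $g^{(i)}=g^{(i+1)}$ for $1\le i\le r-1$. Now the coefficient of $t^{\beta}$ in $g^{(i)}$ equals $c_{\alpha}$, where $\alpha$ is $\beta$ with a $0$ inserted in position $i$; comparing $g^{(i)}$ with $g^{(i+1)}$ therefore yields, for every $\beta$,
\[
c_{(\beta_1,\dots,\beta_{i-1},\,0,\,\beta_i,\,\beta_{i+1},\dots,\beta_{r-1})}=c_{(\beta_1,\dots,\beta_{i-1},\,\beta_i,\,0,\,\beta_{i+1},\dots,\beta_{r-1})},
\]
i.e.\ interchanging a zero entry with an adjacent entry leaves the coefficient unchanged. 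The heart of the argument, and the step I expect to require the most care, is the combinatorial claim that these adjacent zero-transpositions generate exactly the equivalence $\omega(\alpha)=\omega(\beta)$. The key observation is that such a transposition moves a single nonzero entry by one slot while never exchanging two nonzero entries, so the left-to-right order of the nonzero entries is an invariant of the relation; conversely, any two exponent vectors with the same composition can be connected by sliding their nonzero entries through empty slots one step at a time, so the composition is the \emph{only} invariant. Granting this, $c_{\alpha}$ is constant on each composition class, which is precisely the required coefficient condition, and the proposition follows.
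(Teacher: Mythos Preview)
Your proposal is correct and follows essentially the same approach as the paper. Both proofs handle the forward direction by checking the specialization on each $M_{\omega}$, and both reduce the converse to comparing a single coefficient in two adjacent specializations to show that an index (equivalently, a zero in the exponent vector) may be slid one position at a time; the paper then moves each $l_i$ down to $i$, while you move the zeros to reach a canonical representative of the composition class, which is the same argument in different language.
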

\begin{proof}
For the quasi-symmetric monomial $M_{\omega}$ we have
$$
M_{\omega}(t_1,\dots,t_i,0,t_{i+1},\dots,t_{r-1})=
\begin{cases}
0,&\omega=(j_1,\dots,j_r);\\
M_{\omega}(t_1,\dots,t_i,t_{i+1},\dots,t_{r-1}),&\omega=(j_1,\dots,j_k),\;k<r.
\end{cases}
$$
So this property is true for all quasi-symmetric functions.

On the other hand, let the condition of the proposition be true. Let us prove
that for a fixed composition $\omega$ any two monomials $t_{l_1}^{j_1}\dots
t_{l_k}^{j_k}$ and $t_{l_1'}^{j_1}\dots t_{l_k'}^{j_k}$ have the equal
coefficients $g_{l_1,\,\dots,\,l_k}^{j_1,\,\dots,\,j_k}$ and
$g_{l_1',\,\dots,\,l_l'}^{j_1,\,\dots,\,j_k}$.

Let $l_i+1<l_{i+1}$ or $i=k$ and $l_i<r$. Consider the corresponding
coefficients in the polynomial equation:
$$
g(t_1,\dots,t_{l_i-1},0,t_{l_i},t_{l_i+1},\dots,t_{r-1})=g(t_1,\dots,t_{l_i-1},t_{l_i},0,t_{l_i+1},\dots,t_{r-1}).
$$
On the left the monomial $t_{l_1}^{j_1}\dots
t_{l_i}^{j_i}t_{l_{i+1}-1}^{j_{i+1}}\dots t_{l_k-1}^{j_k}$ has the
coefficient
$g_{l_1,\,\dots,\,l_i+1,\,l_{i+1},\dots,\,l_k}^{j_1,\,\dots,\,j_k}$, and on
the right $g_{l_1,\,\dots,\,l_i,\,l_{i+1},\,\dots,\,l_k}^{j_1,\dots,j_k}$, so
they are equal. Now we can move the index $l_i$ to the right to $l_i+1$ and
in the same manner we can move $l_i$ to the left, if $l_{i-1}<l_i-1$, or
$i=1$ and $l_i>1$.

Now let us move step by step the index $l_1$ to $1$, then the index $l_2$ to
$2$, and so on. At last we obtain that
$g_{l_1,\,\dots,\,l_k}^{j_1,\,\dots,\,j_k}=g_{1,\,\dots,\,k}^{j_1,\,\dots,\,j_k}$.
\end{proof}
\begin{rem}\label{cr}
Similar argument shows that the same is true in the case of an infinite
number of variables, if we consider all the expressions
$$
a+\sum\limits_{k=1}^{\infty}\sum\limits_{1\leqslant
l_1<\dots<l_k}\sum\limits_{j_1,\,\dots,\,j_k\geqslant 1}a_{
l_1,\,\dots,\,l_k}^{j_1,\,\dots,\,j_k}t_{l_1}^{j_1}\dots
t_{l_k}^{j_k}.
$$
of bounded degree: $2|\omega|=2(j_1+\dots+j_k)<N$ for all $\omega$. Then the
infinite series $g$ of bounded degree belongs to $\Qs$ if and only if
$$
g(t_1,\dots,t_{i-1},0,t_{i+1},\dots)=g(t_1,\dots,t_{i-1},t_{i+1},\dots)
$$
for all $i\geqslant 1$. As a corollary we obtain another proof of the fact
that quasi-symmetric functions form a ring.
\end{rem}

In \cite{H} M.~Hazewinkel proved the Ditters conjecture that
$\Qsym[t_1,t_2,\dots]$ is a free commutative algebra of polynomials over the
integers.

The $(2n)$-th graded component $\Qsym^{2n}[t_1,t_2,\dots]$ has rank
$2^{n-1}$.

The numbers $\beta_i$ of the multiplicative generators of degree $2i$ can be
found by a recursive relation:
$$
\frac{1-t}{1-2t}=\prod\limits_{i=1}^{\infty}\frac{1}{(1-t^i)^{\beta_i}}
$$

There are group homomorphisms:

\begin{gather*}
V_{r+1}:\Qsym[t_1,\dots,t_r]\to\Qsym[t_1,\dots,t_{r+1}]:\quad
V_{r+1}M_{\omega}(t_1,\dots,t_r)=M_{\omega}(t_1,\dots,t_{r+1})\\
E_r:\Qsym[t_1,\dots,t_{r+1}]\to\Qsym[t_1,\dots,t_r]:\quad t_{r+1}\to 0;
\end{gather*}
The mapping $E_r$ sends all the monomials $M_{\omega}$ corresponding to the
compositions $(j_1,\dots,j_{r+1})$ of length $r+1$ to zero, and for the
compositions $\omega$ of smaller length
$E_rM_{\omega}(t_1,\dots,t_{r+1})=M_{\omega}(t_1,\dots,t_r)$. It is a ring
homomorphism.

It is easy to see that $E_rV_{r+1}$ is the identity map of the ring
$\Qsym[t_1,\dots,t_r]$.

Given $r>0$ there is a projection $\Pi_{\Qsym}:\mathbb
Z[t_1,\dots,t_r]\to\Qsym[t_1,\dots,t_r]\otimes\mathbb Q$:\\
for $s_1<s_2<\dots<s_k$
$$
\Pi_{\Qsym} t_{s_1}^{j_1}\dots
t_{s_k}^{j_k}=\frac{1}{{r\choose k}}\left(\sum\limits_{l_1<\dots<l_k}t_{l_1}^{j_1}\dots
t_{l_k}^{j_k}\right),
$$
which gives the average value over all the monomials of each type.

Then for the quasi-symmetric monomial $M_{\omega},\;\omega=(j_1,\dots,j_k)$
we have:
$$
\Pi_{\Qsym} M_{\omega}=\sum\limits_{1\leqslant j_1<\dots<j_k\leqslant
r}\frac{1}{{r\choose k}}M_{\omega}={r\choose k}\frac{1}{{r\choose k}}M_{\omega}=M_{\omega},
$$
So $\Pi_{\Qsym}$ is indeed a projection.
\begin{rem} We see that in the theory of symmetric and quasi-symmetric
functions there is an important additional graduation -- the number of
variables in the polynomial.
\end{rem}

\begin{defin}
For a composition $\omega=(j_1,\dots,j_k)$ let us define the composition
$\omega^*=(j_k,\dots,j_1)$.
\end{defin}
The correspondence $M_{\omega}\to (M_{\omega})^*=M_{\omega^*}$ defines an
involutory ring homomorphism
$$
*:\Qsym[t_1,t_2,\dots]\to \Qsym[t_1,t_2,\dots].
$$

\begin{rem}\label{DQsym}
Let $\tau_{\Qs}\colon\Qs\otimes\Qs\to\Qs\otimes\Qs$ be the ring homomorphism
that interchanges the tensor factors $\tau_{\Qs}(x\otimes y)=y\otimes x$.
Then $\Delta(M_{\omega}^*)=*\otimes*(\tau_{\Qs}\Delta M_{\omega})$.
\end{rem}

\subsection{Leibnitz-Hopf Algebras}
Let $R$ be a commutative associative ring with unity.
\begin{defin}
A {\itshape Leibnitz-Hopf algebra} over the ring $R$ is an associative Hopf
algebra $\mathcal{H}$ over the ring $R$  with a fixed sequence of a finite or
countable number of multiplicative generators $H_i$, $i=1,2\dots$ satisfying
the comultiplication formula
$$
\Delta H_n=\sum\limits_{i+j=n}H_i\otimes H_j,\quad H_0=1.
$$

A {\itshape universal Leibnitz-Hopf algebra} $\mathcal{A}$ over the ring $R$
is a Leibnitz-Hopf algebra with the universal property: for any Leibnitz-Hopf
algebra $\mathcal{H}$ over the ring $R$ the correspondence $A_i\to H_i$
defines a Hopf algebra homomorphism.
\end{defin}

Consider the free associative Leibnitz-Hopf algebra over the integers
$\mathcal{Z}=\mathbb Z \langle Z_1,Z_2,\dots\rangle$ in countably many
generators $Z_i$.

\begin{prop}
$\mathcal{Z}\otimes R$ is a universal Leibnitz-Hopf algebra over the ring
$R$.
\end{prop}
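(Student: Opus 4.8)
The plan is to exploit the fact that $\mathcal{Z}\otimes R$ is, as an associative $R$-algebra, the free associative algebra $R\langle Z_1,Z_2,\dots\rangle$ on the generators $Z_i$ (identifying $Z_i$ with $Z_i\otimes 1$), since the tensor-algebra construction commutes with base change. First I would record that the comultiplication $\Delta Z_n=\sum_{i+j=n}Z_i\otimes Z_j$, the counit, and the antipode of $\mathcal{Z}$ extend $R$-linearly to $\mathcal{Z}\otimes R$, so that $\mathcal{Z}\otimes R$ becomes a Leibnitz-Hopf algebra over $R$ with distinguished generators $Z_i\otimes 1$. This is the easy half of the statement; the content lies in the universal property.

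For the universal property, let $\mathcal{H}$ be any Leibnitz-Hopf algebra over $R$ with generators $H_i$. Freeness of $R\langle Z_1,Z_2,\dots\rangle$ means the set-map $Z_i\mapsto H_i$ extends \emph{uniquely} to an $R$-algebra homomorphism $\varphi\colon\mathcal{Z}\otimes R\to\mathcal{H}$. It then remains to check that $\varphi$ intertwines the comultiplications, the counits, and the antipodes, so that it is a morphism of Hopf algebras.

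For the comultiplication I would use the bialgebra axiom that $\Delta$ is itself an algebra homomorphism. Consequently both composites $\Delta_{\mathcal{H}}\circ\varphi$ and $(\varphi\otimes\varphi)\circ\Delta$ are $R$-algebra homomorphisms $\mathcal{Z}\otimes R\to\mathcal{H}\otimes\mathcal{H}$, so it suffices to check that they agree on the algebra generators $Z_n$; and indeed $(\varphi\otimes\varphi)\Delta Z_n=\sum_{i+j=n}H_i\otimes H_j=\Delta_{\mathcal{H}}H_n=\Delta_{\mathcal{H}}\varphi(Z_n)$. The counit is handled the same way: both counits are algebra maps to $R$, so they agree once they agree on the $Z_n$, and a straightforward induction using $\varepsilon(H_0)=1$ together with $(\mathrm{id}\otimes\varepsilon)\Delta H_n=H_n$ (and applying $\varepsilon$ to the resulting identity $\varepsilon(H_n)\cdot 1=0$) forces $\varepsilon(H_n)=0$ for every $n\geq1$, matching $\varepsilon(Z_n)=0$ in $\mathcal{Z}\otimes R$.

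Finally, for the antipode I would not compute anything by hand: once $\varphi$ is known to be a bialgebra homomorphism between two Hopf algebras, the compatibility $\varphi\circ\chi_{\mathcal{Z}\otimes R}=\chi_{\mathcal{H}}\circ\varphi$ is automatic. This is the standard fact that the antipode is the convolution inverse of the identity, so that $\varphi\circ\chi$ and $\chi\circ\varphi$ are both convolution inverses of $\varphi$ in $\Hom(\mathcal{Z}\otimes R,\mathcal{H})$ and must coincide by uniqueness of inverses in the convolution monoid. The main (though mild) obstacle here is conceptual rather than computational: the point is to recognize that every verification collapses to evaluation on the generators $Z_n$ precisely because $\Delta$, $\varepsilon$, and $\varphi$ are all algebra maps out of a free algebra, so no genuine calculation beyond the Leibnitz formula itself is required.
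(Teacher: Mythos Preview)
Your argument is correct and complete. The paper itself states this proposition without proof, treating it as immediate from the definitions; your write-up simply unpacks the standard verification that freeness of $R\langle Z_1,Z_2,\dots\rangle$ yields a unique algebra map, and that compatibility with $\Delta$, $\varepsilon$, and $\chi$ can be checked on generators (with antipode compatibility automatic for bialgebra maps between Hopf algebras). There is nothing to compare: you have supplied the routine details the paper elides.
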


Set $\deg Z_i=2i$. Let us denote by $\mathcal{M}$ the graded dual Hopf
algebra over the integers.

It is not difficult to see that $\mathcal M$ is precisely the algebra of
quasi-symmetric functions over the integers. Indeed, for any composition
$\omega=(j_1,\dots,j_k)$ we can define $m_{\omega}$ by the dual basis formula
$$
\langle m_{\omega},Z_{\sigma}\rangle=\delta_{\omega,\,\sigma},
$$
where $Z_{\sigma}=Z_{a_1}\dots Z_{a_l}$ for a composition
$\sigma=(a_1,\dots,a_l)$. Then the elements $m_{\omega}$ are multiplied
exactly as the quasi-symmetric monomials $M_{\omega}$.

Let us denote
$$
\Phi(t)=1+Z_1t+Z_2t^2+\dots=\sum\limits_{k=0}^{\infty}Z_kt^k
$$
Then the comultiplication formula is equivalent to
$$
\Delta\Phi(t)=\Phi(t)\otimes\Phi(t)
$$

Let $\chi:\mathcal{Z}\to\mathcal{Z}$ be the {\itshape antipode}, that is a
linear operator, satisfying the property
$$
1\star\chi=\mu\circ(1\otimes\chi)\circ\Delta=\eta\circ\varepsilon=\mu\circ(\chi\otimes
1)\circ\Delta=\chi\star 1,
$$
where $\varepsilon:\mathcal{Z}\to\mathbb Z$, $\varepsilon(1)=1$,
$\varepsilon(Z_{\sigma})=0$, $\sigma\ne\varnothing$, is a {\itshape counit},
$\eta:\mathbb Z\to\mathcal{Z}$, $\eta(a)=a\cdot 1$ is a {\itshape unite} map,
and $\mu$ is a multiplication in $\mathcal{Z}$.

Then $\Phi(t)\chi(\Phi(t))=1=\chi(\Phi(t))\Phi(t)$ and $\{\chi(Z_n)\}$
satisfy the recurrent formulas
\begin{equation}\label{X1}
\chi(Z_1)=-Z_1;\quad
\chi(Z_{n+1})+\chi(Z_n)Z_1+\dots+\chi(Z_1)Z_n+Z_{n+1}=0,\;n\geqslant
1.
\end{equation}

In fact, since $\Phi(t)=1+Z_1t+\dots$, we can use the previous formula to
obtain
\begin{gather*}
\chi(\Phi(t))=\frac{1}{\Phi(t)}=\sum\limits_{i=0}^{\infty}(-1)^i\left(\Phi(t)-1\right)^i;\\
\chi(Z_n)=\sum\limits_{k=1}^n(-1)^k\sum\limits_{j_1+\dots+j_k=n}Z_{j_1}\dots
Z_{j_k}\quad j_i\geqslant 1,\;n\geqslant 1
\end{gather*}
This formula defines $\chi$ on generators, and therefore on the whole
algebra.

\begin{defin}
Let $\mathcal{Z}^{op}$ be the Hopf algebra opposite to $\mathcal{Z}$, i.e.
$\mathcal{Z}=\mathcal{Z}^{op}$ as coalgebras, and the multiplication in
$\mathcal{Z}^{op}$ is given by the rule $a\diamond b:=b\cdot a$. Then the
antiisomorphism
$$
\varrho\colon\mathcal{Z}\to\mathcal{Z},\quad Z_{a_1}\dots Z_{a_k}\to Z_{a_k}\dots Z_{a_1}
$$
satisfies the property
$\Delta\circ\varrho=(\varrho\otimes\varrho)\circ\Delta$, and defines the Hopf
algebra isomorphism $\mathcal{Z}\to\mathcal{Z}^{op}$.
\end{defin}
\begin{prop}
We have $\varrho^*=*$
\end{prop}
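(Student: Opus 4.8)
The plan is to prove $\varrho^* = *$ by a direct computation in the dual basis, since both sides are graded linear endomorphisms of $\Qs=\mathcal{M}$ and it suffices to check that they agree on the additive basis $\{M_\omega\}$. Recall that $\varrho^*$ denotes the graded adjoint of $\varrho\colon\mathcal{Z}\to\mathcal{Z}$ with respect to the pairing $\langle\,\cdot\,,\,\cdot\,\rangle$ between $\Qs$ and $\mathcal{Z}$ normalized by $\langle M_\omega, Z_\sigma\rangle=\delta_{\omega,\sigma}$; thus $\varrho^*$ is characterized by $\langle \varrho^*(M_\omega), Z_\sigma\rangle=\langle M_\omega, \varrho(Z_\sigma)\rangle$ for every composition $\sigma$.

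First I would evaluate the right-hand side using the explicit description of $\varrho$. For $\sigma=(a_1,\dots,a_k)$ we have $Z_\sigma=Z_{a_1}\cdots Z_{a_k}$, so by definition $\varrho(Z_\sigma)=Z_{a_k}\cdots Z_{a_1}=Z_{\sigma^*}$, where $\sigma^*=(a_k,\dots,a_1)$ is the reversed composition. Hence
$$
\langle \varrho^*(M_\omega), Z_\sigma\rangle=\langle M_\omega, Z_{\sigma^*}\rangle=\delta_{\omega,\sigma^*}.
$$
Since composition reversal is an involution, the equality $\omega=\sigma^*$ is equivalent to $\sigma=\omega^*$, so $\delta_{\omega,\sigma^*}=\delta_{\omega^*,\sigma}=\langle M_{\omega^*}, Z_\sigma\rangle$. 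As this holds for all $\sigma$ and $\{Z_\sigma\}$ is a basis dual to $\{M_\omega\}$, I conclude $\varrho^*(M_\omega)=M_{\omega^*}=*(M_\omega)$, which is exactly the claim.

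I do not expect any genuine obstacle: the statement is a formal consequence of the two reversal operations being adjoint to one another under the dual-basis pairing. The only point requiring care is bookkeeping — one must keep straight that $\varrho$ reverses a \emph{product} of generators $Z_{a_1}\cdots Z_{a_k}$ while $*$ reverses a \emph{composition} $\omega=(j_1,\dots,j_k)$, and verify that these two reversals correspond under the identification of the indexing compositions of $Z_\sigma$ and $M_\sigma$. As a consistency check, $\varrho$ is a coalgebra morphism, so $\varrho^*$ must be an algebra morphism on $\Qs$, agreeing with the fact that $*$ is a ring homomorphism; dually, $\varrho$ is an algebra antihomomorphism, matching the behaviour $\Delta(M_\omega^*)=*\otimes *(\tau_{\Qs}\Delta M_\omega)$ of $*$ recorded in Remark~\ref{DQsym}. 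These observations are not needed for the argument but confirm that the two maps carry the same Hopf-theoretic type.
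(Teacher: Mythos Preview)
Your proof is correct and is essentially identical to the paper's own argument: compute $\langle \varrho^*M_\omega, Z_\sigma\rangle = \langle M_\omega, Z_{\sigma^*}\rangle = \delta_{\omega,\sigma^*}$ and conclude $\varrho^*M_\omega = M_{\omega^*}$. The additional consistency checks you mention are not in the paper but are harmless commentary.
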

\begin{proof}
Indeed,
$$
\langle\varrho^*M_{\omega},Z_{\sigma}\rangle=\langle M_{\omega},\varrho Z_{\sigma}\rangle=\langle M_{\omega}, Z_{\sigma^*}\rangle=\delta_{\omega,\,\sigma^*}.
$$
Therefore, $\varrho^*M_{\omega}=M_{\omega^*}=M_{\omega}^*$
\end{proof}

\begin{defin}[\cite{N}]
A (left) \emph{Milnor module} $M$ over the Hopf algebra $X$ is an algebra
with unit $1\in R$ which is also a (left) module over $X$ satisfying
\[ x(uv) = \sum x'_n(u)x''_n(v), \quad x\in X,\; u,v \in M, \;\, \Delta x = \sum x'_n \otimes x''_n. \]
\end{defin}

\begin{prop}
The homomorphism
$\EuScript{L}_{\R}\colon\mathcal{Z}\to\mathcal{D}(\R)\;\colon\; Z_k\to d_k$
defines on the rings $\mathcal{P}$ and $\mathcal{RP}$ the structures of left
Milnor modules over the Leibnitz-Hopf algebra $\mathcal{Z}$.

The homomorphism
$\EuScript{R}_{\R}\colon\mathcal{Z}^{op}\to\mathcal{D}(\R)\;\colon\;
\EuScript{R}_{\R}=\EuScript{L}_{\R}\circ \varrho$ defines on the rings
$\mathcal{P}$ and $\mathcal{RP}$ the structures of right Milnor modules over
the Leibnitz-Hopf algebra $\mathcal{Z}$.
\end{prop}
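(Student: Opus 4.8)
The plan is to observe first that since $\mathcal{Z}=\mathbb{Z}\langle Z_1,Z_2,\dots\rangle$ is free as an associative algebra, the assignment $Z_k\mapsto d_k$ extends uniquely to an algebra homomorphism $\EuScript{L}_{\R}\colon\mathcal{Z}\to\mathcal{D}(\R)$, where the product on $\mathcal{D}(\R)$ is composition of operators; its image is by definition the subring generated by the $d_k$. Setting $z\cdot u=\EuScript{L}_{\R}(z)(u)$ thus makes $\R$ a left $\mathcal{Z}$-module, precisely because $\EuScript{L}_{\R}(zw)=\EuScript{L}_{\R}(z)\circ\EuScript{L}_{\R}(w)$. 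Since $\R$ is already a commutative associative ring with unit, the only remaining point is the Milnor compatibility
$$z(uv)=\sum z'_n(u)\,z''_n(v),\qquad \Delta z=\sum z'_n\otimes z''_n,$$
for all $z\in\mathcal{Z}$ and $u,v\in\R$.

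I would reduce this to the generators. For $z=Z_k$ one has $\Delta Z_k=\sum_{i+j=k}Z_i\otimes Z_j$, so the identity reads $d_k(uv)=\sum_{i+j=k}(d_iu)(d_jv)$, which is exactly Proposition \ref{MM}, established for both $\times$ on $\mathcal{P}$ and $\divideontimes$ on $\mathcal{RP}$; the unit $1\in\mathcal{Z}$ satisfies it trivially. It then suffices to check that the set $S\subseteq\mathcal{Z}$ of elements satisfying the Milnor identity is a $\mathbb{Z}$-submodule (clear, since both sides are linear in $z$ and $\Delta$ is linear) closed under multiplication; as $\mathcal{Z}$ is generated by $1$ and the $Z_k$, this forces $S=\mathcal{Z}$. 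For $x,y\in S$ I would compute, using that the action is an algebra homomorphism and that $\Delta$ is multiplicative on the bialgebra $\mathcal{Z}$,
$$
(xy)(uv)=x\bigl(y(uv)\bigr)=\sum x\bigl(y'_n(u)\,y''_n(v)\bigr)=\sum x'_m\bigl(y'_n(u)\bigr)\,x''_m\bigl(y''_n(v)\bigr)=\sum (x'_my'_n)(u)\,(x''_my''_n)(v),
$$
which equals $\sum(xy)'(u)\,(xy)''(v)$ because $\Delta(xy)=\Delta(x)\Delta(y)=\sum(x'_my'_n)\otimes(x''_my''_n)$. Hence $xy\in S$, completing the left case.

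For the right case I would exploit the antiautomorphism $\varrho$. Since $\varrho$ reverses products, $\EuScript{R}_{\R}=\EuScript{L}_{\R}\circ\varrho$ satisfies $\EuScript{R}_{\R}(zw)=\EuScript{R}_{\R}(w)\circ\EuScript{R}_{\R}(z)$, so $u\cdot z=\EuScript{R}_{\R}(z)(u)$ gives a right $\mathcal{Z}$-module structure (equivalently a left $\mathcal{Z}^{op}$-module, matching the source $\mathcal{Z}^{op}$ of $\EuScript{R}_{\R}$). To obtain the right Milnor identity $(uv)\cdot z=\sum(u\cdot z'_n)(v\cdot z''_n)$, I would write $(uv)\cdot z=\varrho(z)\cdot_L(uv)$, apply the already-proved left identity to $\varrho(z)$, and then use the coalgebra compatibility $\Delta\circ\varrho=(\varrho\otimes\varrho)\circ\Delta$: if $\Delta z=\sum z'_n\otimes z''_n$ then $\Delta\varrho(z)=\sum\varrho(z'_n)\otimes\varrho(z''_n)$, whence
$$
(uv)\cdot z=\sum\bigl(\varrho(z'_n)\cdot_L u\bigr)\bigl(\varrho(z''_n)\cdot_L v\bigr)=\sum (u\cdot z'_n)(v\cdot z''_n).
$$

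I expect no deep obstacle: the substance is carried entirely by Proposition \ref{MM} together with the bialgebra axiom that $\Delta$ is an algebra map. The only points demanding care are bookkeeping—verifying that the Milnor identity propagates from generators to products (the subalgebra argument above), and, in the right-handed case, keeping straight the reversal of multiplication under $\varrho$ against the invariance of the comultiplication, so that the opposite product in $\mathcal{Z}^{op}$ is absorbed correctly while the coproduct appearing in the right identity remains the undeformed $\Delta$.
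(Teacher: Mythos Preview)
Your proposal is correct and follows the same approach as the paper, which simply states ``This follows from Proposition~\ref{MM}.'' You have spelled out in full the standard argument that the paper leaves implicit: the Milnor identity holds on the generators $Z_k$ by Proposition~\ref{MM}, and propagates to all of $\mathcal{Z}$ because $\Delta$ is an algebra map; the right-module case follows via $\varrho$.
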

\begin{proof}
This follows from Proposition \ref{MM}
\end{proof}

Let us note, that the word $Z_{\omega}=Z_{j_1}\dots Z_{j_k}$ corresponds
under the homomorphism $\EuScript{L}_{\R}$ to the operator
$D_{\omega}=d_{j_1}\dots d_{j_k}$, and under the homomorphism
$\EuScript{R}_{\R}$ to the operator $D_{\omega^*}=d_{j_k}\dots d_{j_1}$.

\begin{defin}
Denote by $\mathcal{U}$ the universal Hopf algebra in the category of
Leibnitz-Hopf algebras with an antipode $\chi(H_i)=(-1)^iH_i$. It is is easy
to see that $\mathcal{U}=\mathcal{Z}/J_{\mathcal{U}}$, where the two-sided
Hopf ideal $J_{\mathcal{U}}$ is generated by the relations
\begin{equation}\label{Phi}
Z_n-Z_1Z_{n-1}+\dots+(-1)^{n-1}Z_{n-1}Z_1+(-1)^nZ_n=0,\;n\geqslant 2
\end{equation}
These relations can be written in the short form as $\Phi(-t)\Phi(t)=1$.

It can be shown that $\mathcal{U}\otimes\mathbb Q\simeq \mathbb Q\langle
Z_1,Z_3,Z_5,\dots\rangle$, and $\mathcal{U}\xrightarrow{\id\otimes
1}\mathcal{U}\otimes\mathbb Q$ is an embedding.
\end{defin}
\begin{cor}
The mappings $\EuScript{L}_{\R}$ and $\EuScript{R}_{\R}$ define on the rings
$\mathcal{P}$ and $\mathcal{RP}$ the structures of left and right Milnor
modules over the Leibnitz-Hopf algebra $\mathcal{U}$.
\end{cor}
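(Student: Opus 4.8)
The plan is to deduce this from the Milnor module structures over $\mathcal{Z}$ established in the Proposition preceding the definition of $\mathcal{U}$, together with the relations of Proposition \ref{PM}. Recall that $\mathcal{U}=\mathcal{Z}/J_{\mathcal{U}}$ is a quotient Hopf algebra, with $J_{\mathcal{U}}$ the two-sided Hopf ideal generated by the elements $r_n=\sum_{i+j=n}(-1)^iZ_iZ_j$, $n\geqslant 2$, of (\ref{Phi}); let $\pi\colon\mathcal{Z}\to\mathcal{U}$ denote the quotient map. Since $\R$ ($=\mathcal{P}$ or $\mathcal{RP}$) is already a left Milnor module over $\mathcal{Z}$ via $\EuScript{L}_{\R}\colon Z_k\to d_k$, it suffices to show that this action descends along $\pi$, i.e. that $J_{\mathcal{U}}\subseteq\Ker\EuScript{L}_{\R}$, and that the Milnor compatibility passes to the quotient.

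First I would check the factorization. Applying $\EuScript{L}_{\R}$ to the generator $r_n$ gives the operator $\sum_{i+j=n}(-1)^id_id_j$ on $\R$, which is exactly the coefficient of $t^n$ in $\Phi(-t)\Phi(t)$. By Proposition \ref{PM} this operator vanishes on $\R$ for every $n\geqslant 1$, so in particular $\EuScript{L}_{\R}(r_n)=0$ for $n\geqslant 2$ and hence $J_{\mathcal{U}}\subseteq\Ker\EuScript{L}_{\R}$. Consequently $\EuScript{L}_{\R}$ factors as $\mathcal{Z}\xrightarrow{\pi}\mathcal{U}\xrightarrow{\overline{\EuScript{L}}_{\R}}\mathcal{D}(\R)$, which endows $\R$ with a module structure over the algebra $\mathcal{U}$.

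It remains to verify the Milnor identity for the $\mathcal{U}$-action. Because $J_{\mathcal{U}}$ is a Hopf ideal, the comultiplication of $\mathcal{U}$ satisfies $\Delta_{\mathcal{U}}\circ\pi=(\pi\otimes\pi)\circ\Delta_{\mathcal{Z}}$. Given $x\in\mathcal{U}$, choose a lift $\tilde x\in\mathcal{Z}$ with $\pi(\tilde x)=x$ and write $\Delta_{\mathcal{Z}}\tilde x=\sum\tilde x'\otimes\tilde x''$. For $u,v\in\R$ the Milnor property over $\mathcal{Z}$ gives $\tilde x(uv)=\sum\tilde x'(u)\,\tilde x''(v)$; since the action factors through $\pi$, the left side equals $x(uv)$ and the right side equals $\sum x'(u)\,x''(v)$ with $\Delta_{\mathcal{U}}x=\sum x'\otimes x''$. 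Thus $\R$ is a left Milnor module over $\mathcal{U}$.

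For the right structure the same argument applies to $\EuScript{R}_{\R}=\EuScript{L}_{\R}\circ\varrho$, once one observes that $\varrho$ preserves $J_{\mathcal{U}}$: indeed $\varrho(r_n)=\sum_{i+j=n}(-1)^iZ_jZ_i=(-1)^nr_n$, so $\varrho(J_{\mathcal{U}})=J_{\mathcal{U}}$ and therefore $\EuScript{R}_{\R}(J_{\mathcal{U}})=\EuScript{L}_{\R}(\varrho(J_{\mathcal{U}}))=0$. (Equivalently one may use that $\Phi(t)\Phi(-t)=1$ holds on $\R$, being the right inverse of $\Phi(t)$ and hence equal to its left inverse $\Phi(-t)$.) The only substantive point is the factorization, which rests entirely on Proposition \ref{PM}; the descent of the Milnor compatibility is automatic from the Hopf-ideal property of $J_{\mathcal{U}}$ built into the definition of $\mathcal{U}$, so I expect no essential obstacle beyond this bookkeeping.
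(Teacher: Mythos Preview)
Your proof is correct and takes essentially the same approach as the paper, which simply records ``This follows from Proposition \ref{PM}.'' You have spelled out the bookkeeping behind that one line: the vanishing of $\sum_{i+j=n}(-1)^id_id_j$ from Proposition \ref{PM} forces $J_{\mathcal{U}}\subseteq\Ker\EuScript{L}_{\R}$, and the Milnor identity descends because $J_{\mathcal{U}}$ is a Hopf ideal.
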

\begin{proof}
This follows from Proposition \ref{PM}.
\end{proof}

Since $\varrho(J_{\mathcal{U}})=J_{\mathcal{U}}$, we see that $\varrho$
induces a correctly defined homomorphism
$\varrho\colon\mathcal{U}\to\mathcal{U}$, which gives the isomorphism
$\mathcal{U}\simeq \mathcal{U}^{op}$.

Since the factor map $\mathcal{Z}\to\mathcal{Z}/J_{\mathcal{U}}$ is an
epimorphism, the dual map $\mathcal{U}^*\to\mathcal{Z}^*=\Qs$ is an
embedding. Then the subring $\mathcal{U}^*\subset\Qs$ is invariant under the
involution $\varrho^*=*$.

\begin{defin}
A {\itshape universal commutative Leibnitz-Hopf algebra} $\mathcal{C}=\mathbb
Z[C_1,C_2,\dots]$ is a free commutative polynomial Leibnitz-Hopf algebra in
generators $C_i$ of degree $2i$. We have
$\mathcal{C}=\mathcal{Z}/J_{\mathcal{C}}$, where the ideal $J_{\mathcal{C}}$
is generated by the relations $Z_iZ_j-Z_jZ_i$.

It is a self-dual Hopf algebra and the graded dual Hopf algebra is naturally
isomorphic to the algebra of symmetric functions $\mathbb
Z[\sigma_1,\sigma_2,\dots]=\Sym[t_1,t_2,\dots]\subset\Qsym[t_1,t_2,\dots]$
generated by the symmetric monomials
$$
\sigma_i=M_{\omega_i}=\sum\limits_{l_1<\dots<l_i}t_{l_1}\dots
t_{l_i},
$$
where $\omega_i=\underbrace{(1,\;\dots,\;1)}_{i}$.

The isomorphism $\mathcal{C}\simeq \mathcal{C}^*$ is given by the
correspondence $C_i\to \sigma_i$.
\end{defin}

\begin{defin}
Denote by $\mathcal{US}$ the universal Hopf algebra in the category of
commutative Leibnitz-Hopf algebras with an antipode $\chi(H_i)=(-1)^iH_i$. It
is easy to see that $\mathcal{US}=\mathcal{Z}/J_{\mathcal{US}}$, where the
two-sided ideal $J_{\mathcal{US}}$ is generated by the relations (\ref{Phi}),
and the commutators $Z_iZ_j-Z_jZ_i$. Then the relations (\ref{Phi}) for odd
$n$ are trivial, while for even $n$ we have
$$
2Z_{2k}=2\sum\limits_{i=1}^{k-1}(-1)^{i-1}Z_iZ_{2k-i}+(-1)^{k-1}Z_k^2
$$
It can be shown that $\mathcal{US}\otimes\mathbb Q\simeq \mathbb
Q[Z_1,Z_3,Z_5,\dots]$.
\end{defin}
\subsection{Lie-Hopf Algebras}
\begin{defin}
A {\itshape Lie-Hopf algebra} over the ring $R$ is an associative Hopf
algebra $\mathfrak{L}$ with a fixed sequence of a finite or countable number
of multiplicative generators $L_i$, $i=1,2\dots$ satisfying the
comultiplication formula
$$
\Delta L_i=1\otimes L_i+L_i\otimes 1,\quad L_0=1.
$$
A {\itshape universal Lie-Hopf algebra} over the ring $R$ is a Lie-Hopf
algebra $\mathcal{A}$ satisfying the universal property: for any Lie-Hopf
algebra $\mathfrak{L}$ over the ring $R$ the correspondence $A_i\to L_i$
defines a Hopf algebra homomorphism $\mathcal{A}\to\mathfrak{L}$.

Consider the free associative Lie-Hopf algebra $\mathcal{W}$ over the
integers $\mathbb Z\langle W_1,W_2,\dots\rangle$ in countably many variables
$W_i$.
\begin{prop}
$\mathcal{W}\otimes R$ is a universal Lie-Hopf algebra over the ring $R$.
\end{prop}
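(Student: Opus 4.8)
The plan is to imitate the proof of the analogous statement for the universal Leibnitz-Hopf algebra $\mathcal{Z}\otimes R$, since the only structural difference here is that the generators $W_i$ are primitive rather than satisfying the Leibnitz comultiplication. The crucial observation is that $\mathcal{W}\otimes R = R\langle W_1,W_2,\dots\rangle$ is, as an algebra, the free associative $R$-algebra on the $W_i$, so any assignment of the generators to elements of another $R$-algebra extends uniquely to an $R$-algebra homomorphism. Every claim will thus reduce, via this freeness, to a check on the single generators.

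First I would verify that $\mathcal{W}\otimes R$ is itself a Lie-Hopf algebra over $R$. By the universal property of the free associative algebra, the formulas $\Delta W_i = 1\otimes W_i + W_i\otimes 1$ and $\varepsilon(W_i)=0$ extend uniquely to algebra homomorphisms $\Delta\colon \mathcal{W}\otimes R\to (\mathcal{W}\otimes R)\otimes(\mathcal{W}\otimes R)$ and $\varepsilon\colon\mathcal{W}\otimes R\to R$. Coassociativity and the counit axiom are identities between algebra homomorphisms, so they hold once checked on the generators $W_i$, where they are immediate from primitivity. Since $\mathcal{W}\otimes R$ is graded with $\deg W_i = 2i$ and connected, it is automatically a Hopf algebra, its antipode being the anti-algebra homomorphism with $\chi(W_i)=-W_i$. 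Hence $\mathcal{W}\otimes R$ is an object of the category of Lie-Hopf algebras over $R$.

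Next, given an arbitrary Lie-Hopf algebra $\mathfrak{L}$ over $R$ with multiplicative generators $L_i$, freeness yields a unique $R$-algebra homomorphism $\phi\colon\mathcal{W}\otimes R\to\mathfrak{L}$ with $\phi(W_i)=L_i$. It remains to show that $\phi$ is a morphism of bialgebras. Both $\Delta_{\mathfrak{L}}\circ\phi$ and $(\phi\otimes\phi)\circ\Delta$ are algebra homomorphisms $\mathcal{W}\otimes R\to\mathfrak{L}\otimes\mathfrak{L}$, and on a generator $W_i$ both send it to $1\otimes L_i+L_i\otimes 1$; agreeing on the generators of a free algebra, they coincide. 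Likewise $\varepsilon_{\mathfrak{L}}\circ\phi$ and $\varepsilon$ are algebra homomorphisms into $R$ that both vanish on each $W_i$, hence are equal. Thus $\phi$ is a bialgebra map, and a bialgebra map between Hopf algebras automatically commutes with the antipodes, so $\phi$ is a Hopf algebra homomorphism; its uniqueness is already guaranteed by freeness.

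I expect no serious obstacle: every verification collapses, through the universal property of the free associative algebra, to a transparent identity on the generators $W_i$. The only points requiring a little care are the bookkeeping that each identity being checked is genuinely an equality of algebra (or anti-algebra) homomorphisms, so that agreement on generators forces global agreement, and the invocation of the standard fact that a bialgebra morphism between Hopf algebras preserves antipodes, which removes any need to treat the antipode condition separately.
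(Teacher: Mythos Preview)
Your argument is correct and is precisely the standard proof: freeness of the associative algebra $R\langle W_1,W_2,\dots\rangle$ gives a unique algebra map $\phi$ sending $W_i\mapsto L_i$, and then compatibility with $\Delta$, $\varepsilon$, and $\chi$ follows by checking on generators since all maps involved are algebra (or anti-algebra) homomorphisms.

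The paper itself offers no proof of this proposition: it is stated and immediately passed over, just as the analogous statement for $\mathcal{Z}\otimes R$ is stated without proof a few lines earlier. Both are regarded as immediate consequences of the freeness of the underlying associative algebra, which is exactly what you have spelled out. So there is nothing to compare; your write-up simply supplies the routine details the paper omits.
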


Let us set $\deg W_i=2i$. Then the graded dual Hopf algebra is denoted by
$\mathcal{N}$. This is the so-called {\itshape shuffle algebra}.
\end{defin}

The antipode $\chi$ in $\mathcal{W}$ has a very simple form $\chi(W_i)=-W_i$.

\subsection{Lyndon Words}
A well-known theorem in the theory of free Lie algebras (see, for example,
\cite{Re}) states that the algebra $\mathcal{N}\otimes\mathbb Q$ is a
commutative free polynomial algebra in the so-called {\itshape Lyndon words}.
\begin{defin}
Let us denote by $[a_1,\dots,a_n]$ an element of $\mathbb N^*$, that is the
word over $\mathbb N$, consisting of symbols $a_1,\dots,a_n$, $a_i\in\mathbb
N$. Let us order the words from $\mathbb N^*$ lexicographically, where any
symbol is larger than nothing, that is $[a_1,\dots,a_n]>[b_1,\dots,b_m]$ if
and only if there is an $i$ such that $a_1=b_1,\dots,a_{i-1}=b_{i-1},
a_i>b_i, 1\leqslant i\leqslant\min\{m,n\}$, or $n>m$ and
$a_1=b_1,\dots,a_m=b_m$.

A {\itshape proper tail} of a word $[a_1,\dots,a_n]$ is a word of the form
$[a_i,\dots,a_n]$ with $1<i\leqslant n$. (The empty word and one-symbol word
have no proper tails.)

A word is {\itshape Lyndon} if all its proper tails are larger than the word
itself. For example, the words $[1,1,2]$, $[1,2,1,2,2]$, $[1,3,1,5]$ are
Lyndon and the words $[1,1,1,1]$, $[1,2,1,2]$, $[2,1]$ are not Lyndon. The
set of Lyndon words is denoted by $\Lyn$.

The same definitions make sense for any totally ordered set, for example, for
the set $\{1,2\}$ or for the set of all odd positive integers.
\end{defin}

The role of Lyndon words is described by the following theorem:
\begin{thm*}[Chen-Fox-Lyndon Factorization \cite{CFL}] Every word $w$ in $\mathbb N^*$ factors uniquely into a decreasing concatenation product of Lyndon words
$$
w=u_1*u_2*\dots*u_k,\quad u_i\in\Lyn,\;u_1\geqslant u_2\geqslant\dots\geqslant u_k
$$
\end{thm*}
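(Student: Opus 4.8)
The plan is to prove existence and uniqueness separately, with the whole argument resting on one structural lemma about concatenating Lyndon words. First I would record the \emph{concatenation lemma}: if $u,v\in\Lyn$ and $u<v$, then $u*v\in\Lyn$ and moreover $u<u*v<v$. The proof is a case analysis on the proper tails of $u*v$, which are of two types: a tail $s$ of $v$, or a word $u'*v$ with $u'$ a proper tail of $u$. For the first type one shows $u*v<v\leqslant s$, using that $v<s$ whenever $s$ is a proper tail of $v$ (since $v$ is Lyndon); the inequality $u*v<v$ is checked by comparing $u$ with $v$ lexicographically, treating separately the subcase where $u$ is a prefix of $v$ (so $v=u*t$ with $v<t$, whence $u*u*t<u*t$). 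For the second type, since $u$ is Lyndon we have $u<u'$, and $u$ cannot be a prefix of the shorter word $u'$, so $u*v<u'*v$ is decided already within $u$ versus $u'$. The bordering inequalities $u<u*v$ (a prefix of something longer) and $u*v<v$ then complete the lemma.

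For \textbf{existence}, I would start from the factorization of $w$ into single symbols, each of which is trivially Lyndon. If the resulting list of Lyndon factors is not weakly decreasing, there is a strict ascent $u_i<u_{i+1}$; by the concatenation lemma $u_i*u_{i+1}$ is again Lyndon, so I replace this pair by the single factor $u_i*u_{i+1}$. Each such merge strictly decreases the number of factors, so after finitely many steps no ascent remains and the list $u_1\geqslant u_2\geqslant\dots\geqslant u_k$ is the desired decreasing factorization into Lyndon words.

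For \textbf{uniqueness} the key is to isolate one boundary factor intrinsically. I would prove that in any decreasing Lyndon factorization $w=u_1*\dots*u_k$ the last factor $u_k$ is exactly the lexicographically smallest nonempty tail of $w$. Every nonempty tail of $w$ has the form $u_i'*u_{i+1}*\dots*u_k$ for some $i$ and some nonempty tail $u_i'$ of $u_i$; using $u_i'\geqslant u_i\geqslant u_k$ (the first inequality because $u_i$ is Lyndon, the second from the decreasing condition), one checks that every such tail is $\geqslant u_k$, with equality only for the tail $u_k$ itself. Since ``smallest tail of $w$'' does not refer to any particular factorization, $u_k$ is forced; removing it leaves the shorter word $u_1*\dots*u_{k-1}$, whose decreasing Lyndon factorization is unique by induction on length, and re-appending the forced $u_k$ recovers $w$. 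This pins down all the factors.

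The main obstacle is the lexicographic bookkeeping in the two lemmas, specifically the subcases in which one word is a prefix of another. These are precisely the places where the convention ``any symbol is larger than nothing'' must be invoked with care: in showing $u*v<v$ when $u$ is a prefix of $v$, and in showing that a tail $u_i'*u_{i+1}*\dots*u_k$ cannot drop below $u_k$ when $u_k$ happens to be a prefix of $u_i'$ (note $u_i'\geqslant u_k$ already excludes $u_i'$ being a proper prefix of $u_k$). I would dispose of all of these by reducing each comparison to a comparison of the relevant initial factors and using repeatedly that the proper tails of a Lyndon word strictly exceed it.
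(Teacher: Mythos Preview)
Your argument is correct and is essentially the standard proof of the Chen--Fox--Lyndon factorization (as in Lothaire's \emph{Combinatorics on Words} or Reutenauer's \emph{Free Lie Algebras}): the concatenation lemma, the merge-ascending-pairs existence argument, and the characterization of the last factor as the smallest nonempty suffix for uniqueness are all right, and your handling of the prefix subcases is accurate.

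However, the paper does not actually prove this theorem. It is stated with a citation to \cite{CFL} and then used as a black box in the proof of the shuffle algebra structure theorem. So there is nothing to compare your argument against; you have supplied a complete proof where the paper deliberately outsourced one.
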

For example, $[1,1,1,1]=[1]*[1]*[1]*[1]$, $[1,2,1,2]=[1,2]*[1,2]$,
$[2,1]=[2]*[1]$.

The algebra $\mathcal{N}$ is additively generated by the words in $\mathbb
N^*$. The word $w=[a_1,\dots,a_n]$ corresponds to the function
$$
\langle [a_1,\dots,a_n], W_{\sigma}\rangle=\delta_{w,\,\sigma}
$$
where $\sigma$ is a composition $\sigma=(b_1,\dots,b_l)$,
$W_{\sigma}=W_{b_1}\dots W_{b_l}$, and for the word $[a_1,\dots,a_n]$
$$
\delta_{w,\,\sigma}=\begin{cases}
1,&\sigma=(a_1,\dots,a_n);\\
0,&\mbox{else}
\end{cases}
$$
The multiplication in the algebra $\mathcal{N}$ is the so-called {\itshape
shuffle multiplication}:
$$
[a_1,\dots,a_n]\times_{sh}[a_{n+1},\dots,a_{m+n}]=\sum\limits_{\sigma}[a_{\sigma(1)},\dots,a_{\sigma(n)},a_{\sigma(n+1)},\dots,a_{\sigma(n+m)}],
$$
where $\sigma$ runs over all the substitutions $\sigma\in S_{m+n}$ such that
$$
\sigma^{-1}(1)<\dots<\sigma^{-1}(n)\mbox{ and }\sigma^{-1}(n+1)<\dots<\sigma^{-1}(n+m).
$$
For example,
\begin{gather*}
[1]\times_{sh}[1]=[1,1]+[1,1]=2[1,1];\\
[1]\times_{sh}[2,3]=[1,2,3]+[2,1,3]+[2,3,1];\\
[1,2]\times_{sh}[1,2]=[1,2,1,2]+[1,1,2,2]+[1,1,2,2]+[1,1,2,2]+[1,1,2,2]+[1,2,1,2]=2[1,2,1,2]+4[1,1,2,2].
\end{gather*}
There is a well-known shuffle algebra structure theorem:
\begin{thm*}
$\mathcal{N}\otimes \mathbb Q=\mathbb Q[\Lyn]$, the free commutative algebra
over $\mathbb Q$ in the symbols from $\Lyn$.
\end{thm*}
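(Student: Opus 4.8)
The plan is to prove the theorem by exhibiting a triangular change of basis between the shuffle monomials in Lyndon words and the natural word basis of $\mathcal{N}$, and then reading off freeness. First I would use the Chen--Fox--Lyndon theorem to match the two index sets in play. On one side, a monomial of the polynomial algebra $\mathbb Q[\Lyn]$ is the same datum as a finite multiset of Lyndon words, which I write as a non-increasing concatenation $u_1\geqslant u_2\geqslant\dots\geqslant u_n$; under the shuffle product this monomial maps to $u_1\times_{sh}u_2\times_{sh}\dots\times_{sh}u_n\in\mathcal{N}$. On the other side, CFL factorization says precisely that the concatenation $w=u_1u_2\cdots u_n$ is an arbitrary word of $\mathbb N^*$ and that the sequence $(u_i)$ is recovered uniquely from $w$. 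Hence multisets of Lyndon words and words of $\mathbb N^*$ are in bijection, the two graded index sets have the same finite cardinality in each degree, and it remains only to show that the shuffle monomials are linearly independent and spanning.

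\emph{The key step} is the following leading-term (Radford) lemma: if $w=u_1\cdots u_n$ is the CFL factorization with $u_1\geqslant\dots\geqslant u_n$, then
$$u_1\times_{sh}\dots\times_{sh}u_n=\Big(\prod_i m_i!\Big)\,w+\sum_{w'<w}c_{w'}\,w',$$
where the $m_i$ are the multiplicities of the distinct Lyndon factors and the sum runs over words strictly smaller than $w$ in the lexicographic order. Both worked examples in the text confirm this: $[1,2]\times_{sh}[1,2]=2[1,2,1,2]+4[1,1,2,2]$ with $[1,2,1,2]>[1,1,2,2]$, and $[1]\times_{sh}[2,3]=[2,3,1]+[2,1,3]+[1,2,3]$ with $[2,3,1]$ maximal. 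Granting the lemma, I would index the rows of the transition matrix by the word $w$ attached to each multiset (via CFL) and the columns by words; in each fixed degree the matrix is then triangular with respect to the lexicographic order, with diagonal entries $\prod_i m_i!\neq0$. This is exactly where rational coefficients are needed: over $\mathbb Q$ the matrix is invertible, so the shuffle monomials form a basis of $\mathcal{N}\otimes\mathbb Q$. Being a basis, they are linearly independent (so there are no polynomial relations among Lyndon words) and spanning (so the Lyndon words generate the whole algebra), whence $\mathcal{N}\otimes\mathbb Q=\mathbb Q[\Lyn]$; commutativity of $\times_{sh}$ is immediate from its definition.

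\emph{The main obstacle} is the leading-term lemma itself, and this is where I expect the real work to lie. I would deduce it from two facts about the interaction of the shuffle with the lexicographic order and the defining property of Lyndon words. The first is that the lexicographically largest word occurring in a shuffle $u\times_{sh}v$ is produced by the greedy merge of $u$ and $v$, and that for a non-increasing sequence of Lyndon words this greedy merge is exactly their concatenation $w$ --- this uses that every proper tail of a Lyndon word strictly exceeds the word itself. The second is the bookkeeping of the leading coefficient: the distinct orderings of equal Lyndon factors each contribute the top term, yielding the factor $\prod_i m_i!$, while every genuinely different merge lands strictly below $w$. Once the maximal term and its coefficient are pinned down, triangularity and the conclusion are formal. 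An alternative, more structural route avoids the combinatorics: $\mathcal{W}\otimes\mathbb Q$ is a connected graded cocommutative Hopf algebra with primitive generators, so by the Milnor--Moore theorem it is $U(\mathfrak L)$ for the free Lie algebra $\mathfrak L$; dualizing and invoking the dual PBW theorem presents $\mathcal{N}\otimes\mathbb Q$ as a polynomial algebra on generators dual to any homogeneous basis of $\mathfrak L$, and the basis of Lyndon brackets identifies these generators with $\Lyn$. I would keep the combinatorial argument as the main line, since the preceding subsection develops precisely the CFL and shuffle tools it requires.
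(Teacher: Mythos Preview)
Your proposal is correct and follows essentially the same route as the paper: both arguments hinge on the leading-term lemma that the shuffle product $u_1\times_{sh}\dots\times_{sh}u_n$ of the CFL factors of $w$ has $w$ as its lexicographically maximal term with nonzero coefficient, and then deduce generation and independence by triangularity together with the CFL bijection between words and multisets of Lyndon words. Your version is slightly sharper in that you identify the leading coefficient as $\prod_i m_i!$ (the paper only asserts it is nonzero), and you add the Milnor--Moore/PBW alternative, but the core strategy is identical.
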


The proof follows from the following theorem concerning shuffle products in
connection with Chen-Fox-Lyndon factorization.
\begin{thm*}
Let $w$ be a word on the natural numbers and let $w=u_1*u_2*\dots *u_m$ be
its Chen-Fox-Lyndon factorization. Then all words that occur with nonzero
coefficient in the shuffle product
$u_1\times_{sh}u_2\times_{sh}\dots\times_{sh}u_m$ are lexicographically
smaller or equal to $w$, and $w$ occurs with a nonzero coefficient in this
product.
\end{thm*}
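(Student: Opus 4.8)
The plan is to translate the statement into a purely combinatorial claim about interleavings. By definition the shuffle product $u_1\times_{sh}u_2\times_{sh}\dots\times_{sh}u_m$ is a sum, \emph{with nonnegative integer coefficients}, over all words obtained by interleaving the letters of $u_1,\dots,u_m$ while preserving the internal order of each factor. Since no cancellation can occur, a word occurs with nonzero coefficient precisely when it arises as such an interleaving. The concatenation $u_1*u_2*\dots*u_m=w$ is visibly one of these interleavings (emit all of $u_1$, then all of $u_2$, and so on), so it occurs with coefficient at least $1$. Thus the second assertion is immediate, and everything reduces to showing that $w$ is the lexicographically \emph{largest} interleaving of $u_1,\dots,u_m$.

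To identify the largest interleaving I would use the standard greedy description of the lexicographically maximal merge of several sequences: build the output letter by letter, and at each step emit the first letter of a factor whose \emph{current remaining tail} is lexicographically largest among the factors not yet exhausted. A routine exchange argument shows that this rule produces the maximal interleaving, so it suffices to check that it outputs exactly $w=u_1*u_2*\dots*u_m$.

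Here the defining property of Lyndon words does the work. At the first step all remaining tails are the full words $u_1,\dots,u_m$, and since $u_1\geqslant u_2\geqslant\dots\geqslant u_m$ the rule emits the first letter of $u_1$. Suppose the algorithm has begun emitting $u_1$ while $u_2,\dots,u_m$ are still untouched; then the remaining part of $u_1$ is a proper tail $u_1'$ of the Lyndon word $u_1$, so $u_1'>u_1$. Combining this with $u_1\geqslant u_j$ for $j\geqslant 2$ gives $u_1'>u_j$ for every still-full factor, so the rule again emits from $u_1$. Iterating, the algorithm exhausts $u_1$ entirely before touching any other factor; this also resolves ties among equal factors $u_1=u_2=\dots$ correctly, since $u_1'>u_1=u_2$. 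Once $u_1$ is gone the remaining factors are $u_2\geqslant\dots\geqslant u_m$, and the same argument applies, so $u_2$ is emitted in full, then $u_3$, and so on. The greedy output is therefore $u_1*u_2*\dots*u_m=w$, which is consequently the lexicographically largest interleaving. Hence every word occurring in the shuffle product is $\leqslant w$, completing the proof.

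The step I expect to be the main obstacle is the clean justification that the greedy merge is optimal, together with the correct handling of ties among equal Lyndon factors; both are controlled by the strict inequality $u_1'>u_1$ for proper tails of a Lyndon word, which guarantees that no competing factor can ever match, let alone overtake, a factor whose emission has already started. The weakly decreasing order $u_1\geqslant\dots\geqslant u_m$ supplied by the Chen--Fox--Lyndon factorization is exactly what makes the very first choice, and each subsequent restart, select the factors in the order that concatenates to $w$.
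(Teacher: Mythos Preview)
The paper does not supply a proof of this theorem; it is quoted as a known result from the theory of free Lie algebras (the surrounding text points to Reutenauer's book), and is then used as a black box to deduce the shuffle algebra structure theorem. So there is no paper proof to compare your argument against.

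Your argument is correct. The reduction to ``$w$ is the lexicographically largest interleaving'' is exactly right, and your verification that the greedy merge outputs $u_1*u_2*\dots*u_m$ is clean: the key inequality $u_1'>u_1\geqslant u_j$ for every proper tail $u_1'$ of the Lyndon word $u_1$ is precisely what keeps the algorithm from switching factors mid-word, and it also handles equal factors correctly, as you note.

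One comment on the step you flag. The optimality of the greedy merge for the lex-maximal interleaving is true with the paper's convention (a proper prefix counts as smaller than its extension), but it is a bit more delicate than ``routine'': the subtle case is when two competing tails share a long common prefix, and one must check that comparing tails in this order still forces the correct choice. In your specific situation this subtlety is largely defused, because after the first letter of $u_1$ is emitted the comparison $u_1'>u_j$ is \emph{strict}; iterating, every comparison the algorithm faces is strict, so the exchange argument really is short here even if the general greedy lemma takes more care.
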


Given this result it is easy to prove the shuffle algebra structure theorem
in the case of arbitrary totally ordered subset
$M=\{m_1,m_2,\dots\}\subset\mathbb N$. Let us denote by $\mathcal{W}_{M}$ the
free associative Lie-Hopf algebra $\mathbb Z\langle
W_{m_1},W_{m_2},\dots\rangle$, let $\mathcal{N}_{M}$ be its graded dual
algebra, and $\Lyn_M$ be the corresponding set of all Lyndon words. We need
to prove that $\mathcal{N}_M\otimes\mathbb Q=\mathbb Q[\Lyn_M]$.

Let $m_1$ be the minimal number in $M$. The smallest word $[m_1]$ is Lyndon.

Given a word $w$ we can assume by induction that all the words
lexicographically smaller than $w$ have been written as polynomials in the
elements of $\Lyn_M$. Take the Chen-Fox-Lyndon factorization
$w=u_1*u_2*\dots*u_m$ of $w$ and consider, using the preceding theorem, $$
u_1\times_{sh}u_2\times_{sh}\dots\times_{sh}u_m=aw+(\mbox{reminder}).
$$
By the theorem the coefficient $a$ is nonzero and all the words in (reminder)
are lexicographically smaller than $w$, hence they belong to $\mathbb
Q[\Lyn_M]$. Therefore $w\in\mathbb Q[\Lyn_M]$. This proves generation. Since
each monomial in $(2n)$-th graded component of $\mathcal{N}\otimes \mathbb Q$
has a unique Chen-Fox-Lyndon decomposition, the number of monomials in
$\mathbb Q[\Lyn_M]$ of graduation $2n$ is equal to the number of monomials of the same graduation in
$\mathcal{N}_{M}\otimes\mathbb Q$, monomials in Lyndon words are linearly
independent. This proves that Lyndon words are algebraically independent.

\begin{cor}
The graded dual algebras to the free associative Lie-Hopf algebras
$\mathcal{W}_{12}=\mathbb Z\langle W_1,W_2\rangle$ and
$\mathcal{W}_{odd}=\mathbb Z\langle W_1,W_3,W_5,\dots\rangle$ are free
polynomial algebras in Lyndon words $\Lyn_{12}$ and $\Lyn_{odd}$
respectively.\label{Poly}
\end{cor}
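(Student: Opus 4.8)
The plan is to obtain the corollary as a direct specialization of the generalized shuffle-algebra structure theorem just established, namely that $\mathcal{N}_M\otimes\mathbb Q=\mathbb Q[\Lyn_M]$ for every totally ordered subset $M=\{m_1,m_2,\dots\}\subset\mathbb N$. The entire substance of the argument already resides in that theorem, so what remains is only to recognize $\mathcal{W}_{12}$ and $\mathcal{W}_{odd}$ as two instances of $\mathcal{W}_M$ and to check that the attendant notation matches. Note that the conclusion is understood after tensoring with $\mathbb Q$, exactly as in the general theorem, since over $\mathbb Z$ the shuffle coefficients (e.g. $[1]\times_{sh}[1]=2[1,1]$) already obstruct a polynomial description.

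First I would set $M=\{1,2\}$ with the order $1<2$ inherited from $\mathbb N$, so that $\mathcal{W}_{12}=\mathbb Z\langle W_1,W_2\rangle=\mathcal{W}_M$; and then $M=\{1,3,5,\dots\}$ with the usual order, so that $\mathcal{W}_{odd}=\mathbb Z\langle W_1,W_3,W_5,\dots\rangle=\mathcal{W}_M$. Both are totally ordered subsets of $\mathbb N$, which is precisely the hypothesis under which the Chen-Fox-Lyndon factorization and the preceding theorem on shuffle products were stated; as was already remarked, the definition of Lyndon words applies verbatim to the alphabets $\{1,2\}$ and $\{1,3,5,\dots\}$. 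By definition the graded dual of $\mathcal{W}_M$ is $\mathcal{N}_M$ with the shuffle multiplication, and the corresponding sets of Lyndon words are exactly $\Lyn_{12}$ and $\Lyn_{odd}$. Invoking $\mathcal{N}_M\otimes\mathbb Q=\mathbb Q[\Lyn_M]$ in each case then yields $\mathcal{N}_{12}\otimes\mathbb Q=\mathbb Q[\Lyn_{12}]$ and $\mathcal{N}_{odd}\otimes\mathbb Q=\mathbb Q[\Lyn_{odd}]$.

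The one point genuinely worth verifying --- and the closest thing to an obstacle --- is the compatibility of the Lyndon notion under restriction of the alphabet: I would check that a word using only letters from $M$ is Lyndon with respect to the induced order on $M$ if and only if it is Lyndon as a word in $\mathbb N^*$. This is immediate, since the lexicographic comparison of a word with its proper tails involves only the letters actually occurring in the word, all of which lie in $M$, and the order on $M$ is the restriction of the order on $\mathbb N$; hence $\Lyn_M$ computed inside $M^*$ coincides with the set of Lyndon words of $\mathbb N^*$ supported on $M$. With this identification in place no further work is needed, and the corollary follows.
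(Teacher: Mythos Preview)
Your proposal is correct and follows exactly the paper's approach: the corollary is stated immediately after the general argument that $\mathcal{N}_M\otimes\mathbb Q=\mathbb Q[\Lyn_M]$ for any totally ordered $M\subset\mathbb N$, and is obtained simply by specializing to $M=\{1,2\}$ and $M=\{1,3,5,\dots\}$. Your added check that Lyndon words over $M$ coincide with Lyndon words over $\mathbb N$ supported on $M$ is a harmless clarification the paper leaves implicit.
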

The correspondence
$$
1+Z_1t+Z_2t^2+\dots+=\exp(W_1t+W_2t^2+\dots);\\
$$
defines an isomorphism of Hopf algebras $\mathcal{Z}\otimes\mathbb
Q\simeq\mathcal{W}\otimes\mathbb Q$.

However it is not true that $\mathcal{N}$ is a free polynomial commutative
algebra over the integers.

\section{Topological realization of Hopf algebras} In \cite{BR} A.~Baker and
B.~Richter showed that the ring of quasi-symmetric functions has a very nice
topological interpretation.

We will consider $CW$-complexes $X$ and their homology $\C_*(X)$ and
cohomology $\C^*(X)$ with integer coefficients.

Let us assume that homology groups $\C_*(X)$ have no torsion.

The diagonal map $X \to X \times X$ defines in $\C_*(X)$ the structure of a
graded coalgebra with the comultiplication $\Delta \colon \C_*(X) \to
\C_*(X)\otimes \C_*(X)$ and the dual structure of a graded algebra in
$\C^*(X)$ with the multiplication $\Delta^* \colon \C^*(X)\otimes \C^*(X) \to
\C^*(X)$.

In the case when $X$ is an $H$-space with the multiplication $\mu\colon
X\times X\to X$ we obtain the Pontryagin product $\mu_*\colon \C_*(X)\otimes
\C_*(X)\to\C_*(X)$ in the coalgebra $\C_*(X)$ and the corresponding structure
of a graded Hopf algebra on $\C_*(X)$. The cohomology ring $\C^*(X)$ obtains
the structure of a graded dual Hopf algebra with the diagonal mapping $\mu^*
\colon \C^*(X) \to \C^*(X)\otimes \C^*(X)$.

For any space $Y$ the loop space $X=\Omega Y$ is an $H$-space. A continuous
mapping $f\colon Y_1\to Y_2$ induces a mapping of $H$-spaces $\Omega f\colon
X_1 \to X_2$, where $X_i=\Omega Y_i$. Thus for any space $Y$ such that
$X=\Omega Y$ has no torsion in integral homology we obtain the Hopf algebra
$\C_*(X)$. This correspondence is functorial, that is any continuous mapping
$f\colon Y_1\to Y_2$ induces a Hopf algebra homomorphism $f_* \colon
\C_*(X_1) \to \C_*(X_2) $

By the Bott-Samelson theorem \cite{BS}, $\C_*(\Omega\Sigma X)$ is the free
associative algebra $T(\widetilde\C_*(X))$ generated by $\widetilde \C_*(X)$.
This construction is functorial, that is a continuous mapping $f:X_1\to X_2$
induces a ring homomorphism of the corresponding tensor algebras arising from
the mapping $f_*\colon\widetilde \C_*(X_1)\to\widetilde\C_*(X_2)$. Denote
elements of $\C_*(\Omega\Sigma X)$ by $(a_1|\ldots |a_n)$, where $a_i\in
\widetilde \C_*(X)$. Since the diagonal mapping $\Sigma X \to \Sigma X \times
\Sigma X$ gives the $H$-map $\Delta \colon \Omega\Sigma X \to \Omega\Sigma X
\times \Omega\Sigma X$, it follows from the Eilenberg-Zilber theorem that
\[ \Delta_*(a_1|\ldots |a_n) = (\Delta_* a_1|\ldots |\Delta_* a_n)  \]
where $(a_1\otimes b_1 | a_2\otimes b_2) = (a_1|a_2)\otimes(b_1|b_2)$.

There is a nice combinatorial model for any topological space of the form
$\Omega\Sigma X$ with $X$ connected, namely the James construction $JX$ on
$X$. After one suspension this gives rise to a splitting
\[ \Sigma\Omega\Sigma X \sim \Sigma JX \sim\bigvee_{n\geqslant1}\Sigma X^{(n)}, \]
where $X^{(n)}$ denotes the $n$-fold smash power of $X$.

\begin{exam} There exists a homotopy equivalence
 \[ \Sigma\Omega\Sigma S^2 \to \bigvee\limits_{n\geqslant1} \Sigma(S^2)^{(n)}\simeq \Sigma
\Big(\bigvee\limits_{n\geqslant1} S^{2n}\Big). \]  Therefore there exists an $H$-map \, $\Omega \Sigma(\Omega\Sigma S^2)
\to \Omega \Sigma \Big(\bigvee\limits_{n\geqslant1} S^{2n}\Big)$ that is a
homotopy equivalence.
\end{exam}

Using these classical topological results let us describe topological
realizations of the Hopf algebras we study in this work.

We have the following Hopf algebras:
\begin{itemize}
\item[\textbf{I.}]
\begin{enumerate}
\item $\Coh_*(\mathbb CP^{\infty})$ is a divided power algebra
    $\mathbb Z[u_1,u_2,\dots]/I$, where the ideal $I$ is generated by
    the relations $u_iu_j-{i+j\choose i}u_{i+j}$, with the
    comultiplication
\begin{equation}\label{F-1}
\Delta u_n=\sum_{k=0}^{n}u_k\otimes u_{n-k};
\end{equation}
\item $\Coh^*(\mathbb CP^{\infty})=\mathbb Z[u]$ -- a polynomial ring
    with the comultiplication
$$
\Delta u=1\otimes u+u\otimes 1;
$$
\end{enumerate}

\item[\textbf{II.}]
\begin{enumerate}
\item $\Coh_*(\Omega\Sigma\mathbb CP^{\infty})\simeq
    T(\tilde\Coh_*(\mathbb CP^{\infty}))=\mathbb Z\langle
    u_1,u_2,\dots\rangle$ with $u_i$ being non-commuting variables of
    degree $2i$. Thus there is an isomorphism of rings
$$
\Coh_*(\Omega\Sigma\mathbb CP^{\infty})\simeq\mathcal{Z}
$$
under which $u_n$ corresponds to $Z_n$.

The coproduct $\Delta$ on $H_*(\Omega\Sigma\mathbb CP^{\infty})$
induced by the diagonal in $\Omega\Sigma\mathbb CP^{\infty}$ is
compatible with the one in $\mathcal{Z}$:
$$
\Delta u_n=\sum\limits_{i+j=n}u_i\otimes u_j
$$
So there is an isomorphism of graded Hopf algebras. This gives a
geometric interpretation for the antipode $\chi$  in $\mathcal{Z}$ :
in $\Coh_*(\Omega\Sigma\mathbb CP^{\infty})$ it arises from the
time-inversion of loops.

\item $\Coh^*(\Omega\Sigma\mathbb CP^{\infty})$ is the graded dual
    Hopf algebra to $\Coh_*(\Omega\Sigma\mathbb CP^{\infty})$.
\end{enumerate}

\begin{thm*}\, [BR]
There is an isomorphism of graded Hopf algebras:
$$
\Coh_*(\Omega\Sigma \mathbb CP^{\infty})\simeq \mathcal{Z}=\mathbb Z \langle
Z_1,Z_2,\dots\rangle,\quad \Coh^*(\Omega\Sigma \mathbb CP^{\infty})\simeq
\mathcal{M}=\Qsym[t_1,t_2,\dots].
$$
\end{thm*}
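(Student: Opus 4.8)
The plan is to build the isomorphism directly from the Bott-Samelson theorem and then match the two Hopf structures on generators. First I would record that the reduced homology $\widetilde\Coh_*(\mathbb{CP}^\infty)$ is free abelian with a single generator $u_i$ in each degree $2i$, $i\geqslant 1$. By the Bott-Samelson theorem quoted above, $\Coh_*(\Omega\Sigma\mathbb{CP}^\infty)\cong T(\widetilde\Coh_*(\mathbb{CP}^\infty))$ as rings, and the latter is the free associative algebra $\mathbb{Z}\langle u_1,u_2,\dots\rangle$. The assignment $u_n\mapsto Z_n$ is therefore a graded ring isomorphism onto $\mathcal{Z}=\mathbb{Z}\langle Z_1,Z_2,\dots\rangle$.

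Next I would check compatibility with the coproducts. The inputs are the comultiplication (\ref{F-1}) on $\Coh_*(\mathbb{CP}^\infty)$, namely $\Delta u_n=\sum_{k=0}^n u_k\otimes u_{n-k}$ with $u_0=1$, together with the Eilenberg-Zilber formula $\Delta_*(a_1|\dots|a_n)=(\Delta_* a_1|\dots|\Delta_* a_n)$. Applying the latter to a single-letter class $(u_n)$ and substituting (\ref{F-1}) yields $\Delta_*(u_n)=\sum_{i+j=n}(u_i)\otimes(u_j)$, which is exactly the Leibnitz-Hopf comultiplication $\Delta Z_n=\sum_{i+j=n}Z_i\otimes Z_j$. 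Since both algebras are generated in degrees $2n$ by these classes and the coproduct is an algebra map, agreement on generators forces agreement everywhere, so $u_n\mapsto Z_n$ is an isomorphism of bialgebras. The antipode requires no separate argument: both sides are connected graded bialgebras, so the antipode is uniquely determined and automatically corresponds under the isomorphism, recovering the description of $\chi$ on $\mathcal{Z}$ as time-inversion of loops. This proves $\Coh_*(\Omega\Sigma\mathbb{CP}^\infty)\cong\mathcal{Z}$.

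The cohomology statement then follows by dualizing. Because the homology is torsion-free and of finite type, $\Coh^*(\Omega\Sigma\mathbb{CP}^\infty)$ is the graded dual Hopf algebra of $\Coh_*(\Omega\Sigma\mathbb{CP}^\infty)$, hence of $\mathcal{Z}$; and this dual was already identified as $\mathcal{M}=\Qsym[t_1,t_2,\dots]$ through the pairing $\langle m_\omega,Z_\sigma\rangle=\delta_{\omega,\sigma}$.

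The step I expect to demand the most care is the coproduct computation, precisely because of the reduced-versus-unreduced discrepancy. The Bott-Samelson generators come from reduced homology, which carries no $u_0$, whereas the diagonal (\ref{F-1}) on $\mathbb{CP}^\infty$ genuinely involves the unit term $u_0$. The delicate point is to see that when the Eilenberg-Zilber formula reinserts $\Delta_* u_n$ into a one-letter word, the term $(u_0)$ must be read as the empty word, i.e.\ the unit of the tensor algebra, so that the boundary summands $u_0\otimes u_n$ and $u_n\otimes u_0$ reappear correctly as $1\otimes Z_n$ and $Z_n\otimes 1$; one must confirm that the primitive-like ends of the Leibnitz-Hopf coproduct are reproduced and that no extra or missing unit terms survive.
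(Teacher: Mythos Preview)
Your proposal is correct and follows essentially the same argument the paper sketches in item II of Section~4: Bott--Samelson gives the free associative ring structure, the Eilenberg--Zilber formula combined with the diagonal (\ref{F-1}) on $\mathbb{CP}^\infty$ yields the Leibnitz--Hopf coproduct on the generators $u_n\leftrightarrow Z_n$, the antipode is forced by connectedness (and interpreted as time-inversion of loops), and the cohomology statement is obtained by graded duality. Your added remark on how the unit term $u_0$ must be read as the empty word in the tensor algebra is a useful clarification that the paper leaves implicit.
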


\item[\textbf{III.}] $\Coh_*(BU)\simeq \Coh^*(BU)\simeq\mathbb
    Z[\sigma_1,\sigma_2,\dots]\simeq\mathcal{C}$. It is a self-dual Hopf
    algebra of symmetric functions. In the cohomology $\sigma_i$ are
    represented by Chern classes.

\item[\textbf{IV.}]
\begin{enumerate}
\item $\Coh_*(\Omega\Sigma S^2)=\Coh_*(\Omega S^3)=\mathbb Z[w]$ -- a
    polynomial ring with $\deg w=2$ and the comultiplication
$$
\Delta w=1\otimes w+w\otimes 1;
$$

\item $\Coh^*(\Omega\Sigma S^2)=\mathbb Z[u_n]/I$ is a divided power
    algebra. Thus $\Coh^*(\Omega\Sigma S^2)\simeq\Coh_*(\mathbb
    CP^{\infty})$.
\end{enumerate}

\item[\textbf{V.}] $\Coh_*(\Omega\Sigma(\Omega\Sigma S^2))\simeq\mathbb
    Z\langle w_1,w_2,\dots\rangle$. It is a free associative Hopf algebra
    with the comultiplication
\begin{equation}\label{F-2}
\Delta w_n=\sum\limits_{k=0}^n{n\choose k}w_k\otimes w_{n-k}
\end{equation}

\item[\textbf{VI.}]
    $\Coh_*\Big(\Omega\Sigma\Big(\bigvee\limits_{n\geqslant1}S^{2n}\Big)\Big)\simeq
    \mathbb{Z}\langle \xi_1,\xi_2,\ldots\rangle$. It is a free
    associative algebra and has the structure of a graded Hopf algebra
    with the comultiplication
\begin{equation}\label{F-3}
\Delta\xi_n = 1\otimes \xi_n + \xi_n\otimes 1.
\end{equation}
Therefore,
$\Coh_*\Big(\Omega\Sigma\Big(\bigvee\limits_{n\geqslant1}S^{2n}\Big)\Big)$
gives a topological realization of the universal Lie-Hopf algebra
$\mathcal{W}$.

The homotopy equivalence
\[ a \colon \Omega\Sigma\Big(\bigvee\limits_{n\geqslant1}S^{2n}\Big)
\longrightarrow \Omega\Sigma(\Omega\Sigma S^2) \] induces
an isomorphism of graded Hopf algebras
 \[ a_* \colon \mathbb{Z}\langle \xi_1,\xi_2,\ldots\rangle \longrightarrow
 \mathbb{Z}\langle w_1,w_2,\ldots\rangle, \]
and its algebraic form is determined by the conditions:
 \[ \Delta a_*\xi_n = (a_* \otimes a_*)(\Delta\xi_n). \]
For example, $a_*\xi_1 = w_1,\; a_*\xi_2 = w_2- w_1|w_1,\; a_*\xi_3 =
w_3- 3w_2|w_1 + 2w_1|w_1|w_1$.
\end{itemize}

Thus using topological results we have obtained that two Hopf algebra
structures on the free associative algebra with the comultiplications
(\ref{F-2}) and (\ref{F-3}) are isomorphic over $\mathbb{Z}$.

This result is interesting from the topological point of view, since the
elements $(w_n-a_*\xi_n)$ for $n\geqslant 2$ are obstructions to the
desuspension of the homotopy equivalence

$$
\Sigma(\Omega\Sigma S^2) \to
\Sigma\Big(\bigvee\limits_{n\geqslant1}S^{2n}\Big).
$$

We have the commutative diagram:
$$
\xymatrix{
\Omega\Sigma(\Omega\Sigma S^2)\ar[r]^{\Omega\Sigma k} &
\Omega\Sigma \mathbb{C}P^{\infty}\ar@{-->}[dr]^j&&\\
\Omega\Sigma S^2\ar[r]^{k} \ar[u] & \mathbb{C}P^{\infty}
\ar[u]\ar[r]^i&BU\ar[r]^{\det}&\mathbb{C}P^{\infty}\\
}
$$
Here
\begin{itemize}
\item $i$ is the inclusion $\mathbb{C}P^{\infty}=BU(1)\subset BU$,

\item $j$ arises from the universal property of $\Omega\Sigma
    \mathbb{C}P^{\infty}$ as a free $H$-space;

\item the mapping $\mathbb{C}P^{\infty}\to\Omega\Sigma
    \mathbb{C}P^{\infty}$ is induced by the identity map $\Sigma
    \mathbb{C}P^{\infty}\to\Sigma \mathbb CP^{\infty}$,

\item the mapping $k\colon \Omega\Sigma S^2 \to \mathbb CP^{\infty}$
    corresponds to the generator of $\Coh^2(\Omega\Sigma
    S^2)=\mathbb{Z}$.

\item $\det$ is the mapping of the classifying spaces $BU\to BU(1)$
    induced by the mappings \linebreak $\det: U(n)\to
    U(1),\;n=1,2,\dots$.
\end{itemize}

As we have mentioned, $\Coh^*(BU)$ as a Hopf algebra is isomorphic to the
algebra of symmetric functions (in the generators -- Chern classes) and is
self-dual.
$$
\Coh^*(BU)\simeq \Coh_*(BU)\simeq\Sym[t_1,t_2,\dots]=\mathbb
Z[\sigma_1,\sigma_2,\dots].
$$
Since $\mathbb CP^{\infty}$ gives rise to the algebra generators in
$\Coh_*(BU)$, $j_*$ is an epimorphism on homology, and $j^*$ is a
monomorphism on cohomology.
\begin{itemize}
\item $j_*$ corresponds to the factorization of the non-commutative
    polynomial algebra $\mathcal{Z}$ by the commutation relations
    $Z_iZ_j-Z_jZ_i$ and sends $Z_i$ to $\sigma_i$.

\item $j^*$ corresponds to the inclusion
    $\Sym[t_1,t_2,\dots]\subset\Qsym[t_1,t_2,\dots]$ and sends $\sigma_i$
    to $M_{\omega_i}$, where $\omega_i=\underbrace{(1,1,\dots,1)}_i$.

\item $k^*$ sends $u$ to $u_1$ and defines an embedding of the polynomial
    ring $\mathbb Z[u]$ into the divided power algebra $\mathbb
    Z[u_n]/I$.

\item the composition $\det_*\circ j_*$ maps the algebra $\mathcal{Z}$ to
    the divided power algebra $H_*(\mathbb CP^{\infty})$. This
    corresponds to the mapping $Z_i\to \left.d_k\right|_{\mathcal{P}_s}$
    of $\mathcal{Z}$ to $\mathcal{D}(\mathcal{P}_s)$.
\end{itemize}

\section{Structure of $\mathcal{D}$}
\begin{thm}
The homomorphism $\EuScript{L}_{\R}:\mathcal{Z}\to\mathcal{D}(\R)\colon
Z_k\to d_k$ induces a ring isomorphism:
$$
\mathcal{D}(\R)\simeq \mathcal{U}=\mathcal{Z}/J_{\mathcal{U}},
$$
where $\mathcal{U}$ is a universal Leibnitz-Hopf algebra with the antipode
$\chi(U_i)=(-1)^iU_i$.
 \label{D}
\end{thm}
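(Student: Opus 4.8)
The homomorphism $\EuScript{L}_{\R}$ is a ring map by construction, and Proposition \ref{PM} states exactly that the operators $d_k$ satisfy $\Phi(-t)\Phi(t)=1$, i.e. the relations (\ref{Phi}) generating the Hopf ideal $J_{\mathcal{U}}$. Hence $\EuScript{L}_{\R}$ kills $J_{\mathcal{U}}$ and factors through a graded \emph{surjective} ring homomorphism $\overline{\EuScript{L}}\colon\mathcal{U}\twoheadrightarrow\mathcal{D}(\R)$. Because $\mathcal{U}$ embeds into $\mathcal{U}\otimes\mathbb{Q}\simeq\mathbb{Q}\langle Z_1,Z_3,Z_5,\dots\rangle$, it is torsion-free and $\rank\mathcal{U}^{2m}$ equals the number $c_{m-1}$ of compositions of $m$ into odd parts. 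Thus the whole theorem reduces to the single inequality $\rank\mathcal{D}(\R)^{2m}\geqslant c_{m-1}$: combined with surjectivity and the torsion-freeness of $\mathcal{U}$, it forces $\ker\overline{\EuScript{L}}$ to be a torsion subgroup of $\mathcal{U}$, hence $0$, so $\overline{\EuScript{L}}$ is an isomorphism in each degree.

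The plan is to prove this bound through a dictionary between the operators $D_{\omega}$ and flag numbers, most cleanly in $\mathcal{RP}$. For a composition $\omega=(j_1,\dots,j_k)$ with $|\omega|=m$ and a polytope $P^{m-1}$ (so $\rho(L(P))=m$), applying $D_{\omega}=d_{j_1}\cdots d_{j_k}$ drops the rank successively by $j_k,j_{k-1},\dots,j_1$, producing a sum over chains of faces with prescribed rank gaps whose minimal term has rank $0$, i.e. equals $\varnothing$. Counting the chains gives
$$
D_{\omega}P^{m-1}=f_{S(\omega)}(P^{m-1})\,\varnothing,\qquad S(\omega)=\{\,j_1-1,\ j_1+j_2-1,\ \dots,\ j_1+\dots+j_{k-1}-1\,\},
$$
and $\omega\mapsto S(\omega)$ is the standard bijection between compositions of $m$ and subsets of $\{0,1,\dots,m-2\}$.

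Composing $\overline{\EuScript{L}}$ with the functional $\mathcal{RP}\to\mathbb{Z}$ reading off the coefficient of $\varnothing$ then identifies $\overline{\EuScript{L}}(Z_{\omega})$ with the flag functional $P^{m-1}\mapsto f_{S(\omega)}(P^{m-1})$ on $\mathcal{P}^{2(m-1)}$. As $\omega$ runs over all compositions of $m$, the sets $S(\omega)$ run over all subsets of $\{0,\dots,m-2\}$, so these functionals span the same space as the full flag $f$-vector of $(m-1)$-polytopes. By the Bayer--Billera theorem every $f_T$ is a combination of $\{f_S:S\in\Psi^{m-1}\}$, and the identity $\det K^{m-1}=1$ shows these $c_{m-1}$ functionals are linearly independent, their matrix of values on the polytopes of $\Omega^{m-1}$ being unimodular. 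Hence the functionals $\{f_{S(\omega)}\}$ span a space of dimension exactly $c_{m-1}$, giving $\rank\mathcal{D}(\mathcal{RP})^{2m}\geqslant c_{m-1}$ and finishing the case $\R=\mathcal{RP}$.

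For $\R=\mathcal{P}$ I would run the same scheme with the character $\xi_{\alpha}$ replacing the coefficient of $\varnothing$: for $P^n$ with $n\geqslant m$ no intermediate face is empty, so $D_{\omega}P^n$ is an honest sum of $(n-m)$-faces and $\xi_{\alpha}D_{\omega}P^n=f_{S}(P^n)\alpha^{\,n-m}$ for the corresponding top-anchored set $S$. The main obstacle here is that these flag numbers are tied by the generalized Dehn--Sommerville relations, so one must verify that the relations they impose among the $D_{\omega}$ are no stronger than the defining relations of $\mathcal{U}$; this is precisely the statement that the Dehn--Sommerville relations anchored at the top face (the case $k=n$ of (\ref{BBR})) reduce to the local Euler relations $\sum_{i+j=n}(-1)^{i}d_{i}d_{j}=0$ of Proposition \ref{PM}. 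Once this is checked the same rank count yields $c_{m-1}$, and the torsion argument above gives $\mathcal{D}(\mathcal{P})\simeq\mathcal{U}$ as well.
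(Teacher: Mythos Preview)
Your argument for $\R=\mathcal{RP}$ is correct and is a genuine alternative to the paper's proof. The paper picks an explicit spanning set for $\mathcal{U}^{2m}$ (words in $d_2,d_3,\dots$ with at most one leading $d_1$), applies these operators to $P^n$ with $n\geqslant m$, and verifies by a careful Dehn--Sommerville reduction that the resulting flag numbers are distinct members of the Bayer--Billera basis $\Psi^n$ (this is Lemma~\ref{basis}). Your rank-count approach is cleaner: by evaluating on $(m-1)$-polytopes you land on a multiple of $\varnothing$ whose coefficient is a \emph{full} flag number, so the Bayer--Billera rank $c_{m-1}$ is immediate. The trade-off is that you must import the torsion-freeness and rank of $\mathcal{U}$ from the paper's assertion $\mathcal{U}\hookrightarrow\mathcal{U}\otimes\mathbb{Q}\simeq\mathbb{Q}\langle Z_1,Z_3,\dots\rangle$, whereas the paper's explicit-basis argument is self-contained on this point (indeed the rational structure of $\mathcal{D}$ is only established \emph{after} Theorem~\ref{D}).

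For $\R=\mathcal{P}$ there is a genuine gap. Your functionals are the $f_{\{n-m\}\cup T}(P^n)$ with $T\subset\{n-m+1,\dots,n-1\}$, and you need their span to have rank exactly $c_{m-1}$. You correctly observe that each \emph{internal} Dehn--Sommerville relation (with both endpoints $\geqslant n-m$) corresponds to an insertion of the Euler relation $\sum(-1)^id_id_j=0$ into a word, hence lies in $J_{\mathcal{U}}$; but you have not ruled out that combinations of DS relations involving indices \emph{below} $n-m$ could conspire to produce further linear relations purely among the $f_{\{n-m\}\cup T}$. Equivalently, writing $f_{\{n-m\}\cup T}(P^n)=\sum_{F^{n-m}}f_{T'}(P^n/F^{n-m})$, you need the elements $\sum_{F^{n-m}}P^n/F^{n-m}\in\mathcal{P}^{2(m-1)}$ to have flag $f$-vectors spanning the full $c_{m-1}$-dimensional space---and this is not automatic. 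The paper's Lemma~\ref{basis} closes exactly this gap by the explicit computation: it rewrites the $d\cdot D_\omega$ terms via a single DS relation and checks that all resulting index sets are distinct and lie in $\Psi^n$. Your phrase ``once this is checked'' is hiding the actual content of the $\mathcal{P}$ case.
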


\begin{proof}
Let us prove the following lemma:
\begin{lemma}
Let $D\in\mathcal{D}(\R)$ be an operator of graduation $2k$. Then for each
space $\R^{2n},\;n\geqslant k$ there is a unique representation
\begin{equation}\label{X5}
D=u(d_2,d_3,\dots,d_k)+dw(d_2,d_3,\dots,d_{k-1}),
\end{equation}
and there is a unique representation
\begin{equation}\label{X6}
D=u'(d_2,d_3,\dots,d_k)+w'(d_2,\dots,d_{k-1})d,
\end{equation}
where $u,u',w,w'$ are polynomials.\label{basis}
\end{lemma}
\begin{proof}
For $k=0$ and $1$ it is evident.

Since $2d_2=d^2$, this is true for $k=2$. Let $k\geqslant 3$.

Using the relations (\ref{PM}) we obtain
$$
dd_i=(-1)^id_id+\left(d_2d_{i-1}-d_3d_{i-2}+\dots+(-1)^{i-1}d_{i-1}d_2\right)+(1+(-1)^{i+1})d_{i+1}
$$
So the expressions (\ref{X5}) and (\ref{X6}) exist.

Let $u(d_2,d_3,\dots,d_k)+dw(d_2,\dots,d_{k-1})=0$, where
$u=\sum\limits_{|\omega|=k}a_{\omega}D_{\omega}$, and
$w=\sum\limits_{|\omega|=k-1}b_{\omega}D_{\omega}$.

Then for any $n$-dimensional polytope $P^n,\;n\geqslant k$ we have
$$
\xi_1(u+dw)P^n=0, \quad \varepsilon_1(u+dw)P^n=0
$$
Since $u+dw$ has degree $2k$, this equality can be written as
\begin{equation}
\sum\limits_{|\omega|=k}a_{\omega}f_{n-k,\,n-k+j_1,\,n-k+j_1+j_2,\,\dots,\,n-j_l}(P^n)+\sum\limits_{|\omega|=k-1}b_{\omega}f_{n-k,\,n-k+1,\,n-k+1+j_1,\,\dots,\,n-j_l}(P^n).\label{FVECT}
\end{equation}
Using the generalized Dehn-Sommerville equations we obtain
\begin{multline*}
f_{n-k,\,n-k+1,\,n-k+1+j_1,\,\dots,\,n-j_l}=(-1)^{n-k-1}\left(\sum\limits_{j=0}^{n-k-1}(-1)^jf_{j,\,n-k+1,\,n-k+1+j_1,\,\dots,\,n-j_l}\right)+\\
+(1+(-1)^{n-k})f_{n-k+1,\,n-k+1+j_1,\,\dots,\,n-j_l}
\end{multline*}
All the sets
$\{n-k,n-k+j_1,n-k+j_1+j_2,\dots,n-j_l\},\{j,n-k+1,n-k+j_1,\dots,n-j_l\}$ and
$\{n-k+1,n-k+1+j_1,\dots,n-j_l\}$ for all $\omega$ are different and belong
to $\Psi^n$.

In the case of $\mathcal{RP}$ and $P^{k-1}$ we obtain that $n=k-1$, therefore
$n-k=-1$ in the expression (\ref{FVECT}) vanishes and we obtain
$$
\sum\limits_{|\omega|=k}a_{\omega}f_{j_1-1,\,j_1+j_2-1,\,\dots,\,n-j_l}(P^n)+\sum\limits_{|\omega|=k-1}b_{\omega}f_{0,\,j_1,\,\dots,\,n-j_l}(P^n).\label{fv}
$$
Again all he sets $\{j_1-1,j_1+j_2-1,\dots,n-j_l\}, \{0,j_1,\dots,n-j_l\}$
for all $\omega$ are different and belong to $\Psi^n$.

Since the vectors $\{f_{S}(Q),\,S\in\Psi^n\}, Q\in\Omega^n$ are linearly
independent, we obtain that all $a_{\omega}$ and $b_{\omega}$ are equal to
$0$, so the representation (\ref{X5}) is unique.

We obtain that the monomials $D_{\omega}=d_{j_1}\dots
d_{j_l},\;|\omega|=k,\;j_i\geqslant 2$ and $dD_{\omega}=dd_{j_1}\dots
d_{j_l}\;|\omega|=k-1,\;j_i\geqslant2$ form a basis of the abelian group
$\mathcal{D}(\R^{2n},\R^{2(n-k)})$.

But each monomial $dD_{\omega},\;|\omega|=k,\;j_i\geqslant 2$ can be
expressed as an integer combination of the monomials $D_{\omega'}$ and
$D_{\omega'}d$. So the monomials $D_{\omega}$ and $D_{\omega}d$ also form a
basis. This proves the second part of the lemma.
\end{proof}
Now let us prove the theorem. The mapping
$\EuScript{L}_{\R}:\mathcal{Z}\to\mathcal{D}(\R):\; Z_i\to d_i$ is an
epimorphism. Let $z\in\mathcal{Z}$ such that $\deg z=2k$ and
$\EuScript{L}_{\R}z=0$.
$$
z=\sum\limits_{|\omega|=k}a_{\omega}Z_{\omega}.
$$
We know that
$$
Z_1Z_{k-1}=(-1)^{k-1}Z_{k-1}Z_1+\sum\limits_{i=2}^{k-2}(-1)^iZ_iZ_{k-i}+(1+(-1)^k)Z_k-\sum\limits_{i=0}^{k}(-1)^iZ_iZ_{k-i}.
$$
So
$$
z=\sum\limits_{|\omega|=k,\;j_i\geqslant 2}a'_{\omega}Z_{\omega}+\sum\limits_{|\omega|=k-1,\;j_i\geqslant 2}b_{\omega}Z_{\omega}Z_1+z',
$$
where $z'\in J_{\mathcal{U}}$. Since
$\EuScript{L}_{\R}z=\EuScript{L}_{\R}z'=0$, we obtain
$$
\sum\limits_{|\omega|=k,\;j_i\geqslant 2}a'_{\omega}D_{\omega}+\sum\limits_{|\omega|=k-1,\;j_i\geqslant 2}b_{\omega}D_{\omega}d=0,
$$
Lemma \ref{basis} implies that all the coefficients $a_{\omega}'$ and
$b_{\omega}$ are equal to $0$, so $z=z'\in J_{\mathcal{U}}$. This proves that
$\mathcal{D}(\R)\simeq \mathcal{Z}/J_{\mathcal{U}}$.
\end{proof}

\begin{rem}
Now we see that the correspondence $d_k\to d_k$ defines a natural isomorphism
between $\mathcal{D}(\mathcal{RP})$ and $\mathcal{D}(\mathcal{P})$. Therefore
from this moment we will often use the notation $\mathcal{D}$ for both these
algebras, and the notations $\EuScript{L}$, $\EuScript{R}$ for the
homomorphisms $\EuScript{L}_{\R}$ and $\EuScript{R}_{\R}$.
\end{rem}

\begin{cor}
The comultiplication $d_k\to \sum\limits_{i+j=k}d_i\otimes d_j$ defines on
$\mathcal{D}$ the structure of a Leibnitz-Hopf algebra over the ring $\mathbb
Z$. This Hopf algebra $\mathcal{D}$ is isomorphic to the Hopf algebra
$\mathcal{U}$, universal in the category of Leibnitz-Hopf algebras  with the
antipode $\chi(U_i)=(-1)^iU_i$.

Both $\mathcal{P}$ and $\mathcal{RP}$ have natural left Milnor module
structures over $\mathcal{D}$
\end{cor}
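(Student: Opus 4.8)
The plan is to read off all three assertions from the structure Theorem~\ref{D}, using in addition only the fact, recorded in the definition of $\mathcal{U}$, that $J_{\mathcal{U}}$ is a two-sided Hopf ideal of $\mathcal{Z}$.

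First I would verify that the formula $d_k\mapsto\sum_{i+j=k}d_i\otimes d_j$ really defines a coproduct on $\mathcal{D}$. By Theorem~\ref{D} the epimorphism $\EuScript{L}\colon\mathcal{Z}\to\mathcal{D}$, $Z_k\mapsto d_k$, has kernel exactly $J_{\mathcal{U}}$, so $\mathcal{D}\simeq\mathcal{Z}/J_{\mathcal{U}}$ as rings. Because $J_{\mathcal{U}}$ is a Hopf ideal, the comultiplication $\Delta Z_k=\sum_{i+j=k}Z_i\otimes Z_j$, the counit $\varepsilon(Z_k)=0$ for $k\geqslant1$, and the antipode all descend to the quotient; on the classes $d_k$ the coproduct is precisely the stated one. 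Since the generators $d_k$ obey the Leibnitz comultiplication formula, this makes $\mathcal{D}$ a Leibnitz-Hopf algebra over $\mathbb Z$.

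Next, to see that the ring isomorphism of Theorem~\ref{D} is an isomorphism of Hopf algebras, I would note that it is induced by $\EuScript{L}$ and carries the generator $U_k$ of $\mathcal{U}=\mathcal{Z}/J_{\mathcal{U}}$ to $d_k$. Both coproducts are images of the single coproduct of $\mathcal{Z}$ under the respective quotient maps, so the isomorphism automatically intertwines them and is a coalgebra map as well; the transported antipode satisfies $\chi(d_k)=(-1)^kd_k$. Hence $\mathcal{D}\simeq\mathcal{U}$ as Hopf algebras, and $\mathcal{D}$ is universal in the category of Leibnitz-Hopf algebras with this antipode.

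Finally, for the module structure I would invoke Proposition~\ref{MM}. The operators $d_k$ act tautologically on $\R=\mathcal{P}$ and $\R=\mathcal{RP}$, and Proposition~\ref{MM} gives $d_k(PQ)=\sum_{i+j=k}(d_iP)(d_jQ)$, which is exactly the Milnor compatibility $x(uv)=\sum x'_n(u)\,x''_n(v)$ for the coproduct $\Delta d_k=\sum_{i+j=k}d_i\otimes d_j$. As $\R$ is a unital ring acted on by $\mathbb Z$-linear operators, both $\mathcal{P}$ and $\mathcal{RP}$ become left Milnor modules over $\mathcal{D}$; this is the earlier $\mathcal{U}$-module structure transported along $\mathcal{U}\simeq\mathcal{D}$. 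I expect no real obstacle here: the corollary is bookkeeping on top of Theorem~\ref{D}, and the single point that genuinely uses a hypothesis is the descent of the coproduct, guaranteed by the Hopf-ideal property of $J_{\mathcal{U}}$.
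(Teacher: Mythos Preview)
Your proposal is correct and matches the paper's intent: the corollary is stated immediately after Theorem~\ref{D} without a separate proof, and the argument you give---descent of the Hopf structure through the Hopf ideal $J_{\mathcal{U}}$, transport of the antipode, and invocation of Proposition~\ref{MM} (or equivalently the earlier corollary on Milnor modules over $\mathcal{U}$)---is exactly the routine unpacking the paper leaves to the reader.
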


It is evident that $\left.D_{\omega}\right|_{\R^{[2n]}}=0$ for $|\omega|>n$.
The following corollary says that the other relations between operators in
$\mathcal{D}$ on the abelian group $\R^{[2n]}$ are the same as on the whole
ring $\R$.
\begin{cor}
We have
$$
\mathcal{D}(\R^{[2n]})=\mathcal{D}(\R)/J_n
$$
where the ideal $J_n$ is generated by all the operators
$D_{\omega},\;|\omega|>n$.
\end{cor}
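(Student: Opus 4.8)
The plan is to realize $\mathcal{D}(\R^{[2n]})$ as a quotient of $\mathcal{D}(\R)$ through the obvious restriction map and to identify the kernel with $J_n$. First I would consider the homomorphism $\pi\colon\mathcal{D}(\R)\to\mathcal{D}(\R^{[2n]})$ that sends an operator to its restriction to the subgroup $\R^{[2n]}$. Each $d_k$ preserves $\R^{[2n]}$ (it only lowers dimension), and by definition $\mathcal{D}(\R^{[2n]})$ is generated by the restrictions $\left.d_k\right|_{\R^{[2n]}}$, so $\pi$ is a well-defined surjective ring homomorphism. Moreover $\pi$ respects the graduation by operator degree, hence $\ker\pi$ is a graded ideal and it suffices to analyse it in each homogeneous degree. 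One inclusion is immediate: as noted just before the statement, $\left.D_\omega\right|_{\R^{[2n]}}=0$ whenever $|\omega|>n$, so every generator of $J_n$ lies in $\ker\pi$ and we obtain an induced surjection $\mathcal{D}(\R)/J_n\twoheadrightarrow\mathcal{D}(\R^{[2n]})$.

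Next I would make the structure of $J_n$ explicit. Every generator $D_\omega$ with $|\omega|>n$ has operator degree $2|\omega|>2n$, and since all elements of $\mathcal{D}(\R)$ have non-negative degree, any product $A\,D_\omega\,B$ again has degree exceeding $2n$; conversely each homogeneous component $\mathcal{D}(\R)^{2k}$ with $k>n$ is spanned by monomials $D_\sigma$ with $|\sigma|=k>n$, and each such $D_\sigma$ is itself a generator of $J_n$. Therefore $J_n=\bigoplus_{k>n}\mathcal{D}(\R)^{2k}$, i.e. $J_n$ is exactly the part of $\mathcal{D}(\R)$ living in degrees above $2n$. On the other side, an operator $D$ of degree $2k$ with $k>n$ sends each $\R^{2m}$ with $m\leqslant n$ into $\R^{2(m-k)}$ with $m-k<0$, so $\left.D\right|_{\R^{[2n]}}=0$; thus the whole degree-$(>2n)$ part of $\mathcal{D}(\R)$ lies in $\ker\pi$ and coincides there with $J_n$.

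It remains to show that $\ker\pi$ contributes nothing new in degrees $\leqslant 2n$, which is the crux. Fix a homogeneous degree $2k$ with $k\leqslant n$ and suppose $D\in\mathcal{D}(\R)^{2k}$ satisfies $\left.D\right|_{\R^{[2n]}}=0$; in particular $\left.D\right|_{\R^{2n}}=0$ with $n\geqslant k$. Here I would invoke Lemma \ref{basis}: its uniqueness assertion is established by showing that the monomials $D_\omega$ ($|\omega|=k,\ j_i\geqslant 2$) together with the operators $D_\omega d$ ($|\omega|=k-1,\ j_i\geqslant 2$) act as linearly independent maps already on the single group $\R^{2n}$, because the corresponding flag numbers are indexed by pairwise distinct sets of $\Psi^n$ and the flag $f$-vectors of the polytopes of $\Omega^n$ are linearly independent. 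Consequently the restriction $\mathcal{D}(\R)^{2k}\to\Hom_{\mathbb Z}(\R^{2n},\R^{2(n-k)})$ is injective for every $n\geqslant k$, so $\left.D\right|_{\R^{2n}}=0$ forces $D=0$ in $\mathcal{D}(\R)$. Combining the two cases gives $\ker\pi=\bigoplus_{k>n}\mathcal{D}(\R)^{2k}=J_n$, whence $\mathcal{D}(\R^{[2n]})\cong\mathcal{D}(\R)/J_n$. The main obstacle is precisely this last step, and it is carried entirely by the fact, extracted from the proof of Theorem \ref{D}, that the top space $\R^{2n}$ already detects every relation among operators of degree at most $2n$; once this single-space injectivity is available, the rest is bookkeeping with the operator graduation.
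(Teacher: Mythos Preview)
Your argument is correct and follows essentially the same route as the paper: both proofs rest on the single-space injectivity furnished by Lemma~\ref{basis}, namely that the basis operators $D_\omega$ (with $j_i\geqslant 2$) and $dD_\omega$ of degree $2k$ are already linearly independent on $\R^{2n}$ for any $n\geqslant k$, so no new relations appear in degrees $\leqslant 2n$. The paper phrases this by recomputing the kernel of $\mathcal{Z}\to\mathcal{D}(\R^{[2n]})$ and obtaining $J_{\mathcal{U}}$ together with $Z_\omega=0$ for $|\omega|>n$, whereas you factor through $\mathcal{D}(\R)$ and identify $J_n=\bigoplus_{k>n}\mathcal{D}(\R)^{2k}$ directly; this is only a cosmetic repackaging of the same idea.
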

\begin{proof}
As in the proof of Theorem \ref{D} using Lemma \ref{basis} it is easy to show
that $\mathcal{D}(\R^{[2n]})\simeq\mathcal{Z}/J_{\mathcal{D}(\R^{[2n]})}$,
where the ideal $J_{\mathcal{D}(\R^{[2n]})}$ is generated by the the
relations $\Phi(-t)\Phi(t)=1$ and $Z_{\omega}=0$ for $|\omega|>n$.
\end{proof}

\begin{cor}
The operators $d_2,d_3,d_4,\dots$ are algebraically independent.
\end{cor}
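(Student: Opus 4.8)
The plan is to read the statement off directly from Lemma~\ref{basis}. To say that $d_2,d_3,d_4,\dots$ are algebraically independent means, in the associative algebra $\mathcal{D}(\R)$, that the noncommutative monomials $D_{\omega}=d_{j_1}\cdots d_{j_l}$ with all $j_i\geqslant 2$ are linearly independent over $\mathbb{Z}$; equivalently, that these operators generate a free associative subalgebra of $\mathcal{D}$. So it suffices to show that no nontrivial $\mathbb{Z}$-linear combination of such words vanishes as an operator on $\R$.

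First I would use that $\mathcal{D}(\R)$ is graded (with $d_k$ homogeneous of graduation $2k$), so that any algebraic relation decomposes into homogeneous pieces; it is therefore enough to rule out a nontrivial homogeneous relation $\sum_{|\omega|=k,\,j_i\geqslant 2}a_{\omega}D_{\omega}=0$ of graduation $2k$. Such an identity holds as an operator on each graded component $\R^{2n}$. Choosing $n\geqslant k$ and regarding the left-hand side as an instance of the representation~\eqref{X5} with the $d$-part $w$ equal to $0$, Lemma~\ref{basis} applies: its conclusion that the monomials $D_{\omega}$ with $j_i\geqslant 2$ form part of a $\mathbb{Z}$-basis of $\mathcal{D}(\R^{2n},\R^{2(n-k)})$ forces every $a_{\omega}$ to vanish. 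Hence the only relation is the trivial one, and the operators $d_k$, $k\geqslant 2$, are algebraically independent.

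I do not expect a genuine obstacle here: all the real work is already contained in Lemma~\ref{basis}, whose hard step was the linear independence of the flag-vector systems $\{f_S(Q)\}_{S\in\Psi^n}$ for $Q\in\Omega^n$, secured by the Bayer--Billera analysis and the computation $\det K^n=1$. The one point that needs care is logical rather than computational: algebraic independence is a statement in $\mathcal{D}(\R)$ itself, whereas Lemma~\ref{basis} is phrased on the pieces $\R^{2n}$ with $n\geqslant k$; this is harmless because a relation in $\mathcal{D}(\R)$ holds on every $\R^{2n}$ and in particular on those with $n\geqslant k$. As a sanity check, this is consistent with $\mathcal{D}\simeq\mathcal{U}$ and $\mathcal{U}\otimes\mathbb{Q}\simeq\mathbb{Q}\langle Z_1,Z_3,Z_5,\dots\rangle$: the relation $2d_2=d^2$ ties $d_2$ to $d=d_1$, but $d_1$ is excluded from the list $d_2,d_3,\dots$, so there is no conflict.
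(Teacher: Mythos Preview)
Your proof is correct and is essentially what the paper has in mind: the corollary is stated without proof there, as an immediate consequence of Lemma~\ref{basis}, and you have simply written out that deduction. The only content is exactly the step you isolate---that a homogeneous relation among the $D_{\omega}$ with $j_i\geqslant 2$ is the special case $w=0$ of the unique representation~\eqref{X5}, hence trivial.
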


\begin{cor}
Rank of the $(2n)$-th graded component of the ring $\mathcal{D}$ is equal to
the $(n-1)$-th Fibonacci number $c_{n-1}$.\label{Fib}
\end{cor}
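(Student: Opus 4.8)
The plan is to convert the statement into a counting problem using the explicit basis furnished by Lemma~\ref{basis}. By the representation \eqref{X5}, every element of $\mathcal{D}$ of degree $2n$ is written uniquely as $u(d_2,\dots,d_n)+d\,w(d_2,\dots,d_{n-1})$, and the proof of Lemma~\ref{basis} shows that the monomials $D_\omega=d_{j_1}\cdots d_{j_l}$ with all $j_i\geqslant2$ and $|\omega|=n$, together with the monomials $d\,D_\omega$ with all $j_i\geqslant2$ and $|\omega|=n-1$, form a $\mathbb Z$-basis of the graded component $\mathcal{D}^{2n}$ (the relations stabilise once the ambient dimension is large enough, so this is a basis of the abstract graded component, consistently with $\mathcal{D}\simeq\mathcal{Z}/J_{\mathcal{U}}$ from Theorem~\ref{D}). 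In particular $\mathcal{D}^{2n}$ is free abelian and
$$
\rank\mathcal{D}^{2n}=b(n)+b(n-1),
$$
where $b(m)$ denotes the number of compositions of $m$ into parts all $\geqslant2$.

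First I would compute $b(m)$. A composition into parts $\geqslant2$ is an ordered sequence of parts drawn from $\{2,3,4,\dots\}$, so
$$
B(t)=\sum_{m\geqslant0}b(m)t^m=\frac{1}{1-(t^2+t^3+\cdots)}=\frac{1-t}{1-t-t^2}.
$$
Since $\sum_{n\geqslant0}c_nt^n=\frac{1}{1-t-t^2}$, this gives $B(t)=(1-t)\sum_n c_nt^n$, whence $b(0)=c_0=1$ and $b(m)=c_m-c_{m-1}$ for $m\geqslant1$. Consequently, for $n\geqslant2$,
$$
\rank\mathcal{D}^{2n}=b(n)+b(n-1)=(c_n-c_{n-1})+(c_{n-1}-c_{n-2})=c_n-c_{n-2}=c_{n-1},
$$
the last equality being the Fibonacci recursion $c_n=c_{n-1}+c_{n-2}$.

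As a consistency check I would assemble both families of basis monomials into one Hilbert series, $\sum_{n\geqslant0}(\rank\mathcal{D}^{2n})\,t^n=(1+t)B(t)=\frac{1-t^2}{1-t-t^2}$, whose coefficient of $t^n$ equals $c_{n-1}$ for $n\geqslant1$; this also matches the rational structure $\mathcal{U}\otimes\mathbb Q\simeq\mathbb Q\langle Z_1,Z_3,Z_5,\dots\rangle$, a free associative algebra on generators of weights $1,3,5,\dots$, whose Hilbert series $\frac{1}{1-(t+t^3+t^5+\cdots)}$ equals the same rational function. I do not anticipate a genuine obstacle: the essential content is already contained in Lemma~\ref{basis}, and only the bookkeeping of compositions remains. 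The one point deserving care is the boundary term $b(n-1)$ in the small cases: for $n=1$ it must be read through the empty composition $b(0)=1$, which records the single operator $d\in\mathcal{D}^2$ and gives $\rank\mathcal{D}^2=b(1)+b(0)=0+1=1=c_0$, exactly as the statement requires.
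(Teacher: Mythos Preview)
Your proof is correct and follows essentially the same approach as the paper: both invoke the basis of Lemma~\ref{basis} to reduce the question to counting compositions into parts $\geqslant 2$, and then add the counts in degrees $n$ and $n-1$. The only cosmetic difference is that the paper computes $b(n)$ (denoted $\rk_n$ there) via the recursion $\rk_{n+1}=\rk_{n-1}+\rk_{n-2}+\dots+\rk_2+1$, whence $\rk_{n+1}=\rk_n+\rk_{n-1}$ and $\rk_n=c_{n-2}$, whereas you package the same recursion into the generating function $B(t)=(1-t)/(1-t-t^2)$; your identity $b(m)=c_m-c_{m-1}=c_{m-2}$ is exactly the paper's $\rk_n=c_{n-2}$.
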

\begin{proof}
At first, let us calculate rank $\rk_n$ of the $(2n)$-th graded component of
the ring $\mathbb Z\langle d_2,d_3,d_4,\dots\rangle$.
$$
\rk_0=1,\;\rk_1=0,\;\rk_2=1,\;\rk_3=1,\;\rk_4=2,\;\rk_5=3,\dots
$$
It is easy to see that there is a recursive relation
$$
\rk_{n+1}=\rk_{n-1}+\rk_{n-2}+\dots+\rk_2+1,\;n\geqslant 3
$$
Then $\rk_{n+1}=\rk_{n-1}+\rk_n$. Since $\rk_2=\rk_3=1$, we obtain
$\rk_n=c_{n-2},\; n\geqslant 2$. At last, rank of the $(2n)$-th  graded
component of the ring $\mathcal{D}$ is equal to
$\rk_{n-1}+\rk_n=c_{n-3}+c_{n-2}=c_{n-1}$ for $n\geqslant 3$. It is easy to
see that for $n=1$ and $2$ it is also true.
\end{proof}

Let us remind that on the ring of simple polytopes
$\left.d_k\right|_{\mathcal{P}_s}=\left.\frac{d^k}{k!}\right|_{\mathcal{P}_s}$,
so the ring $\mathcal{D}(\mathcal{P}_s)$ is isomorphic to the divided power
ring $d_kd_l={k+l\choose k}d_{k+l}$.
\begin{prop}
$$
\mathcal{D}\otimes\mathbb Q=\mathbb Q\langle
d_1,d_3,d_5,\dots\rangle,
$$
The inclusion $\mathcal{D}\subset\mathcal{D}\otimes\mathbb Q$ is an
embedding, and the operators $d_{2k}$ are expressed in terms of the operators
$d_1,d_3,\dots,d_{2k-1}$ by the formulas
\begin{equation}\label{X7}
d_{2k}=\sum\limits_{i=1}^{k}(-1)^{i-1}\frac{{2i-2\choose i-1}}{i2^{2i-1}}\left(\sum\limits_{j_1+j_2+\dots+j_{2i}=i+k,\,j_l\geqslant 1}d_{2j_1-1}\dots
d_{2j_{2i}-1}\right).
\end{equation}
\end{prop}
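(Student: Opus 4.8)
The plan is to pass to $\mathcal{D}\otimes\mathbb{Q}$ through the isomorphism $\mathcal{D}\simeq\mathcal{U}$ of Theorem \ref{D}, and to exploit the single relation $\Phi(-t)\Phi(t)=1$ of Proposition \ref{PM} by separating $\Phi$ into its parts of even and odd degree in $t$. The even part will turn out to be a formal square root of an explicit series in the odd operators, which yields formula (\ref{X7}); a Hilbert series count then upgrades this to the asserted freeness.

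First I would write $\Phi(t)=E(t)+O(t)$, where $E(t)=\sum_{k\geqslant0}d_{2k}t^{2k}$ collects the even powers of $t$ and $O(t)=\sum_{k\geqslant0}d_{2k+1}t^{2k+1}$ the odd ones, so that $\Phi(-t)=E(t)-O(t)$. Expanding the relation $(E-O)(E+O)=1$ gives $E^2-O^2+EO-OE=1$. Here $E^2$ and $O^2$ involve only even powers of $t$, while $EO$ and $OE$ involve only odd powers; comparing the two parities yields
\[
E^2-O^2=1,\qquad EO=OE.
\]
Thus, over $\mathbb{Q}$, the series $E$ is a square root of $1+O^2$ commuting with $O$. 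Setting $F=\sum_{i\geqslant0}\binom{1/2}{i}(O^2)^i$, a well-defined power series in $t$ since $O^2$ has vanishing constant term, I get $F^2=1+O^2=E^2$; moreover $F$ is a series in $O^2$, with which $E$ commutes, so $EF=FE$. Then $(E-F)(E+F)=E^2-F^2=0$, and since $E+F$ has constant term $2$ it is invertible, giving $E=F$.

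It remains to read off the coefficient of $t^{2k}$ in $E=\sum_{i\geqslant0}\binom{1/2}{i}O^{2i}$. From
\[
O^{2i}=\sum_{j_1,\dots,j_{2i}\geqslant1}d_{2j_1-1}\cdots d_{2j_{2i}-1}\,t^{(2j_1-1)+\dots+(2j_{2i}-1)}
\]
the power of $t$ equals $2k$ exactly when $j_1+\dots+j_{2i}=k+i$, which forces $1\leqslant i\leqslant k$; combined with the identity $\binom{1/2}{i}=\frac{(-1)^{i-1}}{i\,2^{2i-1}}\binom{2i-2}{i-1}$ this is precisely (\ref{X7}). In particular every $d_{2k}$ lies in the subalgebra generated by $d_1,d_3,d_5,\dots$, so there is a graded surjection $\mathbb{Q}\langle d_1,d_3,d_5,\dots\rangle\twoheadrightarrow\mathcal{D}\otimes\mathbb{Q}$.

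To see this surjection is an isomorphism I would compare Hilbert series: the free associative algebra on one generator in each degree $2(2j-1)$ has series $\bigl(1-\sum_{j\geqslant1}t^{2j-1}\bigr)^{-1}=\frac{1-t^2}{1-t-t^2}$, whose $t^n$-coefficient is $c_{n-1}$ for $n\geqslant1$; by Corollary \ref{Fib} this equals $\dim_{\mathbb{Q}}(\mathcal{D}\otimes\mathbb{Q})^{2n}$, so the surjection is degreewise bijective. Finally, the embedding $\mathcal{D}\subset\mathcal{D}\otimes\mathbb{Q}$ is torsion-freeness of $\mathcal{D}$, which follows from Lemma \ref{basis}: the monomials $D_{\omega}$ and $D_{\omega}d$ with all parts $\geqslant2$ give a $\mathbb{Z}$-basis of each graded component, so each component is free abelian. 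The main obstacle is the non-commutative square root: the whole reduction hinges on first extracting $EO=OE$ from the parity splitting, after which uniqueness of the root follows from the invertibility of $E+F$ and the coefficient bookkeeping becomes routine.
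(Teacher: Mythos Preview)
Your proof is correct and follows essentially the same route as the paper: split $\Phi$ into its even and odd parts, derive $E^2-O^2=1$ together with $EO=OE$, take the formal square root to obtain (\ref{X7}), and then match dimensions against Corollary~\ref{Fib}. Your parity argument for extracting $EO=OE$ is slightly more direct than the paper's (which computes $ab=\frac{\Phi(t)^2-\Phi(-t)^2}{4}=ba$ via the two-sided inverse), and your justification of the square root via $(E-F)(E+F)=0$ with $E+F$ invertible is more explicit than the paper's bare assertion $a=\sqrt{1+b^2}$; but these are presentational refinements of the same argument.
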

\begin{proof}
Let us denote
$$
a(t)=\sum\limits_{k\geqslant 0}d_{2k}t^{2k},\quad b(t)=\sum\limits_{k\geqslant 0}d_{2k+1}t^{2k+1}.
$$
Then
\begin{gather*}
\Phi(t)=a(t)+b(t)\;\quad\Phi(-t)=a(t)-b(t);\\
a(t)b(t)=\frac{\Phi(t)+\Phi(-t)}{2}\cdot\frac{\Phi(t)-\Phi(-t)}{2}=\frac{\Phi(t)^2-\Phi(-t)^2}{4}=b(t)a(t).
\end{gather*}
Then the relation $\Phi(-t)\Phi(t)=1$ is equivalent to the relations
\begin{gather*}
a(t)^2-b(t)^2=1;\\
a(t)b(t)=b(t)a(t).
\end{gather*}
Therefore $a(t)=\sqrt{1+b(t)^2}$ and the formula (\ref{X7}) is valid.
Consequently all the operators $d_{2k}$ are expressed as polynomials in
$d_1,d_3,\dots$ with rational coefficients. For example,
$$
d_2=\frac{d^2}{2},\quad d_4=\frac{dd_3+d_3d}{2}-\frac{d^4}{8}.
$$
This means that the algebra $\mathcal{D}\otimes\mathbb Q$ is generated by
$d_1,d_3,d_5,\dots$. On the other hand, let us calculate the number of the
monomials $d_{2j_1-1}\dots d_{2j_k-1}$ with $(2j_1-1)+\dots+(2j_k-1)=n$.
Denote this number by $l_n$. Then
$$
l_0=1,\;l_1=1,\;l_2=1,\;l_3=2,\;l_4=3,\dots,
$$
and there is a recursive formula $l_{n+1}=l_{n}+l_{n-2}+l_{n-4}+\dots$ for
$n\geqslant 2$. Hence $l_{n+1}=l_n+l_{n-1}$ for $n\geqslant 2$, and
$l_n=c_{n-1}$ for $n\geqslant 1$.

We see that the number of monomials of degree $2n$ is equal to dimension of
the $(2n)$-th graded component according to Corollary \ref{Fib}. This implies
that they are linearly independent over the rationals. Therefore
$d_1,d_3,d_5,\dots$ are algebraically independent.

At last, the inclusion $\mathcal{D}\subset \mathcal{D}\otimes\mathbb Q$ is an
embedding, since $\mathcal{D}$ is torsion-free.
\end{proof}
\begin{defin}
Let us define operators $s_k$ by the formula
$$
s(t)=s_1t+s_2t^2+s_3t^3+\dots=\log\Phi(t).
$$
\end{defin}
Then $s_1=d_1,\;s_2=0,\;s_3=d_3-\frac{d^3}{6}$, and so on.

The relation $\Phi(-t)\Phi(t)=1$ turns into the relation $s(-t)+s(t)=0$. Thus
$s_{2k}=0$ for all $k$.

Also we have $\Delta s(t)=1\otimes s(t)+s(t)\otimes 1$, so each operator
$s_{2k-1}$ is a derivation.
\begin{prop}
There is an isomorphism of Hopf algebras
$$
\mathcal{D}\otimes\mathbb Q=\mathbb Q\langle
s_1,s_3,s_5,\dots\rangle,
$$
where $\mathbb Q\langle s_1,s_3,s_5,\dots\rangle$ is a free Lie-Hopf algebra
in the generators of degree $\deg s_{2k-1}=2(2k-1)$, $k\geqslant 1$, the
comultiplication $\Delta s_{2k-1}=1\otimes s_{2k-1}+s_{2k-1}\otimes 1$, and
the antipode $\chi(s_{2k-1})=-s_{2k-1}$\label{S}
\end{prop}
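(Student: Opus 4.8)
The plan is to leverage the two facts already established: that $\mathcal{D}\otimes\mathbb{Q}=\mathbb{Q}\langle d_1,d_3,d_5,\dots\rangle$ is free associative on the odd operators (the preceding proposition), and that the series $s(t)=\log\Phi(t)$ has only odd coefficients. From these I will show that $s_1,s_3,s_5,\dots$ is an alternative set of free generators consisting of primitive elements, which is precisely the assertion that $\mathcal{D}\otimes\mathbb{Q}$ is the free Lie-Hopf algebra on these generators.

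First I would pin down primitivity. Applying the algebra homomorphism $\Delta$ to $s(t)=\log\Phi(t)$ and using $\Delta\Phi(t)=\Phi(t)\otimes\Phi(t)=(\Phi(t)\otimes 1)(1\otimes\Phi(t))$, together with the fact that the factors $\Phi(t)\otimes 1$ and $1\otimes\Phi(t)$ commute, gives
$$
\Delta s(t)=\log\bigl((\Phi(t)\otimes 1)(1\otimes\Phi(t))\bigr)=s(t)\otimes 1+1\otimes s(t),
$$
so each $s_{2k-1}$ is primitive. Because $\mathcal{D}$ is connected and graded, $\varepsilon(s_{2k-1})=0$, and the antipode axiom $\mu(\chi\otimes\id)\Delta=\eta\varepsilon$ applied to a primitive element forces $\chi(x)+x=0$, i.e. $\chi(s_{2k-1})=-s_{2k-1}$; this also matches the relation $s(-t)+s(t)=0$.

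Next I would establish free generation by a triangular change of variables. Expanding $s(t)=\log\bigl(1+\sum_{k\geq 1}d_kt^k\bigr)$, the coefficient of $t^{2k-1}$ equals $d_{2k-1}$ plus a sum of products of at least two operators $d_i$; after rewriting the even $d_i$ through the odd ones via the previous proposition, I obtain
$$
s_{2k-1}=d_{2k-1}+(\text{words of length}\geq 2\text{ in }d_1,d_3,d_5,\dots).
$$
Hence the algebra endomorphism of $\mathbb{Q}\langle d_1,d_3,d_5,\dots\rangle$ sending $d_{2k-1}\mapsto s_{2k-1}$ preserves the word-length filtration and induces the identity on the associated graded; since every internal-degree component is finite-dimensional, it is an automorphism, so $s_1,s_3,s_5,\dots$ freely generate $\mathcal{D}\otimes\mathbb{Q}$. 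As a consistency check, the free associative algebra on generators of degrees $2(2k-1)$ has the same Hilbert series as $\mathcal{D}\otimes\mathbb{Q}$, whose $(2n)$-th component has rank $c_{n-1}$ by Corollary \ref{Fib}.

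Combining the two steps, the natural map from the abstract free Lie-Hopf algebra $\mathbb{Q}\langle s_1,s_3,s_5,\dots\rangle$ (with generators declared primitive) to $\mathcal{D}\otimes\mathbb{Q}$ is an algebra isomorphism carrying primitives to primitives, hence a Hopf algebra isomorphism. I expect the only delicate point to be the primitivity computation, namely the justification that $\log$ of the commuting product distributes as a sum; the freeness then follows cleanly from the filtration argument, with no serious obstacle remaining.
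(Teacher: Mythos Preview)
Your proof is correct and follows essentially the same route as the paper: the paper establishes just before the proposition that $s_{2k}=0$ (from $s(-t)+s(t)=0$) and that each $s_{2k-1}$ is primitive (from $\Delta s(t)=1\otimes s(t)+s(t)\otimes 1$), and then states the result without further argument, relying implicitly on the previous proposition $\mathcal{D}\otimes\mathbb{Q}=\mathbb{Q}\langle d_1,d_3,d_5,\dots\rangle$ together with the invertible $\log/\exp$ change of variables (cf.\ the isomorphism $\mathcal{Z}\otimes\mathbb{Q}\simeq\mathcal{W}\otimes\mathbb{Q}$ recalled in Section~3.4). Your filtration argument simply makes this implicit step explicit.
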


\section{Generalized $f$-polynomial}

\subsection{Definition}

The mapping $\Phi:\mathcal{P}\to\mathcal{P}[t]$ is a ring homomorphism. If we
set $\Phi(t_2)t_1=t_1$, then the operator $\Phi(t_2)$ defines a ring
homomorphism $\mathcal{P}\to\mathcal{P}[t_1,t_2]$: $P\to\Phi(t_2)\Phi(t_1)P$,

Thus for each $n\geqslant 0$ we can define the ring homomorphism
$\Phi_n:\mathcal{P}\to\mathcal{P}[[t_1,\dots,t_n]]$ as a composition:
$$
\Phi_n(t_1,\dots,t_n)P=\Phi(t_n)\dots\Phi(t_1)P.
$$

\begin{defin}
Let $P^n$ be an $n$-dimensional polytope. Let us define a quasi-symmetric
function
$$
f_r(\alpha,t_1,\dots,t_r)(P^n)=\xi_{\alpha}\Phi(t_r)\dots\Phi(t_1)P^n=\alpha^n+\sum\limits_{k=1}^{\min\{r,n\}}\sum\limits_{0\leqslant
a_1<\dots<a_k\leqslant
n-1}f_{a_1,\,\dots,\,a_k}\alpha^{a_1}M_{(n-a_k,\,\dots,\,a_2-a_1)}.
$$
\end{defin}
For $n=1$ we obtain
$f_1(\alpha,t_1)(P^n)=\alpha^n+f_{n-1}\alpha^{n-1}t_1+\dots+f_0t_1^n$ is a
homogeneous $f$-polynomial in two variables (\cite{Buch}). So the polynomial
$f_n$ is a generalization of the $f$-polynomial.

Consider the increasing sequence of rings
$$
\mathbb Z[\alpha]\subset \mathbb Z[\alpha,t_1]\subset\mathbb
Z[\alpha,t_1,t_2]\subset\dots
$$
with the restriction maps $E_r:\mathbb Z[\alpha,t_1,\dots,t_{r+1}]\to\mathbb
Z[\alpha,t_1,\dots,t_r]$
$$
(E_rg)(\alpha,t_1,\dots,t_r)=g(\alpha,t_1,\dots,t_r,0).
$$
Since $f_{r+1}(\alpha,t_1,\dots,t_r,0)(P^n)=f_r(\alpha,t_1,\dots,t_r)(P^n)$,
we obtain the ring homomorphism
\begin{gather*}
f:\mathcal{P}\to\Qsym[t_1,t_2,\dots][\alpha]\subset\varprojlim\limits_r\mathbb
Z[\alpha,t_1,\dots,t_r]:\\
f(\alpha,t_1,t_2,\dots)(P^n)=\alpha^n+\sum\limits_{k=1}^{n}\sum\limits_{0\leqslant
a_1<\dots<a_k\leqslant
n-1}f_{a_1,\,\dots,\,a_k}\alpha^{a_1}M_{(n-a_k,\,\dots,\,a_2-a_1)}.
\end{gather*}

\begin{defin}
For a set $S=\{a_1,\dots,a_k\}\subset \{0,1,\dots,n-1\}$ let us denote by $\omega(S)$
the composition $(n-a_k,a_k-a_{k-1},\dots,a_2-a_1)$ of the number $n-a_1$.
\end{defin}
Then we can write
$$
f(P^n)=\sum\limits_{k\geqslant 0}\sum\limits_{S:\;l(S)=k}f_{S}\;\alpha^{a_1}M_{\omega(S)}
$$

\subsection{Image}

It follows from the formula, that the restriction
$$
f(\alpha,t_1,\dots,t_r,t_{r+1},\dots)\to
f(\alpha,t_1,\dots,t_r,0,0,\dots)=f_r(\alpha,t_1,\dots,t_r)
$$
is injective on the space of all $n$-dimensional polytopes, $n\leqslant r$.
\begin{thm} The image of the space $\mathcal{P}^{2n}$ generated by all
$n$-dimensional polytopes in the ring\linebreak
$\Qsym[t_1,\dots,t_r][\alpha],\;r\geqslant n$ under the mapping $f_r$
consists of all the homogeneous polynomials $g$ of degree $2n$  satisfying
the equations
\begin{enumerate}
\item
\begin{align*}
g(\alpha,t_1,-t_1,t_3,\dots,t_r)&=g(\alpha,0,0,t_3,\dots,t_r);\\
g(\alpha,t_1,t_2,-t_2,t_4,\dots,t_r)&=g(\alpha,t_1,0,0,t_4\dots,t_r);\\
\dots\\
g(\alpha,t_1,\dots,t_{r-2},t_{r-1},-t_{r-1})&=g(\alpha,t_1,\dots,t_{r-2},0,0);
\end{align*}
\item
$$
g(-\alpha,t_1,\dots,t_{r-1},\alpha)=g(\alpha,t_1,\dots,t_{r-1},0);
$$
\end{enumerate}
These equations are equivalent to the Bayer-Billera (generalized
Dehn-Sommerville) relations.\label{f}
\end{thm}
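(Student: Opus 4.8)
The plan is to prove both inclusions, and then read off the equivalence with the Bayer--Billera relations from the middle step. The easy inclusion is that $f_r(\mathcal{P}^{2n})$ satisfies (1)--(2), and this is just a restatement of the operator identities already proved. Writing $f_r(\alpha,t_1,\dots,t_r)(P)=\xi_\alpha\,\Phi(t_r)\cdots\Phi(t_1)P$, the substitution $t_{i+1}=-t_i$ turns the two adjacent factors $\Phi(t_{i+1})\Phi(t_i)$ into $\Phi(-t_i)\Phi(t_i)$, which equals $1$ by Proposition \ref{PM}; the effect is the same as deleting both factors, i.e. as setting $t_i=t_{i+1}=0$, which is exactly the $i$-th equation of family (1). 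Likewise, putting $t_r=\alpha$ and replacing the character $\xi_\alpha$ by $\xi_{-\alpha}$ turns the outermost factor into $\xi_{-\alpha}\Phi(\alpha)=\xi_\alpha$ by Proposition \ref{Euler}, which is precisely equation (2). Hence $f_r(\mathcal{P}^{2n})$ lies in the set of homogeneous degree-$2n$ quasi-symmetric polynomials satisfying (1) and (2).

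The substance is the reverse inclusion. I would fix the degree and use the bijection between monomials $\alpha^{a_1}M_{\omega(S)}$ of degree $2n$ and subsets $S=\{a_1<\dots<a_k\}\subseteq\{0,\dots,n-1\}$ (with $S=\varnothing\leftrightarrow\alpha^n$), writing a general admissible $g=\sum_S g_S\,\alpha^{a_1}M_{\omega(S)}$; since $r\ge n$ every composition occurs, so the $g_S$ form $2^n$ free coefficients. The core computation is to expand the substitutions and read off the linear relations on the $g_S$ by comparing coefficients of the surviving monomials. For (1) one checks, as in the sample computation $g_{\{n-2,n-1\}}=2g_{\{n-2\}}$ coming from $M_{(2)},M_{(1,1)}$, that collecting the terms involving the merged variable $t_i$ produces exactly those relations (\ref{BBR}) with lower index $\ge 0$; these keep the $\alpha$-power $\alpha^{a_1}$ fixed, matching the fact that (1) acts at fixed $\alpha$-degree. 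For (2) the mixing of the $\alpha$-power $\alpha^{a_1}$ (the bottom index) with the top variable $t_r$ produces precisely the relations (\ref{BBR}) whose lower index is $-1$, i.e. those in which the minimal element of the index set can change. I expect this coefficient bookkeeping---matching each substitution to the correct instance of (\ref{BBR}), with the right dependence on the $\alpha$-degree---to be the main obstacle; it is the generating-function form of the derivations already used to prove Propositions \ref{PM} and \ref{Euler}.

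Granting this translation, equations (1)--(2) and the relations (\ref{BBR}) cut out the same subspace $V$ of the $2^n$-dimensional space of coefficient vectors $(g_S)$. By \cite{BB} every $f_T$ reduces, via (\ref{BBR}), to the $c_n$ coordinates indexed by $S\in\Psi^n$, so $V$ is $c_n$-dimensional with $\{g_S\}_{S\in\Psi^n}$ free; the same theorem shows the flag $f$-vectors of $n$-polytopes span this whole subspace. Since the first paragraph gives $f_r(\mathcal{P}^{2n}\otimes\mathbb Q)\subseteq V$, and the explicit formula identifies $f_r(P)$ with the flag vector of $P$, the two spaces coincide over $\mathbb Q$.

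Finally I would upgrade to an integral statement. Let $g$ have integer coefficients and satisfy (1)--(2); then $(g_S)_{S\in\Psi^n}\in\mathbb Z^{\Psi^n}$. Because $\det K^n=1$, the matrix $\bigl(f_S(Q)\bigr)_{Q\in\Omega^n,\,S\in\Psi^n}$ is unimodular, so there is a unique integer vector $(n_Q)_{Q\in\Omega^n}$ with $\sum_Q n_Q f_S(Q)=g_S$ for all $S\in\Psi^n$. The combination $\sum_Q n_Q Q$ then satisfies $f_r\bigl(\sum_Q n_Q Q\bigr)=g$ on the $\Psi^n$-coordinates; as both sides satisfy (1)--(2) and such solutions are determined by their $\Psi^n$-coordinates, they agree, so $g\in f_r(\mathcal{P}^{2n})$. (The Corollary that every flag $f$-vector is an integer combination of those of $\Omega^n$ gives the opposite containment integrally.) This closes both the set-theoretic and the integrality parts of the equality, and the equivalence of (1)--(2) with the generalized Dehn--Sommerville relations is exactly the translation established in the second step.
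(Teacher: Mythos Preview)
Your proposal is correct and follows essentially the same route as the paper: the easy inclusion via the operator identities $\Phi(-t)\Phi(t)=1$ and $\xi_{-\alpha}\Phi(\alpha)=\xi_\alpha$, the coefficient-by-coefficient translation of equations (1)--(2) into the generalized Dehn--Sommerville relations, and the closing argument over $\mathbb Z$ via the unimodularity $\det K^n=1$ of the Bayer--Billera matrix. The step you flag as the main obstacle---the explicit matching of the substitutions to instances of (\ref{BBR}), with (1) producing the relations with lower index $i\ge 0$ and (2) the relations with $i=-1$---is exactly what the paper carries out as its Lemma~\ref{equations}, so your expectations there are borne out in full.
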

\begin{proof}
$\Phi(-t)\Phi(t)=1=\Phi(0)\Phi(0)$, therefore
\begin{align*}
\xi_{\alpha}\Phi(t_r)\dots\Phi(t_3)\Phi(-t_1)\Phi(t_1)P^n&=\xi_{\alpha}\Phi(t_r)\dots\Phi(t_3)\Phi(0)\Phi(0)P^n;\\
\xi_{\alpha}\Phi(t_r)\dots\Phi(t_4)\Phi(-t_2)\Phi(t_2)\Phi(t_1)P^n&=\xi_{\alpha}\Phi(t_r)\dots\Phi(t_4)\Phi(0)\Phi(0)\Phi(t_1)P^n;\\
\dots\\
\xi_{\alpha}\Phi(-t_{r-1})\Phi(t_{r-1})\Phi(t_{n-2})\dots\Phi(t_1)P^n&=\xi_{\alpha}\Phi(0)\Phi(0)\Phi(t_{r-2})\dots\Phi(t_1)P^n.
\end{align*}
On the other hand, Proposition \ref{Euler} gives the last relation
$$
\xi_{-\alpha}\Phi(\alpha)\Phi(t_{r-1})\dots\Phi(t_1)P^n=\xi_{\alpha}\Phi(0)\Phi(t_{r-1})\dots\Phi(t_1)P^n.
$$

Now let us proof the opposite inclusion, that is if the homogeneous
polynomial $g$ of degree $2n$ satisfies the conditions of the theorem, then
$g=f_r(p^n)$ for some $p^n\in\mathcal{P}^{2n}$.

\begin{lemma}\label{equations}
Let $P^n$ be an $n$-dimensional polytope. Then
\begin{enumerate}
\item The equation
$$
f_r(\alpha,t_1,\dots,t_q,-t_q,\dots,t_r)=f_r(\alpha,t_1,\dots,0,0,\dots,t_r)
$$
is equivalent to the generalized Dehn-Sommerville relations
$$
\sum\limits_{j=a_t+1}^{a_{t+1}-1}(-1)^{j-a_t-1}f_{a_1,\,\dots,\,a_t,\,j,\,a_{t+1},\,\dots,\,a_k}=\left(1+(-1)^{a_{t+1}-a_t}\right)f_{a_1,\,\dots,\,a_t,\,a_{t+1},\,\dots,\,a_k}
$$
for $1\leqslant k\leqslant\min\{r-1,n-1\},\;k+1-q\leqslant t\leqslant
r-q$.

\item The equation
$$
f_r(-\alpha,t_1,\dots,t_{r-1},\alpha)=f_r(\alpha,t_1,\dots,t_{r-1},0)
$$
is equivalent to the generalized Dehn-Sommerville relations
$$
\sum\limits_{j=0}^{a_1-1}(-1)^jf_{j,\,a_1,\,\dots,\,a_k}=(1+(-1)^{a_1-1})f_{a_1,\,\dots,\,a_k}
$$
for $0\leqslant k\leqslant\min\{r-1,n-1\}$.
\end{enumerate}
\end{lemma}
\begin{proof}
\begin{multline*}
f_r(\alpha,t_1,\dots,t_q,-t_q,\dots,t_r)=\\
\alpha^n+\left.\sum\limits_{k=1}^{\min\{r,\,n\}}\sum\limits_{0\leqslant
a_1<\dots<a_k\leqslant
n-1}f_{a_1,\,\dots,\,a_k}\alpha^{a_1}\left(\sum\limits_{1\leqslant
l_1<\dots<l_k\leqslant r}t_{l_1}^{n-a_k}\dots
t_{l_k}^{a_2-a_1}\right)\right|_{t_{q+1}=-t_q}=\\
=\alpha^n+\sum\limits_{k=1}^{\min\{r-2,\,n\}}\sum\limits_{0\leqslant
a_1<\dots<a_k\leqslant
n-1}f_{a_1,\,\dots,\,a_k}\alpha^{a_1}\left(\sum\limits_{1\leqslant
l_1<\dots<l_k\leqslant r,\,l_j\ne q,\,q+1}t_{l_1}^{n-a_k}\dots
t_{l_k}^{a_2-a_1}\right)+\\
+\sum\limits_{k=1}^{\min\{r-1,\,n\}}\sum\limits_{0\leqslant
a_1<\dots<a_k\leqslant
n-1}f_{a_1,\,\dots,\,a_k}\alpha^{a_1}\left(\sum\limits_{1\leqslant
l_1<\dots<l_j=q<\dots<l_k\leqslant r,\,l_{j+1}\ne q+1}t_{l_1}^{n-a_k}\dots
t_q^{a_{k+2-j}-a_{k+1-j}}\dots
t_{l_k}^{a_2-a_1}\right)+\\
+\sum\limits_{k=1}^{\min\{r-1,\,n\}}\sum\limits_{0\leqslant
a_1<\dots<a_k\leqslant
n-1}f_{a_1,\,\dots,\,a_k}\alpha^{a_1}\left(\sum\limits_{1\leqslant
l_1<\dots<l_{j+1}=q+1<\dots<l_k\leqslant r,\,l_j\ne q}t_{l_1}^{n-a_k}\dots
(-t_q)^{a_{k+1-j}-a_{k-j}}\dots
t_{l_k}^{a_2-a_1}\right)+\\
\sum\limits_{k=1}^{\min\{r,\,n\}}\!\!\!\!\!\sum\limits_{0\leqslant
a_1<\dots<a_k\leqslant
n-1}f_{a_1,\,\dots,\,a_k}\alpha^{a_1}\left(\sum\limits_{1\leqslant
l_1<\dots<l_j=q<l_{j+1}=q+1<\dots< l_k\leqslant r}t_{l_1}^{n-a_k}\dots
(-1)^{a_{k+1-j}-a_{k-j}}t_q^{a_{k+2-j}-a_{k-j}}\dots t_{l_k}^{a_2-a_1}\right)
\end{multline*}
The first and the second summands form exactly
$f(\alpha,t_1,\dots,0,0,\dots,t_r)$. Therefore all the coefficients of the
polynomial consisting of the last three summands should be equal to $0$.

Consider the monomial $\alpha^{a_1}t_{l_1}^{n-a_k}\dots
t_q^{a_{t+1}-a_t}\dots t_{l_k}^{a_2-a_1}$. Here $q=l_{k+1-t}$. The existence
of a monomial of this form in the sum is equivalent to the conditions
$$
k\leqslant\min\{r-1,n-1\},\;k+1-t\leqslant q,\;t-1\leqslant r-q-1.
$$
The coefficient of the monomial should be equal to $0$. This is equivalent to
the relation:
$$
\left(1+(-1)^{a_{t+1}-a_t}\right)f_{a_1,\,\dots,\,a_t,\,a_{t+1},\,\dots,\,a_k}+\sum\limits_{j=a_t+1}^{a_{t+1}-1}(-1)^{j-a_t}f_{a_1,\,\dots,\,a_t,\,j,\,a_{t+1},\,a_k}=0
$$
Now let us consider the remaining relation
    $f_r(-\alpha,t_1,\dots,t_{r-1},\alpha)=f_r(\alpha,t_1,\dots,t_{r-1},0)$:
\begin{multline*}
=(-\alpha)^n+\sum\limits_{k=1}^{\min\{r-1,\,n\}}\sum\limits_{0\leqslant
a_1<\dots<a_k\leqslant
n-1}f_{a_1,\,\dots,\,a_k}(-\alpha)^{a_1}\left(\sum\limits_{1\leqslant
l_1<\dots<l_k\leqslant r-1}t_{l_1}^{n-a_k}\dots
t_{l_k}^{a_2-a_1}\right)+\\
+\sum\limits_{k=1}^{\min\{r,\,n\}}\sum\limits_{0\leqslant
a_1<\dots<a_k\leqslant
n-1}f_{a_1,\,\dots,\,a_k}(-\alpha)^{a_1}\left(\sum\limits_{1\leqslant
l_1<\dots<l_k=r}t_{l_1}^{n-a_k}\dots
t_{l_{k-1}}^{a_3-a_2}\alpha^{a_2-a_1}\right)=\\
\alpha^n+\sum\limits_{k=1}^{\min\{r-1,\,n\}}\sum\limits_{0\leqslant
a_1<\dots<a_k\leqslant
n-1}f_{a_1,\,\dots,\,a_k}\alpha^{a_1}\left(\sum\limits_{1\leqslant
l_1<\dots<l_k\leqslant r-1}t_{l_1}^{n-a_k}\dots t_{l_k}^{a_2-a_1}\right)
\end{multline*}
This is equivalent to the relations
$$
(-1)^{a_1}f_{a_1,\,\dots,\,a_k}+\sum\limits_{j=0}^{a_1-1}(-1)^jf_{j,\,a_1,\,\dots,\,a_k}=f_{a_1,\,\dots,\,a_k}
$$
for $0\leqslant k\leqslant\min\{r-1,n-1\}$. If $k=0$, then the corresponding
relation is exactly the Euler formula
$$
(-1)^n+(-1)^{n-1}f_{n-1}+\dots+f_2-f_1+f_0=1.
$$

\end{proof}
\begin{cor}
For $r\geqslant n$ relations 1. and 2. of the theorem are equivalent to the
generalized Dehn-Sommerville relations for the polytope $P^n$: For
$S\subset\{0,\dots,n-1\}$, and $\{i,k\}\subseteq S\cup\{-1,n\}$ such that
$i<k-1$ and $S\cap\{i+1,\dots,k-1\}=\varnothing$:
$$
\sum\limits_{j=i+1}^{k-1}(-1)^{j-i-1}f_{S\cup\{j\}}=(1-(-1)^{k-i-1})f_S.
$$
\end{cor}
\begin{proof}
We see that relations 1. and 2. follow from the generalized Dehn-Sommerville
relations.

On the other hand, if $i=-1$ then the corresponding relation follows from
equation 2.

If $S=\{a_1,\dots,a_t,a_{t+1},\dots,a_s\}$, $i=a_t\geqslant 0$, and
$k=a_{t+1}$; or $S=\{a_1,\dots,a_t\}$, $i=a_t$, and $k=n$, then we can take
$q$ such that $s+1-t\leqslant q\leqslant r-t$.
\end{proof}
\begin{rem}
For  $n$-dimensional polytopes for different $r\geqslant n$ not all the
equations are independent.

For $i=-1$ relation (\ref{BBR}) follows from equation 2. and all the
equations of type 1. do not contain the case $i=-1$.

On the other hand, let $S=\{a_1,\dots,a_t,a_{t+1},\dots,a_s\}$,
$i=a_t\geqslant 0$, and $k=a_{t+1}$; or $S=\{a_1,\dots,a_t\}$, $i=a_t$, and
$k=n$. Lemma \ref{equations} implies that relation (\ref{BBR}) follows from
the equation with $s+1-t\leqslant q\leqslant r-t$. Let us denote $a=s+1-t$,
$b=r-t$. There are two conditions for $s,t$, namely
$$
1\leqslant s\leqslant n-1 \mbox{ and }1\leqslant t\leqslant s.
$$
Let us rewrite these conditions in terms of $a$ and $b$:
$t=r-b;\;s=a+t-1=a-b+r-1$, so
$$
r-n\leqslant b-a\leqslant r-2;\;b\leqslant r-1;\;1\leqslant a
$$
This gives us a triangle on the plane $(a,b)$. Each point $(a,b)$ of this
triangle corresponds to the conditions $a\leqslant q\leqslant b$, that is
there should be the equation for $q$ in the segment $[a,b]$. We can imagine
that this segment is the segment $[(a,a),(a,b)]$ on the plane.

Thus for $r=n$ all the equations for $t_1,\dots,t_{n-1}$ are necessary, for
$r=n+1$ it is enough to take the equations for
$q=2,4,6,\dots,2[\frac{n}{2}]$. If $r\geqslant 2n-2$ one equation
$$
f(\alpha,t_1,\dots,t_{n-1},-t_{n-1},t_{n+1},\dots,t_r)=f(\alpha,t_1,\dots,0,0,t_{n+1},\dots,t_r)
$$
gives all the relations of type 1.
\end{rem}
Now let us finish the proof of the theorem. If the homogeneous polynomial
$g\in\Qsym[t_1,\dots,t_r][\alpha]$ of degree $2n$ satisfies all the relations
of the theorem, then its coefficients satisfy the generalized
Dehn-Sommerville relations. Therefore all the coefficients are linear
combinations of the coefficients $g_{a_1,\,\dots,\,a_k}$, where
$S=\{a_1,\dots,a_k\}\in \Psi^n$. As we know, the vectors
$\{f_{S}(Q),\;S\in\Psi^n\},\;Q\in\Omega^n$ form a basis of the abelian group
of all the vectors $\{f_{S},\;S\in\Psi^n,\; f_{S}\in\mathbb Z\}=\mathbb
Z^{c_n}$. So the vector $\{g_{S},\;S\in\Psi^n\}$ is an integer combination of
the vectors $\{f_{S}(Q),\;S\in\Psi^n\},\; Q\in\Omega^n$. This implies that
the polynomial $g$ is an integer combination of the polynomials
$f_r(Q),\;Q\in\Omega^n$ with the same coefficients.
\end{proof}

Let us remind that rank of the space $f_r(\mathcal{P}^{2n}),\;r\geqslant n$
is equal to $c_n$.

\begin{prop}
Let $r\geqslant 2$, and let $P^n$ be an $n$-dimensional polytope. Then
$$
f_r(\alpha,t_1,\dots,t_r)(P^n)=f_1(\alpha,t_1+\dots+t_r)(P^n)
$$
if and only if $P^n$ is simple. Here $f_1(\alpha,t)$ is a usual homogeneous
$f$-polynomial in two variables.
\end{prop}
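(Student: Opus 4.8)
The forward implication is essentially contained in the earlier Example computing $\Phi(t)|_{\mathcal{P}_s}=e^{dt}$. The plan is to assume $P^n$ is simple and exploit this exponential form. Every face of a simple polytope is again simple, so each $d_k(P^n)$ (a sum of faces of $P^n$) lies in $\mathcal{P}_s$; iterating, $\Phi(t_r)\cdots\Phi(t_1)P^n$ stays inside $\mathcal{P}_s[t_1,\dots,t_r]$. On $\mathcal{P}_s$ we may therefore replace each $\Phi(t_i)$ by $e^{dt_i}$. Since these are power series in the single derivation $d$, they commute and multiply by adding exponents, giving $\Phi(t_r)\cdots\Phi(t_1)P^n=e^{d(t_1+\cdots+t_r)}P^n=\Phi(t_1+\cdots+t_r)P^n$. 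Applying $\xi_{\alpha}$ and recalling that $f_1(\alpha,t)(P^n)=\xi_{\alpha}\Phi(t)P^n$ yields $f_r(\alpha,t_1,\dots,t_r)(P^n)=f_1(\alpha,t_1+\cdots+t_r)(P^n)$.

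For the converse I would compare a single coefficient. The cases $n\leqslant 1$ are trivial, since every such polytope is simple and both sides agree, so assume $n\geqslant 2$. On the quasi-symmetric side, inspection of the explicit formula for $f_r$ shows that the only term contributing the monomial $\alpha^0 t_1 t_2^{n-1}$ comes from $S=\{0,n-1\}$: here $a_1=0$ (so $\alpha^{a_1}=\alpha^0$) and the composition is $\omega(S)=(n-a_2,\,a_2-a_1)=(1,n-1)$, producing $f_{0,n-1}\,M_{(1,n-1)}$; no term of length $1$ or length $\geqslant 3$ can match a two-variable monomial with exponents $1,n-1$, so the coefficient of $\alpha^0 t_1 t_2^{n-1}$ equals $f_{0,n-1}$. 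On the symmetric side, $f_1(\alpha,t_1+\cdots+t_r)(P^n)=\alpha^n+\sum_{i=0}^{n-1}f_i\,\alpha^i(t_1+\cdots+t_r)^{n-i}$; only the $i=0$ summand carries $\alpha^0$, and the coefficient of $t_1 t_2^{n-1}$ in $(t_1+\cdots+t_r)^n$ is the multinomial $\binom{n}{1,\,n-1}=n$, so the coefficient of $\alpha^0 t_1 t_2^{n-1}$ equals $n f_0$. The hypothesis $r\geqslant 2$ is exactly what guarantees that the two-variable monomial $t_1 t_2^{n-1}$ genuinely occurs.

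Equating these two coefficients gives $f_{0,n-1}=n f_0$. The final step is the geometric fact that every vertex of an $n$-dimensional polytope lies in at least $n$ facets, whence $f_{0,n-1}=\sum_{v}\#\{\text{facets containing }v\}\geqslant n f_0$, with equality if and only if each vertex lies in exactly $n$ facets, i.e. if and only if $P^n$ is simple. Thus $f_{0,n-1}=n f_0$ forces $P^n$ to be simple, completing the converse. The main point requiring care is this converse: one must verify that no other term of $f_r$ contributes to the chosen monomial, so that the single coefficient really recovers $f_{0,n-1}$, after which everything reduces to the elementary incidence inequality above. The forward direction, by contrast, is immediate once the exponential identity $\Phi(t)|_{\mathcal{P}_s}=e^{dt}$ is in hand.
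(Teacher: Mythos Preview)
Your proof is correct and follows essentially the same strategy as the paper: the forward direction via $\Phi(t)|_{\mathcal{P}_s}=e^{dt}$ is identical, and for the converse both you and the paper compare a single coefficient of $\alpha^0$ in the two-variable restriction. The only difference is cosmetic: the paper reads off the coefficient of $t_1^{n-1}t_2$, obtaining $f_{01}=nf_0$ and then invoking $f_{01}=2f_1$ to conclude simplicity from $2f_1=nf_0$, whereas you read off the coefficient of $t_1t_2^{n-1}$, obtaining $f_{0,n-1}=nf_0$ and concluding directly from the vertex--facet incidence inequality. Both choices isolate the same geometric content (each vertex meets exactly $n$ edges, equivalently exactly $n$ facets), so the arguments are interchangeable.
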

\begin{proof}
On the ring of simple polytopes $
\left.d_k\right|_{\mathcal{P}_s}=\left.\frac{d^k}{k!}\right|_{\mathcal{P}_s},
$ so $\Phi(t)=e^{td}$.

Then $\Phi(t_r)\Phi(t_{r-1})\dots\Phi(t_1)=\Phi(t_1+\dots+t_r)$, therefore
$f_r(\alpha,t_1,\dots,t_r)(P^n)=f_1(\alpha,t_1+\dots+t_r)(P^n)$.

On the other hand, let
$f_r(\alpha,t_1,\dots,t_r)(P^n)=f_1(\alpha,t_1+\dots+t_r)(P^n)$.

Then $f_2(\alpha,t_1,t_2)(P^n)=f_1(\alpha,t_1+t_2)(P^n)$. So
$$
\alpha^n+\sum\limits_{i=0}^{n-1}f_i\alpha^i(t_1^{n-i}+t_2^{n-i})+\sum\limits_{0\leqslant i<j\leqslant n-1}f_{ij}\,\alpha^it_1^{n-j}t_2^{j-i}=\alpha^n+\sum\limits_{i=0}^{n-1}f_i\alpha^i(t_1+t_2)^{n-i}
$$
In particular, $f_{01}=nf_0$, since the coefficients of the monomial
$t_1^{n-1}t_2$ on the left and on the right are equal. Hence
$2f_1=f_{01}=nf_0$. This implies that the polytope $P^n$ is simple.
\end{proof}
\begin{rem}
Letting $r$ tend to infinity we obtain that
$f(\alpha,t_1,t_2,\dots)(P^n)=f_1(\alpha,t_1+t_2+\dots)(P^n)$ if and only if
$P^n$ is simple.
\end{rem}
In the case of simple polytopes the equations of the first type are trivial,
but the equation of the second type has the form:
$$
f_1(-\alpha,t_1+\dots+t_{r-1}+\alpha)=f_1(\alpha,t_1+\dots+t_{r-1})
$$
If we denote $t=t_1+\dots+t_{r-1}$, then
$f_1(-\alpha,\alpha+t)=f_1(\alpha,t)$. This equation is equivalent to the
Dehn-Sommerville relations (after the change of variables
$h(\alpha,t)=f_1(\alpha-t,t)$ it is equivalent to the fact that
$h(\alpha,t)=h(t,\alpha)$)

\subsection{Characterization of the generalized $f$-polynomial.}
In this part we find the condition that uniquely determines the generalized
$f$-polynomial.

At first let us find the relation between $f(P^n)$ and $f(d_kP^n)$.
\begin{prop}
For any polytope $P^n\in\mathcal{P}$ we have
$$
f(\alpha,t_1,t_2,\dots)(d_kP^n)=\frac{1}{k!}\left.\frac{\partial^k}{\partial t^k}\right|_{t=0}f(\alpha,t,t_1,t_2,\dots)(P^n)
$$\label{dk}
\end{prop}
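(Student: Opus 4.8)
The plan is to reduce everything to the operator definition of the generalized $f$-polynomial together with the power-series expansion of $\Phi(t)$. Recall that, by construction, $f(\alpha,t_1,t_2,\dots)(Q)=\xi_{\alpha}\cdots\Phi(t_2)\Phi(t_1)Q$, where $\Phi(t_1)$ is applied first; for a fixed polytope $Q$ this is a \emph{finite} composition, since $d_jQ=0$ (respectively $\varnothing$) once $j$ exceeds $\dim Q+1$, so no convergence question arises. The first step is to interpret the insertion of the extra variable $t$ into the first slot. After relabelling the variables $s_1=t,\,s_2=t_1,\,s_3=t_2,\dots$, the defining formula gives
$$
f(\alpha,t,t_1,t_2,\dots)(P^n)=\xi_{\alpha}\cdots\Phi(t_2)\Phi(t_1)\Phi(t)P^n,
$$
so that pre-composing with $\Phi(t)$ as the innermost factor is exactly the effect of the inserted variable.

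Next I would expand this innermost factor. Since $\Phi(t)=\sum_{j\geqslant 0}d_jt^j$, we have $\Phi(t)P^n=\sum_{j\geqslant 0}(d_jP^n)\,t^j$, a polynomial in $t$ whose coefficient of $t^k$ is $d_kP^n$; equivalently $\frac{1}{k!}\left.\frac{\partial^k}{\partial t^k}\right|_{t=0}\Phi(t)P^n=d_kP^n$.

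The key step is then to commute the operator $A=\xi_{\alpha}\cdots\Phi(t_2)\Phi(t_1)$ past the differentiation in $t$. The operators $d_m$ are additive, $\xi_{\alpha}$ is a ring homomorphism, and none of the factors of $A$ involves the variable $t$; hence $A$ is additive in its polytope argument and $t$ is a formal variable commuting with the $t_i$. Therefore
$$
f(\alpha,t,t_1,t_2,\dots)(P^n)=A\Bigl(\sum_{j\geqslant 0}(d_jP^n)t^j\Bigr)=\sum_{j\geqslant 0}\bigl(A\,d_jP^n\bigr)t^j=\sum_{j\geqslant 0}f(\alpha,t_1,t_2,\dots)(d_jP^n)\,t^j,
$$
where again the sum is finite for fixed $P^n$. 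Extracting the coefficient of $t^k$, that is, applying $\frac{1}{k!}\left.\frac{\partial^k}{\partial t^k}\right|_{t=0}$, yields $f(\alpha,t_1,t_2,\dots)(d_kP^n)$, which is the asserted identity.

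I do not expect a genuine obstacle here. The only points requiring care are the ordering convention for the variables (ensuring the inserted $t$ corresponds to the innermost factor $\Phi(t)$, applied first to $P^n$) and the justification that the additive, $t$-independent operator $A$ may be interchanged with the formal expansion in $t$. Both are immediate once one notes that, for a fixed $P^n$, all the compositions and sums involved are finite.
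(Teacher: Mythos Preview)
Your argument is correct and is essentially the same as the paper's: both expand the innermost factor $\Phi(t)P^n=\sum_j (d_jP^n)t^j$ and extract the coefficient of $t^k$ after applying $\xi_{\alpha}\Phi(t_r)\cdots\Phi(t_1)$. The only cosmetic difference is that the paper carries this out via the finite truncations $f_{n+1}$ and $f_n$ (using that quasi-symmetric functions of degree $2n$ are determined by their restriction to $n$ variables), whereas you argue directly that for fixed $P^n$ all compositions are finite; the content is the same.
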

\begin{proof}
Indeed, both expressions belong to $\Qs[\alpha]$, and they are uniquely
defined by their restrictions to $n$ variables. Then we have
\begin{multline*}
\left.\left(\left.\frac{1}{k!}\frac{\partial^k}{\partial t^k}\right|_{t=0}f_{n+1}(\alpha,t,t_1,t_2,\dots)(P^n)\right)\right|_{t_{n+1}=t_{n+2}=\dots=0}=\\
\frac{1}{k!}\left.\frac{\partial^k}{\partial t^k}\right|_{t=0}f_{n+1}(\alpha,t,t_1,t_2,\dots,t_n)(P^n)=\frac{1}{k!}\left.\frac{\partial^k}{\partial t^k}\right|_{t=0}\xi_{\alpha}\Phi(t_n)\dots\Phi(t_1)\Phi(t)P^n=\xi_{\alpha}\Phi(t_n)\dots\Phi(t_1)d_kP^n=\\
f_n(\alpha,t_1,t_2,\dots,t_n)(d_kP^n)=\left.f(\alpha,t_1,t_2,\dots)(d_kP^n)\right|_{t_{n+1}=t_{n+2}=\dots=0}
\end{multline*}
\end{proof}
\begin{cor}
\begin{gather*}
f(\alpha,t,t_1,t_2,\dots)(P^n)=f(\alpha,t_1,t_2,\dots)(P^n)+f(\alpha,t_1,t_2,\dots)(dP^n)t+\\
+f(\alpha,t_1,t_2,\dots)(d_2P^n)t^2+\dots+f(\alpha,t_1,t_2,\dots)(d_nP^n)t^n\\
f_{r+1}(\alpha,t,t_1,\dots,t_r)(P^n)=f_r(\alpha,t_1,\dots,t_r)(P^n)+f_r(\alpha,t_1,\dots,t_r)(dP^n)t+\\
+f_r(\alpha,t_1,\dots,t_r)(d_2P^n)t^2+\dots+f_r(\alpha,t_1,\dots,t_r)(d_nP^n)t^n,\;r\geqslant
0;
\end{gather*}\label{eqn}
\end{cor}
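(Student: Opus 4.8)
The statement is nothing more than the Taylor expansion in the adjoined variable $t$ of the quasi-symmetric function $f(\alpha,t,t_1,t_2,\dots)(P^n)$, together with the identification of its coefficients supplied by Proposition \ref{dk}. The plan is first to observe that this function is a polynomial in $t$ of degree at most $n$, and then to read off its coefficients one by one.

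First I would recall the definition $f_{r+1}(\alpha,t,t_1,\dots,t_r)(P^n)=\xi_{\alpha}\Phi(t_r)\cdots\Phi(t_1)\Phi(t)P^n$, in which $t$ is the innermost variable, so that $\Phi(t)$ is applied to $P^n$ first. Since $d_0=\id$ and $d_{n+1}P^n=0$ in $\mathcal{P}$, the expansion $\Phi(t)P^n=\sum_{k\geqslant 0}(d_kP^n)t^k$ terminates, giving the finite sum $\Phi(t)P^n=\sum_{k=0}^{n}(d_kP^n)t^k$. The operator $\xi_{\alpha}\Phi(t_r)\cdots\Phi(t_1)$ is $\mathbb Z[\alpha,t_1,\dots,t_r]$-linear, being a composition of ring homomorphisms with the evaluation character, so it distributes over this finite sum. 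Carrying out the distribution yields $f_{r+1}(\alpha,t,t_1,\dots,t_r)(P^n)=\sum_{k=0}^{n}f_r(\alpha,t_1,\dots,t_r)(d_kP^n)\,t^k$, which is exactly the identity for $f_{r+1}$ in the corollary; equivalently, the coefficient of $t^k$ is precisely the expression produced by the derivative formula of Proposition \ref{dk}.

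The identity for $f$ in infinitely many variables then follows by passing to the inverse limit over $r$: the finite formulas are compatible under the restriction maps $E_r$ that set the last variable to zero, and $f=\varprojlim_r f_r$, while the right-hand side remains a finite sum because $d_kP^n=0$ for $k>n$.

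There is no serious obstacle here; the only points requiring care are the ordering convention that places $t$ as the first applied variable, and the termination of the series $\Phi(t)P^n$ at degree $n$, which is what makes both right-hand sides genuine polynomials in $t$ rather than formal power series.
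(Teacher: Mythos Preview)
Your proof is correct and follows essentially the same approach as the paper: both argue directly by expanding $\Phi(t)P^n=\sum_{k=0}^n(d_kP^n)t^k$ and applying the linear operator $\xi_{\alpha}\Phi(t_r)\cdots\Phi(t_1)$ term by term, then passing to the limit in $r$ for the infinite-variable version. The paper also notes, as you do, that the first identity can alternatively be read off from Proposition~\ref{dk}.
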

\begin{proof}
The first equality follows from Proposition \ref{dk}.

In fact, both equalities can be proved directly:
\begin{multline*}
f_{r+1}(\alpha,t,t_1,\dots,t_r)(P^n)=\xi_{\alpha}\Phi(t_r)\dots\Phi(t_1)\Phi(t)P^n=\sum\limits_{k=0}^n\left(\xi_{\alpha}\Phi(t_r)\dots\Phi(t_1)d_kP^n\right)t^k=\\
=\sum\limits_{k=0}^{n}f_r(\alpha,t_1,\dots,t_r)(d_kP^n)t^k.
\end{multline*}

Letting $r$ tend to infinity we obtain the first equality.
\end{proof}

The first equality of Corollary \ref{eqn} is equivalent to the condition
$f(\alpha,t_1,t_2,\dots)(\Phi(t)P^n)=f(\alpha,t,t_1,t_2,\dots)(P^n)$.

Thus we see that the following diagram commutes
$$
\begin{CD}
\mathcal{P} @>f>>\Qsym[t_1,t_2,\dots][\alpha]\\
@V{\Phi}VV @VTVV\\
\mathcal{P}[t]@>f>>\Qsym[t,t_1,t_2,\dots][\alpha]
\end{CD}
$$
where $f(t)=t$, and
$T:\Qsym[t_1,t_2\dots][\alpha]\to\Qsym[t,t_1,t_2,\dots][\alpha]$ is a ring
homomorphism:
$$
Tg(\alpha,t_1,t_2\dots)=g(\alpha,t,t_1,t_2,\dots),\quad g\in\Qsym[t_1,t_2,\dots][\alpha].
$$
\begin{rem}
Note that
$f_r(\alpha,t_1,\dots,t_r)(\Phi(t)P^n)=f_{r+1}(\alpha,t,t_1,\dots,t_r)(P^n)$
for all $r\geqslant 0$.

Consider the ring homomorphism
$T_{r+1}:\Qsym[t_1,\dots,t_r][\alpha]\to\Qsym[t,t_1,\dots,t_r][\alpha]$
defined as
$$
T_{r+1}(\alpha)=\alpha,\quad T_{r+1}M_{\omega}(t_1,\dots,t_r)=M_{\omega}(t,t_1,\dots,t_r)
$$
Then the corresponding diagram
$f_r(\alpha,t_1,\dots,t_r)(\Phi(t)P^n)=(T_{r+1}f_r)(\alpha,t,t_1,\dots,t_r)$
commutes only for $r\geqslant n$.
\end{rem}

\begin{thm}
Let $\psi:\mathcal{P}\to\Qsym[t_1,t_2,\dots][\alpha]$ be a linear map such
that
\begin{enumerate}
\item $\psi(\alpha,0,0,\dots)(P^n)=\alpha^n$;
\item The following diagram commutes:

$$
\begin{CD}
\mathcal{P} @>\psi>>\Qsym[t_1,t_2,\dots][\alpha]\\
@V{\Phi}VV @VTVV\\
\mathcal{P}[t]@>\psi>>\Qsym[t,t_1,t_2,\dots][\alpha]
\end{CD}
$$
\end{enumerate}
Then $\psi=f$.
\end{thm}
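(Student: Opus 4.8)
The plan is to verify first that the generalized $f$-polynomial $f$ itself satisfies the two hypotheses, and then to show that these hypotheses already pin down the value of any admissible $\psi$ on every polytope, forcing $\psi=f$ by linearity. That $f$ meets the first condition is immediate from its explicit formula: setting all $t_i=0$ annihilates every $M_{\omega(S)}$ with $l(S)\geqslant 1$ and leaves $\alpha^n$. That $f$ meets the second condition is exactly the commuting square recorded in Corollary~\ref{eqn}, namely $f(\alpha,t_1,t_2,\dots)(\Phi(t)P^n)=f(\alpha,t,t_1,t_2,\dots)(P^n)$. So it suffices to prove that an arbitrary linear $\psi$ obeying conditions (1) and (2) is completely determined, and produces the same restrictions as $f$, on each generator $P^n$.

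First I would iterate the commuting diagram. Reading condition (2) as $\psi(\Phi(s)P)(\alpha,t_1,t_2,\dots)=\psi(P)(\alpha,s,t_1,t_2,\dots)$ and extending it coefficientwise by linearity to elements of $\mathcal{P}[s_1,\dots,s_{r-1}]$, an induction on $r$ yields
\[ \psi(\Phi(s_r)\cdots\Phi(s_1)P)(\alpha,t_1,t_2,\dots)=\psi(P)(\alpha,s_1,\dots,s_r,t_1,t_2,\dots). \]
Setting $t_1=t_2=\dots=0$ and expanding $\Phi(s_r)\cdots\Phi(s_1)P^n=\sum_{k_1,\dots,k_r\geqslant 0}(d_{k_r}\cdots d_{k_1}P^n)\,s_1^{k_1}\cdots s_r^{k_r}$ gives
\[ \psi(P^n)(\alpha,s_1,\dots,s_r,0,0,\dots)=\sum_{k_1,\dots,k_r\geqslant 0}\psi(d_{k_r}\cdots d_{k_1}P^n)(\alpha,0,0,\dots)\,s_1^{k_1}\cdots s_r^{k_r}. \]
Now each $d_{k_r}\cdots d_{k_1}P^n$ is an integer combination of faces of $P^n$ (a minimal face $G$ of dimension $n-k_1-\dots-k_r$ occurring with multiplicity equal to the number of chains through it), so by linearity and condition (1) each coefficient $\psi(d_{k_r}\cdots d_{k_1}P^n)(\alpha,0,0,\dots)=\sum_G(\#\text{chains})\,\alpha^{\dim G}$ is a fixed combination of flag numbers of $P^n$, independent of the particular admissible $\psi$. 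Hence every restriction $\psi(P^n)(\alpha,s_1,\dots,s_r,0,0,\dots)$ is forced; and since $f$ satisfies the very same two hypotheses, it produces the identical restriction for each $r$.

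The hard part will be passing from agreement of all finite-variable restrictions back to equality in $\Qsym[t_1,t_2,\dots][\alpha]$, since $\psi$ is not assumed graded. Here I would use that $\psi(P^n)$ and $f(P^n)$ are, by definition of the target ring, genuine finite combinations $\sum c_{a,\omega}\alpha^a M_\omega$, so their difference involves only finitely many compositions and thus has bounded length $l(\omega)\leqslant r_0$. Because the restriction map $g\mapsto g(\alpha,t_1,\dots,t_r,0,0,\dots)$ annihilates precisely the $M_\omega$ with $l(\omega)>r$ and is therefore injective on the span of monomials of length at most $r$ (cf. Proposition~\ref{Crn} and Remark~\ref{cr}), choosing any $r\geqslant r_0$ and invoking the equality of restrictions established above forces $\psi(P^n)=f(P^n)$. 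As $P^n$ ranges over all combinatorial polytopes and both maps are linear, this yields $\psi=f$.
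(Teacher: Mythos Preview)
Your argument is correct and follows essentially the same route as the paper: iterate the commuting square to obtain $\psi(P^n)(\alpha,s_1,\dots,s_r,0,0,\dots)=\xi_\alpha\Phi(s_r)\cdots\Phi(s_1)P^n=f_r(P^n)$, then conclude $\psi(P^n)=f(P^n)$. The only cosmetic difference is in the last step: the paper reads off each coefficient $\psi_\omega(\alpha)$ directly from the restriction with $r=l(\omega)$, whereas you appeal to finiteness of the monomial support and injectivity of the restriction for $r\geqslant r_0$; both are valid and equally short.
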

\begin{proof}
The first condition implies that $\psi(\alpha,0,0,\dots)(p)=\xi_{\alpha}p$
for any $p\in\mathcal{P}$.

Let $\psi(P^n)=\alpha^n+\sum\limits_{\omega}\psi_{\omega}(\alpha)M_{\omega}$,
and let $\omega=(j_1,\dots,j_k)$.

Since
$\psi(\alpha,t,t_1,t_2,\dots)(P^n)=\psi(\alpha,t_1,t_2,\dots)(\Phi(t)P^n)$,
we have
$$
\psi(\alpha,y_1,y_2,\dots,y_k,t_1,t_2,\dots)(P^n)=\psi(\alpha,t_1,t_2,\dots)(\Phi(y_k)\dots\Phi(y_1)P^n)
$$
Therefore,
$$
\psi(\alpha,y_1,\dots,y_k,0,0,\dots)=\psi(\alpha,0,0,\dots)(\Phi(y_k)\dots\Phi(y_1)P^n)=\xi_{\alpha}\Phi(y_k)\dots\Phi(y_1)P^n=f(\alpha,y_1,\dots,y_k,0,0,\dots)
$$
Hence $\psi_{\omega}(\alpha)=f_{S(\omega)}\alpha^{n-|\omega|}$, where
$S(\omega)=\{n-|\omega|,n-|\omega|+j_k,\dots,n-j_1\}$. This is valid for all
$\omega$, so $\psi=f$.
\end{proof}
\begin{rem}
Let us mention that the mapping $T$ is an isomorphism
$\Qsym[t_1,t_2,\dots][\alpha]\to\Qsym[t,t_1,t_2,\dots][\alpha]$, while
$\Phi$ is an injection and its image is described by the condition:
$p(t)\in\Phi(\mathcal{P})$ is and only if $\Phi(-t)p(t)\in\mathcal{P}$.
\end{rem}
On the ring of simple polytopes we have
$f(\alpha,t_1,t_2,\dots)(p)=f_1(\alpha,t_1+t_2+\dots)(p)=f_1(\alpha,\sigma_1)(p)$,
so the image of $\mathcal P_s$ belongs to $\mathbb Z[\alpha,\sigma_1]$. On
the other hand, $\Phi(t)=e^{td}$, so we have the condition
$f_1(\alpha,t_1)(e^{td}p)=f_1(\alpha,t+t_1)(p)$.
\begin{prop}\label{Simple}
Let $\psi:\mathcal{P}_s\to\mathbb Z[\alpha,t]$ be a linear mapping such that
\begin{enumerate}
\item $\psi(\alpha,0)(P^n)=\alpha^n$;
\item One of the following equivalent conditions holds:
   \begin{enumerate}
     \item $\psi(\alpha,t_1)(e^{td}p)=\psi(\alpha,t+t_1)(p)$;
     \item $\psi(\alpha,t)(dp)=\frac{\partial}{\partial
         t}\psi(\alpha,t)(p)$.
    \end{enumerate}
\end{enumerate}
Then $\psi=f_1$.
\end{prop}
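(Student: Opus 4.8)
The plan is to treat this as the one‑variable specialisation of the preceding characterisation theorem for $f$, where the infinite family of variables collapses to a single $t$. I would open by recording the two structural facts that make the argument work. First, since $\mathcal{P}_s$ is closed under $d$ (a facet of a simple polytope is again simple), every $d^kp$ lies in $\mathcal{P}_s$, and because $d$ is locally nilpotent ($d^{n+1}P^n=0$), the element $e^{td}p=\sum_{k\geqslant0}\frac{t^k}{k!}d^kp$ is an honest \emph{polynomial} in $t$ with coefficients in $\mathcal{P}_s$; thus $\psi(\alpha,t_1)$ may be applied to it coefficientwise. Second, on $\mathcal{P}_s$ we have $\Phi(t)=e^{td}$, so that $f_1(\alpha,t)(p)=\xi_\alpha\Phi(t)p=\xi_\alpha e^{td}p$, and condition~1 together with the linearity of $\psi$ in the polytope slot gives $\psi(\alpha,0)(p)=\xi_\alpha(p)$ for all $p\in\mathcal{P}_s$.

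Next I would verify that 2(a) and 2(b) are genuinely equivalent, so that it is enough to argue under whichever is convenient. For (a)$\Rightarrow$(b) I would differentiate the identity $\psi(\alpha,t_1)(e^{td}p)=\psi(\alpha,t+t_1)(p)$ in $t$ and set $t=0$: the left‑hand side becomes $\psi(\alpha,t_1)(dp)$ and the right‑hand side becomes $\frac{\partial}{\partial t_1}\psi(\alpha,t_1)(p)$, which is (b) after renaming $t_1$. For (b)$\Rightarrow$(a) I would iterate (b) to obtain $\frac{\partial^k}{\partial t^k}\psi(\alpha,t)(p)=\psi(\alpha,t)(d^kp)$ and then Taylor‑expand the polynomial $\psi(\alpha,t+t_1)(p)$ in $t$ about $t_1$, getting $\sum_k\frac{t^k}{k!}\psi(\alpha,t_1)(d^kp)=\psi(\alpha,t_1)(e^{td}p)$; the sum is finite, so there is no convergence issue.

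The identification $\psi=f_1$ then follows in one step. Starting from (a) and substituting $t_1=0$ yields $\psi(\alpha,t)(p)=\psi(\alpha,0)(e^{td}p)$, and since $\psi(\alpha,0)=\xi_\alpha$ acts coefficientwise this equals $\xi_\alpha e^{td}p=\xi_\alpha\Phi(t)p=f_1(\alpha,t)(p)$. Equivalently, working directly from (b) one reads off the Taylor coefficients: the coefficient of $t^k$ in $\psi(\alpha,t)(p)$ is $\frac{1}{k!}\,\xi_\alpha(d^kp)$, and these reassemble into $\xi_\alpha e^{td}p$. Either route gives $\psi=f_1$.

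The computations are short, so the part I would write out most carefully is the formal bookkeeping rather than any hard estimate: that $\psi(\alpha,t_1)$ really does extend to the $t$‑series $e^{td}p$, that this series is a polynomial (hence substituting $t_1=0$ and Taylor‑expanding are purely algebraic), and that the linearity invoked is linearity of $\psi$ as a map out of $\mathcal{P}_s$ and not linearity over $\mathbb{Z}[\alpha,t]$. Once the domain $\mathcal{P}_s$ is seen to be $d$‑stable and these manipulations are justified, the equivalence 2(a)$\Leftrightarrow$2(b) and the conclusion $\psi=f_1$ are both immediate.
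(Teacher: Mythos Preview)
Your proof is correct and follows essentially the same route as the paper: set $t_1=0$ in 2(a) to obtain $\psi(\alpha,t)(p)=\psi(\alpha,0)(e^{td}p)=\xi_\alpha\Phi(t)p=f_1(\alpha,t)(p)$, and verify 2(a)$\Leftrightarrow$2(b) by differentiation. The only tactical difference is in the direction (b)$\Rightarrow$(a): you iterate (b) and Taylor-expand the polynomial $\psi(\alpha,t+t_1)(p)$, whereas the paper observes that $(\partial_t-\partial_{t_1})\,\psi(\alpha,t_1)(e^{td}p)=0$ forces dependence on $t+t_1$ alone; both arguments are equally elementary and yield the same conclusion.
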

\begin{proof}
Let conditions $1$ and $2a$ hold. Then
$$
\psi(\alpha,t+t_1)(p)=\psi(\alpha,t_1)(e^{td}p)=\psi(\alpha,0)(e^{(t+t_1)d}p)=\xi_{\alpha}\Phi(t+t_1)p=f_1(\alpha,t_1+t)(p).
$$

It remains to prove that conditions $2a$ and $2b$ are equivalent.

Indeed, if $\psi(\alpha,t_1)(e^{td}p)=\psi(\alpha,t+t_1)(p)$, then
$\psi(\alpha,t)(p)=\psi(\alpha,0)(e^{td}p)$, therefore
$$
\frac{\partial}{\partial t}\psi(\alpha,t)(p)=\frac{\partial}{\partial t}\psi(\alpha,0)(e^{td}p)=\psi(\alpha,0)(e^{td}dp)=\psi(\alpha,t)(dp).
$$

Now let $\psi(\alpha,t)(dp)=\frac{\partial}{\partial t}\psi(\alpha,t)(p)$.
Then
$$
\frac{\partial}{\partial t}\psi(\alpha,t_1)(e^{td}p)=\psi(\alpha,t_1)(d(e^{td}p))=\frac{\partial}{\partial t_1}\psi(\alpha,t_1)(e^{td}p)
$$
Hence $\left(\frac{\partial}{\partial t}-\frac{\partial}{\partial
t_1}\right)\psi(\alpha,t_1)(e^{td}p)=0$, so $\psi(\alpha,t_1)(e^{td}p)$
depends on $\alpha$ and $t+t_1$.

Thus
$\psi(\alpha,t_1)(e^{td}p)=\psi(\alpha,t+t_1)(e^{0d}p)=\psi(\alpha,t+t_1)(p)$.
\end{proof}
Proposition \ref{Simple} in the form $1, 2b$ was first proved in \cite{Buch}.

\section{Structure of $\mathcal{D}^*$}
\subsection{Main theorem}
\begin{defin}
Let us denote by $\mathcal{D}^*$ the graded dual Hopf algebra to
$\mathcal{D}$.
\end{defin}

Let us identify the Hopf algebra $\mathcal{M}$, which is the graded dual
$\mathcal{Z}^*$ to $\mathcal{Z}$, with $\Qsym[t_1,t_2,\dots]$.

The mappings $\EuScript{L}\colon\mathcal{Z}\to\mathcal{D}$,
$\EuScript{R}\colon\mathcal{Z}^{op}\to\mathcal{D}$, $Z_i\to d_i$ are
surjections, so the dual mappings are injections. So the ring homomorphisms
$\EuScript{L}^*$, and $\EuScript{R}^*=\varrho^*\circ\EuScript{L}^*$ define
two embeddings $\mathcal{D}^*\subset\Qs$, which are related by the involution
$\varrho^*=*$.

In fact, $\EuScript{L}^*$ defines an embedding $\mathcal{D}^*\subset\Qs$ as a
Hopf subalgebra, while for $\EuScript{R}^*$ we have
$$
\Delta\circ\EuScript{R}^*=\tau_{\Qs}\circ(\EuScript{R}^*\otimes\EuScript{R}^*)\circ\Delta,
$$
where $\tau_{\Qs}$ is a homomorphism $\Qs\otimes\Qs\to\Qs\otimes\Qs$ that
interchanges the tensor factors: $\tau_{\Qs}(x\otimes y)=y\otimes x$.
\begin{defin}
For each $r$ let us define the ring homomorphism
$\EuScript{R}_r^*:\mathcal{D}^*\to\Qsym[t_1,\dots,t_r]$ by the formula
$$
\EuScript{R}^*_r(\psi)=\EuScript{R}^*(\psi)(t_1,\dots,t_r,0,0,\dots)
$$
\end{defin}

Since the restriction $\Qs\to\Qsym[t_1,\dots,t_r]$ is injective for any
graduation $(2n)$, $n\leqslant r$, we see that $\EuScript{R}_r^*$ is
injective for graduations $(2n)$, $n\leqslant r$, of $\mathcal{D}^*$.

\begin{thm}
Let $r\geqslant n$. Then the image of the $(2n)$-th, $n\geqslant 1$, graded
component of the ring $\mathcal{D}^*$ in the ring $\Qsym[t_1,\dots,t_r]$
under the map $\EuScript{R}_r^*$ consists of all the homogeneous polynomials
of degree $2n$ satisfying the relations:
\begin{gather*}
g(t_1,-t_1,t_3,\dots,t_r)=g(0,0,t_3,\dots,t_r);\\
g(t_1,t_2,-t_2,t_4,\dots,t_r)=g(t_1,0,0,t_4,\dots,t_r);\\
\dots\\
g(t_1,\dots,t_{r-2},t_{r-1},-t_{r-1})=g(t_1,\dots,t_{r-2},0,0).
\end{gather*}\label{D*}
\end{thm}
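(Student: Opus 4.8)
The plan is to reduce the statement to a purely algebraic description of $\mathcal{D}^*$ as a subspace of $\Qs$ and then to match the defining equations of that subspace with the displayed substitution relations. By Theorem \ref{D} we have $\mathcal{D}\simeq\mathcal{U}=\mathcal{Z}/J_{\mathcal{U}}$, where $J_{\mathcal{U}}$ is the two-sided ideal generated by the elements $R_m=\sum_{i+j=m}(-1)^iZ_iZ_j$, $m\geqslant 2$. Since $\EuScript{L}\colon\mathcal{Z}\to\mathcal{D}$ is the quotient epimorphism, its dual $\EuScript{L}^*$ identifies $\mathcal{D}^*$ with $J_{\mathcal{U}}^{\perp}\subset\mathcal{Z}^*=\Qs$, i.e.\ with the quasi-symmetric functions $g=\sum_{\omega}c_{\omega}M_{\omega}$ annihilating $J_{\mathcal{U}}$. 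Because $\varrho(J_{\mathcal{U}})=J_{\mathcal{U}}$, the subspace $J_{\mathcal{U}}^{\perp}$ is invariant under $*=\varrho^*$, so $\EuScript{R}^*(\mathcal{D}^*)=*(J_{\mathcal{U}}^{\perp})=J_{\mathcal{U}}^{\perp}$ and I may argue with either embedding. First I would note that the degree-$2n$ part of $J_{\mathcal{U}}$ is spanned by the elements $Z_{\mu}R_mZ_{\nu}$ with $|\mu|+m+|\nu|=n$, and then compute the pairing with $g$ using $\langle M_{\omega},Z_{\sigma}\rangle=\delta_{\omega,\sigma}$. This yields that $g\in\mathcal{D}^*$ if and only if its coefficients satisfy
\[
\sum_{i=0}^{m}(-1)^ic_{(\mu,\,i,\,m-i,\,\nu)}=0
\]
for all compositions $\mu,\nu$ and all $m\geqslant 2$; here $Z_0=1$, so the terms $i=0$ and $i=m$ merge into $(1+(-1)^m)c_{(\mu,\,m,\,\nu)}$, and the case $m=1$ is vacuous.

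The second and main step is to show that this system of coefficient relations is exactly equivalent to the displayed substitution equations. For this I would expand $g(t_1,\dots,t_q,-t_q,t_{q+2},\dots,t_r)$ and sort the monomials of each $M_{\omega}$ according to whether their support avoids $\{q,q+1\}$, meets it in $q$ only, in $q+1$ only, or in both. The monomials avoiding $\{q,q+1\}$ reproduce $g(t_1,\dots,0,0,\dots,t_r)$, so the equation at position $q$ is equivalent to the vanishing of the remaining three families. Collecting the coefficient of a monomial carrying $t_q$ with total exponent $m$, the factors to the left of $q$ forming $\mu$ and those to the right of $q+1$ forming $\nu$, produces after clearing a sign exactly the relation $\sum_{i=0}^{m}(-1)^ic_{(\mu,\,i,\,m-i,\,\nu)}=0$ with $l(\mu)=q-1$. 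This computation is the quasi-symmetric content already carried out in Lemma \ref{equations}(1); here it is applied to an arbitrary homogeneous $g$ rather than to $f_r(P^n)$, the flag numbers $f_S$ being replaced by the coefficients $c_{\omega}$. Letting $q$ run over $1,\dots,r-1$ and using $r\geqslant n$, I would check that every composition $(\mu,i,m-i,\nu)$ of $n$ is reached: choosing $q=l(\mu)+1$, the length bound $l(\mu)+2+l(\nu)\leqslant n\leqslant r$ makes this position admissible, so the full system of coefficient relations is recovered and none is spuriously imposed.

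Finally I would assemble the two inclusions. For a homogeneous $g\in\Qsym[t_1,\dots,t_r]$ of degree $2n$ with $r\geqslant n$, every composition of $n$ has length $\leqslant n\leqslant r$, so $g$ determines coefficients $c_{\omega}$ for all compositions $\omega$ of $n$ and hence a well-defined $\tilde g=\sum_{|\omega|=n}c_{\omega}M_{\omega}\in\Qs$ restricting to $g$; moreover $\EuScript{R}_r^*$ is injective in degree $2n$ for $r\geqslant n$. If $g$ satisfies the substitution equations, then by the second step the $c_{\omega}$ satisfy the coefficient relations, whence $\tilde g\in J_{\mathcal{U}}^{\perp}=\EuScript{R}^*(\mathcal{D}^*)$ and $g$ lies in the image. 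Conversely, any image element has coefficients of the form $\psi(D_{\omega^*})$, and these satisfy the coefficient relations because $\EuScript{R}(Z_{\mu}R_mZ_{\nu})=0$ in $\mathcal{D}$ (as $R_m\in J_{\mathcal{U}}$ and $\EuScript{R}$ factors through $\mathcal{D}$, using Proposition \ref{PM}); hence $g$ satisfies the substitution equations. The rank of the image is then automatically $c_{n-1}$ by Corollary \ref{Fib}.

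I expect the main obstacle to be the bookkeeping in the second step: one must track carefully the signs coming from $t_{q+1}=-t_q$ and the admissible ranges of the split position as $q$ varies, and verify that for $r\geqslant n$ the single-pair substitutions at $q=1,\dots,r-1$ generate precisely the relations dual to $J_{\mathcal{U}}$. The duality computations in the first and third steps are then routine once this combinatorial identity — essentially a coefficient-level restatement of Lemma \ref{equations}(1) — is in place.
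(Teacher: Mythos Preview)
Your proposal is correct and follows essentially the same route as the paper: identify $\EuScript{R}^*(\mathcal{D}^*)$ with the annihilator $J_{\mathcal{U}}^{\perp}\subset\Qs$, and then match the resulting coefficient relations $\sum_{i}(-1)^i c_{(\mu,\,i,\,m-i,\,\nu)}=0$ with the substitution equations by reading off the coefficient of a monomial of the form $t_1^{j_1}\cdots t_l^{j_l}\,t_{l+1}^{m}\,t_{l+3}^{j_1'}\cdots$. The only stylistic difference is that the paper compresses the forward direction into the single identity $\EuScript{R}_r^*(\psi)=\langle\psi,\Phi(t_r)\cdots\Phi(t_1)\rangle$, from which the substitution relations follow immediately via $\Phi(-t)\Phi(t)=1$; your coefficient-by-coefficient expansion reaches the same conclusion and, as you note, is precisely the computation of Lemma~\ref{equations}(1) with flag numbers replaced by arbitrary coefficients.
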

\begin{proof}
The ring $\EuScript{R}^*(\mathcal{D}^*)\subset\mathcal Z^*$ consists of all
the linear functions $\psi\in\mathcal{Z}^*$ satisfying the property:
\begin{equation}
\langle \psi, z_1\Phi(t)\Phi(-t)z_2\rangle=\langle\psi, z_1z_2\rangle\label{DRel}
\end{equation}
for all $z_1,z_2\in\mathcal{Z}$, that is the coefficients of all
$t^k,\;k\geqslant 1$, on the left are equal to $0$.

We have
$$
\EuScript{R}^*(\psi)=\sum\limits_{\omega}\langle\EuScript{R}^*(\psi),Z_{\omega}\rangle=\sum\limits_{\omega}\langle\psi,\EuScript{R}Z_{\omega}\rangle M_{\omega}
$$
Therefore,
\begin{multline*}
\EuScript{R}^*_r(\psi)=\sum\limits_{k=0}^r\sum\limits_{(j_1,\,\dots,\,j_k)}\langle\psi,d_{j_k}\dots
d_{j_1}\rangle M_{(j_1,\,\dots,\,j_k)}(t_1,\dots,t_r)=\\
=\sum\limits_{k=0}^r\sum\limits_{(j_1,\,\dots,\,j_k)}\langle\psi,d_{j_k}\dots
d_{j_1}\rangle\sum\limits_{1\leqslant l_1<\dots<
l_k\leqslant r}t_{l_1}^{j_1}\dots t_{l_k}^{j_k}=\\
=\sum\limits_{k=0}^r\sum\limits_{(j_1,\,\dots,\,j_k)}\sum\limits_{1\leqslant
l_1<\dots<l_k\leqslant r}\langle\psi,d_{j_k}t_{l_k}^{j_k}\dots d_{j_1}
t_{l_1}^{j_1}\rangle=\langle\psi, \Phi(t_r)\dots\Phi(t_1)\rangle
\end{multline*}
Then
\begin{multline*}
\EuScript{R}^*_r(\psi)(t_1,\dots,t_i,-t_i,\dots,t_r)=\langle\psi,
\Phi(t_r)\dots\Phi(-t_i)\Phi(t_i)\dots\Phi(t_1)\rangle=\\
\langle\psi,
\Phi(t_r)\dots\Phi(0)\Phi(0)\dots\Phi(t_1)\rangle=\EuScript{R}^*_r(\psi)(t_1,\dots,0,0,\dots,t_r)
\end{multline*}

On the other hand, let $g\in\Qsym[t_1,\dots,t_r]$ be a homogeneous polynomial
of degree $2n$ satisfying all the relations of the theorem. Consider a unique
homogeneous function $\psi\in\mathcal{M}$ such that $\deg\psi=2n$, and
$\psi(t_1,\dots,t_r,0,0,\dots)=g$. Let us prove that $\psi$ belongs to the
image of $\mathcal{D}^*$ under the embedding $\EuScript{R}^*$.

It is sufficient to prove the relation (\ref{DRel}) in the case when
$z_1,z_2$ are monomials. Since
$$
\Phi(t)\Phi(-t)=1+(Z_1-Z_1)t+(2Z_2-Z_1^2)t^2+\dots=1+(2Z_2-Z_1^2)t^2+\dots,\mbox{ and }\deg\psi=2n,
$$
the cases $\deg z_1+\deg z_2=2n$ and $2(n-1)$ are trivial. Let
$$
z_1=Z_{\omega},\;z_2=Z_{\omega'},\;\omega=(j_1,\dots,j_l),\;\omega'=(j_1',\dots,j_{l'}'),\;|\omega|+|\omega'|=n-k,\;k\geqslant 2.$$
Then the only equality we need to prove is
$$
\langle \psi, Z_{\omega}\left(\sum\limits_{i=0}^{k}(-1)^iZ_{k-i}Z_i\right)Z_{\omega'}\rangle=0
$$
Let us consider the equality
$$
g(t_1,\dots,t_l,t_{l+1},-t_{l+1},t_{l+3},\dots,t_{l+2+l'},\dots,t_r)=g(t_1,\dots,t_l,0,0,t_{l+3},\dots,t_{l+2+l'},\dots,t_r).
$$
The coefficient of the monomial $t_1^{j_1}\dots
t_l^{j_l}t_{l+1}^{k}t_{l+3}^{j_1'}\dots t_{l+2+l'}^{j_{l'}'}$ on the left is
exactly
$$
\sum\limits_{i=0}^k(-1)^i\langle\psi, Z_{\omega}Z_{k-i}Z_iZ_{\omega'}\rangle=\langle\psi, Z_{\omega}\left(\sum\limits_{i=0}^{k}(-1)^iZ_{k-i}Z_i\right)Z_{\omega'}\rangle,
$$
and on the right it is equal to $0$. So $\langle\psi,
Z_{\omega}\Phi(t)\Phi(-t)Z_{\omega'}\rangle=0$ for all $\omega,\;\omega'$,
therefore, $\psi\in\EuScript{R}^*(\mathcal{D}^*)$.
\end{proof}

\begin{defin}
Let us define the operations $\Theta_k:\mathbb Z[t_1,t_2,\dots]\to\mathbb
Z[t,t_1,t_2,\dots]$ as
$$
\Theta_kg(t_1,t_2,\dots)=g(t_1,\dots,t_{k-1},t,-t,t_k,t_{k+1},\dots)
$$
\end{defin}

\begin{cor}\label{tD*}
The ring $\EuScript{R}^*(\mathcal{D}^*)\subset\Qs$ is defined by the
equations
$$
g\in\EuScript{R}^*(\mathcal{D}^*)\mbox{ if and only if }\;\Theta_kg=g \mbox{ for all } k\geqslant 1.
$$
\end{cor}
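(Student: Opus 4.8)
The plan is to read Corollary \ref{tD*} as a reformulation of Theorem \ref{D*} in the ring of infinitely many variables, the bridge being the defining property of quasi-symmetric functions (Proposition \ref{Crn} and Remark \ref{cr}): for $g\in\Qs$, setting any variable to $0$ simply deletes it, i.e. $g(\dots,t_{i-1},0,t_{i+1},\dots)=g(\dots,t_{i-1},t_{i+1},\dots)$. First I would reduce to homogeneous components: since $\Theta_k$ preserves degree and $\EuScript{R}^*(\mathcal{D}^*)$ is a graded subspace of $\Qs$, the equivalence $g\in\EuScript{R}^*(\mathcal{D}^*)\Leftrightarrow(\Theta_k g=g\ \text{for all }k)$ may be checked on each homogeneous piece of degree $2n$ separately. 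Fixing such a $g$, I would use that the restriction $\EuScript{R}^*_r$ is injective in graduations $2n$ with $n\leqslant r$, so that $g$ is determined by $g(t_1,\dots,t_r,0,0,\dots)$ for any $r\geqslant n$.

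The core step is to match, for each $k\geqslant1$, the single equation $\Theta_k g=g$ with the corresponding relation of Theorem \ref{D*}. Written out, $\Theta_k g=g$ is the identity
$$
g(t_1,\dots,t_{k-1},t,-t,t_k,t_{k+1},\dots)=g(t_1,t_2,\dots).
$$
On the other hand, the relation of Theorem \ref{D*} whose $\pm$-pair sits in slots $k,k+1$ reads $g(t_1,\dots,t_{k-1},s,-s,t_{k+2},\dots)=g(t_1,\dots,t_{k-1},0,0,t_{k+2},\dots)$; applying the quasi-symmetry deletion property to its right-hand side turns it into $g(t_1,\dots,t_{k-1},t_{k+2},t_{k+3},\dots)$, and then the substitution $s=t$, $t_{k+2}=t_k$, $t_{k+3}=t_{k+1},\dots$ identifies this relation with the displayed equation. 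Conversely, restricting the infinite-variable identity $\Theta_k g=g$ to finitely many variables and reinserting the two zeros (again by quasi-symmetry) recovers the theorem's relation. Thus for each fixed $k$ the two conditions are literally the same equation up to relabeling of variables.

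Assembling the two directions: if $\Theta_k g=g$ for all $k\geqslant1$, then for any $r\geqslant n$ the restriction $g(t_1,\dots,t_r,0,\dots)$ satisfies all $r-1$ relations of Theorem \ref{D*}, so by the ``if'' part of that theorem $g\in\EuScript{R}^*(\mathcal{D}^*)$; conversely, the ``only if'' part yields those relations for every $r$, hence $\Theta_k g=g$ for every $k$. I expect the only real obstacle to be the bookkeeping of this passage between the finite-variable statement and the infinite-variable $\Theta_k$-conditions: one must verify that as $r$ ranges over all values $\geqslant n$ and the pair $(t_i,-t_i)$ over all adjacent positions, one obtains exactly the family $\{\Theta_k g=g:k\geqslant1\}$ with nothing lost as $r\to\infty$, which is guaranteed by the injectivity of $\EuScript{R}^*_r$ in degree $2n$ for $r\geqslant n$. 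A cleaner alternative that sidesteps the relabeling is to argue directly from the internal characterization established inside the proof of Theorem \ref{D*}: since $\EuScript{R}^*_r(\psi)=\langle\psi,\Phi(t_r)\cdots\Phi(t_1)\rangle$, the operation $\Theta_k$ amounts to inserting a factor $\Phi(t)\Phi(-t)$ into this pairing, so $\Theta_k g=g$ for all $k$ is precisely the condition $\langle\psi,z_1\Phi(t)\Phi(-t)z_2\rangle=\langle\psi,z_1z_2\rangle$ of (\ref{DRel}) that cuts out $\EuScript{R}^*(\mathcal{D}^*)$.
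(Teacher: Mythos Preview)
Your proposal is correct and follows essentially the same approach as the paper: both directions reduce to homogeneous degree $2n$, use the quasi-symmetry deletion property (Remark~\ref{cr}) to translate between the infinite-variable condition $\Theta_k g=g$ and the finite-variable relations of Theorem~\ref{D*}, and appeal to injectivity of $\EuScript{R}^*_r$ in degree $2n$ for $r\geqslant n$. Your closing alternative via the pairing $\EuScript{R}^*_r(\psi)=\langle\psi,\Phi(t_r)\cdots\Phi(t_1)\rangle$ and condition~(\ref{DRel}) is a clean shortcut that the paper does not make explicit (note that $\Theta_k$ actually inserts $\Phi(-t)\Phi(t)$ rather than $\Phi(t)\Phi(-t)$, but the two are interchangeable under $t\mapsto -t$).
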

\begin{proof}
Let $g$ be a homogenous quasi-symmetric function of degree $2n$ representing
the function $\psi\in\mathcal{D}^*$. Since $g$ has bounded degree, each
monomial of $g(t_1,\dots,t_{k-1},t,-t,t_k,t_{k+1},\dots)$ appears with the
same coefficient in the polynomial
$g_{r+2}(t_1,\dots,t_{k-1},t,-t,t_k,\dots,t_r)=g(t_1,\dots,t_{k-1},t,-t,t_k,\dots,t_r,0,0,\dots)$,
when $r$ is large enough. Then Theorem \ref{D*} implies that
$$
g_{r+2}(t_1,\dots,t_{k-1},t,-t,t_k,\dots,t_r)=g_{r+2}(t_1,\dots,t_{k-1},0,0,t_k,\dots,t_r).
$$
Therefore each monomial containing $t$ has coefficient $0$, so
$g(t_1,\dots,t_{k-1},t,-t,t_k,\dots)$ does not depend on $t$. So we can set
$t=0$ to obtain
$$
g(t_1,\dots,t_{k-1},t,-t,t_k,\dots)=g(t_1,\dots,t_{k-1},0,0,t_k,\dots)=g(t_1,\dots,t_{k-1},t_k,\dots)
$$
On the other hand, let $g\in\Qs^{2n}$ be a quasi-symmetric function such that
$\Theta_kg=g$ for all $k\geqslant 1$. Then for
$g_n(t_1,\dots,t_n)=g(t_1,\dots,t_n,0,0,\dots)$ we have
\begin{multline*}
g_n(t_1,\dots,t_{i-1},t_i,-t_i,t_{i+2},\dots,t_n)=g(t_1,\dots,t_{i-1},t_i,-t_i,t_{i+2},\dots,t_n,0,0,\dots)=\\
=g(t_1,\dots,t_{i-1},t_{i+2},\dots,t_n,0,0,\dots)=g_n(t_1,\dots,t_{i-1},t_{i+2},\dots,t_n,0,0)=g_n(t_1,\dots,t_{i-1},0,0,t_{i+2},\dots,t_n)
\end{multline*}
for $1\leqslant i\leqslant n-1$. Then Theorem \ref{D*} implies that
$g_n=\EuScript{R}_n^*(\psi)$ for some function $\psi\in\mathcal{D}^*$ of
degree $2n$. Since the restriction $g(t_1,t_2,\dots)\to
g(t_1,\dots,t_n,0,0,\dots)$ is injective for graduation $2n$, we have
$g=\EuScript{R}^*\psi$.
\end{proof}

\begin{rem}
Since $\EuScript{R}^*=\varrho^*\circ\EuScript{L}^*$, and the subring
$\EuScript{R}^*(\mathcal{D}^*)\subset\Qs$ is invariant under the involution
$\varrho^*=*$, Theorem \ref{D*} and Corollary \ref{tD*} remain valid for the
mapping $\EuScript{L}^*$.
\end{rem}

\begin{prop}\label{DFM}
$\mathcal{D}^*\otimes\mathbb Q$ is a free polynomial algebra
$$
\mathcal{D}^*\otimes\mathbb Q\simeq\mathbb Q[\Lyn_{odd}],
$$
where $\Lyn_{odd}$ are Lyndon words consisting of odd positive integers. Rank
of the $(2n)$-th graded component of $\mathcal{D}^*\otimes\mathbb Q$ is equal
to the $(n-1)$-th Fibonacci number $c_{n-1}$.
\end{prop}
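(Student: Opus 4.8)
The plan is to deduce the proposition directly from the structure result for $\mathcal{D}\otimes\mathbb Q$ obtained in Proposition \ref{S}, together with the shuffle-algebra structure theorem in the form of Corollary \ref{Poly}, by passing to graded duals. The whole argument is essentially a dualization, so almost no new computation is needed.

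First I would record that Proposition \ref{S} identifies $\mathcal{D}\otimes\mathbb Q$ with the free associative Hopf algebra $\mathbb Q\langle s_1,s_3,s_5,\dots\rangle$ in which every generator $s_{2k-1}$ is primitive, $\Delta s_{2k-1}=1\otimes s_{2k-1}+s_{2k-1}\otimes 1$, with $\deg s_{2k-1}=2(2k-1)$. This is precisely the graded Lie-Hopf algebra $\mathcal{W}_{odd}\otimes\mathbb Q=\mathbb Q\langle W_1,W_3,W_5,\dots\rangle$ under the assignment $s_{2k-1}\mapsto W_{2k-1}$; since $\deg W_{2k-1}=2(2k-1)$ by the convention $\deg W_i=2i$, this correspondence is degree-preserving and respects both the free associative product and the primitive coproduct. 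Hence I obtain an isomorphism of graded Hopf algebras $\mathcal{D}\otimes\mathbb Q\simeq\mathcal{W}_{odd}\otimes\mathbb Q$.

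Next I would dualize. All the Hopf algebras involved are connected, graded, and of finite rank in each degree: by Corollary \ref{Fib} the $(2n)$-th component of $\mathcal{D}$ has rank $c_{n-1}$, and since $\mathcal{D}$ is torsion-free the same holds after tensoring with $\mathbb Q$. Graded duality is therefore a well-behaved involution and it converts the isomorphism above into an isomorphism of graded dual Hopf algebras $\mathcal{D}^*\otimes\mathbb Q\simeq(\mathcal{W}_{odd}\otimes\mathbb Q)^*=\mathcal{N}_{odd}\otimes\mathbb Q$. By Corollary \ref{Poly} the shuffle algebra $\mathcal{N}_{odd}\otimes\mathbb Q$ is the free commutative polynomial algebra $\mathbb Q[\Lyn_{odd}]$ on the Lyndon words over the odd positive integers, which yields the desired isomorphism $\mathcal{D}^*\otimes\mathbb Q\simeq\mathbb Q[\Lyn_{odd}]$.

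Finally, for the rank statement I would use that graded duality preserves the dimension of each graded component; combined with Corollary \ref{Fib} this immediately gives that the $(2n)$-th graded component of $\mathcal{D}^*\otimes\mathbb Q$ has dimension $c_{n-1}$. I do not expect any serious obstacle: the real content has already been absorbed into Proposition \ref{S} (the passage to a free Lie-Hopf algebra on the odd generators) and into the shuffle-algebra theorem. The only points demanding care are verifying that the grading conventions agree so that $s_{2k-1}\leftrightarrow W_{2k-1}$ is genuinely degree-preserving, and checking that the finite-type and connectedness hypotheses legitimize the graded-duality step; both are routine.
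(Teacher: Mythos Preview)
Your proposal is correct and follows exactly the paper's approach: the paper's proof is a one-line citation of Proposition~\ref{S}, Corollary~\ref{Poly}, and Corollary~\ref{Fib}, and you have simply spelled out how these three ingredients combine via graded duality. The checks you flag (degree-preservation of $s_{2k-1}\leftrightarrow W_{2k-1}$ and finite-type/connectedness for duality) are indeed routine and pose no difficulty.
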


\begin{proof}
This follows from Proposition \ref{S}, Corollary \ref{Poly} to the shuffle
algebra structure theorem, and Corollary \ref{Fib}.
\end{proof}

\subsection{Applications}
Let $\mathcal{D}^*[\alpha]$ be a graded ring of polynomials in variable
$\alpha$, where $\deg \alpha=2$, and for the element $\psi_i$ of graduation
$2i$  the element $\psi_i\alpha^j$ has graduation $2(i+j)$.

Let us consider the mapping
$\varphi_{\alpha}\colon\mathcal{P}\to\mathcal{D}^*[\alpha]$, defined by the
formula
$$
\langle\varphi_{\alpha}(P),D\rangle=\xi_{\alpha}(DP)
$$
for $D\in\mathcal{D}$. Then
$$
\langle\varphi_{\alpha}(P\times Q),D\rangle=\xi_{\alpha}\left(D(P\times Q)\right)=\xi_{\alpha}\left(\sum\limits_{i}D'_i
P\times D''_i
Q\right)=\sum\limits_{i}\left(\xi_{\alpha}D_i'P\right)\left(\xi_{\alpha}D_i''Q\right)=
\langle\varphi_{\alpha}(P)\,\cdot\,\varphi_{\alpha}(Q),D\rangle,
$$
where $\Delta(D)=\sum\limits_{i}D_i'\otimes D_i''$. So $\varphi_{\alpha}$ is
a ring homomorphism.

\begin{prop}
We have $\EuScript{R}^*\circ\varphi_{\alpha}=f$, where $f$ is the generalized
$f$-polynomial.
\end{prop}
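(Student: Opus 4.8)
The plan is to reduce everything to the identity
$$
\EuScript{R}^*_r(\psi)=\langle\psi,\Phi(t_r)\cdots\Phi(t_1)\rangle
$$
that was already established inside the proof of Theorem~\ref{D*}, applied now to $\psi=\varphi_{\alpha}(P)$ and carrying the variable $\alpha$ along as a scalar. First I would note that $\EuScript{R}^*$ extends $\alpha$-linearly to a ring homomorphism $\mathcal{D}^*[\alpha]\to\Qs[\alpha]$, so that $\EuScript{R}^*\circ\varphi_{\alpha}$ is a well-defined map $\mathcal{P}\to\Qs[\alpha]$. Since both $\EuScript{R}^*(\varphi_{\alpha}(P))$ and $f(P)$ live in $\Qs[\alpha]$ and are determined by their truncations to finitely many variables, it suffices to compare their images under each restriction $E_r$.

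Next, using the expansion $\EuScript{R}^*(\psi)=\sum_{\omega}\langle\psi,\EuScript{R}Z_{\omega}\rangle M_{\omega}$ together with the fact that $Z_{\omega}=Z_{j_1}\cdots Z_{j_k}$ maps under $\EuScript{R}$ to $D_{\omega^*}=d_{j_k}\cdots d_{j_1}$, and the defining formula $\langle\varphi_{\alpha}(P),D\rangle=\xi_{\alpha}(DP)$, I would compute
$$
\EuScript{R}^*_r(\varphi_{\alpha}(P))=\langle\varphi_{\alpha}(P),\Phi(t_r)\cdots\Phi(t_1)\rangle=\xi_{\alpha}\bigl(\Phi(t_r)\cdots\Phi(t_1)P\bigr).
$$
Here the pairing against the operator-valued series $\Phi(t_r)\cdots\Phi(t_1)\in\mathcal{D}[[t_1,\dots,t_r]]$ is taken coefficientwise in the $t_i$, and the last equality is just the definition of $\varphi_{\alpha}$ applied monomial-by-monomial. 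By the very definition of the generalized $f$-polynomial the right-hand side is $f_r(\alpha,t_1,\dots,t_r)(P)$.

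Finally, since this equality holds for every $r$ and the polynomials $f_r$ are compatible under the restriction maps $E_r$ (the relation $f_{r+1}(\alpha,t_1,\dots,t_r,0)=f_r(\alpha,t_1,\dots,t_r)$ used in the construction of $f$), passing to the inverse limit yields $\EuScript{R}^*(\varphi_{\alpha}(P))=f(\alpha,t_1,t_2,\dots)(P)$, i.e. $\EuScript{R}^*\circ\varphi_{\alpha}=f$. The only delicate point is bookkeeping rather than a genuine obstacle: one must check that pairing $\varphi_{\alpha}(P)\in\mathcal{D}^*[\alpha]$ against the formal series $\Phi(t_r)\cdots\Phi(t_1)$ is legitimate. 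This is unproblematic because $P$ is fixed of some dimension $n$, so $D_{\omega}P=0$ whenever $|\omega|>n$ and only finitely many terms survive in each graded piece; the $\alpha$-coefficients therefore remain polynomial and the identification with $f_r$ is exact.
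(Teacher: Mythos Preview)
Your proof is correct and follows essentially the same approach as the paper: both compute $\EuScript{R}^*(\varphi_{\alpha}(P))$ by expanding in the basis $\{M_{\omega}\}$, using $\EuScript{R}Z_{\omega}=D_{\omega^*}$ and the defining relation $\langle\varphi_{\alpha}(P),D\rangle=\xi_{\alpha}(DP)$ to arrive at $\sum_{\omega}\xi_{\alpha}(D_{\omega}P)\,M_{\omega^*}=f(P)$. The only difference is presentational: the paper carries out this one-line computation directly in $\Qs[\alpha]$, whereas you route it through the finite truncations $\EuScript{R}^*_r$ and the identity $\EuScript{R}^*_r(\psi)=\langle\psi,\Phi(t_r)\cdots\Phi(t_1)\rangle$ before passing to the limit.
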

\begin{proof}
Indeed,
\begin{multline*}
\EuScript{R}^*\circ\varphi_{\alpha}(P)=\sum\limits_{\omega}\langle
\EuScript{R}^*\circ\varphi_{\alpha}(P), Z_{\omega}\rangle\,
M_{\omega}=\sum\limits_{\omega}\langle\varphi_{\alpha}(P),\EuScript{R}Z_{\omega}\rangle\,
M_{\omega}=\\
=\sum\limits_{\omega}\langle\varphi_{\alpha}(P), D_{\omega^*}\rangle\,
M_{\omega}=\sum\limits_{\omega}\xi_{\alpha}(D_{\omega}P)\,M_{\omega^*}=f(P)
\end{multline*}
\end{proof}

\begin{prop}
The image of the space $\mathcal{P}^{2n}$ under the mapping
$\varphi_{\alpha}:\mathcal{P}\to\mathcal{D}^*[\alpha]$ consists of all
homogeneous functions $\psi(\alpha)$ of degree $2n$ such that
\begin{equation}\label{X8}
\langle\psi(-\alpha),\Phi(\alpha)D\rangle=\langle\psi(\alpha),D\rangle
\end{equation}
for all $D\in\mathcal{D}$.
\end{prop}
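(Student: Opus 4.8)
The plan is to transport everything through the embedding $\EuScript{R}^*\colon\mathcal{D}^*\hookrightarrow\Qs$, under which $\varphi_{\alpha}$ becomes the generalized $f$-polynomial by the relation $\EuScript{R}^*\circ\varphi_{\alpha}=f$, and then to invoke the description of $f(\mathcal{P}^{2n})$ from Theorem \ref{f}. Since $\EuScript{R}^*$ is an embedding, its $\mathbb{Z}[\alpha]$-linear extension $\mathcal{D}^*[\alpha]\to\Qs[\alpha]$ is again injective, and in a fixed degree $2n$ an element is determined by its restriction to $t_1,\dots,t_r$ with $r\geqslant n$. So it suffices to show that $\psi(\alpha)$ satisfies (\ref{X8}) if and only if $g:=\EuScript{R}^*(\psi)$ satisfies the functional equations of Theorem \ref{f}.

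For the inclusion of the image into the set defined by (\ref{X8}) I would argue directly. If $\psi=\varphi_{\alpha}(P)$ for $P=P^n$, then for any $D\in\mathcal{D}$, writing $\Phi(\alpha)=\sum_k d_k\alpha^k$ and using $\langle\varphi_{\alpha}(P)(-\alpha),d_kD\rangle=\xi_{-\alpha}(d_kDP)$, I obtain
\begin{equation*}
\langle\varphi_{\alpha}(P)(-\alpha),\Phi(\alpha)D\rangle=\sum_k\xi_{-\alpha}(d_kDP)\,\alpha^k=\xi_{-\alpha}\bigl(\Phi(\alpha)(DP)\bigr).
\end{equation*}
By Proposition \ref{Euler}, $\xi_{-\alpha}\Phi(\alpha)=\xi_{\alpha}$ on $\mathcal{P}$, so the right-hand side equals $\xi_{\alpha}(DP)=\langle\varphi_{\alpha}(P),D\rangle$, which is exactly (\ref{X8}).

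For the reverse inclusion the key step is the translation of (\ref{X8}) into the Bayer--Billera equation of the second type in Theorem \ref{f}. From the formula $\EuScript{R}^*_r(\psi)(t_1,\dots,t_r)=\langle\psi,\Phi(t_r)\cdots\Phi(t_1)\rangle$ established in the proof of Theorem \ref{D*}, setting $t_r=\alpha$ and replacing $\alpha$ by $-\alpha$ inside $\psi$ gives
\begin{equation*}
g(-\alpha,t_1,\dots,t_{r-1},\alpha)=\langle\psi(-\alpha),\Phi(\alpha)\Phi(t_{r-1})\cdots\Phi(t_1)\rangle,
\end{equation*}
while $g(\alpha,t_1,\dots,t_{r-1},0)=\langle\psi(\alpha),\Phi(t_{r-1})\cdots\Phi(t_1)\rangle$ since $\Phi(0)=1$. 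Taking $D=\Phi(t_{r-1})\cdots\Phi(t_1)$ in (\ref{X8}) and extending the pairing $\mathbb{Z}[t_1,\dots,t_{r-1}]$-linearly yields precisely the second equation of Theorem \ref{f}; conversely, comparing coefficients of the monomials $t_{r-1}^{j_{r-1}}\cdots t_1^{j_1}$ recovers (\ref{X8}) for every word $d_{j_{r-1}}\cdots d_{j_1}$, and these span $\mathcal{D}$ as $r$ ranges over all integers. The equations of the first type hold automatically: each coefficient $\psi_j\in\mathcal{D}^*$ of $\psi=\sum_j\psi_j\alpha^j$ has image satisfying them by Theorem \ref{D*} (equivalently Corollary \ref{tD*}), and these equations do not involve $\alpha$. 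Hence $\psi$ satisfies (\ref{X8}) exactly when $g$ satisfies both families of equations, i.e. when $g\in f(\mathcal{P}^{2n})$; writing $g=f(P)=\EuScript{R}^*(\varphi_{\alpha}(P))$ and using injectivity of $\EuScript{R}^*$ gives $\psi=\varphi_{\alpha}(P)$.

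The main obstacle I anticipate is the bookkeeping in the third paragraph: verifying that pairing against the single operator-valued series $\Phi(t_{r-1})\cdots\Phi(t_1)$, together with letting $r\to\infty$, really recovers (\ref{X8}) for \emph{all} $D\in\mathcal{D}$ rather than merely for a spanning subset in each degree. One must note that in degree $2n$ only finitely many $d_k$ act nontrivially, so that the pairings are honest polynomials in $\alpha$ and the coefficient-matching between the functional equation and (\ref{X8}) is legitimate; the automatic first-type equations and the injectivity step are then routine.
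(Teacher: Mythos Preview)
Your proposal is correct and follows essentially the same route as the paper: the forward inclusion is obtained directly from Proposition~\ref{Euler} via $\langle\psi(-\alpha),\Phi(\alpha)D\rangle=\xi_{-\alpha}\Phi(\alpha)Dp=\xi_{\alpha}Dp$, and for the reverse inclusion one passes to $g=\EuScript{R}^*_n\psi(\alpha)$, derives the second-type equation of Theorem~\ref{f} by plugging $D=\Phi(t_{n-1})\cdots\Phi(t_1)$, notes that the first-type equations hold automatically by Theorem~\ref{D*}, and concludes $g=f_n(p^n)$ and hence $\psi=\varphi_{\alpha}(p^n)$ by injectivity of $\EuScript{R}^*$. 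The only difference is organizational: you set up the full biconditional between (\ref{X8}) and the functional equations, whereas the paper argues each inclusion separately; in particular, your ``converse'' coefficient-matching step is not actually needed, since only the implication from (\ref{X8}) to the second-type equation is used in the reverse inclusion.
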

\begin{proof}
According to Proposition \ref{Euler} we have
$\xi_{-\alpha}\Phi(\alpha)=\xi_{\alpha}$, so if
$\psi(\alpha)=\varphi_{\alpha}(p)$, then
$$
\langle\psi(-\alpha),\Phi(\alpha)D\rangle=\xi_{-\alpha}\Phi(\alpha)Dp=\xi_{\alpha}Dp=\langle\psi(\alpha),D\rangle.
$$
On the other hand, let condition (\ref{X8}) hold. Then the polynomial
$g(\alpha,t_1,\dots,t_n)=\EuScript{R}^*_n\psi(\alpha)$ satisfies the relation
\begin{multline*}
g(-\alpha,t_1,\dots,t_{n-1},\alpha)=\langle\psi(-\alpha),\Phi(\alpha)\Phi(t_{n-1})\dots\Phi(t_1)\rangle=\\
=\langle\psi(\alpha),\Phi(0)\Phi(t_{n-1})\dots\Phi(t_1)\rangle=g(\alpha,t_1,\dots,t_{n-1},0).
\end{multline*}
Theorem \ref{D*} implies that the polynomial $g$ satisfies all the relations
of Theorem \ref{f}, so $g=f_n(p^n)$ for some $p^n\in\mathcal{P}^{2n}$. We
have
$\EuScript{R}^*_n\psi(\alpha)=g=f_n(p^n)=\EuScript{R}^*_n\varphi_{\alpha}(p^n)$.
Since the restriction $\EuScript{R}^*\to \EuScript{R}^*_n$ is injective on
the $(2j)$-th graded component of the ring\; $\mathcal{D}^*$ for $j\leqslant
n$, and $\EuScript{R}^*$ is an embedding, we obtain that
$\psi(\alpha)=\varphi_{\alpha}(p^n)$.
\end{proof}
There is a right action of the ring $\mathcal{D}$ on its graded dual
$\mathcal{D}^*$:
$$
(\psi,D)\to\psi D\colon\quad \langle \psi D, D'\rangle=\langle \psi, DD'\rangle.
$$
If $\deg\psi=2n$ and $\deg D=2k$, then $\deg\psi D=2(n-k)$. Similarly, there
is an action of $\mathcal{D}[[\alpha]]$ on $\mathcal{D}^*[[\alpha]]$. Then
condition (\ref{X8}) means that $\psi(-\alpha)\Phi(\alpha)=\psi(\alpha)$.
Since for any homogeneous function $\psi(\alpha)$ of degree $2n$ the function
$\psi(\alpha)\Phi(\alpha)$ still has degree $2n$, we obtain the corollary.
\begin{cor} Let $\psi(\alpha)\in\mathcal{D}^*[\alpha]$. Then
\begin{equation}\label{X9}
\psi(\alpha)\in \varphi_{\alpha}(\mathcal{P})\Leftrightarrow \psi(-\alpha)\Phi(\alpha)=\psi(\alpha).
\end{equation}\label{Im}
\end{cor}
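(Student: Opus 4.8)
The plan is to read off the corollary from the preceding Proposition, which already identifies $\varphi_{\alpha}(\mathcal{P}^{2n})$ with the set of homogeneous degree-$2n$ functions $\psi(\alpha)$ satisfying condition (\ref{X8}), namely $\langle\psi(-\alpha),\Phi(\alpha)D\rangle=\langle\psi(\alpha),D\rangle$ for all $D\in\mathcal{D}$. All the genuine content is thus contained in that Proposition; the remaining task is purely formal: to reinterpret the left-hand pairing in (\ref{X8}) as a pairing against $\psi(-\alpha)\Phi(\alpha)$ using the right action, and then to delete the universal quantifier over $D$ by invoking non-degeneracy of the graded duality pairing. Combining the two then gives (\ref{X9}).

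First I would extend the duality pairing $\langle\,\cdot\,,\,\cdot\,\rangle\colon\mathcal{D}^*\times\mathcal{D}\to\mathbb Z$ to a $\mathbb Z[\alpha]$-bilinear pairing $\mathcal{D}^*[\alpha]\times\mathcal{D}\to\mathbb Z[\alpha]$, treating $\alpha$ as a central formal variable, and likewise extend the right action of $\mathcal{D}$ on $\mathcal{D}^*$ to an action of $\mathcal{D}[[\alpha]]$ on $\mathcal{D}^*[[\alpha]]$. With $\Phi(\alpha)=\sum_{k\geqslant 0}d_k\alpha^k\in\mathcal{D}[[\alpha]]$, the defining relation $\langle\psi D,D'\rangle=\langle\psi,DD'\rangle$ of the right action, applied with $\psi=\psi(-\alpha)$, $D=\Phi(\alpha)$ and $D'$ arbitrary, yields
\[
\langle\psi(-\alpha)\Phi(\alpha),D'\rangle=\langle\psi(-\alpha),\Phi(\alpha)D'\rangle ,
\]
so that (\ref{X8}) becomes exactly the assertion $\langle\psi(-\alpha)\Phi(\alpha),D'\rangle=\langle\psi(\alpha),D'\rangle$ for every $D'\in\mathcal{D}$. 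Before drawing any conclusion I would check that the right action is well defined and grade-preserving: writing $\psi(\alpha)=\sum_j\psi_j\alpha^j$ with $\deg\psi_j=2(n-j)$, each summand $\psi_j\,d_k$ lowers the $\mathcal{D}^*$-degree by $2k$ while the factor $\alpha^{j+k}$ raises the $\alpha$-degree by $2k$, so the total degree stays $2n$; moreover $\psi_j d_k=0$ once $k>n-j$, so in each fixed total degree only finitely many $d_k$ contribute and $\psi(-\alpha)\Phi(\alpha)$ is again an honest element of $\mathcal{D}^*[\alpha]$, homogeneous of total degree $2n$ and lying in $\bigoplus_{i+j=n}\mathcal{D}^{*,2i}\alpha^{j}$.

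Finally I would invoke non-degeneracy. Since $\mathcal{D}^*$ is the graded dual of $\mathcal{D}$ and each graded component has finite rank, the pairing is perfect on every graded piece, and its $\mathbb Z[\alpha]$-linear extension is perfect on every total-degree component (equality in $\mathbb Z[\alpha]$ forces equality of each $\alpha$-coefficient, each of which pairs against all of $\mathcal{D}$). Hence $\langle\psi(-\alpha)\Phi(\alpha)-\psi(\alpha),D'\rangle=0$ for all $D'$ is equivalent to $\psi(-\alpha)\Phi(\alpha)-\psi(\alpha)=0$, which together with the preceding Proposition establishes (\ref{X9}). This argument is short, so there is no deep obstacle; the one step that genuinely requires care — and which I would state explicitly rather than leave implicit — is the well-definedness and degree-preservation of the action of $\Phi(\alpha)$, since without it the formal identity $\psi(-\alpha)\Phi(\alpha)=\psi(\alpha)$ would not even be an equation between legitimate elements of $\mathcal{D}^*[\alpha]$.
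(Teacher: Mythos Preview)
Your proof is correct and follows essentially the same approach as the paper: the paper introduces the right action $\langle\psi D,D'\rangle=\langle\psi,DD'\rangle$, extends it to $\mathcal{D}[[\alpha]]$ acting on $\mathcal{D}^*[[\alpha]]$, and observes that condition (\ref{X8}) is then precisely $\psi(-\alpha)\Phi(\alpha)=\psi(\alpha)$ together with the degree-preservation remark. You are simply more explicit than the paper about the non-degeneracy of the pairing and the finiteness check ensuring $\psi(-\alpha)\Phi(\alpha)\in\mathcal{D}^*[\alpha]$, which the paper leaves implicit.
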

Let $\psi(\alpha)=\psi_0+\psi_1\alpha+\dots+\psi_n\alpha^n$,
$u(\alpha)=\psi_0+\psi_2\alpha^2+\psi_4\alpha^4+\dots$,
$w(\alpha)=\psi_1\alpha+\psi_3\alpha^3+\dots$. Then
$u(\alpha)\Phi(\alpha)-w(\alpha)\Phi(\alpha)=u(\alpha)+w(\alpha)$. Therefore,
\begin{equation}\label{X10}
u(\alpha)\left(\Phi(\alpha)-1\right)=w(\alpha)\left(\Phi(\alpha)+1\right).
\end{equation}
For example,
$$
\psi_0d=2\psi_1;\quad \psi_0d_3+\psi_2d=\psi_1d_2+2\psi_3.
$$

Consider the Hopf algebra $\mathcal{D}\otimes\mathbb Z[\frac{1}{2}]$. Then
for the graded dual over $\mathbb Z[\frac{1}{2}]$ Hopf algebra we have:
$(\mathcal{D}\otimes \mathbb
Z[\frac{1}{2}])^*\simeq\mathcal{D}^*\otimes\mathbb Z[\frac{1}{2}]$. The
condition that describes the image of the ring $\mathcal{P}\otimes\mathbb
Z[\frac{1}{2}]$ in $\mathcal{D}^*\otimes\mathbb Z[\frac{1}{2}][\alpha]$ is
the same, namely relation (\ref{X10}).

\begin{prop}
In the ring $\mathcal{D}^*\otimes\mathbb Z[\frac{1}{2}][\alpha]$ relation
(\ref{X10}) is equivalent to the relation:
\begin{equation}\label{X11}
w(\alpha)=u(\alpha)\frac{\Phi(\alpha)-1}{\Phi(\alpha)+1}=u(\alpha)\frac{\frac{\Phi(\alpha)-1}{2}}{1+\frac{\Phi(\alpha)-1}{2}}=u(\alpha)\sum\limits_{k=1}^{\infty}(-1)^{k-1}\left(\frac{\Phi(\alpha)-1}{2}\right)^k.
\end{equation}
\end{prop}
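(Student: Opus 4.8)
The plan is to reduce everything to a single invertibility statement in the power-series ring $\mathcal{D}\otimes\mathbb{Z}[\tfrac12][[\alpha]]$. First I would introduce the abbreviation $X=\tfrac12(\Phi(\alpha)-1)$, so that $\Phi(\alpha)-1=2X$ and $\Phi(\alpha)+1=2(1+X)$. Since $\Phi(\alpha)=1+d\alpha+d_2\alpha^2+\cdots$, the element $X$ has no constant term in $\alpha$; that is, it has strictly positive $\alpha$-adic order. Consequently $1+X$ is a unit in $\mathcal{D}\otimes\mathbb{Z}[\tfrac12][[\alpha]]$, its inverse being the $\alpha$-adically convergent series $(1+X)^{-1}=\sum_{k\geq0}(-1)^kX^k$. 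This is exactly what justifies the chain of equalities written in (\ref{X11}): $\frac{\Phi(\alpha)-1}{\Phi(\alpha)+1}=\frac{2X}{2(1+X)}=\frac{X}{1+X}=X\sum_{k\geq0}(-1)^kX^k=\sum_{k\geq1}(-1)^{k-1}X^k$.

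Next I would rewrite the relation (\ref{X10}). Over $\mathbb{Z}[\tfrac12]$ it reads $u(\alpha)\cdot 2X=w(\alpha)\cdot2(1+X)$, and cancelling the common factor $2$ it becomes $u(\alpha)X=w(\alpha)(1+X)$. Here all products are computed using the right action of $\mathcal{D}$ on $\mathcal{D}^*$ (defined by $\langle\psi D,D'\rangle=\langle\psi,DD'\rangle$) extended $\alpha$-linearly; this action is associative, so $\mathcal{D}^*\otimes\mathbb{Z}[\tfrac12][[\alpha]]$ is a right module over the ring $\mathcal{D}\otimes\mathbb{Z}[\tfrac12][[\alpha]]$. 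Multiplying the identity $u(\alpha)X=w(\alpha)(1+X)$ on the right by the unit $(1+X)^{-1}$ and using associativity gives $w(\alpha)=u(\alpha)X(1+X)^{-1}=u(\alpha)\frac{X}{1+X}$, which by the computation above is precisely (\ref{X11}). Conversely, multiplying (\ref{X11}) on the right by $(1+X)$ returns $u(\alpha)X=w(\alpha)(1+X)$, i.e. (\ref{X10}). Thus the two relations are equivalent.

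The only genuinely delicate point, and the step I expect to be the main obstacle, is the legitimacy of inverting $\Phi(\alpha)+1$ and of the infinite geometric series. I would handle this cleanly by working $\alpha$-adically: because $X$ has positive $\alpha$-order, $X^k$ has $\alpha$-order at least $k$, so $\sum_{k\geq0}(-1)^kX^k$ converges and defines a genuine inverse of $1+X$ (two-sided, since $1+X$ and the series are both power series in the single element $X$ and therefore commute), while the invertibility of the scalar $2$ is exactly what passing to $\mathbb{Z}[\tfrac12]$ provides. As a consistency check one may observe that on a homogeneous $\psi(\alpha)$ of degree $2n$ the series in fact truncates: each application of $X$ strictly lowers the degree of the $\mathcal{D}^*$-coefficients (since $\psi_i d_k=0$ whenever $2k>\deg\psi_i$), so $u(\alpha)X^{k}=0$ for $k>n$ and the right-hand side of (\ref{X11}) is a finite sum, hence a polynomial in $\alpha$. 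No further input is required; the equivalence is a formal consequence of the unit property of $1+X$.
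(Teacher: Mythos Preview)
Your argument is correct and is essentially the standard justification the paper leaves implicit: the proposition is stated in the paper without proof, and your approach---showing that $1+X$ with $X=\tfrac12(\Phi(\alpha)-1)$ is a unit via the $\alpha$-adic geometric series, then right-multiplying relation (\ref{X10}) by its inverse---is exactly the intended reading of the formal fraction $\frac{\Phi(\alpha)-1}{\Phi(\alpha)+1}$. Your remark that on a homogeneous $\psi(\alpha)$ the series truncates to a polynomial is a useful addition, since it confirms that the answer lands in $\mathcal{D}^*\otimes\mathbb Z[\tfrac12][\alpha]$ rather than merely in the completion.
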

\begin{rem} In fact, equation (\ref{X11}) implies that the function $w(\alpha)$ is odd,
if the function $u(\alpha)$ is even. Indeed,
\begin{gather*}
w(-\alpha)=u(-\alpha)\frac{\Phi(-\alpha)-1}{\Phi(-\alpha)+1}=u(\alpha)\frac{\frac{1}{\Phi(\alpha)}-1}{\frac{1}{\Phi(\alpha)}+1}=u(\alpha)(1-\Phi(\alpha))\Phi(\alpha)^{-1}(1+\Phi(\alpha))^{-1}\Phi(\alpha)=\\
=-u(\alpha)\frac{\Phi(\alpha)-1}{\Phi(\alpha)+1}=-w(\alpha).
\end{gather*}
Therefore for any even function $u(\alpha)$ the function
$\psi(\alpha)=u(\alpha)+u(\alpha)\frac{\Phi(\alpha)-1}{\Phi(\alpha)+1}=u(\alpha)\frac{2\Phi(\alpha)}{\Phi(\alpha)+1}$
belongs to the image of $\varphi_{\alpha}$
\end{rem}
\begin{rem}\label{uvQ}
The same relation is valid in $\mathcal{D}^*\otimes\mathbb Q[\alpha]$.
\end{rem}

Let us take $\varphi_{\alpha}(p)$ with $\alpha=0$. Then we obtain the
classical map
$$
\varphi_0:\mathcal{P}\to\mathcal{D}^*:\quad p\to\varphi_{0}(p)\quad\langle\varphi_0(p), D\rangle=\xi_{0}Dp,\;\forall p\in\mathcal{P},\;D\in\mathcal{D}
$$
\begin{prop}
The mapping $\varphi_0\otimes 1:\mathcal{P}\otimes\mathbb
Z[\frac{1}{2}]\to\mathcal{D}^*\otimes\mathbb Z[\frac{1}{2}]$ is a surjection.
\end{prop}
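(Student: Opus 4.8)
The plan is to reduce the statement to a single graded component and then read off a preimage from the explicit description of $\varphi_{\alpha}(\mathcal{P}\otimes\mathbb{Z}[\frac{1}{2}])$ obtained above. Since $\varphi_0$ preserves graduation, it suffices to show that every homogeneous $\psi_0\in\mathcal{D}^*\otimes\mathbb{Z}[\frac{1}{2}]$ of degree $2n$ lies in the image of $\mathcal{P}^{2n}\otimes\mathbb{Z}[\frac{1}{2}]$. The crucial link is the identity $\varphi_0(p)=\varphi_{\alpha}(p)\big|_{\alpha=0}$: indeed $\langle\varphi_{\alpha}(p),D\rangle=\xi_{\alpha}(Dp)$ specializes at $\alpha=0$ to $\langle\varphi_0(p),D\rangle$, so it is enough to exhibit an element of $\varphi_{\alpha}(\mathcal{P}\otimes\mathbb{Z}[\frac{1}{2}])$ whose constant term in $\alpha$ equals $\psi_0$.

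First I would set $u(\alpha)=\psi_0$, viewed as an $\alpha$-constant (hence even) homogeneous function of degree $2n$, and form
$$
\psi(\alpha)=u(\alpha)\,\frac{2\Phi(\alpha)}{\Phi(\alpha)+1}.
$$
By degree reasons this is a polynomial in $\alpha$ of total degree $2n$, since the coefficient of $\alpha^k$ in $\frac{2\Phi(\alpha)}{\Phi(\alpha)+1}$ lies in $\mathcal{D}^{2k}$ and annihilates $\psi_0$ once $k>n$. By the Remark following (\ref{X11}), for any even $u(\alpha)$ the resulting $\psi(\alpha)$ satisfies $\psi(-\alpha)\Phi(\alpha)=\psi(\alpha)$, which is relation (\ref{X10}) describing the image over $\mathbb{Z}[\frac{1}{2}]$; hence $\psi(\alpha)$ belongs to $\varphi_{\alpha}(\mathcal{P}\otimes\mathbb{Z}[\frac{1}{2}])$, and there is $p\in\mathcal{P}^{2n}\otimes\mathbb{Z}[\frac{1}{2}]$ with $\varphi_{\alpha}(p)=\psi(\alpha)$.

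It then remains to evaluate at $\alpha=0$. The operator $\Phi(0)=1$ acts as the identity under the right action of $\mathcal{D}$ on $\mathcal{D}^*$, so the constant term of $\frac{2\Phi(\alpha)}{\Phi(\alpha)+1}$ is $\frac{2\cdot 1}{1+1}=1$, whence $\psi(0)=\psi_0$ and $\varphi_0(p)=\psi_0$, as required. The one point demanding care is the meaning of $\frac{2\Phi(\alpha)}{\Phi(\alpha)+1}$: since $\Phi(\alpha)+1=2\bigl(1+\tfrac{\Phi(\alpha)-1}{2}\bigr)$, its inverse is given by the geometric series in (\ref{X11}), which is legitimate only after inverting $2$ and which acts on the fixed homogeneous $\psi_0$ as a finite sum, because each successive term raises the $\mathcal{D}$-degree and hence eventually annihilates a degree-$2n$ element. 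This is exactly where the localization at $2$ enters, and it is the main (though mild) obstacle to a naive integral argument.
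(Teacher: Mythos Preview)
Your proof is correct and follows essentially the same approach as the paper: the paper sets $u(\alpha)=\psi_0$, $w(\alpha)=\psi_0\frac{\Phi(\alpha)-1}{\Phi(\alpha)+1}$, and $\psi(\alpha)=u(\alpha)+w(\alpha)$, which is exactly your $\psi_0\cdot\frac{2\Phi(\alpha)}{\Phi(\alpha)+1}$, then invokes the $\mathbb{Z}[\tfrac12]$-version of Corollary~\ref{Im} to get $\psi(\alpha)=\varphi_{\alpha}(p)$ and evaluates at $\alpha=0$. Your additional remarks on why the series terminates and why inverting $2$ is needed are helpful elaborations but not a different argument.
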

\begin{proof}
Let $\psi_0\in\mathcal{D}^*\otimes\mathbb Z[\frac{1}{2}]$. Consider the
element $\psi(\alpha)=u(\alpha)+w(\alpha)$, $u(\alpha)=\psi_0$,
$w(\alpha)=\psi_0\frac{\Phi(\alpha)-1}{\Phi(\alpha)+1}$. According to
$\mathbb Z[\frac{1}{2}]$-version of Corollary \ref{Im} we obtain that
$\psi(\alpha)=\varphi_{\alpha}(p)\in\varphi_{\alpha}\otimes
1(\mathcal{P}\otimes\mathbb Z[\frac{1}{2}])$. Then
$\psi_0=\psi(0)=\varphi_0(p)$.
\end{proof}
\begin{quest}
What is the image of the map $\varphi_0$ over the integers?
\end{quest}

\begin{rem}
Let us note, that the space $\mathcal{P}/\Ker{\varphi_{\alpha}}$ consists of
the equivalence classes of integer combinations of polytopes under the
equivalence relation: $p\sim q$ if and only if $p$ and $q$ have equal flag
$f$-vectors. Rank of the $(2n)$-th graded component of the group
$\mathcal{P}/\Ker{\varphi_{\alpha}}$ is equal to $c_n$ (\cite{BB}, see also
Section 2.5).

On the other hand, by Corollary \ref{Fib} rank of the $(2n)$-th graded
component of the ring $\mathcal{D}^*$ is equal to $c_{n-1}$ , so the mapping
$\varphi_0:\mathcal{P}/\Ker\varphi_{\alpha}\to\mathcal{D}^*$ is not
injective.
\end{rem}
\begin{exam}
Let us consider small dimensions.
\begin{itemize}
\item $n=1$.  $\mathcal{P}/\Ker\varphi_{\alpha}$ is generated by $CC=I$.
    The ring $\mathcal{D}$ in this graduation is generated by $d$. Then
    $\varphi_0(I)=2d^*$.
\item $n=2$. $\mathcal{P}/\Ker\varphi_{\alpha}$ is generated by
    $CCC=\Delta^2$ and $BCC=I^2$. $\mathcal{D}$ has one generator $d_2$.
    Then $\varphi_0(\Delta^2)=3d_2^*$, $\varphi_0(I^2)=4d_2^*$.
    Therefore, $d_2^*=\varphi_0(I^2-\Delta^2)$, $\Ker\varphi_0$ is
    generated by $3I^2-4\Delta^2$
\item $n=3$. $\mathcal{P}/\Ker\varphi_{\alpha}$ is generated by $BCCC,
    CBCC=CI^2, CCCC=\Delta^3$, while $\mathcal{D}$ is generated by
    $d_3,d_2d$. Then
$$
\varphi_0(BC^3)=5d_3^*+18(d_2d)^*,\quad\varphi_0(CBC^2)=5d_3^*+16(d_2d)^*,\quad \varphi_0(C^4)=4d_3^*+12(d_2d)^*
$$
So $\im \varphi_0$ has the basis $d_3^*,\,2(d_2d)^*$ and $\Ker\varphi_0$
is generated by $2BC^3-6CBC^2+5C^4$.
\end{itemize}
Here by $D_{\omega}^*$ we denote the element of the dual basis: $\langle
D_{\omega}^*,D_{\sigma}\rangle=\delta_{\omega,\,\sigma}$
\end{exam}

\section{Multiplicative Structure of $f(\mathcal{P})\otimes\mathbb Q$}
\begin{thm}
The ring $f(\mathcal{P})\otimes\mathbb Q$ is a free polynomial algebra.
\end{thm}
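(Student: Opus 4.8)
The plan is to pass to the faithful copy of the image inside $\mathcal{D}^*[\alpha]\otimes\mathbb{Q}$. Since $\EuScript{R}^*\colon\mathcal{D}^*\hookrightarrow\Qs$ is a ring embedding and $\EuScript{R}^*\circ\varphi_{\alpha}=f$, the map $\EuScript{R}^*$ identifies $f(\mathcal{P})\otimes\mathbb{Q}$ with $A:=\varphi_{\alpha}(\mathcal{P})\otimes\mathbb{Q}$ as graded rings, so it suffices to prove that $A$ is a free polynomial algebra. Now $A$ is a graded subring of $(\mathcal{D}^*\otimes\mathbb{Q})[\alpha]$, and by Proposition \ref{DFM} the ring $\mathcal{D}^*\otimes\mathbb{Q}$ is a polynomial algebra; hence $(\mathcal{D}^*\otimes\mathbb{Q})[\alpha]$ is an integral domain, and so is $A$. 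This is the structural fact that drives everything: in a domain every nonzero homogeneous element is a non-zero-divisor.

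First I would introduce the evaluation homomorphism $\rho\colon A\to\mathcal{D}^*\otimes\mathbb{Q}$ given by $\alpha\mapsto 0$. On $A$ this is exactly $\varphi_0\otimes\mathbb{Q}$, which is surjective (this was proved over $\mathbb Z[\tfrac12]$ and survives $\otimes\,\mathbb{Q}$). Let $K=\Ker\rho$, a homogeneous ideal of $A$, so that $A/K\simeq\mathcal{D}^*\otimes\mathbb{Q}$ is polynomial. Next I would compute Hilbert series. The dimension $\dim_{\mathbb Q}A^{2n}$ equals the rank of $f_r(\mathcal{P}^{2n})$, which is $c_n$, while by Corollary \ref{Fib} the rank of $(\mathcal{D}^*\otimes\mathbb{Q})^{2n}$ is $c_{n-1}$. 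Therefore $\dim_{\mathbb{Q}}K^{2n}=c_n-c_{n-1}=c_{n-2}$ by the Fibonacci recursion (for $n\geq 2$; in degrees $0$ and $2$ the kernel vanishes).

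The heart of the argument is to show that $K$ is principal. Since $\dim K^{4}=c_2-c_1=1$, I would choose a nonzero $\theta\in K^{4}$. Because $A$ is a domain and $\deg\theta=4$, multiplication by $\theta$ is injective and raises degree by $4$, so $\dim(\theta A)^{2n}=\dim A^{2n-4}=c_{n-2}$. On the other hand $\theta\in K$ and $K$ is an ideal, hence $\theta A\subseteq K$; comparing the two equal dimensions $\dim(\theta A)^{2n}=c_{n-2}=\dim K^{2n}$ in every degree forces $K=\theta A$. I expect this to be the main obstacle: the coincidence that makes the kernel principal is precisely the match between the degree shift $4$ of the generator and the Fibonacci identity $c_n-c_{n-1}=c_{n-2}$, and one must be certain that $\dim A^{2n}$ is genuinely $c_n$ (equivalently, that $\varphi_{\alpha}$ separates exactly the flag $f$-vectors) in order to make the counting exact.

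Finally I would lift a set of homogeneous polynomial generators of $A/K\simeq\mathcal{D}^*\otimes\mathbb{Q}$ to homogeneous elements $g_1,g_2,\dots\in A$ and prove $A=\mathbb{Q}[g_1,g_2,\dots,\theta]$. Generation follows by induction on degree: given homogeneous $a\in A$, subtract a polynomial in the $g_i$ representing $\rho(a)$; the difference lies in $K=\theta A$, hence is $\theta$ times an element of strictly smaller degree, to which the induction applies. Algebraic independence follows by sorting a hypothetical relation into powers of $\theta$, $\sum_j F_j(g_i)\theta^{j}=0$: reducing modulo $\theta$ kills every term but $F_0$, and since the classes $\bar g_i$ generate the polynomial ring $A/K$ freely we obtain $F_0=0$; cancelling the non-zero-divisor $\theta$ and iterating gives $F_j=0$ for all $j$. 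Hence $A$, and therefore $f(\mathcal{P})\otimes\mathbb{Q}$, is a free polynomial algebra.
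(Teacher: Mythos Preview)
Your proof is correct and follows essentially the same skeleton as the paper's: work inside $(\mathcal{D}^*\otimes\mathbb Q)[\alpha]$, use evaluation at $\alpha=0$ to reduce to the polynomial ring $\mathcal{D}^*\otimes\mathbb Q$, lift a set of polynomial generators, adjoin one extra generator, and verify generation and independence by iterated subtraction and cancellation in a domain.

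The execution, however, is genuinely different. The paper constructs everything explicitly: the lifts are given by the closed formula $f_\lambda(\alpha)=f_\lambda\cdot\dfrac{2\Phi(\alpha)}{\Phi(\alpha)+1}$ coming from the characterization $\psi(-\alpha)\Phi(\alpha)=\psi(\alpha)$, and the extra generator is identified as $\alpha^2$; the divisibility step in the generation argument then appeals to the relation $\psi_1=\psi_0\,\tfrac{d}{2}$ to force the first two coefficients of the remainder to vanish. Your argument replaces all of this by a Hilbert-series computation: the Fibonacci identity $c_n-c_{n-1}=c_{n-2}$ exactly matches the degree shift produced by a single degree-$4$ generator, so $K=\theta A$ is forced by dimension alone, and the rest is formal. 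What you gain is a cleaner, construction-free proof that would transfer to any analogous situation with the same numerics; what the paper's approach buys is an explicit set of generators $\{f_\lambda(\alpha),\alpha^2\}$, which it then uses in the subsequent corollaries identifying $f(\mathcal P)\otimes\mathbb Q$ with $\mathbb Q[\Lyn_{odd},\alpha^2]\simeq\mathbb Q[\Lyn_{12}]$.
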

\begin{proof}
According to Proposition \ref{DFM} we have:
$$
\mathcal{D}^*\otimes\mathbb Q\simeq\mathbb Q[\Lyn_{odd}],
$$
where $\Lyn_{odd}$ are Lyndon words consisting of odd positive integers. This
is a free polynomial algebra. Therefore $\mathcal{D}^*\otimes\mathbb
Q[\alpha]$ is a free polynomial algebra in the generators $\Lyn_{odd}$ and
$\alpha$.

Let $\{f_{\lambda}\}$ be the functions in $\mathcal{D}^*\otimes\mathbb Q$
corresponding to the Lyndon words. Consider the polynomials
$f_{\lambda}(\alpha)$ defined as
$$
f_{\lambda}(\alpha)=u_{\lambda}(\alpha)+w_{\lambda}(\alpha);\quad u_{\lambda}(\alpha)=f_{\lambda},\quad w_{\lambda}(\alpha)=f_{\lambda}\frac{\Phi(\alpha)-1}{\Phi(\alpha)+1}=f_{\lambda}\frac{e^{s(\alpha)}-1}{e^{s(\alpha)}+1}
$$
Each polynomial $f_{\lambda}(\alpha)$ is a homogeneous element of degree
$\deg f_{\lambda}$. Let us identify $\mathcal{D}^*$ with its image in
$\Qsym[t_1,t_2,\dots]$ under the embedding $\EuScript{R}^*$.

Then according to Remark \ref{uvQ} we have: $f_{\lambda}(\alpha)\in
f(\mathcal{P}\otimes\mathbb Q)=f(\mathcal{P})\otimes\mathbb Q$.
\begin{lemma}
$f(\mathcal{P})\otimes\mathbb Q=\mathbb Q[f_{\lambda}(\alpha),\alpha^2]$.
\end{lemma}
\begin{proof}
At first let us proof that the polynomials
$\{f_{\lambda}(\alpha)\},\,\alpha^2$ are algebraically independent.

Let $g\in\mathbb Q[x,y_1,y_2,\dots,y_s]$ be a polynomial such that
$g(\alpha^2,f_1(\alpha),\dots,f_s(\alpha))=0$. We can write this equality as
$$
g_0(f_1(\alpha),\dots,f_s(\alpha))+g_2(f_1(\alpha),\dots,f_s(\alpha))\alpha^2+\dots+g_{2t}(f_1(\alpha),\dots,f_s(\alpha))\alpha^{2t}=0
$$
for some integers $s,t$. Let us set $\alpha=0$. Then we obtain
$$
g_0(f_1,\dots,f_s)=0
$$
Since $\{f_{\lambda}\}$ are algebraically independent, $g_0\equiv 0$. Since
$\mathcal{D}^*[\alpha]$ is a polynomial ring, we can divide the equality by
$\alpha^2$ to obtain
$$
g_2(f_1(\alpha),\dots,f_s(\alpha))+g_4(f_1(\alpha),\dots,f_s(\alpha))\alpha^2+\dots+g_{2t}(f_1(\alpha),\dots,f_s(\alpha))\alpha^{2t-2}=0
$$
Iterating this step in the end we obtain that all the polynomials
$g_{2i}\equiv 0$, $i=0,1,\dots,2t$, so $g\equiv0$.

Now let us prove that the polynomials $\{f_{\lambda}(\alpha)\},\alpha^2$
generate $f(\mathcal{P})\otimes \mathbb Q$.

Let $\psi(\alpha)=\psi_0+\psi_1\alpha+\dots+\psi_n\alpha^n\in
f(\mathcal{P})\otimes\mathbb Q$ be a homogeneous element of degree $2n$. The
functions $\{f_{\lambda}\}$ generate $\mathcal{D}^*\otimes\mathbb Q$, so we
can find a homogeneous polynomial $g_0\in\mathbb Q[y_1,\dots,y_s]$ such that
$$
g_0(f_1,\dots,f_s)=\psi_0,\quad \deg f_i\leqslant 2n
$$
Let us consider the element
$\psi(\alpha)-g_0(f_1(\alpha),\dots,f_s(\alpha))\in f(\mathcal{P})\otimes
\mathbb Q$. Since $\theta_1=\theta_0\frac{d}{2}$ for any
$$
\theta(\alpha)=\theta_0+\theta_1\alpha+\theta_2\alpha^2+\dots+\theta_r\alpha^r\in f(\mathcal{P})\otimes \mathbb Q,
$$
we obtain that
$$
\psi(\alpha)-g_0(f_1(\alpha),\dots,f_s(\alpha))=\hat\psi(\alpha)\alpha^2.
$$
is a homogeneous element of degree $2n$. $\hat\psi(\alpha)\alpha^2\in
f(\mathcal{P})\otimes\mathbb Q$, so
$\hat\psi(-\alpha)\alpha^2\Phi(\alpha)=\hat\psi(\alpha)\alpha^2$. Since
$\mathcal{D}^*\otimes\mathbb Q[\alpha]$ is a polynomial ring,
$\hat\psi(-\alpha)\Phi(\alpha)=\hat\psi(\alpha)$, so $\hat\psi(\alpha)\in
f(\mathcal{P})\otimes \mathbb Q$ is a homogeneous element of degree $2(n-2)$.

Iterating this argument in the end we obtain the expression of $\psi(\alpha)$
as a polynomial in $\{f_{\lambda}(\alpha)\}$ and $\alpha^2$.
\end{proof}
This lemma proves the theorem.
\end{proof}

Let us note that dimension of the $(2n)$-th graded component of the ring
$f(\mathcal{P})\otimes\mathbb Q$ is equal to the $n$-th Fibonacci number
$c_n$.

\begin{cor} There is an isomorphism of free polynomial algebras
$$
f(\mathcal{P})\otimes\mathbb Q\simeq \mathbb Q[\Lyn_{odd},\alpha^2]=\mathcal{N}_{odd}\otimes\mathbb Q[\alpha^2]\simeq \mathbb Q[\Lyn_{12}]=\mathcal{N}_{12}\otimes\mathbb Q.
$$\label{three}
\end{cor}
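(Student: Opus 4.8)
The plan is to split the statement into its two asserted isomorphisms and reduce each to results already established, the second one to a comparison of Hilbert (Poincar\'e) series together with the structural fact that a free polynomial algebra is determined by such a series.

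For the first isomorphism $f(\mathcal{P})\otimes\mathbb Q\simeq\mathcal{N}_{odd}\otimes\mathbb Q[\alpha^2]=\mathbb Q[\Lyn_{odd},\alpha^2]$ I would reread the lemma proved inside the preceding theorem. There we obtained $f(\mathcal{P})\otimes\mathbb Q=\mathbb Q[f_{\lambda}(\alpha),\alpha^2]$ with the $f_{\lambda}(\alpha)$ and $\alpha^2$ algebraically independent and homogeneous, where the $\{f_{\lambda}\}$ are exactly the polynomial generators of $\mathcal{D}^*\otimes\mathbb Q\simeq\mathbb Q[\Lyn_{odd}]$ furnished by Proposition \ref{DFM}, hence indexed by $\lambda\in\Lyn_{odd}$. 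Each $f_{\lambda}(\alpha)$ is homogeneous of the same degree as $f_{\lambda}$, and $\deg\alpha^2=4$, so the assignment $\lambda\mapsto f_{\lambda}(\alpha)$ supplemented by the extra generator $\alpha^2$ gives a degree-preserving isomorphism $\mathbb Q[\Lyn_{odd},\alpha^2]\xrightarrow{\sim}f(\mathcal{P})\otimes\mathbb Q$. Since $\mathcal{N}_{odd}\otimes\mathbb Q=\mathbb Q[\Lyn_{odd}]$ by Corollary \ref{Poly}, this is precisely the first claimed isomorphism, and it requires essentially no new work.

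For the second isomorphism $\mathbb Q[\Lyn_{odd},\alpha^2]\simeq\mathcal{N}_{12}\otimes\mathbb Q=\mathbb Q[\Lyn_{12}]$, both sides are free commutative graded $\mathbb Q$-algebras with generators in even topological degree (the right side by Corollary \ref{Poly}). I would compare their Hilbert series in a variable $t$ with $t^n$ tracking topological degree $2n$. The $2n$-th component of $f(\mathcal{P})\otimes\mathbb Q$ has dimension $c_n$ (the Fibonacci count noted right after the preceding theorem), so its Hilbert series is $\sum_n c_n t^n=(1-t-t^2)^{-1}$; by the first isomorphism the same holds for $\mathbb Q[\Lyn_{odd},\alpha^2]$. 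For $\mathcal{N}_{12}$, being the graded dual of $\mathcal{W}_{12}=\mathbb Z\langle W_1,W_2\rangle$, the $2n$-th graded dimension equals the number of words of topological degree $2n$ in the free associative algebra on $W_1$ (degree $2$) and $W_2$ (degree $4$), i.e. the number of compositions of $n$ into parts $1$ and $2$; this count satisfies the Fibonacci recursion with the same initial values, so it equals $c_n$ as well. Thus both algebras have Hilbert series $(1-t-t^2)^{-1}$.

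To conclude I would invoke the standard principle that a connected free polynomial $\mathbb Q$-algebra with $k_j$ generators in topological degree $2j$ has Hilbert series $\prod_j(1-t^j)^{-k_j}$, and that this product determines the exponents $k_j$ uniquely (for instance by induction on $j$, or by taking $\log$ and M\"obius-inverting). Equal Hilbert series therefore force the two algebras to have the same number of generators in every degree, and any degree-respecting bijection of a set of homogeneous generators extends to a graded $\mathbb Q$-algebra isomorphism $\mathbb Q[\Lyn_{odd},\alpha^2]\simeq\mathbb Q[\Lyn_{12}]$. Composing with the first isomorphism yields the corollary. The only genuinely computational points, and the ones most worth verifying carefully, are the Fibonacci count of words in $\mathbb Z\langle W_1,W_2\rangle$ and the consistent bookkeeping of gradings throughout; the abstract step ``equal Hilbert series $\Rightarrow$ isomorphic for free polynomial algebras'' is routine and poses no real obstacle.
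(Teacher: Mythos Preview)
Your proof is correct and follows essentially the same approach as the paper: both rely on the fact that free polynomial $\mathbb Q$-algebras with equal graded dimensions are isomorphic, and that all three algebras in question have $(2n)$-th graded component of dimension $c_n$. The paper's proof is a one-liner stating exactly this, while you spell out the Hilbert-series comparison and additionally construct the first isomorphism explicitly via $\lambda\mapsto f_{\lambda}(\alpha)$ rather than by pure dimension counting; this is a harmless elaboration, not a genuinely different argument.
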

\begin{proof}
Indeed, these three algebras are free polynomial algebras with the same
dimensions of the corresponding graded components: dimension of the $(2n)$-th
graded component of each algebra is equal to $c_n$
\end{proof}
Let us denote by $k_n$ the number of multiplicative generators of degree
$2n$. According to Corollary \ref{three} for $n\geqslant 3$ the number of
generators $k_n$ is equal to the number of Lyndon words of degree $2n$ in
$\Lyn_{odd}$ or in $\Lyn_{12}$.

For example, for small $n$ we have
\begin{center}
\begin{tabular}{|c|c|c|}
\hline
n&$\Lyn^{2n}_{12}$&$\Lyn^{2n}_{odd}$\\
\hline
3&[1,2]&[3]\\
4&[1,1,2]&[1,3]\\
5&[1,2,2],\;[1,1,1,2]&[5],\;[1,1,3]\\
6&[1,1,2,2],\;[1,1,1,1,2]&[1,5],\;[1,1,1,3]\\
7&[1,2,2,2],\;[1,1,2,1,2],\;[1,1,1,2,2],\;[1,1,1,1,1,2]&[7],\;[1,1,5],\;[1,3,3],\;[1,1,1,1,3],\;\\
\hline
\end{tabular}
\end{center}

\begin{cor}
\begin{enumerate}
\item $k_{n+1}\geqslant k_{n}$
\item $k_n\geqslant N_n-2$, where $N_n$ is the number of different
    decompositions of $n$ into the sum of odd numbers.
\end{enumerate}
\end{cor}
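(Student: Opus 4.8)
The plan is to translate both inequalities into statements about Lyndon words and then argue combinatorially. By Corollary~\ref{three} and the remark following it, for $n\geqslant 3$ the number $k_n$ equals the number of Lyndon words of degree $2n$ in $\Lyn_{odd}$, that is, the number of Lyndon words over the alphabet of odd positive integers whose letters sum to $n$ (the values $k_1=k_2=k_3=1$ being read off directly). Thus I work entirely with Lyndon words of letter-sum $n$ over the odd integers, using the classical facts that such a word begins with its smallest letter and that the concatenation $uv$ of two Lyndon words with $u<v$ is again Lyndon.

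For part (1) I would exhibit an injection $w\mapsto [1]\,w$ that prepends the letter $1$. If $w$ is a Lyndon word of letter-sum $n\geqslant 3$, then $w\neq[1]$, so $[1]<w$ and hence $[1]\,w$ is Lyndon of letter-sum $n+1$; this map is obviously injective, giving $k_{n+1}\geqslant k_n$ for $n\geqslant 3$. The remaining inequalities $k_2\geqslant k_1$ and $k_3\geqslant k_2$ follow from $k_1=k_2=k_3=1$.

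For part (2) I would split the Lyndon words of letter-sum $n$ into those that are \emph{weakly increasing} and the rest. A weakly increasing word is Lyndon precisely when it is non-constant, and sorting sets up a bijection between weakly increasing Lyndon words and partitions of $n$ into odd parts that are \emph{not} of the form $a^m$ with $m\geqslant 2$; writing $d(n)$ for the number of constant partitions $a^m$ ($a$ odd, $m=n/a\geqslant 2$), this family has cardinality $N_n-d(n)$. It remains to produce at least $d(n)-2$ Lyndon words that are not weakly increasing. For each odd divisor $a$ of $n$ with $a\geqslant 3$ and $m=n/a\geqslant 3$ I set
\[
 w_a=(a-2)\,(a+2)\,\underbrace{a\,a\cdots a}_{m-2},
\]
a word over the odd integers of letter-sum $n$ whose unique smallest letter $a-2$ sits in the first position, so $w_a$ is Lyndon, while the descent $(a+2)\to a$ (present since $m\geqslant 3$) shows it is not weakly increasing. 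Distinct $a$ give distinct first letters, hence distinct words, all disjoint from the weakly increasing family; the only odd divisors excluded are $a=1$ and $a=n/2$, at most two of them. Therefore the number of non-weakly-increasing Lyndon words is at least $d(n)-2$, and adding the two counts yields
\[
 k_n\geqslant (N_n-d(n))+(d(n)-2)=N_n-2.
\]

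The main obstacle is exactly the constant partitions: a content $a^m$ carries no Lyndon word of its own, so the naive sorting injection loses all $d(n)$ of them, and $d(n)$ can be large (it equals the number of proper odd divisors of $n$). The crux is therefore to compensate by explicitly manufacturing non-increasing Lyndon words of the correct letter-sum, and to verify carefully that the construction $w_a$ is Lyndon, genuinely non-increasing, injective in $a$, and disjoint from the sorted family, while identifying precisely the (at most two) divisors $a=1$ and $a=n/2$ for which it degenerates. The tightness of the bound at $n=6$, where $d(6)=2$ is realized by $1^6$ and $3^3$ and no excess Lyndon words exist, confirms that the constant $-2$ is sharp and that both exceptional divisors genuinely occur.
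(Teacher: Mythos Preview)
Your argument is correct and follows essentially the same strategy as the paper: prepending a $1$ for part~(1), and for part~(2) mapping each non-constant sorted partition to its weakly increasing Lyndon word while manufacturing explicit replacement Lyndon words for most of the constant partitions $a^m$.

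The only notable difference is in the replacement construction for part~(2). The paper assigns to the constant partition $d^k$ (with $d\geqslant 5$) the word $[1,\,d-2,\,1,\,d,\dots,d]$, thereby leaving only the cases $d=1$ and $d=3$ uncovered. You instead assign to $a^m$ (with $a\geqslant 3$ and $m\geqslant 3$) the word $[a-2,\,a+2,\,a,\dots,a]$, leaving the cases $a=1$ and $a=n/2$ uncovered. Both constructions are easily seen to be Lyndon (unique minimal first letter), non-weakly-increasing, and pairwise distinct; each leaves at most two constant partitions unhandled, so both yield the bound $N_n-2$. Your choice has the minor advantage that the ``bad'' divisor $a=n/2$ is absent whenever $n\not\equiv 2\pmod 4$, whereas the paper's bad divisor $d=3$ is absent only when $3\nmid n$; but since the statement only asks for $N_n-2$, this plays no role. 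For part~(1) the paper works in $\Lyn_{12}$ rather than $\Lyn_{odd}$, but the injection $w\mapsto[1]w$ is the same.
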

\begin{rem}
It is a well-known fact, that the number of the decompositions of $n$ into
the sum of odd numbers is equal to the number of the decompositions of $n$
into the sum of different numbers. For example,
$$
\begin{matrix}
2&=&1+1&=&2\\
3&=&1+1+1,\;3&=&1+2,\;3\\
4&=&1+1+1+1,\;1+3&=&1+3,\;4\\
5&=&1+1+1+1+1,\;1+1+3,\;5&=&1+4,\;2+3,\;5
\end{matrix}
$$
\end{rem}
\begin{proof}
\begin{enumerate}
\item $k_1=k_2=k_3=1$.

Let $n\geqslant 3$ and let $w=[a_1,\dots,a_k]\in\Lyn^{2n}_{12}$. Then
$1w=[1,a_1,\dots,a_k]\in\Lyn^{2(n+1)}_{12}$. Indeed, for any proper tail
$[a_i,\dots,a_k],\; i\geqslant 1$ if $a_i>1$, then $[a_i,\dots,a_k]>1w$.
If $a_i=1$ then $i\ne k$ and $[a_{i+1},\dots,a_k]>[a_1,\dots,a_k]$, since
$w$ is Lyndon. Thus $k_{n+1}\geqslant k_n$
\item Any decomposition $n=d_1+\dots+d_k$ into the sum of odd numbers
    $d_1\leqslant d_2\leqslant\dots\leqslant d_k$ gives a Lyndon word
    $[d_1,\dots,d_k]\in\Lyn_{odd}$, except for the case, when $k>1$,
    $d_1=\dots=d_k=d\ne n$.

If $d\geqslant 5$, then the word $[1,d-2,1,d,\dots,d]$ is Lyndon. When
$d=1$ or $d=3$ there can be no Lyndon word corresponding to the
decomposition $n=1+\dots+1$ or $n=3+\dots+3$, as it happens for $n=6$:
$$
6=1+5=3+3=1+1+1+3=1+1+1+1+1+1
$$
We have $k_6=2$ and $N_6=4$. Thus $k_n\geqslant N_n-2$.
\end{enumerate}
\end{proof}
Then we obtain the corollary:
\begin{cor}
For any $t$, $|t|<\frac{\sqrt{5}-1}{2}$, there is the following decomposition
of the generating series of Fibonacci numbers into the absolutely convergent
infinite product:
$$
\frac{1}{1-t-t^2}=1+t+t^2+2t^3+3t^4+\dots=\sum\limits_{n=0}^{\infty}c_nt^n=\prod\limits_{i=1}^{\infty}\frac{1}{(1-t^i)^{k_i}},
$$
The numbers $k_n$ satisfy the properties $k_{n+1}\geqslant k_n\geqslant
N_n-2$, where $N_n$ is the number of decompositions of $n$ into the sum of
odd summands.
\end{cor}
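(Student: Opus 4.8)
The plan is to read the product formula directly off the fact, just established, that $f(\mathcal{P})\otimes\mathbb{Q}$ is a free polynomial algebra whose $(2n)$-th graded component has dimension $c_n$. First I would record the Hilbert series in the variable $t$, assigning the weight $t^i$ to each multiplicative generator of degree $2i$. For a free polynomial algebra with exactly $k_i$ generators in degree $2i$, a single such generator $x$ contributes the powers $x^m$ of weight $t^{im}$, hence the factor $\sum_{m\geqslant0}t^{im}=(1-t^i)^{-1}$, so that the whole Hilbert series is $\prod_{i\geqslant1}(1-t^i)^{-k_i}$. On the other hand, since the $(2n)$-th component has dimension $c_n$, the same Hilbert series equals $\sum_{n\geqslant0}c_nt^n=\frac{1}{1-t-t^2}$, the last equality being the standard generating function for the Fibonacci numbers $c_n$. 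Comparing the two expressions yields the asserted identity as an equality of formal power series.

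To upgrade this to absolute convergence on the interval $|t|<\frac{\sqrt5-1}{2}$, I would first bound the number of generators by the dimension of the ambient graded component, $k_i\leqslant c_i$, since a set of multiplicative generators in degree $2i$ is in particular a linearly independent subset of the $(2i)$-th component. As $c_i$ grows like $\phi^i$ with $\phi=\frac{1+\sqrt5}{2}$ and $\phi^{-1}=\frac{\sqrt5-1}{2}$, the series $\sum_i k_i|t|^i\leqslant\sum_i c_i|t|^i$ converges for $|t|<\frac{\sqrt5-1}{2}$. Using the elementary estimate $-\log(1-t^i)\leqslant\frac{|t|^i}{1-|t|}$ valid for $0\leqslant|t|<1$, the sum of logarithms $\sum_i k_i\bigl(-\log(1-t^i)\bigr)$ is dominated by $\frac{1}{1-|t|}\sum_i k_i|t|^i<\infty$; hence the infinite product converges absolutely on the stated disk. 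Its radius $\frac{\sqrt5-1}{2}$ is exactly the radius of convergence of $\frac{1}{1-t-t^2}$, determined by the pole at the positive root $\frac{\sqrt5-1}{2}$ of $t^2+t-1$.

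Finally, the inequalities $k_{n+1}\geqslant k_n\geqslant N_n-2$ are precisely the content of the preceding corollary, so I would simply invoke it: the bound $k_{n+1}\geqslant k_n$ follows from prepending the symbol $1$ to a Lyndon word in $\Lyn_{12}$, and $k_n\geqslant N_n-2$ from associating a Lyndon word in $\Lyn_{odd}$ to each decomposition of $n$ into odd summands, discarding at most the two constant decompositions $n=1+\dots+1$ and $n=3+\dots+3$. The only genuine work lies in the convergence step, and I expect the main obstacle to be making the domination argument precise enough that $\frac{\sqrt5-1}{2}$ is recognized as exactly the threshold of absolute convergence rather than merely a lower bound for the admissible domain; this follows once $\phi^{-1}$ is matched with the radius of convergence of the Fibonacci generating series.
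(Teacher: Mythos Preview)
Your argument is correct. The formal identity via the Hilbert series and the invocation of the preceding corollary for the inequalities on $k_n$ match the paper exactly. Where you differ from the paper is in the convergence step: you bound $k_i\leqslant c_i$ directly (generators in degree $2i$ are linearly independent in the $(2i)$-th component) and use this to dominate $\sum_i k_i|t|^i$ by $\sum_i c_i|t|^i$, then control $\sum_i k_i|{-}\log(1-t^i)|$ by the elementary inequality $|{-}\log(1-t^i)|\leqslant |t|^i/(1-|t|)$. The paper instead first shows the partial products converge to $\sum c_kt^k$ by estimating their difference $S_n$ as a tail $\sum_{k\geqslant n}a_kt^k$ with $0\leqslant a_k\leqslant c_k$, and only afterwards extracts the convergence of $\sum k_nt^n$ (for real $t\geqslant 0$) from the already-established convergence of $\sum k_n(-\ln(1-t^n))$, then bootstraps to complex $t$ via the asymptotic $|{-}k_n\ln(1-t^n)|\sim k_n|t^n|$. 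Your route is more direct and avoids this two-step bootstrap; the paper's route has the minor advantage of not needing to observe $k_i\leqslant c_i$, but is otherwise more circuitous. One small remark: the statement only asserts convergence on $|t|<\frac{\sqrt5-1}{2}$, not that this is the exact boundary, so your closing concern about recognizing the threshold as sharp is unnecessary for the corollary as stated.
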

\begin{proof}
Let $|t|<\frac{\sqrt{5}-1}{2}$. Then the series
$\sum\limits_{n=0}^{\infty}c_nt^n$ converges absolutely to
$\frac{1}{1-t-t^2}$. Then
$$
R_N=\sum\limits_{n=N}^{\infty}c_n|t^n|\xrightarrow[N\to\infty]{} 0.
$$
But we have:
$$
S_n=\sum\limits_{k=0}^{\infty}c_kt^k-\prod\limits_{i=1}^{n-1}\frac{1}{(1-t^i)^{k_i}}=a_nt^n+a_{n+1}t^{n+1}+a_{n+2}t^{n+2}+\dots,
$$
where $0\leqslant a_i\leqslant c_i$. Then $|S_n|\leqslant R_n$, therefore
$S_n\xrightarrow[n\to\infty]{}0$, and the formula is valid.

The infinite product
$\lim\limits_{n\to\infty}\prod\limits_{i=1}^n\frac{1}{(1-t^i)^{k_i}}$
converges absolutely if and only if the series
$\sum\limits_{n=1}^{\infty}|-k_n\ln(1-t^n)|$ converges. For $0\leqslant
t<\frac{\sqrt{5}-1}{2}$ this is true. In this case $0\leqslant
k_nt^n\leqslant-k_n\ln(1-t^n)$, therefore the power series
$\sum\limits_{n=1}^{\infty}k_nt^n$ converges and its radius of convergence it
at least $\frac{\sqrt{5}-1}{2}$. Thus for $|t|<\frac{\sqrt{5}-1}{2}$ we have:
$\sum\limits_{n=1}^{\infty}k_n|t^n|<\infty$,  and the equivalence
$|-k_n\ln(1-t^n)|\sim k_n|t^n|$ implies the absolute convergence of the
infinite product.
\end{proof}

Here are the examples in small dimensions:
\begin{center}
\begin{tabular}{|c|c|c|c|c|}
\hline n&generators&$k_n$&additive
basis&$c_n$\\
\hline
1&[1]&1&[1]&1\\

2&[2]&1&[2],\;[1]*[1]&2\\

3&[1,2]&1&[1,2],\;[2]*[1],\;[1]*[1]*[1]&3\\

4&[1,1,2]&1&[1,1,2],\;[1,2]*[1],\;[2]*[2],\;[2]*[1]*[1],\;[1]*[1]*[1]*[1]&5\\
\hline
5&[1,2,2],\;[1,1,1,2]&2&[1,2,2],\;[1,1,1,2],\;[1,1,2]*[1],\;[1,2]*[2],\;[1,2]*[1]*[1]&8\\
&&&[2]*[2]*[1],\;[2]*[1]*[1]*[1],\;[1]*[1]*[1]*[1]*[1]&\\
\hline
6&[1,1,2,2],\;[1,1,1,1,2]&2&&13\\
7&[1,2,2,2],\;[1,1,2,1,2],\;[1,1,1,2,2],\;[1,1,1,1,1,2]&4&&21\\
\hline
\end{tabular}
\end{center}
The corresponding products are:
\begin{gather*}
(1-t)(1-t^2)=1-t-t^2+t^3;\\
(1-t)(1-t^2)(1-t^3)=1-t-t^2+t^4+t^5-t^6;\\
(1-t)(1-t^2)(1-t^3)(1-t^4)=1-t-t^2+2t^5-t^8-t^9+t^{10};\\
(1-t)(1-t^2)(1-t^3)(1-t^4)(1-t^5)^2=1-t-t^2+2t^6+2t^7-t^8-t^9-2t^{10}-\\
-t^{11}-t^{12}+2t^{13}+2t^{14}-t^{18}-t^{19}+t^{20}.
\end{gather*}
The numbers $k_i$ can be found in the following way:
\begin{gather*}
-\log(1-t-t^2)=-\sum\limits_{i=1}^{\infty}k_i\log(1-t^i);\\
\sum\limits_{n=1}^{\infty}\frac{1}{n}\sum\limits_{j=0}^n{n\choose
j}t^{n+j}=\sum\limits_{i=1}^{\infty}k_i\sum\limits_{r=1}^{\infty}\frac{t^{ri}}{r}
\end{gather*}
Thus for any $N$
$$
\sum\limits_{j=0}^{[\frac{N}{2}]}\frac{{N-j\choose
j}}{N-j}=\frac{1}{N}\sum\limits_{i|N}ik_i.
$$
According to the M\"obius inversion formula we obtain
$$
Nk_N=\sum\limits_{d|N}\left(d\sum\limits_{j=0}^{[\frac{d}{2}]}\frac{{d-j\choose
j}}{d-j}\right)\cdot\mu\left(\frac{N}{d}\right),
$$
where $\mu(n)$ is the M\"obius function, that is
$$
\mu(n)=
\begin{cases}
1,&n=1;\\
(-1)^r,&n=p_1\dots p_r,\;\{p_i\}\mbox{ -- distinct prime
numbers};\\
0,&n\mbox{ is not square-free}.
\end{cases}
$$
For example, if $N$ is a prime number $p$, then
$$
pk_p=-1+p\sum\limits_{j=0}^{[\frac{p}{2}]}\frac{{p-j\choose
j}}{p-j};\quad
k_p=\sum\limits_{j=1}^{[\frac{p}{2}]}\frac{{p-j\choose j}}{p-j}.
$$
Each summand is an integer, since ${p-j\choose j}=\frac{{p-j-1\choose
j-1}(p-j)}{j}$ is an integer, and $(p-j)$ and $j$ are relatively prime
numbers.

Thus $k_5=2,\; k_7=4,\;k_{11}=18$, and so on.

\section{Bayer-Billera Ring} Let us consider the free graded abelian group
$BB\subset\mathcal{P}$, generated by $1$ and all the polytopes
$Q\in\Omega^n,\;n\geqslant 1$. Rank of the $(2n)$-th graded component of this
group is equal to $c_n$.

Since the determinant of the matrix $K^n$ is equal to $1$, the generalized
$f$-polynomials $\{f(Q),\;Q\in\Omega^n\}$ form a basis of the $(2n)$-th
graded component of the ring $f(\mathcal{P})$.

So the composition of the inclusion $i:BB\subset\mathcal{P}$ and the mapping
$f:\mathcal{P}\to\Qsym[t_1,t_2,\dots][\alpha]$ is an isomorphism of the
abelian groups $BB$ and $f(\mathcal{P})$. This gives a projection
$\pi:\mathcal{P}\to BB$
$$
\pi(p)=x\in BB:\quad f(p)=f(x).
$$
It follows from the definition, that $\pi\circ i=1$ on the space $BB$.
\begin{thm}
The projection $\pi\colon\mathcal{P}\to BB$ defined by the relation $f(\pi
p)=f(p)$ gives the graded group $BB$ the structure of a graded commutative
associative ring with the multiplication $x\times_{BB}y=\pi(x\times y)$ such
that $f(x\times_{BB}y)=f(x)f(y)$, and $BB\otimes\mathbb Q$ is a free
polynomial algebra in a countable set of variables.
\end{thm}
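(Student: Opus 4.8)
The plan is to transport the ring structure of the image $f(\mathcal{P})$ back to $BB$ along the additive isomorphism $\bar f:=f|_{BB}\colon BB\to f(\mathcal{P})$, which we already have in hand: by the Proposition that $\det(K^n)=1$, the polynomials $\{f(Q):Q\in\Omega^n\}$ form a basis of the $(2n)$-th graded component $f(\mathcal{P}^{2n})$, so $\bar f$ restricts to isomorphisms $BB^{2n}\xrightarrow{\;\sim\;}f(\mathcal{P}^{2n})$. The first step is to record that $\pi$ is a well-defined graded $\mathbb Z$-linear map: for $p\in\mathcal{P}^{2n}$ the element $f(p)$ lies in $f(\mathcal{P}^{2n})$, so there is a unique $x\in BB^{2n}$ with $f(x)=f(p)$, namely $x=\bar f^{-1}(f(p))$; setting $\pi(p)=x$ yields $f\circ\pi=f$, a grading-preserving $\pi$, and $\pi\circ i=\id_{BB}$.

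Next I would invoke the fact (Section 6) that $f$ is a ring homomorphism, so its image $f(\mathcal{P})$ is a graded commutative associative subring of $\Qsym[t_1,t_2,\dots][\alpha]$, with unit $f(1)=f(\pt)=\xi_\alpha(\pt)=1$; note that $1=\pt$ is one of the chosen generators of $BB$. For the proposed multiplication $x\times_{BB}y:=\pi(x\times y)$ the central computation is
$$
f(x\times_{BB}y)=f\bigl(\pi(x\times y)\bigr)=f(x\times y)=f(x)\,f(y),
$$
using $f\circ\pi=f$ together with multiplicativity of $f$. This shows simultaneously that $\bar f$ carries $\times_{BB}$ to the multiplication of $f(\mathcal{P})$ and that the stated identity $f(x\times_{BB}y)=f(x)f(y)$ holds.

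All the ring axioms for $\times_{BB}$ then follow by pulling back the corresponding identities in $f(\mathcal{P})$ through the \emph{injective} map $\bar f$: bilinearity from additivity of $\pi$; commutativity from $f(x\times_{BB}y)=f(x)f(y)=f(y)f(x)=f(y\times_{BB}x)$; associativity from $f((x\times_{BB}y)\times_{BB}z)=f(x)f(y)f(z)=f(x\times_{BB}(y\times_{BB}z))$; and the unit axiom from $f(1\times_{BB}x)=f(1)f(x)=f(x)$. Since $\times$ is additive in degree and $\pi$ is graded, $\times_{BB}$ has degree $0$, so $BB$ becomes a graded commutative associative ring and $\bar f\colon BB\to f(\mathcal{P})$ is an isomorphism of graded rings. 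The rationalization assertion is then formal: $\mathcal{P}$ is free abelian and $\mathbb Q$ is flat, whence $f(\mathcal{P})\otimes\mathbb Q=f(\mathcal{P}\otimes\mathbb Q)$, and $\bar f\otimes\id$ gives $BB\otimes\mathbb Q\simeq f(\mathcal{P})\otimes\mathbb Q$, which is a free polynomial algebra in countably many variables by the Theorem of Section~8.

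I do not anticipate a genuine obstacle: the entire content is the already-established group isomorphism $\bar f$ (resting on $\det K^n=1$) combined with the multiplicativity of $f$, after which the axioms transfer by injectivity. The only points demanding care — and they are immediate — are checking that $\pi$ preserves the grading (so that $\times_{BB}$ is graded) and that the generator $1=\pt\in BB$ is sent to the unit of $f(\mathcal{P})$, so that $(BB,\times_{BB})$ is unital.
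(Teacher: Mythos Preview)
Your proposal is correct and follows exactly the approach the paper intends: the paper sets up the group isomorphism $\bar f=f|_{BB}\colon BB\to f(\mathcal{P})$ from $\det K^n=1$ just before the theorem and then states the theorem without further argument, leaving the transport-of-structure details implicit. You have simply written out those details --- well-definedness and gradedness of $\pi$, the key identity $f(x\times_{BB}y)=f(x)f(y)$ via $f\circ\pi=f$ and multiplicativity of $f$, the ring axioms by injectivity of $\bar f$, and the appeal to Section~8 for the polynomial-algebra claim over~$\mathbb Q$.
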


\begin{quest}
To describe the projection $\pi\colon\mathcal{P}\to BB$ in an efficient way.
\end{quest}
\begin{quest}
To find the multiplicative generators of $BB\otimes\mathbb Q$ in an efficient
way.
\end{quest}
\begin{quest}
To describe the multiplication in $BB$ in an efficient way.
\end{quest}

\section{Hopf comodule structures}
Let $R$ be a field or the ring $\mathbb Z$.
\begin{defin}
By a (left) {\it Hopf comodule} (or {\it Milnor comodule}) over a Hopf
algebra $X$ we mean an $R$-algebra $M$ with a unit provided $M$ is a comodule
over $X$ with a coaction $b\colon M\to X\otimes M$ such that
$b(uv)=b(u)b(v)$, i.e. such that $b$ is a homomorphism of rings.
\end{defin}

\subsection{Hopf comodule structure arising from the Hopf module structure}
\begin{lemma}\label{CM}
Let $\mathcal{H}$ be a graded connected torsionfree Hopf algebra over
$\mathbb Z$ with finite rank of each graded component. If a ring
$\mathcal{A}$ has the right graded Milnor module structure over
$\mathcal{H}$, then $\mathcal{A}$ has a left graded Hopf comodule structure
over the graded dual Hopf algebra $\mathcal{H}^*$ defined by the formula
$$
\Delta_{\mathcal{H}^*}(A)=\sum\limits_{H_{\omega}\in\mathcal{H}}H^*_{\omega}\otimes AH_{\omega}
$$
where $\{H_{\omega}\}$ is some basis in the graded group $\mathcal{H}$, and
$\{H_{\omega}^*\}$ is the graded dual basis: $\langle
H_{\omega}^*,H_{\sigma}\rangle=\delta_{\omega,\,\sigma}$.
\end{lemma}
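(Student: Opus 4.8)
The plan is to reinterpret the proposed coaction in a basis-free way and then to verify, in turn, that it is well defined, that it satisfies the two comodule axioms, and that it is a homomorphism of rings; the last of these is the point where the Milnor module hypothesis is genuinely used. Since $\mathcal{H}$ is torsion-free with finite-rank graded components, in each degree the evaluation pairing induces an isomorphism of graded groups $\mathcal{H}^*\otimes\mathcal{A}\cong\Hom_{\mathbb Z}(\mathcal{H},\mathcal{A})$, and under it the element $\Delta_{\mathcal{H}^*}(A)=\sum_{\omega}H^*_{\omega}\otimes AH_{\omega}$ corresponds exactly to the linear map $H\mapsto AH$. This makes well-definedness transparent: for $A$ of degree $2n$ the summand $H^*_{\omega}\otimes AH_{\omega}$ vanishes unless $\deg H_{\omega}\le 2n$, so the sum is finite, and since the map $H\mapsto AH$ makes no reference to a basis, $\Delta_{\mathcal{H}^*}(A)$ does not depend on the chosen basis $\{H_{\omega}\}$. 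At the outset I would also record the two dualities to be used repeatedly: the product on $\mathcal{H}^*$ is dual to $\Delta$ on $\mathcal{H}$, so that $H^*_{\sigma}H^*_{\tau}=\sum_{\omega}\langle H^*_{\sigma}\otimes H^*_{\tau},\Delta H_{\omega}\rangle\,H^*_{\omega}$, and dually the comultiplication $\Delta_{\mathcal{H}^*}$ is governed by the structure constants of the product $H_{\sigma}H_{\tau}=\sum_{\rho}c^{\rho}_{\sigma\tau}H_{\rho}$. Because all graded components lie in even degrees, the graded tensor-product algebra $\mathcal{H}^*\otimes\mathcal{A}$ carries no Koszul signs.

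Next I would check that $b:=\Delta_{\mathcal{H}^*}$ is a coaction. For coassociativity I compute $(\mathrm{id}\otimes b)\,b(A)=\sum_{\omega,\tau}H^*_{\omega}\otimes H^*_{\tau}\otimes (AH_{\omega})H_{\tau}$ and apply associativity of the right action, $(AH_{\omega})H_{\tau}=A(H_{\omega}H_{\tau})=\sum_{\rho}c^{\rho}_{\omega\tau}AH_{\rho}$; on the other side the coefficient of $H^*_{\omega}\otimes H^*_{\tau}\otimes AH_{\rho}$ in $(\Delta_{\mathcal{H}^*}\otimes\mathrm{id})\,b(A)$ is precisely $c^{\rho}_{\omega\tau}$, so the two agree. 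For the counit axiom, connectedness of $\mathcal{H}$ lets me choose the basis with $H_{\omega_0}=1_{\mathcal{H}}$ in degree $0$; then $\varepsilon_{\mathcal{H}^*}(H^*_{\omega})=\langle H^*_{\omega},1_{\mathcal{H}}\rangle=\delta_{\omega,\omega_0}$, whence $(\varepsilon_{\mathcal{H}^*}\otimes\mathrm{id})\,b(A)=AH_{\omega_0}=A$ by the module unit axiom.

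The decisive step is multiplicativity, $b(uv)=b(u)b(v)$, and this is exactly where the right Milnor module identity $(uv)H=\sum_n(uH'_n)(vH''_n)$, with $\Delta H=\sum_n H'_n\otimes H''_n$, is invoked. Expanding $b(uv)=\sum_{\omega}H^*_{\omega}\otimes(uv)H_{\omega}$ and writing $\Delta H_{\omega}=\sum_{\sigma,\tau}d^{\sigma\tau}_{\omega}H_{\sigma}\otimes H_{\tau}$ gives $b(uv)=\sum_{\omega,\sigma,\tau}d^{\sigma\tau}_{\omega}\,H^*_{\omega}\otimes(uH_{\sigma})(vH_{\tau})$, while the product in $\mathcal{H}^*\otimes\mathcal{A}$ yields $b(u)b(v)=\sum_{\sigma,\tau}(H^*_{\sigma}H^*_{\tau})\otimes(uH_{\sigma})(vH_{\tau})$; since $H^*_{\sigma}H^*_{\tau}=\sum_{\omega}d^{\sigma\tau}_{\omega}H^*_{\omega}$ by product/coproduct duality, the two expressions coincide. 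Finally I would confirm $b(1_{\mathcal{A}})=1_{\mathcal{H}^*}\otimes 1_{\mathcal{A}}$ by showing that $\phi(H):=1_{\mathcal{A}}H$ equals $\varepsilon(H)\,1_{\mathcal{A}}$: applying the Milnor identity to $1_{\mathcal{A}}=1_{\mathcal{A}}\cdot 1_{\mathcal{A}}$ gives $\phi(H)=\sum_n\phi(H'_n)\phi(H''_n)$, and an induction on degree using connectedness (the reduced coproduct lands in strictly lower positive degrees, forcing $\phi(H)=2\phi(H)$, hence $\phi(H)=0$ for $\deg H>0$) yields the claim; since $1_{\mathcal{H}^*}=\varepsilon_{\mathcal{H}}=H^*_{\omega_0}$, this gives $b(1_{\mathcal{A}})=H^*_{\omega_0}\otimes 1_{\mathcal{A}}=1_{\mathcal{H}^*}\otimes 1_{\mathcal{A}}$.

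I expect the only real subtlety to be the careful use of the finiteness and torsion-freeness hypotheses to justify, degreewise, the identifications $\mathcal{H}^{**}\cong\mathcal{H}$ and $\mathcal{H}^*\otimes\mathcal{A}\cong\Hom_{\mathbb Z}(\mathcal{H},\mathcal{A})$, together with keeping straight which dual structure on $\mathcal{H}^*$ (product versus coproduct) is being invoked at each stage. Once that bookkeeping is fixed, every axiom reduces either to associativity of the action, to the product/coproduct duality between $\mathcal{H}$ and $\mathcal{H}^*$, or to connectedness, so no further input is needed.
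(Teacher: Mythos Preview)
Your proof is correct and follows essentially the same route as the paper: coassociativity via the structure constants of the product in $\mathcal{H}$ and their appearance in $\Delta_{\mathcal{H}^*}$, the counit axiom via connectedness, and multiplicativity via the Milnor identity together with product/coproduct duality. You are in fact slightly more careful than the paper: your basis-free description via $\mathcal{H}^*\otimes\mathcal{A}\cong\Hom_{\mathbb Z}(\mathcal{H},\mathcal{A})$ is relegated there to a post-proof remark, and your verification that $b(1_{\mathcal{A}})=1\otimes 1$ (via the inductive $\phi(H)=2\phi(H)$ argument) is omitted in the paper altogether.
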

\begin{proof}
\begin{multline*}
(1\otimes\Delta_{\mathcal{H}^*})\circ\Delta_{\mathcal{H}^*}A=\sum\limits_{\omega}H^*_{\omega}\otimes
\left(\sum\limits_{\sigma}H^*_{\sigma}\otimes (AH_{\omega})H_{\sigma}\right)=
\sum\limits_{\omega,\,\sigma}H^*_{\omega}\otimes H^*_{\sigma}\otimes
A\left(\sum\limits_{\tau}\,\langle H^*_{\tau}, H_{\omega}H_{\sigma}\rangle
H_{\tau}\right)=\\
=\sum\limits_{\tau}\left(\sum\limits_{\omega,\,\sigma}H_{\omega}^*\otimes
H_{\sigma}^*\;\langle H^*_{\tau}, H_{\omega} H_{\sigma} \rangle \right)
\otimes AH_{\tau}
=\sum\limits_{\tau}\left(\sum\limits_{\omega,\,\sigma}H_{\omega}^*\otimes
H_{\sigma}^*\;\langle \Delta H^*_{\tau}, H_{\omega}\otimes
H_{\sigma}\rangle\right)\otimes AH_{\tau}=\\
=\sum\limits_{\tau}\Delta H^*_{\tau}\otimes AH_{\tau}=(\Delta\otimes
1)\circ\Delta_{\mathcal{H}^*}A
\end{multline*}

Since the augmentation in the ring $\mathcal{H}^*$ has the form
$\varepsilon(\xi)=\xi(1)$, we see that
\begin{multline*}
(\varepsilon\otimes 1)\Delta_{\mathcal{H}^*}A=(\varepsilon \otimes
1)\sum\limits_{\omega} H^*_{\omega}\otimes
AH_{\omega}=\sum\limits_{\omega}H_{\omega}^*(1)\otimes
AH_{\omega}=H_{\omega_0}^*(1)\otimes AH_{\omega_0}=1\otimes A,
\end{multline*}
where $H_{\omega_0}$ is a basis in $\mathcal{H}^0$.

Therefore $\Delta_{\mathcal{H}^*}$ is a comodule structure.

Now
\begin{multline*}
\Delta_{\mathcal{H}^*}(AB)=\sum\limits_{\omega}H^*_{\omega}\otimes(AB)H_{\omega}=\sum\limits_{\omega}H^*_{\omega}
\otimes\left(\sum\limits_{i}(A H_{\omega,\,i}')(B H_{\omega,\,i}'')\right)=\\
=\sum\limits_{\omega}H^*_{\omega}\otimes\left(\sum\limits_{i}\left(A
\sum\limits_{\sigma}\langle H_{\sigma}^*, H_{\omega,\,i}'\rangle
H_{\sigma}\right)\left(B\sum\limits_{\tau}\langle H_{\tau}^*,
H_{\omega,\,i}''\rangle  H_{\tau}\right)\right)=\\
=\sum\limits_{\sigma,\,\tau}\left(\sum\limits_{\omega}\left(\sum\limits_{i}
\langle H_{\sigma}^*, H_{\omega,\,i}'\rangle \langle H_{\tau}^*,
H_{\omega,\,i}''\rangle\right)H^*_{\omega}\right)\otimes (AH_{\sigma})(B H_{\tau})=\\
\sum\limits_{\sigma,\,\tau}\left(\sum\limits_{\omega}\langle
H_{\sigma}^*\cdot H_{\tau}^*,\,H_{\omega}\rangle\,H^*_{\omega}\right)\otimes
(AH_{\sigma})(B H_{\tau})=\sum\limits_{\sigma,\,\tau} H^*_{\sigma}\cdot
H^*_{\tau}\otimes (AH_{\sigma})(B
H_{\tau})=\left(\Delta_{\mathcal{H}^*}A\right)\cdot
\left(\Delta_{\mathcal{H}^*}B\right),
\end{multline*}
where $\Delta H_{\omega}=\sum\limits_{i}H_{\omega,\,i}'\otimes
H_{\omega,\,i}''$. This equality finishes the proof.
\end{proof}
\begin{rem}
This Hopf comodule structure does not depend on the choice of a graded basis
in the graded group $\mathcal{H}$, since we can write it as
$$
\Delta_{\mathcal{H}^*}A=(1\otimes A)\left(\sum\limits_{\omega}H^*_{\omega}\otimes H_{\omega}\right),
$$
and $\sum\limits_{\omega}H^*_{\omega}\otimes H_{\omega}$ defines in each
graded component $\mathcal{H}^n$ the identity operator in
$(\mathcal{H}^{n})^*\otimes\mathcal{H}^n\simeq\Hom_{\mathbb
Z}(\mathcal{H}^n,\mathcal{H}^n)$.
\end{rem}
\begin{rem}
The similar argument proves that if a ring $\mathcal{A}$ has the left graded
Milnor module structure over $\mathcal{H}$, then $A$ has a right graded Hopf
comodule structure over the graded dual Hopf algebra $\mathcal{H}^*$ defined
by the formula
$$
\Delta_{\mathcal{H}^*}(A)=\sum\limits_{H_{\omega}\in\mathcal{H}}H_{\omega}A\otimes H_{\omega}^*.
$$
\end{rem}

\begin{cor}\label{ZD}
There ia a natural left Hopf comodule structure on the ring $\R$ over the
Hopf algebra $\Qs$ induced by the right Hopf module structure over the Hopf
algebra $\mathcal{Z}$:
$$
\Delta_{\EuScript{R}}(P)=\sum\limits_{\omega}M_{\omega}\otimes
PZ_{\omega}=\sum\limits_{k\geqslant
0}\sum\limits_{(j_1,\,\dots,\,j_k)}M_{(j_1,\,\dots,\,j_k)}\otimes
\left(PZ_{j_1}\dots Z_{j_k}\right)=\sum\limits_{k\geqslant
0}\sum\limits_{(j_1,\,\dots,\,j_k)}M_{(j_k,\,\dots,\,j_1)}\otimes
\left(d_{j_1}\dots d_{j_k}P\right)
$$
There are natural right Hopf comodule structures $\Delta_{\EuScript{L}}$ and
$\Delta_{\mathcal{D}^*}$ on the ring $\R$ over the Hopf algebras $\Qs$ and
$\mathcal{D}^*$ induced by the left Hopf module structures over the Hopf
algebras $\mathcal{Z}$ and $\mathcal{D}$.

These structures are compatible in the following sense:
\begin{gather}
\Delta_{\EuScript{L}}=\tau\circ(*\otimes1)\circ\Delta_{\EuScript{R}}\\
(1\otimes\EuScript{L}^*)\circ\Delta_{\mathcal{D}^*}=\Delta_{\EuScript{L}}\label{LD}
\end{gather}
where $\tau\colon\Qs\otimes \R\to\R\otimes\Qs$ is a homomorphism that
interchanges the tensor factors
$$
\tau(M_{\omega}\otimes P)=P\otimes M_{\omega}.
$$
We have
$$
\Delta_{\EuScript{L}}P=\sum\limits_{k\geqslant 0}\sum\limits_{(j_1,\,\dots,\,j_k)}\left(d_{j_1}\dots d_{j_k}P\right)\otimes M_{(j_1,\,\dots,\,j_k)}
$$
\end{cor}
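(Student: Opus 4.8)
The plan is to derive the entire statement from Lemma \ref{CM} and its Remark together with the Milnor module structures already established in the preceding propositions, so that essentially no new module-theoretic computation is required. First I would obtain $\Delta_{\EuScript{R}}$ by applying Lemma \ref{CM} to the right Milnor module structure of $\R$ over $\mathcal{Z}$ given by $\EuScript{R}_{\R}$; the algebra $\mathcal{Z}$ is graded, connected, torsionfree and of finite rank in each degree, so the hypotheses hold, and the basis $\{Z_{\omega}\}$ with graded dual $\{Z_{\omega}^*\}=\{M_{\omega}\}$ yields $\Delta_{\EuScript{R}}(P)=\sum_{\omega}M_{\omega}\otimes PZ_{\omega}$. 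Since $\EuScript{R}_{\R}(Z_{\omega})=D_{\omega^*}=d_{j_k}\cdots d_{j_1}$ for $\omega=(j_1,\dots,j_k)$, relabelling $\omega\mapsto\omega^*$ brings this into the stated form $\sum_{(j_1,\dots,j_k)}M_{(j_k,\dots,j_1)}\otimes(d_{j_1}\cdots d_{j_k}P)$. The coactions $\Delta_{\EuScript{L}}$ and $\Delta_{\mathcal{D}^*}$ arise identically from the Remark after Lemma \ref{CM} (a right comodule from a left module), applied to the left module structures over $\mathcal{Z}$ and over $\mathcal{D}$; for the latter I would cite Theorem \ref{D} and Corollary \ref{Fib} to confirm that $\mathcal{D}\simeq\mathcal{U}$ is graded, connected, torsionfree and of finite (Fibonacci) rank. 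Reading off the $\mathcal{Z}$ case with $D_{\omega}=d_{j_1}\cdots d_{j_k}$ gives at once the final displayed formula for $\Delta_{\EuScript{L}}$.

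For the first compatibility relation I would compute directly. Applying $*\otimes 1$ to $\Delta_{\EuScript{R}}(P)=\sum_{\omega}M_{\omega}\otimes D_{\omega^*}P$ and using $M_{\omega}^*=M_{\omega^*}$ from Remark \ref{DQsym} produces $\sum_{\omega}M_{\omega^*}\otimes D_{\omega^*}P$; swapping the factors by $\tau$ and relabelling $\omega\mapsto\omega^*$ then yields $\sum_{\omega}D_{\omega}P\otimes M_{\omega}=\Delta_{\EuScript{L}}(P)$, which is exactly $\tau\circ(*\otimes 1)\circ\Delta_{\EuScript{R}}=\Delta_{\EuScript{L}}$.

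The relation $(1\otimes\EuScript{L}^*)\circ\Delta_{\mathcal{D}^*}=\Delta_{\EuScript{L}}$ is the step I expect to require the most care, because $\mathcal{D}$ is a proper quotient of $\mathcal{Z}$, so the operators $D_{\sigma}=\EuScript{L}(Z_{\sigma})$ are not linearly independent and a term-by-term comparison in an explicit basis of $\mathcal{D}$ (say the one of Lemma \ref{basis}) would be clumsy. Instead I plan to use the basis-free description from the Remark after Lemma \ref{CM}: each coaction is obtained by letting $\mathcal{D}$, respectively $\mathcal{Z}$, act on $P$ in one tensor slot of the canonical element $e_{\mathcal{D}}=\sum_{\omega}D_{\omega}\otimes D_{\omega}^*$, respectively $e_{\mathcal{Z}}=\sum_{\sigma}Z_{\sigma}\otimes M_{\sigma}$. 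The duality identity $(\EuScript{L}\otimes 1)(e_{\mathcal{Z}})=(1\otimes\EuScript{L}^*)(e_{\mathcal{D}})$ in $\mathcal{D}\otimes\Qs$, which merely records that both sides correspond to $\EuScript{L}$ under $\mathcal{D}\otimes\Qs\simeq\Hom_{\mathbb Z}(\mathcal{Z},\mathcal{D})$, then gives the claim after applying the evaluation $D\mapsto DP$ in the first factor. Thus the whole relation reduces to the functoriality of the canonical element for the dual pair $(\EuScript{L},\EuScript{L}^*)$, and the only real obstacle is to set this up cleanly without ever choosing a basis of $\mathcal{D}$.
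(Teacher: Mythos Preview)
Your proposal is correct and, for the existence of the three comodule structures and the first compatibility $\Delta_{\EuScript{L}}=\tau\circ(*\otimes1)\circ\Delta_{\EuScript{R}}$, it is exactly what the paper does: invoke Lemma~\ref{CM} and its remark, then read off the formulas in the monomial basis.

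The only genuine difference is in the proof of relation~(\ref{LD}). The paper chooses a splitting $\mathcal{Z}=\mathcal{Z}'\oplus J_{\mathcal{U}}$ as graded abelian groups, picks adapted bases $\{H_{\beta}\}$ of $\mathcal{Z}'$ and $\{H_{\gamma}\}$ of $J_{\mathcal{U}}$, and uses the basis-independence remark to rewrite $\Delta_{\EuScript{L}}P=\sum_{\beta}(H_{\beta}P)\otimes H_{\beta}^*+\sum_{\gamma}(H_{\gamma}P)\otimes H_{\gamma}^*$; the $\gamma$-terms vanish because $J_{\mathcal{U}}$ annihilates $\R$, and the surviving $\beta$-terms are identified with $(1\otimes\EuScript{L}^*)\Delta_{\mathcal{D}^*}P$ since $\{H_{\beta}^*\}$ is precisely a basis of $\EuScript{L}^*(\mathcal{D}^*)\subset\Qs$. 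Your route is the basis-free version of the same idea: you observe that both $(\EuScript{L}\otimes1)(e_{\mathcal{Z}})$ and $(1\otimes\EuScript{L}^*)(e_{\mathcal{D}})$ represent the map $\EuScript{L}\in\Hom_{\mathbb Z}(\mathcal{Z},\mathcal{D})$ in $\mathcal{D}\otimes\Qs$, hence coincide, and then evaluate on $P$. This is cleaner in that it never requires choosing a splitting of the surjection $\mathcal{Z}\to\mathcal{D}$ or an explicit basis of $\mathcal{D}$; the paper's version, on the other hand, makes visible the concrete mechanism (the ideal $J_{\mathcal{U}}$ acts trivially), which is what your functoriality identity encodes implicitly. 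Either argument is complete.
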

\begin{proof}
The existence of these structures and the explicit formulas follow
immediately from Lemma \ref{CM} and Corollary \ref{ZD}. The first relation
follows easily from the formulas for $\Delta_{\EuScript{L}}$ and
$\Delta_{\EuScript{R}}$.

Since $\mathcal{D}\simeq\mathcal{Z}/J_{\mathcal{U}}$ is a free abelian group,
we have $\mathcal{Z}=\mathcal{Z}'\oplus J_{\mathcal{U}}$. Let us choose
graded bases in both subgroups: $\{H_{\beta}\}$ in $\mathcal{Z}'$, and
$\{H_{\gamma}\}$ in $J_{\mathcal{U}}$. Then $\{H_{\beta}, H_{\gamma}\}$ form
a graded basis in $\mathcal{Z}$.

Let us note that $\EuScript{L}^*(\mathcal{D}^*)=\{\psi\in\Qs:
\left.\psi\right|_{J_{\mathcal{U}}}=0\}$.

Then the elements of the dual basis $\{H_{\beta}^*\},\{H_{\gamma}^*\}$ form a
basis in $\Qs$, while the functions $\{H_{\beta}^*\}$ form a basis in
$\EuScript{L}^*(\mathcal{D}^*)$. Let $H_{\beta}^*=\EuScript{L}^*D^*_{\beta}$.
Then the operators $\{D_{\beta}=\EuScript{L}H_{\beta}\}$ form a graded basis
in $\mathcal{D}$, dual to the basis $\{D_{\beta}^*\}$.

Now for any $P\in \R$ we have
$$
\Delta_{\EuScript{L}}P=\sum\limits_{\beta}\left(H_{\beta}P\right)\otimes H_{\beta}^*
+\sum\limits_{\gamma}\left(H_{\gamma}P\right)\otimes H_{\gamma}^*
$$
But $H_{\gamma}P=0$ for all $\gamma$, therefore,
$$
\Delta_{\EuScript{L}}P=\sum\limits_{\beta}H_{\beta}P\otimes H_{\beta}^*=\sum\limits_{\beta}\left(\EuScript{L}H_{\beta}\right)P\otimes H_{\beta}^*=\sum\limits_{\beta}D_{\beta}P\otimes\EuScript{L}^*D^*_{\beta}=(1\otimes \EuScript{L}^*)\circ\Delta_{\mathcal{D}^*}P
$$
\end{proof}

We have the series of mappings
\begin{gather*}
\Phi(t_1)\colon\R\to\R[t_1],
\Phi_2(t_1,t_2)=\Phi(t_2)\Phi(t_1)\colon\R\to\Qsym[t_1,t_2]\otimes\R,\\
\dots\\
\Phi_n(t_1,\dots,t_n)=\Phi(t_n)\Phi(t_{n-1})\dots\Phi(t_1)\colon
\R\to\Qsym[t_1,\dots,t_n]\otimes\R
\end{gather*}

Taking the inverse limit we obtain the mapping
$$
\Phi_{\infty}:\R\to\Qs\otimes\R,
$$
such that for any element $P\in\R$ of graduation $2n$
$$
\Phi_{\infty}P=\sum\limits_{k=0}^{n}\sum\limits_{(j_1,\,\dots,\,j_k)}M_{(j_k,\,\dots,\,j_1)}\otimes\left(d_{j_1}\dots d_{j_k}P\right)
$$
Thus we have proved the following fact.
\begin{prop}
$$
\Phi_{\infty}=\Delta_{\EuScript{R}}
$$
\end{prop}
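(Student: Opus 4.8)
The plan is to compute the composite $\Phi_n(t_1,\dots,t_n)=\Phi(t_n)\cdots\Phi(t_1)$ applied to a homogeneous generator $P\in\R^{2n}$ in closed form, read off its value as an element of $\Qsym[t_1,\dots,t_n]\otimes\R$, pass to the inverse limit over $n$, and then match the answer term by term against the formula for $\Delta_{\EuScript{R}}$ established in Corollary \ref{ZD}.

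First I would expand each factor as $\Phi(t_i)=\sum_{j_i\geqslant 0}d_{j_i}t_i^{j_i}$, with $d_0=\id$, so that
\[
\Phi(t_n)\cdots\Phi(t_1)P=\sum_{j_1,\dots,j_n\geqslant 0}\left(d_{j_n}\cdots d_{j_1}P\right)t_1^{j_1}\cdots t_n^{j_n}.
\]
The crucial observation is that, since $d_0=\id$, only the variables carrying a positive exponent contribute an operator: if in a given monomial the positive exponents sit at positions $l_1<\dots<l_k$ with values $a_1,\dots,a_k$, then its coefficient is $d_{a_k}\cdots d_{a_1}P$, which depends only on the composition $(a_1,\dots,a_k)$ and not on the chosen positions $l_1<\dots<l_k$. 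Hence the coefficients are constant along the monomials making up each quasi-symmetric monomial, and collecting them yields
\[
\Phi_n(t_1,\dots,t_n)P=\sum_{k\geqslant 0}\sum_{(a_1,\dots,a_k)}M_{(a_1,\dots,a_k)}\otimes\left(d_{a_k}\cdots d_{a_1}P\right),
\]
which in particular confirms that $\Phi_n$ lands in $\Qsym[t_1,\dots,t_n]\otimes\R$.

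Next I would check compatibility with the restriction maps $E_r$: putting $t_{n+1}=0$ amounts to composing with $\Phi(0)=\id$ on the left, whence $\Phi_{n+1}(t_1,\dots,t_n,0)=\Phi_n(t_1,\dots,t_n)$, so the inverse limit $\Phi_{\infty}$ is well defined. As $P$ has graduation $2n$, every $D_{\omega}$ with $|\omega|>n$ kills $P$ and the sum is finite; writing it in terms of $D_{\omega}=d_{j_1}\cdots d_{j_k}$ and relabelling $\omega\mapsto\omega^*$ gives
\[
\Phi_{\infty}P=\sum_{\omega}M_{\omega^*}\otimes D_{\omega}P=\sum_{k=0}^{n}\sum_{(j_1,\dots,j_k)}M_{(j_k,\dots,j_1)}\otimes\left(d_{j_1}\cdots d_{j_k}P\right).
\]

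Finally I would compare with $\Delta_{\EuScript{R}}$. Under the right Milnor module structure $\EuScript{R}_{\R}=\EuScript{L}_{\R}\circ\varrho$ one has $PZ_{\omega}=D_{\omega^*}P$, so the formula of Corollary \ref{ZD} becomes $\Delta_{\EuScript{R}}(P)=\sum_{\omega}M_{\omega^*}\otimes D_{\omega}P$, which is literally the expression just obtained for $\Phi_{\infty}P$; therefore $\Phi_{\infty}=\Delta_{\EuScript{R}}$. The one place that demands care is precisely this bookkeeping of operator order: tracking how the right action reverses the word $Z_{\omega}$ into $D_{\omega^*}$, and matching it with the $\omega\leftrightarrow\omega^*$ relabelling of the monomials $M_{\omega}$ on the $\Phi_{\infty}$ side, so that the two reversals cancel consistently. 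Everything else is a direct expansion.
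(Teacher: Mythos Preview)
Your proof is correct and follows essentially the same route as the paper: expand $\Phi(t_n)\cdots\Phi(t_1)$ into a sum of monomials, collect them into quasi-symmetric monomials $M_{(a_1,\dots,a_k)}$ with coefficient $d_{a_k}\cdots d_{a_1}P$, and match against the formula for $\Delta_{\EuScript{R}}$ given in Corollary~\ref{ZD}. The paper in fact states the proposition with the phrase ``Thus we have proved the following fact'' immediately after recording the formula for $\Phi_{\infty}P$, leaving the expansion implicit; your version simply makes that expansion explicit and carefully tracks the $\omega\leftrightarrow\omega^*$ reversal, which is exactly the point one must get right.
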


\subsection{Ring homomorphisms}

\begin{defin}
Any ring homomorphism $h\colon\R\to\mathcal{A}$ induces the ring homomorphism
$$
\Phi_{h}\colon\R\to\Qs\otimes \mathcal{A},\; \Phi_{h}=(1\otimes h)\circ\Delta_{\EuScript{R}}
$$
Let us denote by $\Phi_{h,\,r}$ the homomorphism
$\R\to\Qsym[t_1,\dots,t_r]\otimes\mathcal{A}$.
$$
\Phi_{h,\,r}(P)=\Phi_{h}(P)(t_1,\dots,t_r,0,0,\dots)
$$
\end{defin}

\begin{exam}
Let us consider several examples.
\begin{enumerate}
\item $\xi_{\alpha}\colon\mathcal{P}\to\mathbb Z[\alpha]$ induces the
    homomorphism $\Phi_{\xi_{\alpha}}\colon\mathcal{P}\to\Qs[\alpha]$.
    This homomorphism coincides with the generalized $f$-polynomial.
\item $\varepsilon_{\alpha}\colon\mathcal{RP}\to \mathbb Z[\alpha]$
    induces the ring homomorphism $\Phi_{\varepsilon_{\alpha}}\colon
    \mathcal{RP}\to\Qs[\alpha]$. Let us denote this ring homomorphism by
    $f_{\mathcal{RP}}$.
\item $\varepsilon_0\colon\mathcal{RP}\to\mathbb Z$ induces the
    homomorphism $\Phi_{\varepsilon_0}\colon\mathcal{RP}\to\Qs$. Then
    $\Phi_{\varepsilon_0}=\F^*$, where $\F$ is the Ehrenborg
    $\F$-quasi-symmetric function introduced in \cite{Ehr}:
    $$
      \F(P)=\!\!\!\!\sum\limits_{\hat 0=x_0<x_1<\dots<x_{k+1}=\hat 1}M_{(\rho(x_0,\,x_1),\,\rho(x_1,\,x_2),\,\dots,\,\rho(x_k,\,x_{k+1}))}=\!\!\!\!\!\!\!\!\!\sum\limits_{0\leqslant a_1<\dots<a_k\leqslant n-1}f_{a_1,\,\dots,\,a_k}(P)M_{(a_1+1,\,a_2-a_1,\,\dots,\,n-a_k)}
    $$
    where $P$ is an $n$-dimensional polytope, the sum ranges over all
    chains from $\hat 0=\varnothing$ to $\hat 1=P$ in the face lattice
    $L(P)$, and $f_{a_1,\,\dots,\,a_k}$ are flag numbers.

    In fact, $F$ is defined on the Rota-Hopf algebra $\mathcal{R}$ of
    graded posets, and it was proved in \cite{Ehr} that it is a Hopf
    algebra homomorphism $\mathcal{R}\to\Qs$ such that
    $\F(\Ps^*)=\F(\Ps)^*$.
\end{enumerate}
\end{exam}

\begin{prop}
For any polytope $P\in\mathcal{P}$ we have
$$
f_{\mathcal{RP}}(P)=\F^*(P)+\alpha f( P)
$$
\end{prop}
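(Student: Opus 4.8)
The plan is to expand $f_{\mathcal{RP}}(P)=\Phi_{\varepsilon_\alpha}(P)=(1\otimes\varepsilon_\alpha)\circ\Delta_{\EuScript{R}}(P)$ using the explicit coaction formula and then split the resulting sum according to the dimension of the faces produced by the face operators. For $P=P^n$ one has
$$\Delta_{\EuScript{R}}(P)=\sum_{k\geqslant 0}\sum_{(j_1,\dots,j_k)}M_{(j_k,\dots,j_1)}\otimes\bigl(d_{j_1}\dots d_{j_k}P\bigr),$$
where the operators act in $\mathcal{RP}$. The summand $d_{j_1}\dots d_{j_k}P$ is a sum of faces of dimension $n-|\omega|$ with $|\omega|=j_1+\dots+j_k$; it vanishes for $|\omega|>n+1$, is a combination of genuine faces of dimension $\geqslant 0$ when $|\omega|\leqslant n$, and equals a nonnegative multiple of the empty face $\varnothing$ (the unique face of dimension $-1$) when $|\omega|=n+1$. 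I would therefore write $\Delta_{\EuScript{R}}(P)=A+B$, where $A$ gathers the terms with $|\omega|\leqslant n$ and $B$ the terms with $|\omega|=n+1$.

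The key observation is that $A$ lies in $\Qs\otimes\mathcal{P}$ and coincides with the coaction $\Delta_{\EuScript{R}}^{\mathcal{P}}(P)$ computed inside $\mathcal{P}$: by the natural isomorphism $\mathcal{D}(\mathcal{RP})\simeq\mathcal{D}(\mathcal{P})$, the only discrepancy between the two rings is that $d_{n+1}P^n=\varnothing$ in $\mathcal{RP}$ while $d_{n+1}P^n=0$ in $\mathcal{P}$; all faces of dimension $\geqslant 0$ produced are identical, and the $\mathcal{P}$-coaction kills precisely the $|\omega|=n+1$ terms. I would then apply $1\otimes\varepsilon_\alpha$ to each piece. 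On the second tensor factor of $A$ every face $Q$ has dimension $\geqslant 0$, so $\varepsilon_\alpha(Q)=\alpha^{\dim Q+1}=\alpha\,\xi_\alpha(Q)$; hence $(1\otimes\varepsilon_\alpha)(A)=\alpha\,(1\otimes\xi_\alpha)(A)=\alpha\,(1\otimes\xi_\alpha)\Delta_{\EuScript{R}}^{\mathcal{P}}(P)=\alpha f(P)$.

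For the second piece, every second tensor factor in $B$ is a multiple of $\varnothing$, and $\varepsilon_\alpha(\varnothing)=\alpha^0=1=\varepsilon_0(\varnothing)$, so $1\otimes\varepsilon_\alpha$ and $1\otimes\varepsilon_0$ agree on $B$. On the other hand $\varepsilon_0$ vanishes on every polytope of dimension $\geqslant 0$, so $(1\otimes\varepsilon_0)(A)=0$; therefore $(1\otimes\varepsilon_\alpha)(B)=(1\otimes\varepsilon_0)(B)=(1\otimes\varepsilon_0)(A+B)=\Phi_{\varepsilon_0}(P)=\F^*(P)$. Adding the two contributions gives $f_{\mathcal{RP}}(P)=\alpha f(P)+\F^*(P)$, which is the claim.

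The one point that needs care—and which I would isolate as a short lemma or verify explicitly—is the bookkeeping of the face operators across the two rings: that $\varnothing$ enters a monomial $d_{j_1}\dots d_{j_k}P$ only in the top case $|\omega|=n+1$, using $d_m\varnothing=0$ for $m\geqslant 1$, so that an empty face produced at an intermediate stage would annihilate the whole term; and that in all other cases the $\mathcal{RP}$- and $\mathcal{P}$-coactions literally agree. Once this splitting $\Delta_{\EuScript{R}}(P)=A+B$ is pinned down, the remainder is just the two clean specializations $\varepsilon_\alpha=\alpha\,\xi_\alpha$ on positive-dimensional faces and $\varepsilon_\alpha=\varepsilon_0$ on $\varnothing$.
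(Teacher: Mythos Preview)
Your argument is correct and is essentially the same as the paper's: the paper's one-line proof writes out the explicit formula
\[
f_{\mathcal{RP}}(P^n)=\sum_{k}\sum_{0\leqslant a_1<\dots<a_k\leqslant n-1}f_{a_1,\dots,a_k}\Bigl(M_{(n-a_k,\dots,a_1+1)}+\alpha^{a_1+1}M_{(n-a_k,\dots,a_2-a_1)}\Bigr),
\]
which is exactly your splitting $A+B$ --- the first summand (chains extended down to $\varnothing$, i.e.\ $|\omega|=n+1$) is $\F^*(P)$ and the second ($|\omega|\leqslant n$) is $\alpha f(P)$. Your presentation is just the structural phrasing of the same decomposition.
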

\begin{proof}
Indeed, let $P$ be an $n$-dimensional polytope. Then
$$
f_{\mathcal{RP}}(P)=\sum\limits_{k=0}^n\sum\limits_{0\leqslant a_1<\dots<a_k\leqslant n-1}f_{a_1,\,\dots,\,a_k}\left(M_{(n-a_k,\,\dots,\,a_1+1)}+\alpha^{a_1+1}M_{(n-a_k,\,\dots,\,a_2-a_1)}\right)
$$
\end{proof}

\begin{cor}
For any two polytopes $P,Q\in \mathcal{P}$ we have
$$
f(P\divideontimes Q)=f(P)\cdot\F^*(Q)+\F^*(P)\cdot f(Q)+\alpha f(P)\cdot f(Q)
$$
\end{cor}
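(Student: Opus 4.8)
The plan is to derive the identity purely from multiplicativity, using the preceding Proposition $f_{\mathcal{RP}}(R)=\F^*(R)+\alpha f(R)$ as the only nontrivial input. First I would observe that $f_{\mathcal{RP}}=\Phi_{\varepsilon_{\alpha}}=(1\otimes\varepsilon_{\alpha})\circ\Delta_{\EuScript{R}}$ is a composition of ring homomorphisms, hence itself a ring homomorphism $\mathcal{RP}\to\Qs[\alpha]$. Since the multiplication in $\mathcal{RP}$ is the join, this yields $f_{\mathcal{RP}}(P\divideontimes Q)=f_{\mathcal{RP}}(P)\cdot f_{\mathcal{RP}}(Q)$. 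In the same way $\F^*=\Phi_{\varepsilon_0}$ is a ring homomorphism, so $\F^*(P\divideontimes Q)=\F^*(P)\cdot\F^*(Q)$. Here I would record the harmless bookkeeping point that $P\divideontimes Q$ is a genuine polytope lying in $\mathcal{P}\subset\mathcal{RP}$, so that the Proposition applies to it as well as to $P$ and $Q$ individually.

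Next I would compute $f_{\mathcal{RP}}(P\divideontimes Q)$ in two ways. Applying the Proposition directly to $P\divideontimes Q$ and then using multiplicativity of $\F^*$ gives
$$
f_{\mathcal{RP}}(P\divideontimes Q)=\F^*(P)\F^*(Q)+\alpha\, f(P\divideontimes Q).
$$
Applying the Proposition to $P$ and $Q$ separately and then multiplying gives
$$
f_{\mathcal{RP}}(P\divideontimes Q)=\bigl(\F^*(P)+\alpha f(P)\bigr)\bigl(\F^*(Q)+\alpha f(Q)\bigr).
$$

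Finally I would equate these two expressions, cancel the common term $\F^*(P)\F^*(Q)$, and divide by $\alpha$, which is legitimate because $\alpha$ is not a zero divisor in the polynomial ring $\Qs[\alpha]$. Expanding the product on the right gives $\alpha\bigl(\F^*(P)f(Q)+f(P)\F^*(Q)\bigr)+\alpha^2 f(P)f(Q)$, and after cancellation and division by $\alpha$ one obtains exactly
$$
f(P\divideontimes Q)=f(P)\cdot\F^*(Q)+\F^*(P)\cdot f(Q)+\alpha\, f(P)\cdot f(Q),
$$
using commutativity of $\Qs$. There is no real obstacle in this argument; the only step that deserves attention is confirming that both $f_{\mathcal{RP}}$ and $\F^*$ are genuinely multiplicative for the join, which is immediate from their construction as $\Phi_h=(1\otimes h)\circ\Delta_{\EuScript{R}}$ with $\Delta_{\EuScript{R}}$ a Hopf comodule coaction and $h$ a character.
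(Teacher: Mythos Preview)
Your proof is correct and follows essentially the same approach as the paper: use the multiplicativity of $f_{\mathcal{RP}}$ and $\F^*$ with respect to the join, apply the identity $f_{\mathcal{RP}}=\F^*+\alpha f$ to both sides, cancel $\F^*(P)\F^*(Q)$, and divide by $\alpha$. Your added justifications (that $\alpha$ is a non-zero-divisor and that $P\divideontimes Q\in\mathcal{P}$) are details the paper leaves implicit.
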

\begin{proof}
Since
$$
f_{\mathcal{RP}}(P\divideontimes Q)=f_{\mathcal{RP}}(P)\cdot f_{\mathcal{RP}}(Q),\quad\mbox{and }\F^*(P \divideontimes Q)=\F^*(P)\cdot\F^*(Q),
$$
we obtain
$$
\F^*(P)\cdot\F^*(Q)+\alpha f(P\divideontimes Q)=\left(\F^*(P)+\alpha f(P)\right)\cdot\left(\F^*(Q)+\alpha f(Q)\right)
$$
Removing the brackets, canceling the equal summands, and  dividing by
$\alpha$, we obtain the required formula.
\end{proof}

\begin{cor}
$$
f(CP)=\F^*(P)+(\alpha+\sigma_1)f(P)
$$
\end{cor}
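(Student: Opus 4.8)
The plan is to obtain this identity as an immediate specialization of the preceding corollary, which computes $f(P\divideontimes Q)$ for arbitrary $P,Q\in\mathcal{P}$, by taking the first factor to be a point. Since $CP=\pt\divideontimes P$ by the definition of the cone, I would set the first argument equal to $\pt$ and the second equal to $P$ in
$$
f(P\divideontimes Q)=f(P)\cdot\F^*(Q)+\F^*(P)\cdot f(Q)+\alpha f(P)\cdot f(Q),
$$
which yields
$$
f(CP)=f(\pt)\cdot\F^*(P)+\F^*(\pt)\cdot f(P)+\alpha f(\pt)\cdot f(P).
$$
The whole argument then reduces to evaluating the two scalars $f(\pt)$ and $\F^*(\pt)$.

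First I would compute $f(\pt)$. As $\pt=P^0$ is $0$-dimensional, the defining sum for the generalized $f$-polynomial has empty inner range (its length index runs from $1$ to $n=0$), so only the leading term $\alpha^{n}=\alpha^{0}$ survives and $f(\pt)=1$. Next I would compute $\F^*(\pt)=\Phi_{\varepsilon_0}(\pt)$ using
$$
\Delta_{\EuScript{R}}(\pt)=\sum_{k\geqslant 0}\sum_{(j_1,\dots,j_k)}M_{(j_k,\dots,j_1)}\otimes\bigl(d_{j_1}\cdots d_{j_k}\pt\bigr).
$$
The only faces operators that do not annihilate $\pt$ are $d_0$, with $d_0\pt=\pt$, and $d_1$, with $d_1\pt=\varnothing$ in $\mathcal{RP}$; every longer word or any $d_{\geqslant 2}$ kills $\pt$. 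Applying $1\otimes\varepsilon_0$ and using $\varepsilon_0(\pt)=0$ together with $\varepsilon_0(\varnothing)=1$ leaves exactly the single term $M_{(1)}$, so $\F^*(\pt)=M_{(1)}=\sigma_1$. Equivalently, this matches the chain description of $\F$ applied to the two-element face lattice $L(\pt)=B_1=\{\hat0,\hat1\}$, whose unique maximal chain has rank jump $1$ and contributes precisely $M_{(1)}$.

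Substituting $f(\pt)=1$ and $\F^*(\pt)=\sigma_1$ into the specialized corollary gives
$$
f(CP)=\F^*(P)+\sigma_1 f(P)+\alpha f(P)=\F^*(P)+(\alpha+\sigma_1)f(P),
$$
which is the desired formula. The only step requiring genuine care is the evaluation of $\F^*(\pt)$: one must respect the reversal of the composition in $\Delta_{\EuScript{R}}$ and the conventions $d_1\pt=\varnothing$, $\varepsilon_0(\varnothing)=1$, so that the contribution is the monomial $M_{(1)}=\sigma_1$ rather than the empty monomial $M_{()}=1$ or $0$. Everything else is a direct substitution into the already established multiplicative formula for the join.
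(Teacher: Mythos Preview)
Your proof is correct and follows exactly the approach the paper intends: the corollary is stated immediately after the formula for $f(P\divideontimes Q)$ with no separate proof, as it is the specialization $P\to\pt$, $Q\to P$ using $CP=\pt\divideontimes P$, $f(\pt)=1$, and $\F^*(\pt)=M_{(1)}=\sigma_1$. Your verification of $\F^*(\pt)=\sigma_1$ is more explicit than the paper's (which leaves it implicit), but the route is the same.
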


\begin{thm}
\begin{itemize}
\item For $r\geqslant n$ the image
    $f_r(\mathcal{P}^{2n})\subset\Qsym[t_1,\dots,t_r][\alpha]$ consists
    of homogeneous polynomials $g$ of degree $2n$ satisfying the
    equations
\begin{enumerate}
\item
\begin{equation}\label{I}
\begin{aligned}
g(\alpha,t_1,-t_1,t_3,\dots,t_r)&=g(\alpha,0,0,t_3,\dots,t_r)\\
g(\alpha,t_1,t_2,-t_2,\dots,t_r)&=g(\alpha,t_1,0,0,\dots,t_r)\\
\dots \\
g(\alpha,t_1,\dots,t_{r-1},-t_{r-1})&=g(\alpha,t_1,\dots,0,0)
\end{aligned}
\end{equation}
\item
\begin{equation}\label{I2}
g(-\alpha,t_1,t_2,\dots,t_{r-1},\alpha)=g(\alpha,t_1,t_2,\dots,t_{r-1},0)
\end{equation}
\end{enumerate}
\item For $r\geqslant n$ the image
    $f_{\mathcal{RP},\,r}(\mathcal{RP}^{2n})\subset\Qsym[t_1,\dots,t_r][\alpha]$
    consists of homogeneous polynomials $g$ of degree $2n$ satisfying the
    following conditions:
\begin{enumerate}
\item The equations of type (\ref{I})
\item The equation
$$
g(-\alpha,t_1,t_2,\dots,t_{r-1},\alpha)=g(0,t_1,t_2,\dots,t_{r-1},0)
$$
\end{enumerate}
\item For $r\geqslant n$ the image
    $\F_r(\mathcal{RP}^{2n})\subset\Qsym[t_1,\dots,t_r]$ consists of
    homogeneous polynomials $g$ of degree $2n$ satisfying the equations
    of type (\ref{I}).
\end{itemize}
In all the cases the equations are equivalent to the Bayer-Billera
(generalized Dehn-Sommerville) relations \cite{BB}
\end{thm}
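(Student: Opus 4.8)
The plan is to treat the three maps uniformly as the comodule homomorphisms $\Phi_h=(1\otimes h)\circ\Delta_{\EuScript{R}}$ attached to the characters $h=\xi_\alpha$, $h=\varepsilon_\alpha$ and $h=\varepsilon_0$, and to reduce each statement to the already-established Theorem \ref{f} and Lemma \ref{equations}. The computation in Corollary \ref{ZD} gives the identity $\Phi_{h,r}(P)(t_1,\dots,t_r)=h\bigl(\Phi(t_r)\cdots\Phi(t_1)P\bigr)$, which I would record first; everything then comes from moving substitutions past $h$.

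Necessity is uniform and short. Substituting $t_{q+1}=-t_q$ inserts the adjacent factor $\Phi(-t_q)\Phi(t_q)$ inside $h(\Phi(t_r)\cdots\Phi(t_1)P)$, and this factor equals $1$ by Proposition \ref{PM} before $h$ is applied; hence the equations of type (\ref{I}) hold for all three maps irrespective of $h$. The remaining equation comes from Proposition \ref{Euler}: putting $t_r=\alpha$ and $\alpha\mapsto-\alpha$ produces the outermost factor $h_{-\alpha}\Phi(\alpha)$, and $\xi_{-\alpha}\Phi(\alpha)=\xi_\alpha$, $\varepsilon_{-\alpha}\Phi(\alpha)=\varepsilon_0$ give exactly (\ref{I2}) for $f$, the equation $g(-\alpha,\mathbf t,\alpha)=g(0,\mathbf t,0)$ for $f_{\mathcal{RP}}$, and nothing new for $\F^*$, since $\varepsilon_0$ does not involve $\alpha$ and only type (\ref{I}) survives.

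For sufficiency I would run, in each case, the three steps of Theorem \ref{f}: (i) show the listed equations are equivalent, on the coefficients of $g$, to the generalized Dehn--Sommerville relations; (ii) deduce that every coefficient is an integral combination of the free coefficients indexed by $\Psi$, so the solution set has rank equal to the appropriate Fibonacci number ($c_n$ for $f$, and $c_{n-1}$ for $f_{\mathcal{RP}}$ and $\F^*$ by Corollary \ref{Fib}); (iii) invoke the integral basis $\{f_S(Q)\}_S$, $Q\in\Omega$, coming from $\det K^n=1$, so that the images of the $\Omega$-polytopes, which lie in the solution set by necessity, span it over $\mathbb Z$. Step (iii) is formally identical in all three cases, and step (i) for $f$ is Lemma \ref{equations}. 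For $\F^*$ the inclusion in the solution set is cleanest seen from the factorisation of $\F^*$ through $\mathcal{D}^*$ analogous to $f=\EuScript{R}^*\circ\varphi_\alpha$: the type (\ref{I}) equations are precisely the defining equations of $\EuScript{R}^*(\mathcal{D}^*)$ in Theorem \ref{D*} and Corollary \ref{tD*}, and the rank count $c_{n-1}$ matches.

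The genuinely new point, and the step I expect to be the main obstacle, is the index bookkeeping in (i) for $\F^*$ and $f_{\mathcal{RP}}$. In $f$ the rank $a_1$ of the lowest face is carried by $\alpha^{a_1}$, whereas $\F$ records $a_1+1$ as the \emph{leading} part of its indexing composition and $\varepsilon_\alpha$ splits this datum between $\alpha$ and the leading $t$-exponent. Consequently the Dehn--Sommerville relation with $i=-1$, which for $f$ requires the separate boundary equation, is produced for $\F^*$ by an equation of type (\ref{I}); verifying this, namely that type (\ref{I}) alone already encompasses the $i=-1$ relation and hence cuts the solution set down to rank $c_{n-1}$, is where care is needed. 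Once the dictionary between the three placements of the rank datum is fixed, the equivalence with the Bayer--Billera relations follows from the computation of Lemma \ref{equations} essentially verbatim, and for $f_{\mathcal{RP}}$ one may alternatively organise the argument around the decomposition $f_{\mathcal{RP}}(P)=\F^*(P)+\alpha f(P)$ by setting $\alpha=0$.
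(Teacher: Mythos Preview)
Your proposal is correct and matches the paper's approach closely. The paper proves the first bullet by citing Theorem \ref{f} outright, and for the remaining two bullets it states and proves a direct analogue of Lemma \ref{equations} (called Lemma \ref{RPequations}) computing, for $f_{\mathcal{RP},r}$ and $\F^*_r$, exactly which Dehn--Sommerville relations each type-(\ref{I}) equation and the boundary equation encode; the key remark, which you isolate correctly, is that because the datum $a_1$ may now take the value $-1$ (it is carried by the outermost composition part rather than by $\alpha$), the type-(\ref{I}) equations already contain the $i=-1$ relations, so for $f_{\mathcal{RP}}$ the boundary equation is implied and for $\F^*$ no boundary equation is needed. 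Your alternative of deducing the $\F^*$ case from Theorem \ref{D*}/Corollary \ref{tD*} plus the rank count $c_{n-1}$ is valid and is in fact how the paper phrases the same conclusion a few lines later; the paper's Lemma \ref{RPequations} is the more pedestrian but self-contained route.
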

\begin{proof}
The first statement of the theorem is exactly Theorem \ref{f}.

For the second and the third statements we need a lemma corresponding to
Lemma \ref{equations}
\begin{lemma}\label{RPequations}
Let $P$ be an $n$-dimensional polytope. Then
\begin{enumerate}
\item The equation
$$
f_{\mathcal{RP},\,r}(\alpha,t_1,\dots,t_q,-t_q,\dots,t_r)=f_{\mathcal{RP},\,r}(\alpha,t_1,\dots,0,0,\dots,t_r)
$$
is equivalent to the generalized Dehn-Sommerville relations
$$
\sum\limits_{j=a_t+1}^{a_{t+1}-1}(-1)^{j-a_t-1}f_{a_1,\,\dots,\,a_t,\,j,\,a_{t+1},\,\dots,\,a_k}=\left(1+(-1)^{a_{t+1}-a_t}\right)f_{a_1,\,\dots,\,a_t,\,a_{t+1},\,\dots,\,a_k}
$$
for $-1\leqslant a_1<a_2<\dots<a_k<n$, and $1\leqslant
k\leqslant\min\{r-1,n\},\;k+1-q\leqslant t\leqslant r-q$.

\item The equation
$$
f_{\mathcal{RP},\,r}(-\alpha,t_1,\dots,t_{r-1},\alpha)=f_{\mathcal{RP},\,r}(0,t_1,\dots,t_{r-1},0)
$$
is equivalent to the generalized Dehn-Sommerville relations
$$
\sum\limits_{j=0}^{a_1-1}(-1)^jf_{j,\,a_1,\,\dots,\,a_k}=(1+(-1)^{a_1-1})f_{a_1,\,\dots,\,a_k}
$$
for $0\leqslant k\leqslant\min\{r-1,n-1\}$.

\item The equation
$$
\F^*_r(t_1,\dots,t_q,-t_q,\dots,t_r)=\F^*_r(t_1,\dots,0,0,\dots,t_r)
$$
is equivalent to the generalized Dehn-Sommerville relations
$$
\sum\limits_{j=a_t+1}^{a_{t+1}-1}(-1)^{j-a_t-1}f_{a_1,\,\dots,\,a_t,\,j,\,a_{t+1},\,\dots,\,a_k}=\left(1+(-1)^{a_{t+1}-a_t}\right)f_{a_1,\,\dots,\,a_t,\,a_{t+1},\,\dots,\,a_k}
$$
for $0\leqslant a_1<\dots<a_k$, and $0\leqslant
k\leqslant\min\{r-2,n-1\},\;k+2-q\leqslant t\leqslant r-q$.

\end{enumerate}
\end{lemma}
\begin{proof}
Items $1.$ and $3.$ are proved in the same manner as Lemma \ref{equations}.
Let us consider the relation of item $2.$:
$f_{\mathcal{RP},\,r}(-\alpha,t_1,\dots,t_{r-1},\alpha)=f_{\mathcal{RP},\,r}(0,t_1,\dots,t_{r-1},0)$:
\begin{multline*}
=(-\alpha)^{n+1}+\sum\limits_{k=1}^{\min\{r-1,\,n+1\}}\sum\limits_{-1\leqslant
a_1<\dots<a_k\leqslant
n-1}f_{a_1,\,\dots,\,a_k}(-\alpha)^{a_1+1}\left(\sum\limits_{1\leqslant
l_1<\dots<l_k\leqslant r-1}t_{l_1}^{n-a_k}\dots
t_{l_k}^{a_2-a_1}\right)+\\
+\sum\limits_{k=1}^{\min\{r,\,n+1\}}\sum\limits_{-1\leqslant
a_1<\dots<a_k\leqslant
n-1}f_{a_1,\,\dots,\,a_k}(-\alpha)^{a_1+1}\left(\sum\limits_{1\leqslant
l_1<\dots<l_k=r}t_{l_1}^{n-a_k}\dots
t_{l_{k-1}}^{a_3-a_2}\alpha^{a_2-a_1}\right)=\\
\sum\limits_{k=1}^{\min\{r-1,\,n+1\}}\sum\limits_{-1=a_1<\dots<a_k\leqslant
n-1}f_{a_1,\,\dots,\,a_k}\left(\sum\limits_{1\leqslant l_1<\dots<l_k\leqslant
r-1}t_{l_1}^{n-a_k}\dots t_{l_k}^{a_2-a_1}\right)
\end{multline*}
This is equivalent to the relations
$$
f_{a_1,\,\dots,\,a_k}+\sum\limits_{j=0}^{a_1-1}(-1)^{j+1}f_{j,\,a_1,\,\dots,\,a_k}+(-1)^{a_1+1}f_{a_1,\,\dots,\,a_k}=0
$$
for $0\leqslant a_1<\dots<a_k$, and $0\leqslant k\leqslant\min\{r-1,n-1\}$.
The case $k=0$ again corresponds to the Euler formula
$$
1-f_0+f_1+\dots+(-1)^{n}f_{n-1}+(-1)^{n+1}=0.
$$
\end{proof}
\begin{rem}
Let us mention that in the case of $f_{\mathcal{RP},\,r}$ and $\F^*_r$ the
equations of type (\ref{I}) contain all the generalized Dehn-Sommerville
relations, and these equations for $f_{\mathcal{RP},\,r}$ imply the equation
(\ref{I2}). This follows from the fact that $a_1$ can be equal to $-1$, which
is not possible in the case of the generalized $f$-polynomial.
\end{rem}
Now the end of the proof of the theorem is the same as for Theorem \ref{f}.
\end{proof}

\begin{prop}
The correspondence
$f_{\mathcal{RP}}\to\left.f_{\mathcal{RP}}\right|_{\alpha=0}=\F^*$ is an
isomorphism of the images $f_{\mathcal{RP}}(\mathcal{RP})$ and
$\F^*(\mathcal{RP})$.
\end{prop}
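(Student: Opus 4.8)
The plan is to show that evaluation at $\alpha=0$, the ring homomorphism $\pi_0\colon\Qs[\alpha]\to\Qs$ sending $\alpha\mapsto 0$, restricts to a ring isomorphism $f_{\mathcal{RP}}(\mathcal{RP})\to\F^*(\mathcal{RP})$. Since $\F^*=\Phi_{\varepsilon_0}=\pi_0\circ f_{\mathcal{RP}}$, the map $\pi_0$ carries $f_{\mathcal{RP}}(\mathcal{RP})$ onto $\F^*(\mathcal{RP})$ and is precisely the correspondence named in the statement; being a restriction of a ring homomorphism it is automatically a surjective ring homomorphism. Everything therefore reduces to injectivity, i.e. to the equality of kernels $\ker f_{\mathcal{RP}}=\ker\F^*$ as graded subgroups of $\mathcal{RP}$, which I would treat grading by grading.

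The key observation is that the coefficients of both $\F^*$ and $f_{\mathcal{RP}}$ are flag numbers. For $\F^*$ this is the displayed formula $\F^*(P)=\sum_{0\leqslant a_1<\dots<a_k\leqslant n-1}f_{a_1,\,\dots,\,a_k}(P)\,M_{\omega(S)}$ with $\omega(S)=(a_1+1,a_2-a_1,\dots,n-a_k)$; the assignment $S\mapsto\omega(S)$ is injective on subsets $S\subseteq\{0,\dots,n-1\}$ in fixed degree $2n$ (one recovers both $S$ and $n$ from $\omega(S)$), so the monomials occurring are distinct, hence linearly independent in $\Qs$, and the coefficient of each is exactly $f_S(P)$. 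For $f_{\mathcal{RP}}$ the formula $f_{\mathcal{RP}}(P)=\sum f_{a_1,\,\dots,\,a_k}\bigl(M_{(n-a_k,\,\dots,\,a_1+1)}+\alpha^{a_1+1}M_{(n-a_k,\,\dots,\,a_2-a_1)}\bigr)$ from the preceding proposition gives the same conclusion. By linearity, for an arbitrary $x\in\mathcal{RP}$ in a fixed grading all coefficients of $\F^*(x)$ and of $f_{\mathcal{RP}}(x)$ lie among the flag numbers $f_S(x)$, and conversely these coefficients recover the whole collection $\{f_S(x)\}$.

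From this the kernels coincide at once. If $f_{\mathcal{RP}}(x)=0$, setting $\alpha=0$ gives $\F^*(x)=0$, so $\ker f_{\mathcal{RP}}\subseteq\ker\F^*$. Conversely, if $\F^*(x)=0$ then, by linear independence of the $M_{\omega(S)}$, every flag number $f_S(x)$ vanishes; since all coefficients of $f_{\mathcal{RP}}(x)$ are flag numbers of $x$, this forces $f_{\mathcal{RP}}(x)=0$, whence $\ker\F^*\subseteq\ker f_{\mathcal{RP}}$. Thus $\ker f_{\mathcal{RP}}=\ker\F^*=:K$, and the first isomorphism theorem yields ring isomorphisms $f_{\mathcal{RP}}(\mathcal{RP})\cong\mathcal{RP}/K\cong\F^*(\mathcal{RP})$ whose composite sends $f_{\mathcal{RP}}(x)\mapsto[x]\mapsto\F^*(x)$. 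As $\F^*(x)=f_{\mathcal{RP}}(x)\big|_{\alpha=0}$, this composite is exactly $\pi_0$, which completes the argument.

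I expect the only delicate point — the main obstacle — to be the bookkeeping guaranteeing that the coefficients of $\F^*$ and $f_{\mathcal{RP}}$ really are the flag numbers with no collisions, i.e. the injectivity of $S\mapsto\omega(S)$ together with the independence of the monomials $M_\omega$; once this is secured the kernel computation is formal. As a cross-check one may avoid the kernels entirely and note that in each fixed grading $\pi_0$ is a surjection of free abelian groups whose ranks both equal the number of flag-$f$-inequivalent classes computed by the Bayer--Billera theory of Section~2.5, so a surjection of free abelian groups of equal finite rank is necessarily an isomorphism. I would present the kernel argument as the primary one, since it makes transparent why the same flag-number data underlies both $f_{\mathcal{RP}}$ and $\F^*$.
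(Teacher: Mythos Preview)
Your proof is correct and follows essentially the same approach as the paper: the paper's proof is the two-line argument that the map is obviously surjective, and that $\F^*(P)=0$ forces all flag numbers of $P$ to vanish, whence $f_{\mathcal{RP}}(P)=0$. Your version is a more careful elaboration of exactly this, spelling out why the flag numbers can be read off from the coefficients (via injectivity of $S\mapsto\omega(S)$) and phrasing the conclusion through $\ker f_{\mathcal{RP}}=\ker\F^*$; one minor slip is that the displayed formula you quote is for $\F$ rather than $\F^*$ (the compositions are reversed), but this is immaterial to the argument.
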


\begin{proof}
Indeed, it is clear that this mapping is an epimorphism. On the other hand,
let $\F^*(P)=0$. Then all the flag numbers of the element $P\in\mathcal{RP}$
are equal to $0$. Therefore, $f_{\mathcal{RP}}(P)=0$, so the mapping is a
monomorphism.
\end{proof}

\begin{cor}
\begin{enumerate}
\item The ring homomorphism
    $\F^*:\mathcal{RP}\to\EuScript{R}^*(\mathcal{D}^*)\subset\Qs$ is an
    epimorphism,
\item $\F^*(\mathcal{RP})\otimes\mathbb Q$ is a free polynomial algebra
    with dimension of the $(2n)$-th graded component equal to the
    $(n-1)$-th Fibonacci number $c_{n-1}$ ($c_0=c_1=1$,
    $c_{n+1}=c_{n}+c_{n-1}$, $n\geqslant 1$).
\end{enumerate}
\end{cor}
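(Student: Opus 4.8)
The plan is to pin down the image $\F^*(\mathcal{RP})$ as the explicit subring $\EuScript{R}^*(\mathcal{D}^*)\subset\Qs$, and then simply transport through the embedding $\EuScript{R}^*$ the structural facts about $\mathcal{D}^*$ already established in Proposition \ref{DFM}. Since $\F^*=\Phi_{\varepsilon_0}$ comes from the counit, it carries no $\alpha$, so the equations of type (\ref{I}) attached to it are precisely the variable equations $g(t_1,\dots,t_{k-1},t,-t,t_k,\dots)=g(t_1,\dots,t_{k-1},t_k,\dots)$ with no $\alpha$ present.

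First I would prove part (1). By the third bullet of the theorem just proved above, for $r\geqslant n$ the image $\F_r(\mathcal{RP}^{2n})\subset\Qsym[t_1,\dots,t_r]$ is exactly the set of homogeneous polynomials of degree $2n$ satisfying the equations of type (\ref{I}). Because the restriction $\Qs\to\Qsym[t_1,\dots,t_r]$ is injective in graduation $2n$ whenever $n\leqslant r$, I can lift this description back to $\Qs$: the component $\F^*(\mathcal{RP}^{2n})$ consists precisely of the homogeneous degree-$2n$ quasi-symmetric functions $g$ with $\Theta_k g=g$ for all $k\geqslant 1$. By Corollary \ref{tD*} this is exactly the degree-$2n$ part of $\EuScript{R}^*(\mathcal{D}^*)$. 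Hence $\F^*(\mathcal{RP})^{2n}=\EuScript{R}^*(\mathcal{D}^*)^{2n}$ for every $n$, which at once gives the containment $\F^*(\mathcal{RP})\subseteq\EuScript{R}^*(\mathcal{D}^*)$ and surjectivity of $\F^*$ onto $\EuScript{R}^*(\mathcal{D}^*)$.

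For part (2), I would use that since $\EuScript{L}\colon\mathcal{Z}\to\mathcal{D}$, and therefore $\EuScript{R}$, are epimorphisms, the dual map $\EuScript{R}^*\colon\mathcal{D}^*\to\Qs$ is an embedding; thus $\EuScript{R}^*$ is a ring isomorphism $\mathcal{D}^*\xrightarrow{\sim}\EuScript{R}^*(\mathcal{D}^*)$, which by part (1) equals $\F^*(\mathcal{RP})$. Tensoring with $\mathbb Q$ yields $\F^*(\mathcal{RP})\otimes\mathbb Q\cong\mathcal{D}^*\otimes\mathbb Q$, and Proposition \ref{DFM} says the latter is the free polynomial algebra $\mathbb Q[\Lyn_{odd}]$ whose $(2n)$-th graded component has rank $c_{n-1}$. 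Freeness and the graded dimensions are preserved under a graded ring isomorphism, so (2) follows.

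The substantive content here is entirely carried by the preceding image theorem, Corollary \ref{tD*}, and Proposition \ref{DFM}; the only genuine care needed — and the one place I expect to have to be precise — is the equation-matching step together with the passage from the finite-variable descriptions $\F_r$ to the description inside all of $\Qs$. Concretely, one must verify that the $\alpha$-free equations of type (\ref{I}) for $\F^*$ coincide with the operators $\Theta_k$ of Corollary \ref{tD*}, and then invoke injectivity of restriction to $n$ variables on each bounded-degree graded component. Once the image is identified with $\EuScript{R}^*(\mathcal{D}^*)$, everything else is formal.
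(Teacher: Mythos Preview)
Your proposal is correct and follows essentially the same approach as the paper: identify $\F^*(\mathcal{RP})$ with $\EuScript{R}^*(\mathcal{D}^*)$ by matching the equations of type (\ref{I}) with the characterization of $\EuScript{R}^*(\mathcal{D}^*)$, then invoke Proposition \ref{DFM}. The only cosmetic difference is that you route the equation-matching through Corollary \ref{tD*} (the $\Theta_k$ characterization in $\Qs$), whereas the paper restricts to $n$ variables and applies Theorem \ref{D*} directly before lifting back via injectivity; the content is identical.
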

\begin{proof}
\begin{enumerate}
\item Let $g\in\EuScript{R}^*(\mathcal{D}^*)$ be a quasi-symmetric
    function of degree $2n$. Then according to Theorem \ref{D*}
    $g_n(t_1,\dots,t_n)=g(t_1,\dots,t_n,0,0,\dots)$ satisfies the
    relations of type (\ref{I}). Therefore $g_n(t_1,\dots,t_n)=\F^*_n(P)$
    for some $P\in\mathcal{RP}^{2n}$. Since the mapping
    $\Qs\to\Qsym[t_1,\dots,t_n], t_j \to 0, j>n$ is injective on the
    graduation $(2n)$, we see that $g=\F^*(P)$.
\item We see that $\F^*(\mathcal{RP})=\EuScript{R}^*(\mathcal{D}^*)$.
    Therefore,
    $$
    \F^*(\mathcal{RP})\otimes\mathbb Q\simeq\mathcal{D}^*\otimes\mathbb Q\simeq\mathbb Q[\Lyn_{odd}],
    $$
    and $\dim \F^*(\mathcal{RP}^{2n})\otimes \mathbb Q=c_{n-1}$ according
    to Proposition \ref{DFM}.
\end{enumerate}
\end{proof}

\begin{rem}
Let us mention that $\deg L(P^n)=2(n+1)$, therefore, $\rank
\F^*(\mathcal{RP}^{2(n+1)})=c_n=\rank f(\mathcal{P}^{2n})$
\end{rem}

\subsection{Natural Hopf comodule structure}

$\mathcal{P}$ is not a Hopf algebra, but it turns out that there is a natural
Hopf comodule structure on the ring $\mathcal{P}$ over the Hopf algebra
$\mathcal{RP}$.

\begin{prop}
Formula
$$
\Delta_{\mathcal{RP}} P=\sum\limits_{F\subseteq P}F\otimes(P/F)
$$
defines a natural graded right Hopf comodule structure
$\Delta_{\mathcal{RP}}\colon\mathcal{P}\to\mathcal{P}\otimes\mathcal{RP}$ on
the ring $\mathcal{P}$ over the Hopf algebra $\mathcal{RP}$.
\end{prop}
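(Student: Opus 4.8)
The plan is to check the three conditions that make $(\mathcal{P},\Delta_{\mathcal{RP}})$ a right Hopf (Milnor) comodule over the Hopf algebra $\mathcal{RP}$: the counit axiom, coassociativity, and multiplicativity of $\Delta_{\mathcal{RP}}$. Throughout, the sum $\sum_{F\subseteq P}$ ranges over the \emph{nonempty} faces $F$ of $P$ (so that the left tensor factor lies in $\mathcal{P}$ and not in $\mathcal{RP}$), including $F=P$ itself, for which $P/P=\varnothing$. The counit axiom is then immediate: applying $1\otimes\varepsilon$ to $\Delta_{\mathcal{RP}}P=\sum_{F\subseteq P}F\otimes(P/F)$ and using that $\varepsilon(P/F)=1$ precisely when $P/F=\varnothing$, i.e. when $F=P$, leaves only the summand $P$, so $(1\otimes\varepsilon)\Delta_{\mathcal{RP}}P=P$.

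For coassociativity I would expand both $(\Delta_{\mathcal{RP}}\otimes 1)\Delta_{\mathcal{RP}}P$ and $(1\otimes\Delta)\Delta_{\mathcal{RP}}P$ and match them termwise. The first gives
$$(\Delta_{\mathcal{RP}}\otimes 1)\Delta_{\mathcal{RP}}P=\sum_{G\subseteq F\subseteq P}G\otimes(F/G)\otimes(P/F).$$
For the second the decisive input is the description $L(P/G)=[G,P]$ of the face polytope: a nonempty face of $P/G$ is exactly an element $F\in[G,P]$, it is combinatorially $F/G$, and its face polytope inside $P/G$ has lattice the upper interval $[F,P]=L(P/F)$, so $(P/G)/(F/G)=P/F$. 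Substituting this into $\Delta(P/G)=\sum_{\varnothing\subseteq H\subseteq P/G}H\otimes(P/G)/H$ yields
$$(1\otimes\Delta)\Delta_{\mathcal{RP}}P=\sum_{G\subseteq P}G\otimes\!\!\sum_{G\subseteq F\subseteq P}\!\!(F/G)\otimes(P/F)=\sum_{G\subseteq F\subseteq P}G\otimes(F/G)\otimes(P/F),$$
the same expression, so the two agree.

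The multiplicativity $\Delta_{\mathcal{RP}}(P\times Q)=(\Delta_{\mathcal{RP}}P)\cdot(\Delta_{\mathcal{RP}}Q)$ rests on two facts already established in the text: every nonempty face of $P\times Q$ is uniquely a product $F\times G$ of nonempty faces $F\subseteq P$, $G\subseteq Q$; and, since intervals in a product poset factor, $L\bigl((P\times Q)/(F\times G)\bigr)=[F\times G,\,P\times Q]=[F,P]\times[G,Q]=L(P/F)\times L(Q/G)$, which by the corollary $L(P\divideontimes Q)=L(P)\times L(Q)$ equals $L\bigl((P/F)\divideontimes(Q/G)\bigr)$. Hence $(P\times Q)/(F\times G)=(P/F)\divideontimes(Q/G)$. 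Recalling that the product in $\mathcal{P}\otimes\mathcal{RP}$ uses $\times$ on the left factor and $\divideontimes$ on the right, both sides expand to $\sum_{F,G}(F\times G)\otimes\bigl((P/F)\divideontimes(Q/G)\bigr)$, so they coincide. Gradedness is the dimension bookkeeping $\dim F+\rho(P/F)=k+(\dim P-k)=\dim P$: with $\mathcal{RP}$ graded by $2\rho$, the map $\Delta_{\mathcal{RP}}$ carries $\mathcal{P}^{2n}$ into $\bigoplus_{k}\mathcal{P}^{2k}\otimes\mathcal{RP}^{2(n-k)}$.

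The main obstacle is not the algebra of reindexing the sums but the two face-lattice identities that feed it, namely $(P/G)/(F/G)=P/F$ for a chain $G\subseteq F\subseteq P$ and $(P\times Q)/(F\times G)=(P/F)\divideontimes(Q/G)$. Both reduce to the interval computations $[G,P]\supseteq[G,F],[F,P]$ and $[F\times G,\,P\times Q]=[F,P]\times[G,Q]$ together with $L(P/F)=[F,P]$ and the product formula for $L(P\divideontimes Q)$; once these are in place, coassociativity, the counit property, and multiplicativity each follow by direct substitution.
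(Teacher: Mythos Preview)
Your proof is correct and follows essentially the same route as the paper: both verify the counit axiom, coassociativity via the identity $(P/G)/(F/G)=P/F$, and multiplicativity via $(P\times Q)/(F\times G)=(P/F)\divideontimes(Q/G)$ by the same direct expansions. The paper additionally records the unit check $\Delta_{\mathcal{RP}}(\pt)=\pt\otimes\varnothing=1\otimes 1$, while you make the gradedness explicit; otherwise the arguments coincide.
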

\begin{proof}
We should proof that the following two diagrams commute
$$
\xymatrix{
\mathcal{P}\ar[d]^{\Delta_{\mathcal{RP}}}\ar[rr]^{\Delta_{\mathcal{RP}}}&&\mathcal{P}\otimes\mathcal{RP}\ar[d]^{\id\otimes\Delta}\\
\mathcal{P}\otimes\mathcal{RP}\ar[rr]^{\Delta_{\mathcal{RP}}\otimes\id}&&\mathcal{P}\otimes\mathcal{RP}\otimes\mathcal{RP}\\
}
\hspace{2cm}
\xymatrix{
\mathcal{P}\ar[d]_{\Delta_{\mathcal{RP}}}\ar[dr]^{\id\otimes 1}&\\
\mathcal{P}\otimes \mathcal{RP}\ar[r]^{\id\otimes \varepsilon}&\mathcal{P}\otimes\mathbb Z\\
}
$$
and that $\Delta_{\mathcal{RP}}$ is a ring homomorphism. Indeed, we have
\begin{multline*}
(\Delta_{\mathcal{RP}}\otimes\id)\circ\Delta_{\mathcal{RP}} P=(\Delta_{\mathcal{RP}}\otimes\id)\left(\sum\limits_{F\subseteq P}F\otimes P/F\right)=\sum\limits_{F\subseteq P}\left(\sum\limits_{G\subseteq F}G\otimes F/G\right)\otimes P/F=\\
=\sum\limits_{G\subseteq P}G\otimes\left(\sum\limits_{G\subseteq F\subseteq
P}F/G\otimes P/F\right)=\sum\limits_{G\subseteq P}G\otimes\Delta (P/G)
=(\id\otimes\Delta)\circ\Delta_{\mathcal{RP}} P,
\end{multline*}
and
$$ (\id\otimes\varepsilon)\circ\Delta_{\mathcal{RP}}(P)=\sum\limits_{F\subseteq
P}F\otimes\varepsilon(P/F)=P\otimes 1,
$$

Let us check that $\Delta_{\mathcal{RP}}$ is a homomorphism of rings
$\mathcal{P}\to\mathcal{P}\otimes\mathcal{RP}$:
\begin{multline*}
\Delta_{\mathcal{RP}}(P\times Q)=\sum\limits_{F\times G\subseteq P\times
Q}(F\times G)\otimes \left(P\times Q/ F\times G\right)=
\sum\limits_{F\subseteq P,\,G\subseteq Q}(F\times G)\otimes \left(P/F\divideontimes Q/G\right)=\\
=\left(\sum\limits_{F\subseteq P}F\otimes
P/F\right)\cdot\left(\sum\limits_{G\subseteq Q}G\otimes
Q/G\right)=\Delta_{\mathcal{RP}}(P)\cdot\Delta_{\mathcal{RP}}(Q)
\end{multline*}
$$
\Delta_{\mathcal{RP}}(\pt)=\pt\otimes (\pt/\pt)=\pt\otimes \varnothing=1\otimes 1.
$$
\end{proof}

\begin{cor}
\begin{itemize}
\item Any ring homomorphism $\mathcal{P}\to\mathcal{A}$, where
    $\mathcal{A}$ is a ring, induces the ring homomorphism
    $\mathcal{P}\to\mathcal{A}\otimes\mathcal{RP}$
\item Any linear homomorphism $\psi\colon\mathcal{RP}\to\mathbb Z$
    induces the linear operator $\mathcal{P}\to\mathcal{P}$, which is a
    ring homomorphism, if $\psi$ is a ring homomorphism.
\end{itemize}
\end{cor}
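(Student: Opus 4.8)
The plan is to derive both statements as immediate formal consequences of the preceding Proposition, which asserts that $\Delta_{\mathcal{RP}}\colon\mathcal{P}\to\mathcal{P}\otimes\mathcal{RP}$ is a right Hopf comodule structure and, in particular, a homomorphism of rings. Once that fact is in hand, each of the two induced maps is simply a composition of $\Delta_{\mathcal{RP}}$ with one more map, and no further argument about faces or face polytopes is required.

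First, for the first item I would start from a ring homomorphism $h\colon\mathcal{P}\to\mathcal{A}$ and form the composition
$$
\mathcal{P}\xrightarrow{\Delta_{\mathcal{RP}}}\mathcal{P}\otimes\mathcal{RP}\xrightarrow{h\otimes\id}\mathcal{A}\otimes\mathcal{RP}.
$$
Here $h\otimes\id$ is a ring homomorphism because both $h$ and $\id$ are, and $\Delta_{\mathcal{RP}}$ is a ring homomorphism by the Proposition; hence $(h\otimes\id)\circ\Delta_{\mathcal{RP}}$ is the desired ring homomorphism $\mathcal{P}\to\mathcal{A}\otimes\mathcal{RP}$.

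Second, for a linear map $\psi\colon\mathcal{RP}\to\mathbb Z$ I would set
$$
\mathcal{P}\xrightarrow{\Delta_{\mathcal{RP}}}\mathcal{P}\otimes\mathcal{RP}\xrightarrow{\id\otimes\psi}\mathcal{P}\otimes\mathbb Z=\mathcal{P}.
$$
As a composition of linear maps this is linear, which already gives the induced operator $\mathcal{P}\to\mathcal{P}$. To see it is multiplicative precisely when $\psi$ is, I would verify on elementary tensors that $\id\otimes\psi$ respects products: assuming $\psi(xy)=\psi(x)\psi(y)$ and $\psi(1)=1$ one has
$$
(\id\otimes\psi)\big((a\otimes x)(b\otimes y)\big)=ab\,\psi(x)\psi(y)=(\id\otimes\psi)(a\otimes x)\cdot(\id\otimes\psi)(b\otimes y),
$$
together with $(\id\otimes\psi)(1\otimes 1)=1$, so $\id\otimes\psi$ is a ring homomorphism under the identification $\mathcal{P}\otimes\mathbb Z=\mathcal{P}$. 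Composing it with the ring homomorphism $\Delta_{\mathcal{RP}}$ yields a ring endomorphism of $\mathcal{P}$.

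There is essentially no hard step: the substance of the corollary is entirely carried by the Proposition. The only point deserving a word of care, which I would mention rather than belabor, is that the defining sum $\Delta_{\mathcal{RP}}(P)=\sum_{F\subseteq P}F\otimes(P/F)$ is finite for every polytope $P$, so all the maps above are honestly defined on $\mathcal{P}$ itself and not merely on a completion; this also ensures that the counit identity $(\id\otimes\varepsilon)\circ\Delta_{\mathcal{RP}}=\id$ used in the Proposition specializes correctly on each graded component of finite rank.
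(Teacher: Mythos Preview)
Your proof is correct and is exactly the argument the paper has in mind: the corollary is stated without proof precisely because both items are immediate compositions with the ring homomorphism $\Delta_{\mathcal{RP}}$ established in the preceding Proposition. Your explicit verification that $\id\otimes\psi$ is multiplicative when $\psi$ is, and your remark about finiteness of the sum, simply spell out what the paper leaves implicit.
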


\begin{exam}
The homomorphism $\xi_{\alpha}\colon\mathcal{P}\to\mathbb Z[\alpha]$ defines
the ring homomorphism $l_{\alpha}\colon\mathcal{P}\to\mathcal{RP}[\alpha]$:
$$
l_{\alpha}(P^n)=\sum\limits_{F\subseteq P}\alpha^{\dim F}P/F.
$$
Let us recall that in the case of a simple $n$-polytope a face polytope of an
$i$-dimensional face is a simplex $\Delta^{n-i-1}=x^{n-i}$. Therefore,
$$
l_{\alpha}(P^n)=\sum\limits_{i=0}^nf_i\alpha^ix^{n-i}=f_1(\alpha,x).
$$
This is a homogeneous $f$-polynomial in two variables defined in \cite{Buch}.
\end{exam}

\begin{prop}\label{DPQ}
\begin{equation}
\Delta_{\mathcal{RP}}(P\divideontimes Q)=(1\otimes P)\cdot\Delta_{\mathcal{RP}}(Q)+\Delta_{\mathcal{RP}}(P)\cdot (1\otimes Q)+\Delta_{\mathcal{RP}}(P)(\divideontimes\otimes\divideontimes) \Delta_{\mathcal{RP}}(Q)\label{FDPQ}
\end{equation}
\end{prop}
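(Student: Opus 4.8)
The plan is to expand $\Delta_{\mathcal{RP}}(P\divideontimes Q)$ straight from the definition and then split the resulting sum over faces into three pieces matching the three terms on the right. The starting point is the description of faces of a join proved earlier: every face of $P\divideontimes Q$ is $F\divideontimes G$ with $\varnothing\subseteq F\subseteq P$ and $\varnothing\subseteq G\subseteq Q$, and since $L(P\divideontimes Q)=L(P)\times L(Q)$, the interval $[F\divideontimes G,\,P\divideontimes Q]$ equals $[F,P]\times[G,Q]$. Comparing face lattices then gives $(P\divideontimes Q)/(F\divideontimes G)=(P/F)\divideontimes(Q/G)$, because $L((P/F)\divideontimes(Q/G))=L(P/F)\times L(Q/G)=[F,P]\times[G,Q]$. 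Hence
\[
\Delta_{\mathcal{RP}}(P\divideontimes Q)=\sum_{\substack{\varnothing\subseteq F\subseteq P,\ \varnothing\subseteq G\subseteq Q\\ (F,G)\neq(\varnothing,\varnothing)}}(F\divideontimes G)\otimes\bigl((P/F)\divideontimes(Q/G)\bigr),
\]
where the pair $(\varnothing,\varnothing)$ is excluded because the first tensor leg must lie in $\mathcal{P}$, i.e.\ be a nonempty face.

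Next I would partition this index set according to which of $F,G$ is the empty face. Using $\varnothing\divideontimes G=G$ and $P/\varnothing=P$ (as $L(P/\varnothing)=[\varnothing,P]=L(P)$), the summands with $F=\varnothing$ and $G\neq\varnothing$ are $G\otimes(P\divideontimes(Q/G))$, and these assemble into $(1\otimes P)\cdot\Delta_{\mathcal{RP}}(Q)$, where $1=\pt$ is the unit of $(\mathcal{P},\times)$ and the product in $\mathcal{P}\otimes\mathcal{RP}$ uses $\times$ in the first slot and $\divideontimes$ in the second (so $\pt\times G=G$). Symmetrically, the summands with $G=\varnothing$ give $\Delta_{\mathcal{RP}}(P)\cdot(1\otimes Q)$. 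The remaining summands, with both faces nonempty, are exactly
\[
\sum_{\substack{\varnothing\subsetneq F\subseteq P\\ \varnothing\subsetneq G\subseteq Q}}(F\divideontimes G)\otimes\bigl((P/F)\divideontimes(Q/G)\bigr),
\]
which is precisely $\Delta_{\mathcal{RP}}(P)(\divideontimes\otimes\divideontimes)\Delta_{\mathcal{RP}}(Q)$, the product of $\Delta_{\mathcal{RP}}(P)$ and $\Delta_{\mathcal{RP}}(Q)$ regarded in $(\mathcal{RP}\otimes\mathcal{RP},\,\divideontimes\otimes\divideontimes)$ via $\mathcal{P}\subset\mathcal{RP}$. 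Adding the three pieces yields the claimed identity.

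The genuinely routine verifications are $(P\divideontimes Q)/(F\divideontimes G)=(P/F)\divideontimes(Q/G)$ and $P/\varnothing=P$; both reduce to $L(P\divideontimes Q)=L(P)\times L(Q)$ together with the definition of the face polytope through the interval $[F,P]$. The main point requiring care — and the only real pitfall — is the bookkeeping at the boundary of the index set: the defining sum $\sum_{F\subseteq P}$ runs over \emph{nonempty} faces since the first tensor leg lives in $\mathcal{P}$, so the join sum must omit only the single pair $(\varnothing,\varnothing)$ while retaining, for example, $F=\varnothing,\ G\neq\varnothing$. Handling this inclusion–exclusion correctly is exactly what separates out the two ``mixed'' terms $(1\otimes P)\cdot\Delta_{\mathcal{RP}}(Q)$ and $\Delta_{\mathcal{RP}}(P)\cdot(1\otimes Q)$ from the ``full'' product term, so the three-case decomposition is the crux of the argument.
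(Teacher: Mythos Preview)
Your proof is correct and follows essentially the same approach as the paper: both expand $\Delta_{\mathcal{RP}}(P\divideontimes Q)$ using the description of faces of a join and split the sum into the three cases $F=\varnothing$, $G=\varnothing$, and both nonempty. Your write-up adds the explicit justification of $(P\divideontimes Q)/(F\divideontimes G)=(P/F)\divideontimes(Q/G)$ and the bookkeeping about the excluded pair $(\varnothing,\varnothing)$, which the paper leaves implicit.
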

\begin{proof}
\begin{multline*}
\Delta_{\mathcal{RP}}(P\divideontimes Q)=\sum\limits_{G\subseteq Q}
\left(\varnothing\divideontimes G\right)\otimes \left(P\divideontimes
Q/\varnothing \divideontimes G\right)+\sum\limits_{F\subseteq P}
\left(F\divideontimes\varnothing\right)\otimes \left(P\divideontimes
Q/F \divideontimes \varnothing\right)+\\
+\sum\limits_{F\subseteq P,\,G\subseteq Q}(F\divideontimes  G)\otimes
\left(P\divideontimes Q/F\divideontimes G\right)=\sum\limits_{G\subseteq Q}
G\otimes \left(P\divideontimes (Q/G)\right)+
\sum\limits_{F\subseteq P} F\otimes \left((P/F)\divideontimes Q\right)+\\
+\sum\limits_{F\subseteq P,\,G\subseteq Q}\left(F\divideontimes
G\right)\otimes\left(P/F\divideontimes Q/G\right)=(1\otimes
P)\Delta_{\mathcal{RP}}(Q)+\Delta_{\mathcal{RP}}(P)(1\otimes
Q)+\Delta_{\mathcal{RP}}(P)(\divideontimes\otimes\divideontimes)
\Delta_{\mathcal{RP}}(Q)
\end{multline*}
\end{proof}

Now if we apply $\xi_{\alpha}\otimes 1$ to the formula (\ref{FDPQ}), then we
obtain the corollary:
\begin{cor}\label{lPQ}
$$
l_{\alpha}(P\divideontimes Q)=P\divideontimes l_{\alpha}(Q)+l_{\alpha}(P)\divideontimes Q+\alpha\; l_{\alpha}(P)\cdot l_{\alpha}(Q)
$$
\end{cor}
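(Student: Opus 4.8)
The plan is to read off the identity by applying the ring homomorphism $\xi_{\alpha}\otimes 1\colon\mathcal{P}\otimes\mathcal{RP}\to\mathbb Z[\alpha]\otimes\mathcal{RP}=\mathcal{RP}[\alpha]$ to the coproduct formula (\ref{FDPQ}) of Proposition \ref{DPQ}, using throughout that $l_{\alpha}=(\xi_{\alpha}\otimes 1)\circ\Delta_{\mathcal{RP}}$, which is exactly the description of $l_{\alpha}$ obtained in the Example above. The only arithmetic input I need is that $\xi_{\alpha}$ returns $\alpha^{\dim}$ on each polytope and is extended linearly, together with the dimension formula for joins, $\dim(F\divideontimes G)=\dim F+\dim G+1$; the latter gives the twisted multiplicativity $\xi_{\alpha}(F\divideontimes G)=\alpha\,\xi_{\alpha}(F)\,\xi_{\alpha}(G)$, while $\xi_{\alpha}(\pt)=1$.

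Then I would evaluate the three summands of (\ref{FDPQ}) separately. For the first, $(1\otimes P)\cdot\Delta_{\mathcal{RP}}(Q)=\sum_{G\subseteq Q}G\otimes(P\divideontimes Q/G)$, so applying $\xi_{\alpha}\otimes 1$ extracts the scalar $\alpha^{\dim G}$ from each term and, pulling the left join factor $P$ out of the sum, yields $P\divideontimes\sum_{G\subseteq Q}\alpha^{\dim G}\,Q/G=P\divideontimes l_{\alpha}(Q)$. Symmetrically, $\Delta_{\mathcal{RP}}(P)\cdot(1\otimes Q)=\sum_{F\subseteq P}F\otimes(P/F\divideontimes Q)$ produces $l_{\alpha}(P)\divideontimes Q$.

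The third summand $\Delta_{\mathcal{RP}}(P)(\divideontimes\otimes\divideontimes)\Delta_{\mathcal{RP}}(Q)=\sum_{F\subseteq P,\,G\subseteq Q}(F\divideontimes G)\otimes(P/F\divideontimes Q/G)$ is where the essential point lies, and this is the step I expect to demand the most care. Here $\xi_{\alpha}$ is applied to the join $F\divideontimes G$ sitting in the first tensor slot, and the dimension shift $+1$ of the join manufactures the extra factor of $\alpha$: applying $\xi_{\alpha}\otimes 1$ gives $\sum_{F,G}\alpha^{\dim F+\dim G+1}\,(P/F\divideontimes Q/G)=\alpha\,l_{\alpha}(P)\cdot l_{\alpha}(Q)$, where the product on the right is that of $\mathcal{RP}$. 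The one thing that must be checked carefully is the bookkeeping of which tensor factor lies in $\mathcal{P}$ and which in $\mathcal{RP}$, so that $\xi_{\alpha}$ is legitimately evaluated on genuine (nonempty) polytopes $F\divideontimes G$; this is guaranteed because $\Delta_{\mathcal{RP}}$ lands in $\mathcal{P}\otimes\mathcal{RP}$ and $F,G$ run over nonempty faces, whence $F\divideontimes G\in\mathcal{P}$. Collecting the three contributions and using $l_{\alpha}(P\divideontimes Q)=(\xi_{\alpha}\otimes 1)\Delta_{\mathcal{RP}}(P\divideontimes Q)$ gives precisely the identity of Corollary \ref{lPQ}.
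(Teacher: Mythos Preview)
Your proposal is correct and follows exactly the paper's approach: the paper simply states that applying $\xi_{\alpha}\otimes 1$ to formula (\ref{FDPQ}) yields the corollary, and you have spelled out precisely how each of the three summands transforms under this map. Your extra care about the join dimension formula producing the factor $\alpha$ and about the first tensor slot lying in $\mathcal{P}$ is well placed and makes explicit what the paper leaves implicit.
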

\subsection{Interrelation}

Let us remind that the left action of the Leibnitz-Hopf algebra $\mathcal{Z}$
in the ring $\R$ induces the right Hopf comodule structure
$$
\Delta_{\EuScript{L}}P^n=\sum\limits_{k=0}^n\sum\limits_{(j_1,\,\dots,\,j_k)}\left(d_{j_1}\dots
d_{j_k}P\right)\otimes M_{(j_1,\,\dots,\,j_k)},
$$

\begin{prop}
The following diagram commutes:
$$
\xymatrix{
\mathcal{P}\ar[dr]_-{\Delta_{\EuScript{L}}}\ar[r]^-{\Delta_{\mathcal{RP}}}&\mathcal{P}\otimes\mathcal{RP}\ar[d]^{1\otimes \F}\\
&\mathcal{P}\otimes\Qs
}
$$
\end{prop}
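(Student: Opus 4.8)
The plan is to verify the identity $(1\otimes\F)\circ\Delta_{\mathcal{RP}}=\Delta_{\EuScript{L}}$ by evaluating both composite maps on an arbitrary polytope $P^n\in\mathcal{P}$ and comparing the resulting elements of $\mathcal{P}\otimes\Qs$. Since both maps are linear, it suffices to check the equality on generators, and since the tensor factor $\Qs$ is a free abelian group, it suffices to compare coefficients of each quasi-symmetric monomial $M_{(j_1,\dots,j_k)}$, with coefficients lying in $\mathcal{P}$. The key computational inputs are all available: the formula for $\F$ as a sum over chains in the face lattice, the explicit formula for $\Delta_{\mathcal{RP}}$, and the explicit formula for $\Delta_{\EuScript{L}}$ in terms of the face operators $d_{j_1}\dots d_{j_k}$.

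First I would expand $(1\otimes\F)\circ\Delta_{\mathcal{RP}}(P)$. Applying $\Delta_{\mathcal{RP}}$ gives $\sum_{F\subseteq P}F\otimes(P/F)$, and then $1\otimes\F$ replaces each face polytope $P/F$ by its Ehrenborg function. Using the chain formula for $\F$, and recalling that $L(P/F)=[F,P]$ so that chains in $P/F$ from $\hat 0$ to $\hat 1$ are exactly chains $F=G_0\subset G_1\subset\dots\subset G_{k}=P$ in $L(P)$, this becomes
\begin{equation*}
\sum_{F\subseteq P}\ \sum_{F=G_0\subset G_1\subset\dots\subset G_k=P} F\otimes M_{(\dim G_1-\dim G_0,\,\dots,\,\dim G_k-\dim G_{k-1})}.
\end{equation*}
Reorganizing the double sum so that the outer index is the chain $F=G_0\subset\dots\subset G_k=P$, the composition multiplicity of a monomial $M_{(j_1,\dots,j_k)}$ attached to a fixed smallest face $F=G_0$ counts chains above $F$ with consecutive rank gaps $j_1,\dots,j_k$.

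Next I would expand $\Delta_{\EuScript{L}}(P)=\sum_{k}\sum_{(j_1,\dots,j_k)}(d_{j_1}\dots d_{j_k}P)\otimes M_{(j_1,\dots,j_k)}$ and unwind the iterated face operator. By definition $d_{j_k}P$ is the sum of all faces of codimension $j_k$, then $d_{j_{k-1}}$ applied to each such face sums its faces of codimension $j_{k-1}$, and so on; hence $d_{j_1}\dots d_{j_k}P$ is precisely the sum, over all chains $F=G_0\subset G_1\subset\dots\subset G_k=P$ with $\dim G_i-\dim G_{i-1}=j_i$, of the smallest face $F=G_0$. This is exactly the $\mathcal{P}$-coefficient of $M_{(j_1,\dots,j_k)}$ obtained above, so the two expressions agree term by term. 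The main thing to get right is the bookkeeping of indexing conventions: one must match the rank-gap composition used by $\F$ (reading the chain from bottom to top) against the codimension-indexed composition $(j_1,\dots,j_k)$ attached to $d_{j_1}\dots d_{j_k}$ in $\Delta_{\EuScript{L}}$, and confirm that the face being tensored on the left is in both cases the minimal face $G_0$ of the chain. I expect this index-matching — rather than any deep structural fact — to be the only real obstacle, and it is dispatched by writing the rank of $G_i$ in $L(P)$ as $\dim G_i+1$ and checking $\dim G_i-\dim G_{i-1}=\rho(G_i)-\rho(G_{i-1})$, so the two compositions coincide.
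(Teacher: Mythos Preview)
Your proposal is correct and follows essentially the same approach as the paper's proof: both unwind $d_{j_1}\dots d_{j_k}P$ as a sum over chains $G_0\subset G_1\subset\dots\subset G_k=P$ with prescribed rank gaps, and then identify the inner chain sum over a fixed bottom face $G_0=F$ with $\F(P/F)$. The paper simply runs the computation in one direction (starting from $\Delta_{\EuScript{L}}$ and regrouping to reach $\sum_F F\otimes\F(P/F)$), whereas you describe expanding both sides and matching coefficients, but the substance is identical.
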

\begin{proof}
Let $P$ be an $n$-dimensional polytope. Then
\begin{multline*}
\Delta_{\EuScript{L}}(P)=\sum\limits_{k=0}^n\sum\limits_{(j_1,\,\dots,\,j_k)}(d_{j_1}\dots
d_{j_k}P)\otimes
M_{(j_1,\,\dots,\,j_k)}=\\
\sum\limits_{k=0}^n\sum\limits_{(j_1,\,\dots,\,j_k)}\sum\limits_{F^{n-(j_1+\dots+j_k)}}\left(F^{n-(j_1+\dots+j_k)}\sum\limits_{F^{n-(j_1+\dots+j_k)}\subset
F^{n-(j_2+\dots+j_k)}\subset\dots\subset F^{n-j_k}\subset
P}1\right)\otimes M_{(j_1,\,\dots,\,j_k)}=\\
=\sum\limits_{r=0}^n\sum\limits_{F^r\subseteq P}F^r\otimes
\left(\sum\limits_{k=0}^{n-r}\sum\limits_{(j_1,\,\dots,\,
j_k):j_1+\dots+j_k=n-r}\sum\limits_{F^r\subset F^{r+j_1}\subset\dots\subset
F^{n-j_k}\subset P}M_{(j_1,\,\dots,\,j_k)}\right)=\\
=\sum\limits_{F\subseteq P}F\otimes \F(P/F)=(1\otimes\F)\circ\Delta_{RP}(P)
\end{multline*}
\end{proof}

\begin{cor}\label{Fl} For any polytope $P\in\mathcal{P}$ we have
$$
\F^*(l_{\alpha}P)=f(P)
$$
\end{cor}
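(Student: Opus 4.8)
The plan is to feed the character $\xi_\alpha$ into the comodule picture and reduce the whole statement to the commuting square of the preceding proposition, together with the symmetry between the left and right coactions recorded in Corollary~\ref{ZD}. First I would present both sides of the asserted identity as a coaction followed by a character. On one side $l_\alpha=(\xi_\alpha\otimes\id)\circ\Delta_{\mathcal{RP}}$, since $l_\alpha(P^n)=\sum_{F\subseteq P}\alpha^{\dim F}(P/F)$ is exactly $\xi_\alpha\otimes\id$ applied to $\Delta_{\mathcal{RP}}P=\sum_{F\subseteq P}F\otimes(P/F)$. On the other side the generalized $f$-polynomial is $f=(\id\otimes\xi_\alpha)\circ\Delta_{\EuScript{R}}$ (this is the Example identifying $\Phi_{\xi_\alpha}$ with $f$, using $\Phi_\infty=\Delta_{\EuScript{R}}$). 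Applying $\F^*$ to the $\mathcal{RP}$-slot of $l_\alpha P$ then reads
$$
\F^*(l_\alpha P)=(\xi_\alpha\otimes\F^*)\,\Delta_{\mathcal{RP}}(P)=\sum_{F\subseteq P}\alpha^{\dim F}\,\F^*(P/F),
$$
so the claim becomes the problem of matching this sum with $f(P)=(\id\otimes\xi_\alpha)\Delta_{\EuScript{R}}(P)$.

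The crux is the $\F^*$-analogue of the preceding proposition, namely
$$
\Delta_{\EuScript{R}}(P)=\sum_{F\subseteq P}\F^*(P/F)\otimes F,
$$
which I would obtain by transporting the identity $\Delta_{\EuScript{L}}=(\id\otimes\F)\circ\Delta_{\mathcal{RP}}$ of that proposition through the relation $\Delta_{\EuScript{L}}=\tau\circ(\ast\otimes\id)\circ\Delta_{\EuScript{R}}$ of Corollary~\ref{ZD}. Inverting the latter gives $\Delta_{\EuScript{R}}=(\ast\otimes\id)\circ\tau^{-1}\circ\Delta_{\EuScript{L}}$, where $\tau$ is the flip of the two tensor factors; substituting $\Delta_{\EuScript{L}}(P)=\sum_{F\subseteq P}F\otimes\F(P/F)$ then yields $\sum_{F}\F(P/F)^{\ast}\otimes F$. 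Since $\F^*=\Phi_{\varepsilon_0}$ is precisely the composition-reversal $\ast\circ\F$ of the Ehrenborg function (the content of $\F^*=\Phi_{\varepsilon_0}$ together with $\F(\Ps^*)=\F(\Ps)^*$), we have $\F(P/F)^{\ast}=\F^*(P/F)$, which is the displayed formula. Alternatively, the same formula can be proved directly by the flag-counting regrouping used in the preceding proposition, now reading the operator word $d_{j_1}\cdots d_{j_k}$ in the order dictated by $\Delta_{\EuScript{R}}$; this route avoids invoking $\F^*=\ast\circ\F$ as a separate ingredient.

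With this in hand the conclusion is immediate: applying $\id\otimes\xi_\alpha$ to the displayed formula gives
$$
f(P)=(\id\otimes\xi_\alpha)\,\Delta_{\EuScript{R}}(P)=\sum_{F\subseteq P}\F^*(P/F)\,\xi_\alpha(F)=\sum_{F\subseteq P}\alpha^{\dim F}\,\F^*(P/F)=\F^*(l_\alpha P).
$$

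I expect the only genuine obstacle to be bookkeeping rather than substance: keeping straight which tensor slot carries $\Qs$ and which carries $\R$, tracking the flip $\tau$, and above all handling the composition-reversal involution $\ast$ so that the Ehrenborg function enters in the form $\F^*$ and not $\F$. Once the orientation conventions are fixed consistently with Corollary~\ref{ZD} (or, taking the direct route, once the chain in $d_{j_1}\cdots d_{j_k}P$ is read from the top face down so that its rank sequence reverses into the composition of $\F^*(P/F)$), no further computation is required.
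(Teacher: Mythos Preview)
Your argument is correct and follows essentially the same route as the paper: both proofs rest on the preceding proposition $\Delta_{\EuScript{L}}=(1\otimes\F)\circ\Delta_{\mathcal{RP}}$, the relation $\Delta_{\EuScript{L}}=\tau\circ(*\otimes 1)\circ\Delta_{\EuScript{R}}$ from Corollary~\ref{ZD}, and the application of $\xi_\alpha$; the paper simply orders these steps differently, applying $\xi_\alpha\otimes 1$ to the commuting diagram first to obtain $\F\circ l_\alpha=f^*$ and then hitting both sides with $*$, whereas you transport to the $\Delta_{\EuScript{R}}$ side before applying $\xi_\alpha$. One minor point: your parenthetical invoking $\F(\Ps^*)=\F(\Ps)^*$ is a red herring, since that identity concerns poset duality rather than the composition-reversal involution on $\Qs$; the equality $\F^*=*\circ\F$ you need is just the definition of the notation $\F^*=\Phi_{\varepsilon_0}$, and your alternative direct flag-count avoids the issue anyway.
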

\begin{proof}
We have $(\xi_{\alpha}\otimes1)\circ\Delta_{\EuScript{L}}P=f(P)^*$. So if we
apply the homomorphism $\xi_{\alpha}\otimes 1$ to the diagram above, we
obtain $\F\circ\, l_{\alpha}=(\xi_{\alpha}\otimes
1)\circ\Delta_{\EuScript{L}}=f^*$. Therefore, $\F^*\circ\, l_{\alpha}=f$.
\end{proof}
\begin{cor}
The following diagram commutes:
$$
\begin{CD}
\mathcal{P}@>{f^*}>>\Qs[\alpha]\\
@V{\Delta_{\mathcal{RP}}}VV @V{\Delta}VV\\
\mathcal{P}\otimes\mathcal{RP}@>{f^*\otimes F}>>\Qs[\alpha]\otimes\Qs
\end{CD}
$$
\end{cor}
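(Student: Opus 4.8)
The plan is to establish the operator identity $\Delta\circ f^{*}=(f^{*}\otimes\F)\circ\Delta_{\mathcal{RP}}$ by reducing it to two facts already in hand, rather than by comparing flag numbers of $P$ with products of flag numbers of faces $F$ and face polytopes $P/F$ (which is the combinatorial content of coassociativity and would be messy). The two inputs are: the identity $f^{*}=(\xi_{\alpha}\otimes 1)\circ\Delta_{\EuScript{L}}$ extracted in the proof of Corollary~\ref{Fl}, and the commuting triangle $\Delta_{\EuScript{L}}=(1\otimes\F)\circ\Delta_{\mathcal{RP}}$ from the proposition preceding Corollary~\ref{Fl}. The key structural fact I would exploit is that $\Delta_{\EuScript{L}}$ is a right $\Qs$-comodule, so it satisfies coassociativity $(1\otimes\Delta)\circ\Delta_{\EuScript{L}}=(\Delta_{\EuScript{L}}\otimes 1)\circ\Delta_{\EuScript{L}}$; this is exactly what is proved for $\Delta_{\mathcal{H}^{*}}$ in Lemma~\ref{CM}, with the right-handed version recorded in the remark following it.

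First I would record a bookkeeping identity for the variable $\alpha$: since $\xi_{\alpha}$ lands in $\mathbb Z[\alpha]$ and the comultiplication $\Delta$ on $\Qs[\alpha]=\mathbb Z[\alpha]\otimes\Qs$ leaves $\alpha$ in the left tensor factor, acting only on quasi-symmetric monomials, one has $\Delta\circ(\xi_{\alpha}\otimes 1)=(\xi_{\alpha}\otimes 1\otimes 1)\circ(1\otimes\Delta)$ as maps $\mathcal{P}\otimes\Qs\to\Qs[\alpha]\otimes\Qs$. This is immediate because the scalar $\xi_{\alpha}(p)$ simply factors through $\Delta$.

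Then the computation runs as a chain of formal substitutions. Starting from $\Delta\circ f^{*}=\Delta\circ(\xi_{\alpha}\otimes 1)\circ\Delta_{\EuScript{L}}$, the bookkeeping identity gives $(\xi_{\alpha}\otimes 1\otimes 1)\circ(1\otimes\Delta)\circ\Delta_{\EuScript{L}}$; coassociativity of the comodule $\Delta_{\EuScript{L}}$ replaces $(1\otimes\Delta)\circ\Delta_{\EuScript{L}}$ by $(\Delta_{\EuScript{L}}\otimes 1)\circ\Delta_{\EuScript{L}}$; and the contraction $(\xi_{\alpha}\otimes 1\otimes 1)\circ(\Delta_{\EuScript{L}}\otimes 1)=\bigl((\xi_{\alpha}\otimes 1)\circ\Delta_{\EuScript{L}}\bigr)\otimes 1=f^{*}\otimes 1$ yields $\Delta\circ f^{*}=(f^{*}\otimes 1)\circ\Delta_{\EuScript{L}}$. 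Inserting $\Delta_{\EuScript{L}}=(1\otimes\F)\circ\Delta_{\mathcal{RP}}$ and using functoriality $(f^{*}\otimes 1)\circ(1\otimes\F)=f^{*}\otimes\F$ gives exactly $\Delta\circ f^{*}=(f^{*}\otimes\F)\circ\Delta_{\mathcal{RP}}$, which is the commutativity of the diagram.

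The only delicate point, and hence the step I would treat most carefully, is the handling of $\alpha$ under $\Delta$: I must confirm that the right-hand vertical arrow is the comultiplication acting solely on the $\Qs$-factor, so that it commutes with $\xi_{\alpha}$, and I must avoid commuting $\Delta$ past the involution $*$, since by Remark~\ref{DQsym} these do not commute. Working throughout with $f^{*}$ in the form $(\xi_{\alpha}\otimes 1)\circ\Delta_{\EuScript{L}}$, rather than with $f$ and starring afterward, sidesteps this pitfall; every remaining step is a formal manipulation of comodule maps.
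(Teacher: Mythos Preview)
Your proof is correct and is essentially the paper's first argument: the paper organizes the same chain of identities via a commutative diagram built from $\Delta_{\mathcal{RP}}$, $1\otimes\F$, $\Delta_{\EuScript{L}}\otimes 1$, and $\xi_{\alpha}\otimes 1\otimes 1$, then reads off the two paths as $\Delta\circ f^{*}$ and $(f^{*}\otimes\F)\circ\Delta_{\mathcal{RP}}$ exactly as you do. The paper additionally supplies a second, direct proof by expanding $\Delta M_{(b_1,\dots,b_k)}$ and $f(P^n)^{*}$ in flag numbers and regrouping over intermediate faces $G^{a_s}\subseteq P^n$, which is the ``messy'' combinatorial verification you deliberately avoided.
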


\begin{proof}
\begin{enumerate}
\item Consider the commutative diagram
$$
\xymatrix{
\mathcal{P}\ar[dr]_-{\Delta_{\EuScript{L}}}\ar[r]^-{\Delta_{\mathcal{RP}}}&\mathcal{P}\otimes\mathcal{RP}\ar[d]^{1\otimes \F}\ar[r]^-{\Delta_{\EuScript{L}}\otimes 1}& P\otimes\Qs\otimes \mathcal{RP}\ar[d]^{1\otimes 1\otimes F}\\
&\mathcal{P}\otimes\Qs\ar[dr]\ar[r]^-{\Delta_{\EuScript{L}}\otimes 1}& P\otimes \Qs\otimes\Qs\ar[d]^{\xi_{\alpha}\otimes 1\otimes 1}\\
&&\mathbb Z[\alpha]\otimes\Qs\otimes\Qs
}
$$
Then
$$
\left(\xi_{\alpha}\otimes 1\otimes 1\right) \circ \left(\Delta_{\EuScript{L}}\otimes 1\right)\circ\Delta_{\EuScript{L}}=
\left(\xi_{\alpha}\otimes 1\otimes 1\right)\circ\left(1\otimes\Delta\right)\circ\Delta_{\EuScript{L}}=\Delta\circ f^*
$$
while
$$
\left(\xi_{\alpha}\otimes 1\otimes 1\right)\circ\left(1\otimes 1\otimes \F\right)\circ\left(\Delta_{\EuScript{L}}\otimes 1\right)\circ \Delta_{\mathcal{RP}}= \left(f^*\otimes\F\right)\circ\Delta_{\mathcal{RP}}
$$
Since these two compositions correspond to two pathes in the diagram,
they are equal.
\item Let us consider also a direct proof. We know that
$$
\Delta(M_{(b_1,\,\dots,\,b_k)})=1\otimes M_{(b_1,\,\dots,\,b_k)}+M_{(b_1)}\otimes M_{(b_2,\,\dots,\,b_k)}+\dots+M_{(b_1,\,\dots,\,b_{k-1})}\otimes M_{(b_k)}+M_{(b_1,\,\dots,\,b_k)}\otimes 1
$$
and
$$
f(\alpha,t_1,t_2,\dots)(P^n)^*=\alpha^n+\sum\limits_{F^{a_1}\subset\dots\subset F^{a_k}}\alpha^{a_1}M_{(a_2-a_1,\,\dots,\,n-a_k)}=\sum\limits_{F^{a_1}\subset\dots\subset F^{a_k}\subset P^n}\alpha^{a_1}M_{(a_2-a_1,\,\dots,\,n-a_k)}
$$
Then
\begin{multline*}
\Delta(f(P^n)^*)=\sum\limits_{F^{a_1}\subset\dots\subset G^{a_s}\subset\dots\subset F^{a_k}\subset P^n}\alpha^{a_1}M_{(a_2-a_1,\,\dots,\,a_s-a_{s-1})}\otimes M_{(a_{s+1}-a_s,\,\dots,\,n-a_k)}=\\
=\sum\limits_{G^{a_s}\subseteq P^n}\left(\sum\limits_{F^{a_1}\subset\dots\subset F^{a_{s-1}}\subset G^{a_s}}\alpha^{a_1}M_{(a_2-a_1,\,\dots,\,a_s-a_{s-1})}\right)\otimes\left(\sum\limits_{G^{a_s}\subset F^{a_{s+1}}\subset\dots\subset F^{a_k}\subset P^n}M_{(a_{s+1}-a_s,\,\dots,\,n-a_k)}\right)=\\
=\sum\limits_{G\subseteq P^n}f^*(G)\otimes \F(P^n/G)=(f^*\otimes
\F)\circ\Delta_{\mathcal{RP}} (P^n).
\end{multline*}
\end{enumerate}
\end{proof}

\section{Operators $B$ and $C$}
We know that the {\itshape cone} and the {\itshape bipyramid} operations can
be considered as linear operators on the rings $\mathcal{P}$ and
$\mathcal{RP}$. It turns out that there are corresponding operations on the
rings $\Qs[\alpha]$ and $\Qs$, which commute with the homomorphisms $f,
f_{\mathcal{RP}}$, and $\F^*$.

\subsection{The case of $\mathcal{P}$} At first let us consider the ring $\mathcal{P}$. We have
$$
[C,d]=1+\xi_0
$$
For $k\geqslant 2$ we have the similar situation.
$$
d_kCP^n=\begin{cases}
Cd_kP^n+d_{k-1}P^n,&k<n+1,\\
1+d_{k-1}P^n=C\varnothing+d_{k-1}P^n,&k=n+1
\end{cases}
$$
So we have $[d_{k+1},C]=d_k+\frac{1}{k!}\left.\frac{\partial^k}{\partial
\alpha^k}\right|_{\alpha=0}\xi_{\alpha}$, $k\geqslant 0$. This can be
reformulated in terms of $\Phi(t)$.
\begin{prop}\label{PC}
We have
$$
\Phi(t)C=(C+t)\Phi(t)+t\xi_t
$$
\end{prop}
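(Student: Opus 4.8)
The plan is to check the operator identity on an arbitrary generator $P^n$ of $\mathcal{P}$, since both sides are linear operators $\mathcal{P}\to\mathcal{P}[[t]]$; here $\xi_t$ is read as the composite $\mathcal{P}\to\mathbb Z[t]\hookrightarrow\mathcal{P}[[t]]$ sending $P^n$ to $t^n\pt$, where $\pt=P^0$ is the unit of $\mathcal{P}$. The essential input is the facewise description of $d_kCP^n$ recorded just above the proposition: for $1\leqslant k\leqslant n$ one has $d_kCP^n=Cd_kP^n+d_{k-1}P^n$, while at the top codimension $k=n+1$ the apex $C\varnothing=\pt$ appears and gives $d_{n+1}CP^n=\pt+d_nP^n$. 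Recall also that $d_kP^n=0$ for $k>n$ in $\mathcal{P}$, so $\Phi(t)P^n=\sum_{k=0}^n d_kP^n\,t^k$ and $\Phi(t)CP^n=\sum_{k=0}^{n+1}d_kCP^n\,t^k$.

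Next I would assemble $\Phi(t)CP^n$ from these pieces. Grouping the cone part $Cd_kP^n$ (which also covers $k=0$, since $d_0CP^n=CP^n=Cd_0P^n$) and the shift part $d_{k-1}P^n$ (which also covers $k=n+1$: the apex line contributes $d_nP^n=d_{(n+1)-1}P^n$, and $Cd_{n+1}P^n=0$ in $\mathcal{P}$) yields
\[ \Phi(t)CP^n=\sum_{k=0}^{n}Cd_kP^n\,t^k+\sum_{k=1}^{n+1}d_{k-1}P^n\,t^k+\pt\,t^{n+1}. \]
The first sum is exactly $C\Phi(t)P^n$, and re-indexing the second by $j=k-1$ turns it into $t\sum_{j=0}^{n}d_jP^n\,t^j=t\Phi(t)P^n$. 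Hence
\[ \Phi(t)CP^n=C\Phi(t)P^n+t\Phi(t)P^n+\pt\,t^{n+1}=(C+t)\Phi(t)P^n+\pt\,t^{n+1}. \]

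Finally I would identify the residual term: since $\xi_t(P^n)=t^n$, the chosen reading gives $t\,\xi_t(P^n)=t^{n+1}\pt=\pt\,t^{n+1}$, whence $\Phi(t)CP^n=(C+t)\Phi(t)P^n+t\,\xi_tP^n$, as required. I expect the only delicate point to be the top-codimension term: one must not drop the apex $\pt$ contributed at $k=n+1$, which is precisely the source of the correction $\xi_t$ and parallels the $\xi_0$ in $[d,C]=\id+\xi_0$ (indeed, extracting the coefficient of $t^1$ from the proposition recovers $[d,C]=\id+\xi_0$, a useful consistency check). I would also confirm the argument degenerates correctly for $P^n=\pt$, where the middle sum is empty and the identity reads $\Phi(t)I=I+2\pt\,t=(C+t)\pt+t\,\xi_t\pt$.
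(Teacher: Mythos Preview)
Your proof is correct and follows essentially the same route as the paper's: both expand $\Phi(t)CP^n$ term by term using the identities $d_kCP^n=Cd_kP^n+d_{k-1}P^n$ for $1\leqslant k\leqslant n$ and the apex contribution at $k=n+1$, then regroup into $C\Phi(t)P^n+t\Phi(t)P^n+t\xi_tP^n$. Your version is simply more explicit about the bookkeeping (the re-indexing, the interpretation of $\xi_t$ inside $\mathcal{P}[[t]]$, and the degenerate case $n=0$).
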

\begin{proof}
Let $P$ be an $n$-polytope. Then
$$
\Phi(t)CP=CP+(CdP+P)t+\dots+(Cd_nP+d_{n-1}P)t^n+(1+d_nP)t^{n+1}=C\Phi(t)P+t\Phi(t)P+t\xi_{t}P
$$
\end{proof}
Now let us consider the bipyramid. $dBP=2CdP$ for any polytope of positive
dimension, while $dB\pt=dI=2\pt$, and $2Cd\pt=0$. So $dBP=2CdP+2\xi_0$. For
$k\geqslant 2$ we have
$$
d_kBP^n=\begin{cases}2Cd_kP^n+d_{k-1}P^n,&k<n+1;\\
2+d_{k-1}P^n=2C\varnothing+d_{k-1}P^n,&k=n+1;
\end{cases}
$$
So $d_{k+1}B=2Cd_{k+1}+d_k+\frac{2}{k!}\left.\frac{\partial^k}{\partial
\alpha^k}\right|_{\alpha=0}\xi_{\alpha}$, $k\geqslant 1$.
\begin{prop}\label{PB}
$$
\Phi(t)B=(B-2C-t)+(2C+t)\Phi(t)+2t\xi_{t}
$$
\end{prop}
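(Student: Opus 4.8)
The plan is to prove this operator identity exactly as Proposition~\ref{PC} was proved for the cone: apply both sides to an arbitrary $n$-dimensional polytope $P^n$, expand $\Phi(t)BP^n=\sum_{k\geqslant 0}d_k(BP^n)t^k$ using the commutation relations between $d_k$ and $B$ established just above the statement, and regroup the result into the three operators on the right. Since both sides are linear operators of degree $+2$, verifying the identity on every $P^n$ suffices.

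First I would record the coefficient of each power of $t$ in $\Phi(t)BP^n$. For $P^n$ of positive dimension the relations $dBP^n=2Cd_1P^n$, $d_kBP^n=2Cd_kP^n+d_{k-1}P^n$ for $2\leqslant k\leqslant n$, and $d_{n+1}BP^n=2C\varnothing+d_nP^n=2\pt+d_nP^n$ (together with $d_kBP^n=0$ for $k>n+1$) give
\[
\Phi(t)BP^n=BP^n+2Cd_1P^n\,t+\sum_{k=2}^{n}\bigl(2Cd_kP^n+d_{k-1}P^n\bigr)t^k+\bigl(2\pt+d_nP^n\bigr)t^{n+1}.
\]
Then I would expand the right-hand side $(B-2C-t)P^n+(2C+t)\Phi(t)P^n+2t\xi_tP^n$ and match coefficients. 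Writing $(2C+t)\Phi(t)P^n=2C\Phi(t)P^n+t\Phi(t)P^n$, the summand $2C\Phi(t)P^n=\sum_k 2Cd_kP^n\,t^k$ supplies all the $2Cd_kP^n$ terms (and is automatically truncated at $t^n$ since $Cd_{n+1}P^n=0$), while $t\Phi(t)P^n=\sum_k d_kP^n\,t^{k+1}$ supplies the $d_{k-1}P^n$ terms, including $d_nP^n$ at level $t^{n+1}$.

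The only discrepancies between $\Phi(t)B$ and $(2C+t)\Phi(t)$ occur in the two lowest and the highest degrees, and these are precisely what the correction operators $(B-2C-t)$ and $2t\xi_t$ repair. At $t^0$ the part $B-2C$ reconciles the leading term $B$ of $\Phi(t)B$ with the leading term $2C$ of $2C\Phi(t)$; at $t^1$ the part $-t$ cancels the spurious $\id=d_0$ produced by $t\Phi(t)$, so that only $2Cd_1P^n$ survives (reflecting that $dB=2Cd$ on positive-dimensional polytopes, in contrast to $[d,C]=\id$); and at $t^{n+1}$ the term $2t\xi_tP^n=2\pt\,t^{n+1}$ produces exactly the two apex vertices $2\pt=2C\varnothing$ of the bipyramid. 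This is the structural heart of the argument and parallels the role of $t\xi_t$ in Proposition~\ref{PC}.

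The computation is routine once this bookkeeping is fixed, so the only point needing care is the comparison in the extreme degrees, where the empty-set/point conventions enter ($d_{n+1}P^n=0$ in $\mathcal{P}$, $C\varnothing=\pt$, and $\xi_tP^n=t^n$). I expect the main, though minor, obstacle to be the degenerate case $P=\pt$: there $B\pt=I$, $C\pt=I$, and the exceptional term $2\xi_0$ in $dB\pt=dI=2\pt$ becomes visible, so this case must be checked separately — just as $\pt$ was the exceptional case in the cone relation $[d,C]=\id+\xi_0$. A direct substitution shows both sides equal $I+2\pt\,t$, completing the verification.
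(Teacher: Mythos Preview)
Your proof is correct and follows essentially the same route as the paper's own argument: both verify the identity by applying each side to an arbitrary $n$-polytope, expand $\Phi(t)BP^n$ coefficient-by-coefficient using the relations $d_kB=2Cd_k+d_{k-1}+\text{(boundary term)}$, and then regroup as $(B-2C-t)P^n+2C\Phi(t)P^n+t\Phi(t)P^n+2t\xi_tP^n$, with the degenerate case $P=\pt$ checked separately. Your explanation of why the correction terms $(B-2C-t)$ and $2t\xi_t$ are needed at the extreme degrees is a bit more explicit than the paper's, but the underlying computation is identical.
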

\begin{proof}
For a point we have: $\Phi(t)B\pt=\Phi(t)I=I+(2\pt)t$, and
$$
\left((B-2C-t)+(2C+t)\Phi(t)+2t\xi_t\right)\pt=I-2I-(\pt)t+2I+(\pt) t+2(\pt)t=I+(2\pt) t
$$
Let $P$ be an $n$-polytope, $n>0$. Then
\begin{multline*}
\Phi(t)BP=BP+(2CdP)t+(2Cd_2P+dP)t^2+\dots+(2Cd_nP+d_{n-1}P)t^n+(2+d_n
P)t^{n+1}=\\
=(B-2C-t)P+2C\Phi(t)P+t\Phi(t)P+2t\xi_tP
\end{multline*}
\end{proof}

Let us denote $A=2C-B$. Then from Propositions \ref{PC} and \ref{PB} we have
$$
\Phi(t)A=A+t+t\Phi(t)
$$
At last, Propositions \ref{PC} and \ref{PB} imply the following formula.
\begin{prop}
$$
\Phi(t)[B,C]=[B,C]+At+t^2
$$
\end{prop}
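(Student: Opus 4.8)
The plan is to reduce the bracket $[B,C]$ to one involving $A=2C-B$, for which the commutation with $\Phi(t)$ is especially clean. Since $t$ is central and $[C,C]=0$, writing $B=2C-A$ gives $[B,C]=[2C-A,C]=-[A,C]=[C,A]$, so it suffices to prove $\Phi(t)[C,A]=[C,A]+At+t^2$.

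First I would collect the commutation relations I need. Proposition \ref{PC} gives $\Phi(t)C=(C+t)\Phi(t)+t\xi_t$, while the relation $\Phi(t)A=A+t+t\Phi(t)$ obtained above from Propositions \ref{PC} and \ref{PB} is the decisive one, as it carries \emph{no} $\xi_t$ term. The one extra input is the operator identity $\xi_t A=t\xi_t$: because $A=2C-B$ raises dimension by one and $\xi_t(CP^n)=\xi_t(BP^n)=t^{n+1}\pt$, for an $n$-polytope $P^n$ one has $\xi_t(AP^n)=(2-1)\,t^{n+1}\pt=t\,\xi_t(P^n)$.

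Next I would push $\Phi(t)$ to the right in each of $CA$ and $AC$. Substituting first the relation for $\Phi(t)C$ and then the one for $\Phi(t)A$ (and using that $t$ commutes with all operators) gives, after routine collecting of terms,
\begin{gather*}
\Phi(t)CA=(C+t)\Phi(t)A+t\,\xi_t A=CA+tA+tC+t^2+tC\Phi(t)+t^2\Phi(t)+t^2\xi_t,\\
\Phi(t)AC=AC+tC+t\Phi(t)C=AC+tC+tC\Phi(t)+t^2\Phi(t)+t^2\xi_t,
\end{gather*}
where in the first line the term $t\,\xi_t A$ is rewritten as $t^2\xi_t$ via the identity above. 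Subtracting the two lines, every term cancels except $CA-AC$, the lone $tA$, and $t^2$, so $\Phi(t)[C,A]=[C,A]+At+t^2$, which is the claim once $[C,A]=[B,C]$ is recalled.

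The only delicate point is the bookkeeping of the $\xi_t$ contributions: the summand $t\,\xi_t A$ coming out of $\Phi(t)CA$ and the summand $t^2\xi_t$ coming out of $\Phi(t)AC$ must be recognised as equal, which is exactly what $\xi_t A=t\xi_t$ guarantees. (This is also why routing through $A$ is preferable to expanding $\Phi(t)BC-\Phi(t)CB$ directly: the latter forces one to evaluate expressions such as $(2C+t)\,t\xi_t$, where $C\pt=I\neq\pt$ makes the $\xi_t$ terms fail to cancel on the nose.) Once the $\xi_t$ terms are cleared, the rest is purely formal manipulation of operator polynomials in $B$, $C$ and $\Phi(t)$.
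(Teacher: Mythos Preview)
Your proof is correct and takes a genuinely different route from the paper's. The paper expands $\Phi(t)BC-\Phi(t)CB$ directly using Propositions~\ref{PC} and~\ref{PB}, then substitutes those same formulas a second time to push $\Phi(t)$ past the remaining $C$ and $B$; this produces several $\xi_t$ terms (including $2Ct\xi_t$ and $-2Ct\xi_t$) that must be tracked and shown to cancel. You instead observe $[B,C]=[C,A]$ and exploit the $\xi_t$-free relation $\Phi(t)A=A+t+t\Phi(t)$, so that only a single $\xi_t$ term arises on each side, and the identity $\xi_t A=t\xi_t$ matches them immediately. Your approach is cleaner and isolates exactly why the answer comes out $\xi_t$-free. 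One small correction to your parenthetical: in the direct expansion the $C\xi_t$ contributions do in fact cancel on the nose (the paper gets $2Ct\xi_t-2Ct\xi_t=0$), so the advantage of your route is economy rather than avoiding a genuine obstruction.
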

\begin{proof}
\begin{multline*}
\Phi(t)[B,C]=\Phi(t)(BC-CB)=\left(B-2C-t+(2C+t)\Phi(t)+2t\xi_t\right)C-\left((C+t)\Phi(t)+t\xi_t\right)B=\\
=(B-2C-t)C+(2C+t)\left((C+t)\Phi(t)+t\xi_t\right)+2t^2\xi_t-(C+t)\left(B-2C-t+(2C+t)\Phi(t)+2t\xi_t\right)-t^2\xi_t=\\
=(BC-CB)-t(B-2C-t)+2Ct\xi_t+t^2\xi_t+2t^2\xi_t-2Ct\xi_t-2t^2\xi_t-t^2\xi_t=[B,C]+At+t^2
\end{multline*}
\end{proof}

Now let us apply this formulas to the generalized $f$-polynomial.
\begin{prop}\label{AC}
Mappings $C,A\colon\Qs[\alpha]\to\Qs[\alpha]$ defined by the formulas
\begin{gather*}
Cg=(\alpha+\sigma_1)g+\sum\limits_{n=1}^{\infty}t_n g(t_n,t_1,\dots,t_{n-1},0,0,\dots)\\
Ag=\alpha
g(\alpha,0,0,\dots)+t_1g(\alpha,t_1,t_2,\dots)+\sum\limits_{n=2}^{\infty}(t_n+t_{n-1})g(\alpha,t_n,t_{n+1},\dots)
\end{gather*}
satisfy the relations
$$
f(CP)=Cf(P),\quad f(AP)=Af(P)
$$
for all $P\in\mathcal{P}$.
\end{prop}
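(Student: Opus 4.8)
The plan is to compute $f(CP)$ and $f(AP)$ directly from the definition $f_r(\alpha,t_1,\dots,t_r)(P)=\xi_\alpha\Phi(t_r)\cdots\Phi(t_1)P$, pushing the operators $C$ and $A$ to the left through the product $\Phi(t_r)\cdots\Phi(t_1)$ by means of the commutation relations already established. For $C$ I would use Proposition \ref{PC} in the form $\Phi(t)C=(C+t)\Phi(t)+t\xi_t$, and for $A=2C-B$ the relation $\Phi(t)A=A+t+t\Phi(t)$ derived above from Propositions \ref{PC} and \ref{PB}. Two auxiliary facts are needed at the ends of the computation: first, on $\mathcal{P}$ both $C$ and $A$ raise the dimension by one, so $\xi_\alpha C=\alpha\xi_\alpha$ and $\xi_\alpha A=\alpha\xi_\alpha$; second, each $d_k$ with $k\geqslant1$ annihilates scalars, so every $\Phi(t_j)$ acts as the identity on an element of the coefficient ring such as $\xi_{t_i}\Phi(t_{i-1})\cdots\Phi(t_1)P$.

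Consider first $C$. Commuting $C$ past $\Phi(t_1),\Phi(t_2),\dots$ one factor at a time, at the $i$-th step the relation $\Phi(t_i)C=(C+t_i)\Phi(t_i)+t_i\xi_{t_i}$ peels off a ``main'' summand $t_i\,\Phi(t_r)\cdots\Phi(t_1)P$ (the $C$ keeps all the $\Phi$'s to its right) and a ``remainder'' summand $\Phi(t_r)\cdots\Phi(t_{i+1})\,t_i\xi_{t_i}\,\Phi(t_{i-1})\cdots\Phi(t_1)P$. Since $\xi_{t_i}\Phi(t_{i-1})\cdots\Phi(t_1)P$ is a scalar, the leading $\Phi$'s act trivially on it, and after applying $\xi_\alpha$ the remainder contributes exactly $t_i\,f_{i-1}(t_i,t_1,\dots,t_{i-1})(P)=t_i\,g(t_i,t_1,\dots,t_{i-1},0,0,\dots)$, where $g=f(P)$. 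When $C$ finally reaches the front, $\xi_\alpha C(\cdots)=\alpha\,g$, while the accumulated main summands give $(t_1+\dots+t_r)g$. Summing everything yields $f_r(CP)=(\alpha+t_1+\dots+t_r)g+\sum_{n=1}^{r}t_n\,g(t_n,t_1,\dots,t_{n-1},0,\dots)$, which is precisely $Cg$ restricted to $r$ variables; letting $r\to\infty$ gives $f(CP)=Cf(P)$.

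The case of $A$ is analogous but slightly different, because the relation $\Phi(t)A=A+t+t\Phi(t)$ has no $\xi_t$-term; instead the ``$A$'' summand loses the factor $\Phi(t_i)$ at each step. Writing $g_i=\xi_\alpha\Phi(t_r)\cdots\Phi(t_i)P=g(\alpha,t_i,t_{i+1},\dots,t_r,0,\dots)$, the $i$-th commutation peels off $t_i g_i$ (from $t_i\Phi(t_i)$) and $t_i g_{i+1}$ (from the constant $+t_i$, which drops $\Phi(t_i)$), while the final $\xi_\alpha A$ produces $\alpha g_{r+1}=\alpha\,g(\alpha,0,0,\dots)$. Re-indexing and collecting coefficients gives $f_r(AP)=\alpha\,g(\alpha,0,\dots)+t_1 g_1+\sum_{k=2}^{r}(t_k+t_{k-1})g_k+t_r g_{r+1}$, which one checks term by term to be exactly the restriction of $Ag$ to $r$ variables: in particular the boundary summand $t_r g_{r+1}$ is the $n=r+1$ term $(t_{r+1}+t_r)g(\alpha,t_{r+1},\dots)$ of the sum defining $A$ after setting $t_{r+1}=t_{r+2}=\dots=0$. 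Passing to the limit $r\to\infty$ yields $f(AP)=Af(P)$.

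The computations are routine once the commutation relations are in hand; the one point requiring care is the bookkeeping of the remainder terms and, for $A$, the identification of the finite boundary summand $t_r g_{r+1}$ with the tail of the infinite sum defining $A$. This is what guarantees that the finite identities $f_r(CP)=(Cf(P))|_{t_{>r}=0}$ and $f_r(AP)=(Af(P))|_{t_{>r}=0}$ assemble correctly in the inverse limit, giving the stated intertwining relations on all of $\mathcal{P}$.
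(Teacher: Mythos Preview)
Your commutation-relation computation is correct and is exactly the argument the paper gives: iterate $\Phi(t)C=(C+t)\Phi(t)+t\xi_t$ and $\Phi(t)A=A+t+t\Phi(t)$ through $\Phi(t_r)\cdots\Phi(t_1)$, apply $\xi_\alpha$, and identify the result with the restriction of $Cg$ (resp.\ $Ag$) to $r$ variables, including the boundary term you single out for $A$.

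However, you have omitted one half of the proposition. The statement asserts that $C$ and $A$ are maps $\Qs[\alpha]\to\Qs[\alpha]$, i.e.\ that for an \emph{arbitrary} $g\in\Qs[\alpha]$ the expressions $Cg$ and $Ag$ are again quasi-symmetric. Your argument only shows this for $g$ in the image of $f$ (since then $Cg=f(CP)\in\Qs[\alpha]$), and $f(\mathcal{P})$ is a proper subring of $\Qs[\alpha]$. The paper handles this separately and first: using the criterion of Proposition~\ref{Crn} and Remark~\ref{cr}, it checks directly that $(Cg)(\alpha,t_1,\dots,t_{i-1},0,t_{i+1},\dots)=(Cg)(\alpha,t_1,\dots,t_{i-1},t_{i+1},\dots)$ for every $i\geqslant1$, and likewise for $Ag$. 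This verification is elementary but not automatic---the infinite sums in the formulas for $C$ and $A$ have to be reindexed carefully when a variable is killed---and without it the proposition is not fully proved.
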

\begin{proof}
At first let us prove that this mappings are correctly defined, that is for
any $g\in\Qs[\alpha]$ its images $Cg$, and $Ag$ belong to $\Qs[\alpha]$. We
will use the criterion obtained in Proposition \ref{Crn} and Remark \ref{cr}.
It is easy to see that if $g$ has bounded degree, then $Cg$ and $Ag$ have
bounded degree.

Let $i\geqslant 1$. Then
\begin{multline*}
(Cg)(\alpha,t_1,\dots,t_{i-1},0,t_{i+1},\dots)=(\alpha+\sum\limits_{j\ne
i}t_j)g(\alpha,t_1,\dots,t_{i-1},0,t_{i+1},\dots)+\sum\limits_{n=1}^{i-1}t_n g(t_n,t_1,\dots,t_{n-1},0,0,\dots)+\\
+\sum\limits_{n=i+1}^{\infty} t_n g(t_n,t_1,\dots,t_{i-1},0,t_{i+1},\dots,t_{n-1},0,0,\dots)=\\
=(\alpha+\sigma_1(t_1,\dots,t_{i-1},t_{i+1},\dots))g(\alpha,t_1,\dots,t_{i-1},t_{i+1},\dots)+\sum\limits_{n=1}^{i-1}t_n g(t_n,t_1,\dots,t_{n-1},0,0,\dots)+\\
+t_{i+1}g(t_{i+1},t_1,\dots,t_{i-1},0,0,\dots)+\sum\limits_{n=i+2}^{\infty}
t_n
g(t_n,t_1,\dots,t_{i-1},t_{i+1},\dots,t_{n-1},0,0,\dots)=(Cg)(\alpha,t_1,\dots,t_{i-1},t_{i+1},\dots)
\end{multline*}
Now consider $A$. For $i=1$ the relation is clear. Let $i>1$. Then
\begin{multline*}
(Ag)(\alpha,t_1,\dots,t_{i-1},0,t_{i+1},\dots)=\alpha g(\alpha,0,0,\dots)+\sum\limits_{n=1}^{i-1}(t_n+t_{n-1})g(\alpha,t_n,\dots,t_{i-1},0,t_{i+1},\dots)+\\
+(0+t_{i-1})g(\alpha,0,t_{i+1},\dots)+(t_{i+1}+0)g(\alpha,t_{i+1},t_{i+2},\dots)+\sum\limits_{n=i+2}^{\infty}(t_n+t_{n-1})g(\alpha,t_n,t_{n+1},\dots)=\\
=\alpha g(\alpha,0,0,\dots)+\sum\limits_{n=1}^{i-1}(t_n+t_{n-1})g(\alpha,t_n,\dots,t_{i-1},t_{i+1},\dots)+(t_{i+1}+t_{i-1})g(\alpha,t_{i+1},t_{i+2},\dots)+\\
+\sum\limits_{n=i+2}^{\infty}(t_n+t_{n-1})g(\alpha,t_n,t_{n+1},\dots)=
(Ag)(\alpha,t_1,\dots,t_{i-1},t_{i+1},\dots)
\end{multline*}
On the other hand, we have
\begin{gather*}
\Phi(t_1)C=(C+t_1)\Phi(t_1)+t_1\xi_{t_1}\\
\Phi(t_2)\Phi(t_1)C=(C+t_2+t_1)\Phi(t_2)\Phi(t_1)+t_2\xi_{t_2}\Phi(t_1)+t_1\xi_{t_1}\\
\dots\\
\Phi(t_n)\Phi(t_{n-1})\dots\Phi(t_1)C=(C+t_n+\dots+t_1)\Phi(t_n)\Phi(t_{n-1})\dots\Phi(t_1)+t_n\xi_{t_n}\Phi(t_{n-1})\dots\Phi(t_1)+\dots+t_1\xi_{t_1}\\
\dots\\
\Phi_{\infty}C=(C+\sigma_1)\Phi_{\infty}+\sum\limits_{n=1}^{\infty}t_nf(t_n,t_1,\dots,t_{n-1},0,\dots)\\
f(CP)=\xi_{\alpha}\Phi_{\infty}CP=(\alpha+\sigma_1)\xi_{\alpha}\Phi_{\infty}P+\sum\limits_{n=1}^{\infty}t_nf(t_n,t_1,\dots,t_{n-1},0,\dots)=Cf(P)
\end{gather*}

\begin{gather*}
\Phi(t_1)A=A+t_1+t_1\Phi(t_1)\\
\Phi(t_2)\Phi(t_1)A=A+t_2+(t_2+t_1)\Phi(t_2)+t_1\Phi(t_2)\Phi(t_1)\\
\dots\\
\Phi(t_n)\Phi(t_{n-1})\dots\Phi(t_1)A=A+t_n+(t_n+t_{n-1})\Phi(t_n)+(t_{n-1}+t_{n-2})\Phi(t_n)\Phi(t_{n-1})+\dots+t_1\Phi(t_n)\dots\Phi(t_1)\\
\dots\\
\Phi_{\infty}A=A+t_1\Phi_{\infty}(t_1,t_2,\dots)+\sum\limits_{n=2}^{\infty}(t_n+t_{n-1})\Phi_{\infty}(t_n,t_{n+1},\dots)\\
f(AP)=\xi_{\alpha}\Phi_{\infty}AP=\alpha
f(\alpha,0,0,\dots)+t_1f(\alpha,t_1,t_2,\dots)+\sum\limits_{n=2}^{\infty}(t_n+t_{n-1})f(\alpha,t_n,t_{n+1},\dots)=Af(P)
\end{gather*}
\end{proof}

Now the mapping $B:\Qs[\alpha]\to\Qs[\alpha]$ can be defined as $B=2C-A$.
Then $f(BP)=Bf(P)$.

\subsection{The case of $\mathcal{RP}$}
Now let us consider the ring $\mathcal{RP}$. In this ring $CP$ is just the
multiplication by the element $x=\pt$. So
\begin{gather*}
\Phi(t)CP=\Phi(t)(xP)=(\Phi(t)x)\divideontimes(\Phi(t)P)=(x+t)\Phi(t)P=(C+t)\Phi(t)P\\
\Phi_{\infty}(CP)=\Phi_{\infty}(xP)=\Phi_{\infty}(x)\divideontimes\Phi_{\infty}(P)=(x+\sigma_1)\Phi_{\infty}(P)=(C+\sigma_1)\Phi_{\infty}(P)\\
f_{\mathcal{RP}}(CP)=f_{\mathcal{RP}}(xP)=f_{\mathcal{RP}}(x)f_{\mathcal{RP}}(P)=(\alpha+\sigma_1)f_{\mathcal{RP}}(P)\\
\F^*(CP)=\F^*(xP)=\F^*(x)\F^*(P)=\sigma_1\F^*(P)\\
\end{gather*}
In fact, the existence of the empty polytope $\varnothing$ such that
$d_{n+1}P^n=\varnothing$ for any $n$-polytope $P^n$ makes the formulas
connecting $C,B$, and $d_k$ simpler. We have:
\begin{align*}
[C,d]=1,& \quad [C,d_{k+1}]=d_k,\, k\geqslant 1\\
dB=2Cd+\varepsilon_0,&\quad d_{k+1}B=2Cd_{k+1}+d_k,\, k\geqslant 1
\end{align*}
\begin{prop}
$$
\Phi(t)B=(B-2C-t)+(2C+t)\Phi(t)+\varepsilon_0t
$$
\end{prop}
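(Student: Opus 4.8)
The plan is to mirror the proof of Proposition \ref{PB} in the ring $\mathcal{P}$, but to exploit the simpler relations available in $\mathcal{RP}$, namely $dB=2Cd+\varepsilon_0$ and $d_{k+1}B=2Cd_{k+1}+d_k$ for $k\geqslant 1$, which are already recorded just above the statement. Since $\Phi(t)=\sum_{j\geqslant 0}d_jt^j$ with $d_0=\id$, the asserted identity is an equality of operators $\mathcal{RP}\to\mathcal{RP}[t]$, where $\varepsilon_0$ is read as the operator $P\mapsto\varepsilon_0(P)\varnothing$ landing in the span of the unit, exactly as $\xi_t$ is used in Proposition \ref{PB}. Because the two face relations already hold as operator identities on all of $\mathcal{RP}$, it suffices to expand $\Phi(t)B$ as a power series in $t$ and substitute them term by term; no separate treatment of $\pt$ and $\varnothing$ is needed.

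Concretely, first I would write $\Phi(t)B=B+(d_1B)t+\sum_{j\geqslant 2}(d_jB)t^j$ and substitute $d_1B=2Cd+\varepsilon_0$ into the linear term and $d_jB=2Cd_j+d_{j-1}$ into the terms with $j\geqslant 2$, obtaining $$\Phi(t)B=B+(2Cd+\varepsilon_0)t+\sum_{j\geqslant 2}\bigl(2Cd_j+d_{j-1}\bigr)t^j.$$ The next step is to reassemble the two resulting sums: the coned terms give $2C\sum_{j\geqslant 1}d_jt^j=2C(\Phi(t)-1)$, while the shifted terms give $\sum_{j\geqslant 2}d_{j-1}t^j=t\sum_{k\geqslant 1}d_kt^k=t(\Phi(t)-1)$. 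Collecting everything yields $$\Phi(t)B=B+2C(\Phi(t)-1)+t(\Phi(t)-1)+\varepsilon_0t=(B-2C-t)+(2C+t)\Phi(t)+\varepsilon_0t,$$ which is precisely the claimed formula.

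I do not expect a genuine obstacle here, as the computation is routine once the $d_kB$ relations are in hand; the only points requiring care are bookkeeping ones. The essential one is to keep the $j=1$ term separate from the $j\geqslant 2$ terms, since it is the single place where the counit correction $\varepsilon_0$ enters. This term accounts precisely for the input $P=\varnothing$, where $B\varnothing=\pt$ gives $dB\varnothing=d\pt=\varnothing$ while $2Cd\varnothing=0$, so the discrepancy is exactly $\varepsilon_0\varnothing=\varnothing$. With the convention that $\varepsilon_0$ takes values in the span of $\varnothing$, one may also evaluate the final identity directly on $\varnothing$ and $\pt$ as a quick consistency check, which confirms it holds verbatim in these boundary cases.
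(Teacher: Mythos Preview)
Your proof is correct and follows essentially the same approach as the paper: both expand $\Phi(t)B$ term by term, insert the operator relations $dB=2Cd+\varepsilon_0$ and $d_{k+1}B=2Cd_{k+1}+d_k$ for $k\geqslant 1$, and regroup the sums into $2C(\Phi(t)-1)$ and $t(\Phi(t)-1)$. The only cosmetic difference is that the paper applies the operators to a polytope $P$ throughout, while you work directly at the operator level.
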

\begin{proof}
Indeed,
\begin{multline*}
\Phi(t)BP=BP+(2CdP+\varepsilon_0(P))t+(2Cd_2P+dP)t^2+\sum\limits_{k\geqslant
3}(2Cd_kP+d_{k-1}P)t^k=\\
=(B-2C-t)P+2C\Phi(t)P+t\Phi(t)P+\varepsilon_0(P)t
\end{multline*}
\end{proof}
Then
$$
\Phi(t)A=A+t+t\Phi(t)-\varepsilon_0t
$$

\begin{prop}
The mappings  $A_{\mathcal{RP}}\colon\Qs[\alpha]\to\Qs[\alpha]$,
$A_{0}\colon\Qs\to\Qs$ defined by the formulas
\begin{gather*}
A_{\mathcal{RP}}g=\alpha
g(\alpha,0,0,\dots)+t_1g(\alpha,t_1,t_2,\dots)+\sum\limits_{n=2}^{\infty}(t_n+t_{n-1})g(\alpha,t_n,t_{n+1},\dots)-\sigma_1g(0,0,\dots)\\
A_{0}g=t_1g(t_1,t_2,\dots)+\sum\limits_{n=2}^{\infty}(t_n+t_{n-1})g(t_n,t_{n+1},\dots)-\sigma_1g(0,0,\dots)
\end{gather*}
satisfy the relations
$$
f_{\mathcal{RP}}(AP)=A_{\mathcal{RP}}f_{\mathcal{RP}}(P),\quad \F^*(AP)=A_{0}\F^*(P)
$$
for all $P\in\mathcal{RP}$.
\end{prop}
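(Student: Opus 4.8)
The plan is to follow the proof of Proposition~\ref{AC}, carrying along the extra counit term by which the bipyramid relation in $\mathcal{RP}$ differs from the one in $\mathcal{P}$. The starting point is the operator identity already established in $\mathcal{L}(\mathcal{RP})[[t]]$,
$$\Phi(t)A=A+t+t\Phi(t)-\varepsilon_0t,$$
where $\varepsilon_0$ denotes the operator $P\mapsto\varepsilon_0(P)\varnothing$. The one new feature compared with the case of $\mathcal{P}$ is the summand $-\varepsilon_0t$, and the key simplification is that $\Phi(s)\varepsilon_0=\varepsilon_0$ for every $s$: indeed $\Phi(s)\varnothing=\varnothing$ because $d_k\varnothing=0$ for $k\geqslant1$, so the counit term is never deformed by further applications of $\Phi$.

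First I would dispose of well-definedness, i.e.\ that $A_{\mathcal{RP}}$ and $A_0$ really land in $\Qs[\alpha]$ and $\Qs$. Writing $A_{\mathcal{RP}}g=Ag-\sigma_1\,g(0,0,\dots)$ with $A$ the operator of Proposition~\ref{AC} and $g(0,0,\dots)$ the value obtained by setting $\alpha=0$ and all $t_i=0$, the first summand lies in $\Qs[\alpha]$ by Proposition~\ref{AC} and the second is $\sigma_1\in\Qs$ times a scalar; for $A_0$ one verifies the substitution criterion of Proposition~\ref{Crn} and Remark~\ref{cr} directly, exactly as was done for $A$ in Proposition~\ref{AC}. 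This step is routine.

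The heart of the argument is the intertwining relation. Iterating the operator identity and using $\Phi(s)\varepsilon_0=\varepsilon_0$ gives, for every $r$,
$$\Phi(t_r)\cdots\Phi(t_1)A=A+t_r+t_1\Phi(t_r)\cdots\Phi(t_1)+\sum_{j=2}^{r}(t_j+t_{j-1})\Phi(t_r)\cdots\Phi(t_j)-\varepsilon_0\sum_{i=1}^{r}t_i,$$
which is the expansion from the $\mathcal{P}$ case plus the single extra term $-\varepsilon_0\sum_i t_i$. Passing to the inverse limit $\Phi_\infty=\Delta_{\EuScript{R}}$ sends $\sum_i t_i$ to $\sigma_1$ and yields
$$\Phi_\infty A=A+t_1\Phi_\infty+\sum_{n=2}^{\infty}(t_n+t_{n-1})\Phi_\infty(t_n,t_{n+1},\dots)-\varepsilon_0\,\sigma_1.$$
Applying the character $\varepsilon_0$ to the second tensor factor, together with $\varepsilon_0(AP)=0$ (both $CP$ and $BP$ have positive dimension) and $\varepsilon_0\bigl(\varepsilon_0(P)\varnothing\bigr)=\varepsilon_0(P)=\F^*(P)(0,0,\dots)$, reproduces the definition of $A_0$ and gives $\F^*(AP)=A_0\F^*(P)$. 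Applying $\varepsilon_\alpha$ instead, and using $\varepsilon_\alpha(AP^n)=\alpha^{n+2}=\alpha\,f_{\mathcal{RP}}(P)(\alpha,0,0,\dots)$ and $\varepsilon_\alpha\bigl(\varepsilon_0(P)\varnothing\bigr)=\varepsilon_0(P)=f_{\mathcal{RP}}(P)(0,0,\dots)$ (because $\varepsilon_\alpha(\varnothing)=\alpha^0=1$), reproduces the definition of $A_{\mathcal{RP}}$ and gives $f_{\mathcal{RP}}(AP)=A_{\mathcal{RP}}f_{\mathcal{RP}}(P)$.

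The main obstacle is the bookkeeping in the inverse limit. At each finite stage $r$ the isolated summand $+t_r$ is genuinely needed: it repairs the coefficient of the boundary variable $t_r$ so that the degree-one part assembles into $2\sigma_1$, yet in the limit there is no boundary variable and the isolated term disappears, exactly as in Proposition~\ref{AC}. I must check that this passage is legitimate and, simultaneously, that the new counit contribution collapses cleanly to the single term $-\varepsilon_0\sigma_1$ and is then sent to $-\sigma_1\,g(0,0,\dots)$ by each of the two characters. This last identification, where $\varepsilon_\alpha(\varnothing)=1$ and $\varepsilon_0(\varnothing)=1$ make $\varepsilon_0(P)$ coincide with the value of the respective $f$-polynomial at the origin, is the only place where the presence of the empty polytope $\varnothing$ in $\mathcal{RP}$ visibly enters the final formulas, and it is what accounts for the extra $-\sigma_1\,g(0,0,\dots)$ relative to Proposition~\ref{AC}.
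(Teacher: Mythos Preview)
Your proposal is correct and follows essentially the same route as the paper: both iterate the operator identity $\Phi(t)A=A+t+t\Phi(t)-\varepsilon_0 t$ to obtain $\Phi(t_n)\cdots\Phi(t_1)A=A+t_n+\sum_{j}(t_j+t_{j-1})\Phi(t_n)\cdots\Phi(t_j)+t_1\Phi(t_n)\cdots\Phi(t_1)-\varepsilon_0(t_1+\cdots+t_n)$, pass to $\Phi_\infty$, and then apply $\varepsilon_\alpha$ and $\varepsilon_0$. Your explicit observation that $\Phi(s)\varepsilon_0=\varepsilon_0$ (because $\Phi(s)\varnothing=\varnothing$) and your shortcut $A_{\mathcal{RP}}g=Ag-\sigma_1\,g(0,0,\dots)$ for well-definedness are minor clarifications of steps the paper leaves implicit, but the argument is the same.
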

\begin{proof}
From the proof of Proposition \ref{AC} we see that both mappings
$A_{\mathcal{RP}}$ and $A_{0}$ are correctly defined. Then
\begin{gather*}
\Phi(t_1)A=A+t_1+t_1\Phi(t_1)-\varepsilon_0 t_1\\
\Phi(t_2)\Phi(t_1)A=A+t_2+(t_2+t_1)\Phi(t_2)+t_1\Phi(t_2)\Phi(t_1)-\varepsilon_0(t_2+t_1)\\
\dots\\
\Phi(t_n)\Phi(t_{n-1})\dots\Phi(t_1)A=A+t_n+(t_n+t_{n-1})\Phi(t_n)+(t_{n-1}+t_{n-2})\Phi(t_n)\Phi(t_{n-1})+\dots\\
+t_1\Phi(t_n)\dots\Phi(t_1)-\varepsilon_0(t_n+\dots+t_1)\\
\dots\\
\Phi_{\infty}A=A+t_1\Phi_{\infty}(t_1,t_2,\dots)+\sum\limits_{n=2}^{\infty}(t_n+t_{n-1})\Phi_{\infty}(t_n,t_{n+1},\dots)-\varepsilon_0\sigma_1\\
f_{\mathcal{RP}}(AP)=\varepsilon_{\alpha}\Phi_{\infty}A=\varepsilon_{\alpha}(AP)+t_1\varepsilon_{\alpha}\Phi_{\infty}(t_1,t_2,\dots)+\sum\limits_{n=2}^{\infty}(t_n+t_{n-1})\varepsilon_{\alpha}\Phi_{\infty}(t_n,t_{n+1},\dots)-\varepsilon_0(P)\sigma_1=\\
=\alpha
f_{\mathcal{RP}}(\alpha,0,0,\dots)+t_1f_{\mathcal{RP}}(\alpha,t_1,t_2,\dots)+\sum\limits_{n=2}^{\infty}(t_n+t_{n-1})f_{\mathcal{RP}}(\alpha,t_n,t_{n+1},\dots)-\sigma_1f_{\mathcal{RP}}(0,0,\dots)=A_{\mathcal{RP}}f_{\mathcal{RP}}(P)\\
\F^*(AP)=\varepsilon_{0}\Phi_{\infty}A=\varepsilon_{0}(AP)+t_1\varepsilon_{0}\Phi_{\infty}(t_1,t_2,\dots)+\sum\limits_{n=2}^{\infty}(t_n+t_{n-1})\varepsilon_0\Phi_{\infty}(t_n,t_{n+1},\dots)-\varepsilon_0(P)\sigma_1=\\
=t_1\F^*(t_1,t_2,\dots)+\sum\limits_{n=2}^{\infty}(t_n+t_{n-1})\F^*(t_n,t_{n+1},\dots)-\sigma_1\F^*(0,0,\dots)=A_{0}\F^*(P)
\end{gather*}
\end{proof}
As before $B_{\mathcal{RP}}$ and $B_{0}$ are defined as
$B_{\mathcal{RP}}=2C_{\mathcal{RP}}-A_{\mathcal{RP}}$, $B_0=2C_0-A_0$, where
$C_{\mathcal{RP}}g=(\alpha+\sigma_1)g$, $C_{0}g=\sigma_1g$, and
$$
f_{\mathcal{RP}}(BP)=B_{\mathcal{RP}}f_{\mathcal{RP}}(P),\quad \F^*(BP)=B_0\F^*(B)
$$

At last
\begin{multline*}
\Phi(t)(BC-CB)=\left(B-2C-t+(2C+t)\Phi(t)+\varepsilon_0t\right)C-(C+t)\Phi(t)B=\\
=BC-(2C+t)C+(2C+t)(C+t)\Phi(t)-(C+t)\left(B-2C-t+(2C+t)\Phi(t)+\varepsilon_0t\right)=\\
=BC-CB+(2C-B)t+t^2-(C+t)\varepsilon_0t=BC-CB+At+t^2-(x+t)t\varepsilon_0
\end{multline*}
So we obtain the following proposition.
\begin{prop}
In the ring $\mathcal{RP}$
$$
\Phi(t)[B,C]=[B,C]+At+t^2-(x+t)t\varepsilon_0
$$
\end{prop}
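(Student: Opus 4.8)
The plan is to reduce everything to the two commutation formulas already established in the ring $\mathcal{RP}$: on one hand $\Phi(t)C=(C+t)\Phi(t)$, which follows from $C$ being multiplication by $x=\pt$, from $\Phi(t)$ being a ring homomorphism with respect to $\divideontimes$, and from $\Phi(t)x=x+t$; on the other hand the preceding proposition $\Phi(t)B=(B-2C-t)+(2C+t)\Phi(t)+\varepsilon_0 t$. Since $[B,C]=BC-CB$, I would expand $\Phi(t)[B,C]=\Phi(t)BC-\Phi(t)CB$ by pushing $\Phi(t)$ to the right past $B$ and then past $C$ in each term.

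First I would compute $\Phi(t)BC$: substituting the formula for $\Phi(t)B$ and then applying $\Phi(t)C=(C+t)\Phi(t)$ yields $\Phi(t)BC=(B-2C-t)C+(2C+t)(C+t)\Phi(t)+\varepsilon_0 t\,C$. The key observation at this point is that $\varepsilon_0 C=0$ as an operator: for every $P$ the cone $CP=\pt\divideontimes P$ has nonnegative dimension, so it is never the empty polytope $\varnothing$, and the counit $\varepsilon_0$ annihilates it. Hence the last summand drops out and $\Phi(t)BC=(B-2C-t)C+(2C+t)(C+t)\Phi(t)$.

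Next I would compute $\Phi(t)CB=(C+t)\Phi(t)B=(C+t)\bigl[(B-2C-t)+(2C+t)\Phi(t)+\varepsilon_0 t\bigr]$. Subtracting the two results, the genuinely $\Phi(t)$-dependent contributions cancel, because $t$ is central and therefore $(2C+t)(C+t)=(C+t)(2C+t)$; this is exactly what makes the $\Phi(t)$ part vanish and leaves a polynomial expression in $B$, $C$ and $t$. Collecting the surviving terms, $(B-2C-t)C-(C+t)(B-2C-t)$ simplifies to $[B,C]+t(2C-B)+t^2=[B,C]+At+t^2$ after the substitution $A=2C-B$, while the remaining piece is $-(C+t)\varepsilon_0 t=-(x+t)t\varepsilon_0$ since $C$ is multiplication by $x$. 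Assembling these gives the claimed identity.

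I do not expect a genuine obstacle here: the argument is an elementary operator manipulation once the two input formulas are in place. The only points one must not overlook are the vanishing $\varepsilon_0 C=0$ and the centrality of $t$, which together force the $\Phi(t)$-terms to cancel and keep the right-hand side polynomial in $t$ rather than involving $\Phi(t)$. Both facts are immediate, so the proof is essentially bookkeeping.
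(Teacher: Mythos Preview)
Your proof is correct and follows essentially the same route as the paper: both expand $\Phi(t)BC-\Phi(t)CB$ using the two input formulas $\Phi(t)C=(C+t)\Phi(t)$ and $\Phi(t)B=(B-2C-t)+(2C+t)\Phi(t)+\varepsilon_0 t$, cancel the $\Phi(t)$-terms via $(2C+t)(C+t)=(C+t)(2C+t)$, and simplify. The only difference is cosmetic: you explicitly isolate and justify $\varepsilon_0 C=0$, whereas the paper drops that term silently in its computation.
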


\section{Problem of the Description of Flag Vectors of
Polytopes} We have mentioned that on the space of simple polytopes
$$
f(\alpha,t_1,t_2,\dots)(P^n)=f_1(\alpha,t_1+t_2+\dots)(P^n)=f_1(\alpha,\sigma_1)(P^n).
$$
The only linear relation on the polynomial $f_1$ is
$f_1(-\alpha,\alpha+t)=f_1(\alpha,t)$, and it is equivalent to the
Dehn-Sommerville relations. In fact, for the polynomial
$g=g(\alpha,\sigma_1)\in\Qsym[t_1,t_2,\dots][\alpha]$ this condition is
necessary and sufficient to be an image of an integer combination of simple
polytopes.

One of the outstanding results in the polytope theory is the so-called
$g$-theorem, which was formulated as a conjecture by P.~McMullen \cite{Mc1}
in 1970 and proved by R.~Stanley \cite{St1} (the necessity) and L.~Billera
and C.~Lee \cite{BL} (the sufficiency) in 1980.

For an $n$-dimensional simple polytope $P^n$ let us define an {\itshape
$h$-polynomial}:
$$
h(\alpha,t)(P^n)=h_0\alpha^n+h_1\alpha^{n-1}t+\dots+h_{n-1}\alpha
t^{n-1}+h_nt^n=f_1(\alpha-t,t)(P^n)
$$

Since $f_1(-\alpha,\alpha+t)=f_1(\alpha,t)$, the $h$-polynomial is symmetric:
$h(\alpha,t)=h(t,\alpha)$. So $h_i=h_{n-i}$.

\begin{defin}
A {\itshape $g$-vector} of a simple polytope $P^n$ is the set of numbers
$g_0=1$, $g_i=h_i-h_{i-1}, 1\leqslant i\leqslant [\frac{n}{2}]$.
\end{defin}

For any positive integers $a$ and $i$ there exists a unique ''binomial
$i$-decomposition'' of $a$:
$$a=\binom{a_i}{i}+\binom{a_{i-1}}{i-1}+\dots+\binom{a_j}{j},$$
where $a_i>a_{i-1}>\dots>a_j\geqslant j\geqslant 1.$

Let us define
$$a^{\langle
i\rangle}=\binom{a_i+1}{i+1}+\binom{a_{i-1}+1}{i}+\dots+\binom{a_j+1}{j+1},
\quad 0^{\langle i\rangle}=0.$$ For example, $a=\binom{a}{1}$, so $a^{\langle
1\rangle}=\binom{a+1}{2}=\frac{(a+1)a}{2}$; for $i\ge a$ we have:
$a=\binom{i}{i}+\binom{i-1}{i-1}+\dots+\binom{i-a+1}{i-a+1}$, so $a^{\langle
i\rangle}=a$.
\begin{thm*}[$g$-theorem, \cite{St1}, \cite{BL}]
Integer numbers $(g_0,g_1,\dots,g_{[\frac{n}{2}]})$ form the $g$-vector of
some simple $n$-dimensional polytope if and only if they satisfy the
following conditions:
$$
g_0=1,\;0\leqslant g_1,\;0\leqslant g_{i+1}\leqslant g_i^{\langle
i\rangle},\; i=1,2,\dots, \left[\frac{n}{2}\right]-1.
$$
\end{thm*}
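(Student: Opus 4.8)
The plan is to reduce to the dual simplicial polytope and then to prove the two halves — necessity and sufficiency — by entirely different techniques. Passing to the dual, if $P^n$ is simple then $Q=(P^n)^*$ is simplicial, and the $h$-polynomial $h(\alpha,t)(P^n)$ of the excerpt records the $h$-vector $(h_0,\dots,h_n)$ of the boundary sphere $\partial Q$, with $h_i=h_{n-i}$ (the Dehn–Sommerville symmetry, available here as $h(\alpha,t)=h(t,\alpha)$). The $g$-vector is the sequence of successive differences $g_i=h_i-h_{i-1}$ for $i\le[\frac n2]$. Thus the statement asserts that the sequences arising as $g$-vectors of simplicial polytopes are exactly the \emph{$M$-sequences} (Macaulay sequences): $g_0=1$ and $0\le g_{i+1}\le g_i^{\langle i\rangle}$.

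\textbf{Necessity.} First I would associate to the simplicial polytope $Q$ (equivalently, to the normal fan of $P^n$) a projective toric variety $X=X_Q$. When $Q$ is simplicial this $X$ has only quotient singularities, so its rational cohomology is well behaved and satisfies Poincaré duality. The key computation is that $X$ has cohomology concentrated in even degrees with $\dim_{\mathbb Q}H^{2i}(X;\mathbb Q)=h_i$; this follows from the presentation of $H^{*}(X;\mathbb Q)$ as the Stanley–Reisner (face) ring of $\partial Q$ modulo a linear system of parameters, and Poincaré duality reproduces $h_i=h_{n-i}$. The crux is the \emph{hard Lefschetz theorem} for $X$: an ample class $\omega\in H^{2}(X;\mathbb Q)$ gives injections $\cdot\,\omega\colon H^{2i}\to H^{2i+2}$ for $i<\frac n2$, whence $h_i\le h_{i+1}$, i.e.\ $g_i\ge0$. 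Moreover the quotient $A=H^{*}(X;\mathbb Q)/(\omega)$ is, after halving the topological grading, a standard graded $\mathbb Q$-algebra generated in degree one whose Hilbert function is $(g_0,g_1,\dots,g_{[n/2]})$. Macaulay's theorem on Hilbert functions of standard graded algebras then yields exactly the bounds $g_{i+1}\le g_i^{\langle i\rangle}$.

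\textbf{Sufficiency.} Conversely, given a sequence satisfying the stated inequalities, Macaulay's theorem guarantees it is the Hilbert function of some graded algebra generated in degree one, concretely realized by an order ideal of monomials. The plan is to convert this combinatorial datum into an actual polytope by the Billera–Lee construction: one builds a shellable simplicial $(n-1)$-sphere as an explicit subcomplex of the boundary of a cyclic polytope $C(m,n)$ with $m$ chosen large enough, arranging the shelling so that each new facet contributes to the $h$-vector in the degree dictated by the prescribed differences $g_i$. Shellability lets one read off the $h$-vector from the shelling (each facet raises a single $h_j$ by $1$, the index $j$ being the number of its ``new'' faces), and one checks that the resulting $h$-vector has first differences $g_i$ and is symmetric. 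Finally one argues that the complex so produced is the boundary of a genuine simplicial polytope, whose polar is a simple polytope with the required $g$-vector.

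\textbf{Main obstacle.} The hard Lefschetz theorem is the genuine depth of the argument: all the ring-theoretic bookkeeping (face ring, linear system of parameters, Macaulay's theorem) is formal once one knows that the $h_i$ are the even Betti numbers of $X$ and that multiplication by $\omega$ is injective below the middle. Establishing hard Lefschetz for the only rationally smooth toric variety $X$ — or replacing it by a purely combinatorial substitute such as McMullen's polytope (weight) algebra together with its own Lefschetz-type positivity — is the hard part; the Billera–Lee half, while intricate, is elementary and constructive by comparison.
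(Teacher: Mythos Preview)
The paper does not actually prove the $g$-theorem; it states it with attribution to Stanley (necessity) and Billera--Lee (sufficiency) and then moves on. Your outline is precisely the classical two-part argument the paper cites --- Stanley's toric-variety/hard-Lefschetz proof of necessity followed by Macaulay's theorem, and the Billera--Lee shelling construction inside a cyclic polytope for sufficiency --- so there is nothing to compare beyond noting that your sketch matches the literature the paper invokes. (One small wording glitch: ``the only rationally smooth toric variety $X$'' should presumably read ``the merely rationally smooth toric variety $X$'' or similar.)
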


As it is mentioned in \cite{Z1} even for $4$-dimensional non-simple polytopes
the corresponding problem involving flag $f$-vectors is extremely hard.

Modern tools used to obtain linear and nonlinear inequalities satisfied by
flag $f$-numbers are based on the notion of a {\itshape $cd$-index}
\cite{BK,BE1,BE2,EF,ER,L,St3}, a {\itshape toric $h$-vector}
\cite{BE,BL,K,St2,Sts} and its generalizations \cite{F1,F2,L},  a {\itshape
ring of chain operators} \cite{BH,BLiu,Kalai}.

In \cite{St1} R.~Stanley constructed for a simple polytope $P$ a projective
toric variety $X_P$ so that \{$h_i$\} are the even Betti numbers of the
(singular) cohomology of $X_P$. Then the Poincare duality and the Hard
Lefschetz  theorem for $X_P$ imply the McMullen conditions for $P$.

In \cite{Mc2} P.~McMullen gave the purely geometric proof of these conditions
using the notion of a {\itshape polytope algebra}.

In \cite{St2} Stanley generalized the definition of an $h$-vector to an
arbitrary polytope $P$ in such a way that if a polytope is rational the
generalized  $\hat h_i$ are the intersection cohomology Betti numbers of the
associated toric variety (see the validation of the claim in \cite{Fi}).

The set of numbers $(\hat h_0,\dots,\hat h_n)$ is called a {\itshape toric}
$h$-vector.  It is nonnegative and symmetric: $\hat h_i=\hat h_{n-i}$, and in
the case of rational polytope the Hard Lefschetz theorem in the intersection
cohomology of the associated toric variety proves the unimodality
$$
1=\hat h_0\leqslant \hat h_1\leqslant\dots\leqslant\hat h_{[\frac{n}{2}]}
$$
Generalizing the geometric method introduced by P.~McMullen \cite{Mc2} Kalle
Karu \cite{K} proved the unimodality for an arbitrary polytope.

The toric $h$-vector consists of linear combinations of the flag $f$-numbers.
In general case it does not contain the full information about the flag
$f$-vector. In \cite{L} C.~Lee introduced an ''extended toric'' $h$-vector
that carries all the information about the flag $f$-numbers and consists of a
collection of nonnegative symmetric unimodal vectors. In \cite{F1} and
\cite{F2} J.~Fine introduced the generalized $h$- and $g$-vectors that
contain the toric $h$- and $g$-vectors as subsets, but can have negative
entries.

Let us look on the problem of the description of flag $f$-vectors from the
point of view of the ring of polytopes.

$g$-theorem describes all the polynomials
$\psi(\alpha,t_1,t_2,\dots)\in\Qsym[t_1,t_2,\dots][\alpha]$ that are images
of simple polytopes under the ring homomorphism $f$:
$$
\psi(\alpha,t_1,t_2,\dots)=f(\alpha,t_1,t_2,\dots)(P^n),\quad
P^n\mbox{ -- a simple $n$-dimensional polytope}.
$$

In the general case we know the criterion when the polynomial
$\psi\in\Qsym[t_1,t_2,\dots][\alpha]$ is the image of an integer combination
of polytopes.
\begin{quest}
Find a criterion when the polynomial $\psi\in\Qsym[t_1,t_2,\dots][\alpha]$ is
the image of an $n$-dimensional polytope.
\end{quest}

\appendix
\section{Join}
\begin{prop}
$P^{n_1}\divideontimes Q^{n_2}$ is an $(n_1+n_2+1)$-dimensional polytope.

Faces of $P^{n_1}\divideontimes Q^{n_2}$ up to an affine equivalence are
exactly $F\divideontimes G$, where $\varnothing\subseteq F\subseteq
P^{n_1},\;\varnothing\subseteq G\subseteq Q^{n_2}$ are faces of $P^{n_1}$ and
$Q^{n_2}$ respectively.
\end{prop}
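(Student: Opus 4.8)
The plan is to first reduce the defining convex hull to a convenient parametric form, then read off all faces from supporting hyperplanes, and finally check that every join of faces actually occurs. The dimension claim I would simply attribute to Remark~\ref{Join}, where the general position of the combined vertex sets already yields $\dim(P^{n_1}\divideontimes Q^{n_2})=n_1+n_2+1$. Writing $P\subset\Delta^{n_1}\subset\mathbb R^{n_1+1}$ and $Q\subset\Delta^{n_2}\subset\mathbb R^{n_2+1}$, I would first record that the convex hull defining $P\divideontimes Q$ equals the set of points $(\lambda p,(1-\lambda)q)$ with $p\in P$, $q\in Q$, $\lambda\in[0,1]$; this is a routine regrouping of convex combinations of points of $P\times\{\mathbf 0\}$ and $\{\mathbf 0\}\times Q$, where $\lambda$ is the total weight carried by the first factor.

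For the faces I would use that every proper face of a polytope is the subset on which some affine function attains its maximum over the polytope. An affine function on $\mathbb R^{n_1+1}\times\mathbb R^{n_2+1}$ has the form $\ell(x,y)=\langle a,x\rangle+\langle b,y\rangle+c$, and on a parametrized point it becomes $\ell(\lambda p,(1-\lambda)q)=\lambda(\langle a,p\rangle+c)+(1-\lambda)(\langle b,q\rangle+c)$, a convex combination of the values of the affine functions $\ell_1=\langle a,\cdot\rangle+c$ on $P$ and $\ell_2=\langle b,\cdot\rangle+c$ on $Q$. Setting $M_1=\max_P\ell_1$ and $M_2=\max_Q\ell_2$ with argmax faces $F\subseteq P$ and $G\subseteq Q$, the maximum over the join equals $\max(M_1,M_2)$, and I would split into the cases $M_1>M_2$, $M_1<M_2$, $M_1=M_2$. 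Since the weights $\lambda,1-\lambda$ are nonnegative and sum to $1$, the combination is maximal exactly when every term carrying positive weight is maximal; a short case check then identifies the face with $F\times\{\mathbf 0\}=F\divideontimes\varnothing$, with $\{\mathbf 0\}\times G=\varnothing\divideontimes G$, and with $\{(\lambda p,(1-\lambda)q):p\in F,\,q\in G\}=F\divideontimes G$ respectively. Thus every face is a join of a face of $P$ and a face of $Q$, under the conventions $\varnothing\divideontimes\varnothing=\varnothing$ for the empty face and $P\divideontimes Q$ itself arising from $\ell\equiv0$.

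Conversely I would show that every pair $F,G$ is realized. Choosing $a$ so that $\langle a,\cdot\rangle$ is maximized on $P$ exactly at $F$ and $b$ so that $\langle b,\cdot\rangle$ is maximized on $Q$ exactly at $G$, I can equalize the two maximal values without disturbing either argmax by exploiting that $P$ and $Q$ lie in coordinate simplices: adding a multiple of the all-ones vector to $a$ (respectively $b$) shifts $\langle a,\cdot\rangle$ on $P$ (respectively $\langle b,\cdot\rangle$ on $Q$) by a constant, since the coordinates sum to $1$ there. Arranging $M_1=M_2$ produces $F\divideontimes G$, while arranging $M_1>M_2$ or $M_1<M_2$ produces the degenerate joins. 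Finally, the embedded face $\{(\lambda p,(1-\lambda)q):p\in F,\,q\in G\}$ is by construction a realization of the join of $F$ and $G$ in skew affine subspaces, so by Remark~\ref{Join} it is affinely equivalent to the abstract $F\divideontimes G$, which is exactly the ``up to affine equivalence'' in the statement.

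The main obstacle I anticipate is the careful bookkeeping in the equal-maxima case $M_1=M_2$: one must verify that the face is the full join $F\divideontimes G$ rather than merely $F\times\{\mathbf 0\}\cup\{\mathbf 0\}\times G$, i.e.\ that interior parameters $0<\lambda<1$ genuinely appear, and that in the converse the two maxima can always be equalized. The observation that the factors live in coordinate simplices is precisely what makes this equalization free of any rescaling, and I expect it to be the key technical point of the argument.
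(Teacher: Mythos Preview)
Your proposal is correct and follows essentially the same approach as the paper's proof in Appendix~A: both parametrize the join as convex combinations $t_1\ib{p}+t_2\ib{q}$ with $t_1+t_2=1$, analyze a generic linear functional via the trichotomy on the optimal values over $P$ and $Q$, and for the converse exploit that the sum-of-coordinates functional is constant on each simplex to shift the two optimal values independently. The only cosmetic differences are that the paper minimizes linear functions while you maximize affine ones, and the paper names the shifting functionals $\lambda_P,\lambda_Q$ where you speak of adding the all-ones vector.
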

\begin{proof}
The formula for dimension of $P^{n_1}\divideontimes Q^{n_2}$ follows from
Remark \ref{Join}.

The nonempty faces of a polytope are exactly the subsets of the polytope that
minimize some linear function on it. Let $\ib{c}=(\ib{c}_1,\ib{c}_2)$ be a
linear function in $(\mathbb R^{n_1+1}\times\mathbb R^{n_2+1})^*=(\mathbb
R^{n_1+1})^*\times(\mathbb R^{n_2+1})^*$. Then $\ib{c}_1$ has its minimum
$c_1$ on the face $F$ of $P^{n_1}$, and $\ib{c}_2$ has its minimum $c_2$ on
the face $G$ of $Q^{n_2}$. Since any point of $P^{n_1}\divideontimes Q^{n_2}$
has the form $t_1\ib{p}+t_2\ib{q}$, where $\ib{p}\in P^{n_1},\, \ib{q}\in
Q^{n_2},\,t_1,t_2\geqslant 0, t_1+t_2=1$, we have
\begin{itemize}
\item if $c_1<c_2$, then $\ib{c}$ has its minimum on the face
    $F\divideontimes\varnothing$ of $P^{n_1}\divideontimes Q^{n_2}$;

\item if $c_1>c_2$, then $\ib{c}$ has its minimum on
    $\varnothing\divideontimes G$;

\item if $c_1=c_2$, then $\ib{c}$ has its minimum on the set
    $$
    X=\{t_1\ib{p}+t_2\ib{q},\;\ib{p}\in F,\,\ib{q}\in G,\; t_1,t_2\geqslant 0,\,t_1+t_2=1\}.
    $$
    Let $F$ lie in the $k$-simplex
    $\conv\{\ib{p}_1,\dots,\ib{p}_{k+1}\}\subset \aff(F)$, and $G$ lie in
    the $l$-simplex\linebreak
    $\conv\{\ib{q}_1,\dots,\ib{q}_{l+1}\}\subset\aff(G)$. Now let
    $\{\overrightarrow{O\ib{p}_i},\,i=1,\dots,k+1\}$ be a basis in
    $\mathbb R^{k+1}=\aff\{O,\,\ib{p}_i,\,i=1,\dots, k+1\}$ and
    $\{\overrightarrow{O\ib{q}_j},\,j=1,\dots,l+1\}$ be a basis in
    $\mathbb R^{l+1}=\aff\{O,\,\ib{q}_j,\,j=1,\dots, l+1\}$. Then
    $X=F\divideontimes G$.
\end{itemize}

Note that $\ib{c}_1$ can have the constant value on $P^{n_1}$, or $\ib{c}_2$
can have the constant value on $Q^{n_2}$. In these cases $F=P^{n_1}$, or
$G=Q^{n_2}$.

Let $\lambda_P$ be the linear function equal to the sum of all the
coordinates in $\mathbb R^{n_1+1}$, and $\lambda_Q$ be the linear function
equal to the sum of all the coordinates in $\mathbb R^{n_2+1}$. Then

\begin{gather*}
\lambda_P(\ib{p})=1,\quad\lambda_Q(\ib{p})=0\quad\mbox{for any }\ib{p}\in P^{n_1}=P^{n_1}\divideontimes \varnothing\\
\lambda_P(\ib{q})=0,\quad\lambda_Q(\ib{q})=1\quad\mbox{for any }\ib{q}\in
Q^{n_2}=\varnothing\divideontimes Q^{n_2}
\end{gather*}

Now let $F\ne\varnothing$, $F\subseteq P^{n_1}$ be a face of $P^{n_1}$, and
let $\ib{c}_1\in(\mathbb R^{n_1+1})^*$ be a linear function that has its
minimum $c_1$ exactly on $F$. Similarly, let $(G,\ib{c}_2,c_2)$ be the
corresponding data for $Q^{n_2}$. Then let $\ib{c}=(\ib{c}_1,\ib{c}_2)\in
\left(\mathbb R^{n_1+1}\times\mathbb R^{n_2+1}\right)^*$. By an addition or a
subtraction of $\lambda_P$ and $\lambda_Q$ with coefficients we can obtain
any of the cases $c_1<c_2$, $c_1=c_2$, or $c_1>c_2$, so
$F\divideontimes\varnothing$, $F\divideontimes G$, $\varnothing\divideontimes
G$ are faces of $P^{n_1}\divideontimes Q^{n_2}$.

Thus faces of the polytope $P^{n_1}\divideontimes Q^{n_2}$ up to an affine
equivalence have the form
$$
F\divideontimes G,\; \varnothing\subseteq F\subseteq P^{n_1},\; \varnothing\subseteq G\subseteq Q^{n_2}.
$$
\end{proof}
\section{Structure theorems}
\begin{prop}
$\mathcal{P}$ is a polynomial ring generated by indecomposable combinatorial
polytopes.
\end{prop}
\begin{proof}
We will show that any polytope $P^n$ of positive dimension can be represented
in a unique up to the order of factors way as a product of indecomposable
polytopes of positive dimensions $P=P_1\times\dots\times P_k$.

It is clear that such a decomposition exists. Now let
$$
P_1\times\dots\times P_k=Q_1\times\dots\times Q_s,
$$
where
$$
1\leqslant \dim P_1\leqslant\dim P_2\leqslant\dots\leqslant\dim P_k,\quad\mbox{and }1\leqslant \dim Q_1\leqslant\dim Q_2\leqslant\dots\leqslant \dim Q_s.
$$
Let us consider a face $F$ of the form
$\ib{v}_1\times\dots\times\ib{v}_{k-1}\times P_k\simeq P_k$, where $\ib{v}_i$
are vertices. Then we have $F=G_1\times \dots\times G_s$, where $G_i$ is a
face of $Q_i$. Since $P_k$ is indecomposable, we see that all $G_j$ but one
are points, and $P_k$ is combinatorially equivalent to a face of some $Q_r$.
Similarly $Q_r$ is combinatorially equivalent to a face of some $P_t$. Then
$\dim P_k\leqslant\dim Q_r\leqslant\dim P_t\leqslant\dim P_k$, therefore,
$P_k=Q_r$. Similarly $Q_s\simeq P_l$ for some $l$. So $\dim P_k=\dim Q_s$ and
without loss of generality we can set $P_k=Q_s$.

If $P_k=I$ then $P_i=Q_j=I$ for all $i,j$, so the statement is true. Let
$P_k\ne I$. We know that
$$
\ib{v}_1\times\dots\times\ib{v}_{k-1}\times P_k=\ib{w}_1\times\dots\times\ib{w}_{s-1}\times P_k
$$
Let us consider a face $G$ of the form $\ib{v}_1\times\dots\times
\ib{v}_{i-1}\times I\times \ib{v}_{i+1}\times\dots\times \ib{v}_{k-1}\times
P_k$, where $I$ is the edge that joins the vertices $\ib{v}_i$ and
$\ib{v}_i'$ of the polytope $P_i$. Then
$G=\ib{w}_1\times\dots\times\ib{w}_{j-1}\times
J\times\ib{w}_{j+1}\times\dots\times\ib{w}_{s-1}\times P_k$, where $J$ is the
edge that joins the vertices $\ib{w}_j$ and $\ib{w}_j'$ of some polytope
$Q_j$.

Since $\ib{v}_1\times\dots\times\ib{v}_i'\times\dots\times P_k\subset G$,
$P_k$ is indecomposable, and $P_k\ne J$, we have
$$
\ib{v}_1\times\dots\times\ib{v}_i'\times\dots\times P_k=\ib{w}_1\times\dots\times\ib{w}_j'\times\dots\times P_k.
$$
Changing the edges we can achieve any vertex
$\ib{v}_1'\times\dots\times\ib{v}_{k-1}'$ of $P_1\times\dots\times P_{k-1}$,
so for any such a vertex we have
$\ib{v}_1'\times\dots\times\ib{v}_{k-1}'\times P_k=\ib{w}_1'\times\dots\times
\ib{w}_{s-1}'\times P_k$. Therefore for any face $F_1\times \dots\times
F_{k-1}\times P_k$
$$
F_1\times \dots\times F_{k-1}\times P_k=G_1\times\dots\times G_{s-1}\times P_k
$$
Thus for any face $F$ of the polytope $P_1\times\dots\times P_{k-1}$ there is
a face $G$ of the polytope $Q_1\times\dots\times Q_{s-1}$ such that $F\times
P_k=G\times P_k$, and vice versa. Moreover, $F\subset F'$ if and only if
$G\subset G'$. Thus
$$
P_1\times\dots\times P_{k-1}=Q_1\times\dots\times Q_{s-1}
$$
Iterating this step in the end we obtain that $k=s$, and up to the order of
the polytopes $P_i=Q_i$, $i=1,\dots,k$.
\end{proof}

\begin{prop}
$\mathcal{RP}$ is a ring of polynomials generated by all join-indecomposable
polytopes.
\end{prop}
\begin{proof}
Let $P_1\divideontimes P_2\dots\divideontimes P_k=Q_1\divideontimes
Q_2\divideontimes\dots\divideontimes Q_s$, where $P_i, Q_j$ are polytopes,
$$
0\leqslant\dim P_1\leqslant \dim P_2\leqslant \dots\leqslant \dim P_k,\quad 0\leqslant\dim Q_1\leqslant \dim Q_2\leqslant\dots\leqslant \dim Q_s
$$
and each polytope $P_i $ and $Q_j$ can not be decomposed as a join of
polytopes of lower dimensions. Let $\dim P_k\geqslant \dim Q_s$. Consider the
face $\varnothing\divideontimes\varnothing\divideontimes\dots\divideontimes
P_k$. It can be represented in the form $G_1\divideontimes
G_2\divideontimes\dots\divideontimes G_s$, where $G_j$ is a face of $Q_j$.
Since $P_k$ is join-indecomposable, all $G_j$ but one should be equal to
$\varnothing$, and one of them should be equal to $P_k$. Let $P_k=G_r$. Then
$P_k$ is a face of $Q_r$, so $P_k=Q_r$, since $\dim P_k\geqslant\dim
Q_s\geqslant\dim Q_r$. Without loss of generality we can take $r=s$.

We have $P'\divideontimes P_k=Q'\divideontimes P_k$, and the combinatorial
equivalence identifies the face $\varnothing\divideontimes P_k$ of the first
polytope with the same face $\varnothing\divideontimes P_k$ of the second.
Consider the face $P'\divideontimes\varnothing$. It should have the form
$G_1\divideontimes G_2$ where $G_1$ is a face of $Q'$ and $G_2$ is a face of
$P_k$. Then $\varnothing\divideontimes G_2$ should be a face of
$\varnothing\divideontimes P_k$ and $P'\divideontimes \varnothing$, therefore
$G_2=\varnothing$ and $G_1=P'$. Since $\dim P'=\dim Q'$, we see that $P'=Q'$.

So $P_1\divideontimes P_2\divideontimes\dots\divideontimes
P_{k-1}=Q_1\divideontimes Q_2\divideontimes\dots Q_{s-1}$. Let $s\geqslant
k$. Iterating this step in the end we obtain $P_1=Q_1\divideontimes
Q_2\divideontimes\dots\divideontimes Q_{s-k+1}$. Therefore $k=s$ and
$P_1=Q_1$.

Thus each polytope can be decomposed in a unique up to permutations way as
$P_1\divideontimes\dots\divideontimes P_k$, where $P_j$ are
join-indecomposable. Since $\mathcal{RP}$ additively is a free abelian group
generated by combinatorial polytopes and $\varnothing$, it is a ring of
polynomials generated by all join-indecomposable polytopes.
\end{proof}

\section{Examples}
\begin{prop}
Any monomial $M_{\omega}$ such that $M_{\omega}\in \F(\mathcal{RP})$ has the
form $M_{(2k+1)},\;k\geqslant 0$.
\end{prop}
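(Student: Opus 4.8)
The plan is to translate membership in $\F(\mathcal{RP})$ into the functional equations already established and then read off which single quasi-symmetric monomials can satisfy them. Recall from the corollary comparing $f_{\mathcal{RP}}$ and $\F^*$ that $\F(\mathcal{RP})=\F^*(\mathcal{RP})=\EuScript{R}^*(\mathcal{D}^*)$, and that by Corollary \ref{tD*} a homogeneous $g\in\Qs$ lies in $\EuScript{R}^*(\mathcal{D}^*)$ if and only if $\Theta_kg=g$ for every $k\geqslant1$. Thus it suffices to extract a necessary condition from the single equation $\Theta_1M_{\omega}=M_{\omega}$, where $\Theta_1M_{\omega}(t_1,t_2,\dots)=M_{\omega}(t,-t,t_1,t_2,\dots)$.

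First I would expand $\Theta_1M_{\omega}$ for $\omega=(j_1,\dots,j_k)$ by classifying each strictly increasing index sequence $l_1<\dots<l_k$ occurring in $M_{\omega}(t,-t,t_1,\dots)$ according to how many of the two new leading slots (carrying the values $t$ and $-t$) it uses. The sequences using neither slot reproduce $M_{\omega}(t_1,t_2,\dots)$; those whose least index is the $t$-slot contribute $t^{j_1}M_{(j_2,\dots,j_k)}$; those whose least index is the $-t$-slot contribute $(-t)^{j_1}M_{(j_2,\dots,j_k)}$; and, when $k\geqslant2$, those using both slots contribute $t^{j_1}(-t)^{j_2}M_{(j_3,\dots,j_k)}$. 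Cancelling $M_{\omega}$ from both sides, the equation $\Theta_1M_{\omega}=M_{\omega}$ becomes
$$
\bigl(t^{j_1}+(-t)^{j_1}\bigr)M_{(j_2,\dots,j_k)}+(-1)^{j_2}t^{j_1+j_2}M_{(j_3,\dots,j_k)}=0,
$$
where the second summand is present only for $k\geqslant2$, and for $k=1$ the factor $M_{(j_2,\dots,j_k)}$ is the empty monomial $M_{()}=1$.

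The conclusion then follows by linear independence of quasi-symmetric monomials. Since distinct compositions give $\mathbb Z[t]$-linearly independent elements of $\mathbb Z[t]\otimes\Qs$, and the compositions $(j_2,\dots,j_k)$ and $(j_3,\dots,j_k)$ are distinct (they have $k-1$ and $k-2$ parts respectively), the two coefficients must vanish separately. For $k\geqslant2$ the second coefficient $(-1)^{j_2}t^{j_1+j_2}$ is a nonzero polynomial, which is impossible; hence $k=1$. For $k=1$ the equation reduces to $t^{j_1}+(-t)^{j_1}=0$, forcing $j_1$ to be odd, so $\omega=(2m+1)$ and $M_{\omega}=M_{(2m+1)}$ as claimed. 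The main (and essentially the only) point requiring care is the bookkeeping of the ``both slots used'' term: it is precisely this term that obstructs every composition of length at least two, so I would check its coefficient attentively. Finally one may note the converse is immediate, since $\Theta_kM_{(2m+1)}=M_{(2m+1)}$ for all $k$ because $t^{2m+1}+(-t)^{2m+1}=0$, so these monomials do occur in $\F(\mathcal{RP})$.
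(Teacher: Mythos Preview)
Your proof is correct and follows essentially the same approach as the paper's own argument: both expand $M_{\omega}(t,-t,t_1,t_2,\dots)$ into the three summands you describe and then observe that the ``both slots used'' term $(-1)^{j_2}t^{j_1+j_2}M_{(j_3,\dots,j_k)}$ cannot cancel when $k\geqslant2$, forcing $k=1$ and then $j_1$ odd. Your version is slightly more explicit in invoking Corollary~\ref{tD*} and in spelling out the linear-independence step (distinguishing the length-$(k-1)$ and length-$(k-2)$ compositions), whereas the paper simply says the last summand ``does not cancel''; but the mathematical content is the same.
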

\begin{proof}
Indeed, for the monomial $M_{(a_1,\,\dots,\,a_k)}$ we have
\begin{multline*}
M_{(a_1,\,\dots,\,a_k)}(t,-t,t_1,t_2,\dots)=M_{(a_1,\,\dots,\,a_k)}(t_1,t_2,\dots)+\left(t^{a_1}+(-t)^{a_1}\right)M_{(a_2,\,\dots,\,a_k)}(t_1,t_2,\dots)+\\
+(-1)^{a_2}t^{a_1+a_2}M_{(a_3,\,\dots,\,a_k)}
\end{multline*}
If $k\geqslant 2$ the last summand does not cancel, so $k=1$. For $M_{(n)}$
with $n=2l+1$ we have
$$
M_{(2l+1)}(t_1,\dots,t_{i-1},t,-t,t_{i},\dots)=t_1^{2l+1}+\dots+t_{i-1}^{2l+1}+t^{2l+1}-t^{2l+1}+t_i^{2l+1}+\dots=M_{(2l+1)}(t_1,\dots,t_{i-1},t_i,\dots),
$$
while for $n=2l$ the term containing $t^{2l}+(-t)^{2l}$ does not cancel.
\end{proof}

\begin{prop}
$M_{(2k+1,\,2l+1)}-M_{(2l+1,\,2k+1)}\in\F(\mathcal{RP})$
\end{prop}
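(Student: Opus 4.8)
The plan is to use the characterization of $\F(\mathcal{RP})$ as the set of quasi-symmetric functions fixed by all the operators $\Theta_i$. Recall that in Section 10.2 it is shown that $\F^*(\mathcal{RP})=\EuScript{R}^*(\mathcal{D}^*)$, and by Corollary \ref{tD*} a homogeneous $g\in\Qs$ lies in $\EuScript{R}^*(\mathcal{D}^*)$ if and only if $\Theta_i g=g$ for every $i\geqslant 1$, where $\Theta_i g(t_1,t_2,\dots)=g(t_1,\dots,t_{i-1},t,-t,t_i,t_{i+1},\dots)$. Since $\EuScript{R}^*(\mathcal{D}^*)$ is invariant under the involution $*$ and $\F(\Ps^*)=\F(\Ps)^*$, the same criterion describes membership in $\F(\mathcal{RP})$. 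Thus it suffices to prove that the difference $M_{(2k+1,\,2l+1)}-M_{(2l+1,\,2k+1)}$ is fixed by every $\Theta_i$.

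First I would compute $\Theta_i M_{(a,b)}$ for an arbitrary two-part composition $(a,b)$, generalizing the front-insertion formula of the preceding proposition to insertion at an arbitrary slot $i$. Writing the inserted variables as $u=t$ and $v=-t$, the monomials of $M_{(a,b)}=\sum_{p<q}t_p^a t_q^b$ that involve $u$ or $v$ organize into four families: $u^a(v^b+B_{>})$, $v^a B_{>}$, $A_{<}u^b$, and $A_{<}v^b$, where $A_{<}=\sum_{j<i}t_j^a$ and $B_{>}=\sum_{j\geqslant i}t_j^b$ collect the old variables to the left and to the right of the insertion point. Substituting $u^a=t^a$, $v^a=(-1)^a t^a$, $u^b=t^b$, $v^b=(-1)^b t^b$ yields the correction term added to $M_{(a,b)}$.

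The key step is the cancellation when $a$ and $b$ are both odd. Then $(-1)^a=(-1)^b=-1$, so $u^a B_{>}+v^a B_{>}=t^a B_{>}-t^a B_{>}=0$ and $A_{<}u^b+A_{<}v^b=A_{<}t^b-A_{<}t^b=0$, and the only surviving extra term is $u^a v^b=-t^{a+b}$, which depends neither on $i$ nor on the remaining variables. Hence $\Theta_i M_{(2k+1,\,2l+1)}=M_{(2k+1,\,2l+1)}-t^{2(k+l+1)}$, and likewise $\Theta_i M_{(2l+1,\,2k+1)}=M_{(2l+1,\,2k+1)}-t^{2(k+l+1)}$, the correction being the same because both compositions have total weight $2(k+l+1)$. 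Subtracting, the correction terms cancel and $\Theta_i\bigl(M_{(2k+1,\,2l+1)}-M_{(2l+1,\,2k+1)}\bigr)=M_{(2k+1,\,2l+1)}-M_{(2l+1,\,2k+1)}$ for all $i\geqslant 1$, which by the criterion above places the difference in $\F(\mathcal{RP})$.

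The main obstacle is the careful bookkeeping in the second step: accounting for exactly which pairs $(p,q)$ with $p<q$ meet the inserted positions $\{i,i+1\}$ without double-counting the pair $(u,v)$, and checking that for odd $a,b$ every term of odd $t$-degree cancels while the single even-degree term $-t^{a+b}$ is genuinely independent of $i$ and of the ambient variables. Once this local computation is in hand the rest is immediate, since the equality of the two correction terms is precisely what forces the difference to be $\Theta_i$-invariant.
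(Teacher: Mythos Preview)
Your argument is correct and is essentially the same as the paper's: both compute the effect of the substitution $(t_1,\dots,t_{i-1},t,-t,t_i,\dots)$ on $M_{(2k+1,\,2l+1)}$, observe that the cross terms with a single inserted variable cancel because the exponents are odd, and that the only surviving correction $-t^{2(k+l+1)}$ is the same for both orderings and hence drops out of the difference. Your write-up is slightly more explicit in invoking the $\Theta_i$-invariance criterion (via $\F^*(\mathcal{RP})=\EuScript{R}^*(\mathcal{D}^*)$ and its $*$-invariance) to conclude membership in $\F(\mathcal{RP})$, whereas the paper leaves that step implicit.
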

\begin{proof}
Indeed,
\begin{multline*}
M_{(2k+1,\,2l+1)}(t_1,\dots,t_{i-1},t,-t,t_i,\dots)=M_{(2k+1,\,2l+1)}(t_1,\dots,t_{i-1},t_i,\dots)+\\
+\sum\limits_{j<i}t_j^{2k+1}(t^{2l+1}+(-t)^{2l+1})+\sum\limits_{j\geqslant
i}(t^{2k+1}+(-t)^{2k+1})t_j^{2l+1}+(-1)^{2l+1}t^{2k+2l+2}
\end{multline*}
Then
\begin{multline*}
\left(M_{(2k+1,\,2l+1)}-M_{(2l+1,\,2k+1)}\right)(t_1,\dots,t_{i-1},t,-t,t_i,\dots)=\\
\left(M_{(2k+1,\,2l+1)}-M_{(2l+1,\,2k+1)}\right)(t_1,\dots,t_{i-1},t_i,\dots)
\end{multline*}
\end{proof}

This proposition can be generalized.
\begin{prop}
Let $(j_1,\dots,j_k)$ be the set of odd positive integer numbers. Then the
quasi-symmetric polynomial
$$
\sum\limits_{\sigma\in S_k}(-1)^{\sigma}M_{(j_{\sigma(1)},\,\dots,\,j_{\sigma(k)})}
$$
belongs to the image of $\F$.
\end{prop}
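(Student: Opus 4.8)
The plan is to use the characterization of the image of $\F$ established earlier in the excerpt: by the identification $\F^*(\mathcal{RP})=\EuScript{R}^*(\mathcal{D}^*)$ together with Corollary \ref{tD*}, a quasi-symmetric function $g$ lies in the image of $\F$ if and only if $\Theta_i g=g$ for every $i\geqslant 1$, that is,
$$
g(t_1,\dots,t_{i-1},t,-t,t_i,t_{i+1},\dots)=g(t_1,\dots,t_{i-1},t_i,t_{i+1},\dots).
$$
Thus it suffices to verify this single family of functional equations for the antisymmetrized sum $g=\sum_{\sigma\in S_k}(-1)^{\sigma}M_{(j_{\sigma(1)},\,\dots,\,j_{\sigma(k)})}$, exactly as in the two preceding propositions, but now keeping track of the permutations. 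The element $g$ is a $\mathbb Z$-combination of monomials $M_\omega$ with $|\omega|=j_1+\dots+j_k$, hence it is a homogeneous element of $\Qs$, so only the $\Theta_i$-invariance remains to be checked.

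First I would compute $\Theta_i M_{(a_1,\dots,a_k)}$ for an arbitrary composition, sorting the monomials of $M_{(a_1,\dots,a_k)}$ according to how many of the two newly inserted variables $t$ (at slot $i$) and $-t$ (at slot $i+1$) they use. Monomials using neither inserted variable reassemble precisely into $M_{(a_1,\dots,a_k)}$ on the original variables, producing the desired right-hand side. Monomials using exactly one inserted variable come in matched pairs, one assigning $t$ and one assigning $-t$ to the same part $a_b$ with identical remaining factors, so they contribute $(1+(-1)^{a_b})\,t^{a_b}(\cdots)$, which vanishes because every $a_b$ is odd. Monomials using both inserted variables must assign them to two consecutive parts $a_b,a_{b+1}$, since the slots $i,i+1$ are adjacent, giving
$$
\Theta_i M_{(a_1,\,\dots,\,a_k)}-M_{(a_1,\,\dots,\,a_k)}=-\sum_{b=1}^{k-1}t^{a_b+a_{b+1}}\,M_{(a_1,\,\dots,\,a_{b-1})}(t_1,\dots,t_{i-1})\,M_{(a_{b+2},\,\dots,\,a_k)}(t_i,t_{i+1},\dots),
$$
where the overall sign $-1$ again comes from $(-1)^{a_{b+1}}$ with $a_{b+1}$ odd. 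The care required here is purely bookkeeping: one must confirm that the parts before $a_b$ range over the variables $t_1,\dots,t_{i-1}$ while the parts after $a_{b+1}$ range over $t_i,t_{i+1},\dots$, so that the two surviving factors are genuine quasi-symmetric monomials on disjoint blocks (with the conventions $M_{()}=1$ covering the extreme values $b=1$ and $b=k-1$).

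Summing this formula against $(-1)^{\sigma}$ over $S_k$ expresses $\Theta_i g-g$ as a double sum over $\sigma$ and over the splitting position $b$. For each fixed $b$ I would pair $\sigma$ with $\sigma'=\sigma\circ(b,b+1)$, the permutation obtained by interchanging the images of positions $b$ and $b+1$. This pairing leaves unchanged both surviving factors $M_{(j_{\sigma(1)},\,\dots,\,j_{\sigma(b-1)})}$ and $M_{(j_{\sigma(b+2)},\,\dots,\,j_{\sigma(k)})}$ as well as the exponent $j_{\sigma(b)}+j_{\sigma(b+1)}$, while flipping the sign $(-1)^{\sigma'}=-(-1)^{\sigma}$; hence the $\sigma$- and $\sigma'$-terms cancel. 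Since $\sigma\mapsto\sigma\circ(b,b+1)$ is a fixed-point-free involution of $S_k$, the entire $b$-summand vanishes, and therefore $\Theta_i g=g$ for all $i\geqslant 1$. By the characterization quoted at the outset this places $g$ in the image of $\F$. The only genuinely delicate step is the first one — establishing the $\Theta_i$-expansion of a single monomial and verifying the vanishing of the one-insertion terms; once that formula is in hand, the antisymmetrization collapses by the sign-reversing involution essentially for free.
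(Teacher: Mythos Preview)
Your proof is correct and follows essentially the same approach as the paper: compute the effect of the substitution $(t_1,\dots,t_{i-1},t,-t,t_i,\dots)$ on a single monomial $M_{(j_1,\dots,j_k)}$ with odd parts, observe that the terms using exactly one inserted variable cancel by oddness, and then cancel the two-variable terms by swapping the adjacent parts $j_{s+1},j_{s+2}$ --- which is precisely your involution $\sigma\mapsto\sigma\circ(b,b+1)$. The paper's proof is written more tersely and does not separately name the $\Theta_i$-criterion or the involution, but the underlying cancellation is identical.
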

\begin{proof}
Indeed, since $j_l$ are odd, we have
\begin{multline*}
M_{(j_1,\,\dots,\,j_k)}(t_1,\dots,t_{i-1},t,-t,t_i,\dots)=M_{(j_1,\,\dots,\,j_k)}(t_1,\dots, t_{i-1},t_i,\dots)-\\
-\sum\limits_{l_1<\dots<l_s<i\leqslant l_{s+3}<\dots<l_k}t_{l_1}^{j_1}\dots
t_{l_s}^{j_s}t^{j_{s+1}+j_{s+2}}t_{l_{s+3}}^{j_{s+3}}\dots t_{l_k}^{j_k}
\end{multline*}
For any pair $(j_{s+1},j_{s+2})$ there exists a unique monomial
$M_{(j_1,\,\dots,\,j_s,\,j_{s+2},\,j_{s+1},\,j_{s+3},\,\dots,\,j_k)}$, and
this monomial has the opposite sign. Therefore all the monomials of the form
$t_{l_1}^{j_{\sigma(1)}}\dots
t_{l_s}^{j_{\sigma(s)}}t^{j_{\sigma(s+1)}+j_{\sigma(s+2)}}t_{l_{s+3}}^{j_{\sigma(s+3)}}\dots
t_{l_k}^{j_{\sigma(k)}}$ vanish.
\end{proof}

Let us calculate the image of the Ehrenborg transformation $\F$ in small
dimensions.
\begin{itemize}
\item[$n=0$] The image of the point $x=\pt$ is $M_{(1)}=\sigma_1$. So
    $$
    \F(\mathcal{RP}^2)=\EuScript{R}^*((\mathcal{D}^*)^2)=\mathbb
    Z\langle \sigma_1\rangle.
    $$
\item[$n=1$] The image of the segment $I=\pt*\pt$ is
    $M_{(2)}+2M_{(1,\,1)}=\sigma_1^2$.
    So
    $$
    \F(\mathcal{RP}^4)=\EuScript{R}^*((\mathcal{D}^*)^4)=\mathbb
    Z\langle \sigma_1^2\rangle.
    $$
\item[$n=2$] The image of the $m$-gon $M_m^2$ is
$$
M_{(3)}+mM_{(2,\,1)}+mM_{(1,\,2)}+2mM_{(1,\,1,\,1)}=M_{(3)}+m(\sigma_1\sigma_2-\sigma_3)
$$
Therefore,
$$
\F(\mathcal{RP}^6)=\EuScript{R}^*((\mathcal{D}^*)^6)=\mathbb
Z\langle M_{(3)},\sigma_1\sigma_2-\sigma_3\rangle,
$$
where
$$
M_{(3)}=\F(4\Delta^2-3I^2),\quad \sigma_1\sigma_2-\sigma_3=\F(I^2-\Delta^2),
$$
\item[$n=3$] The image of a $3$-dimensional polytope is
$$
M_{(4)}+f_0M_{(1,\,3)}+f_1M_{(2,\,2)}+f_2M_{(3,\,1)}+f_{01}M_{(1,\,1,\,2)}+f_{02}M_{(1,\,2,\,1)}+f_{12}M_{(2,\,1,\,1)}+f_{012}M_{(1,\,1,\,1,\,1)}
$$
Since $f_0-f_1+f_2=2$, $f_{01}=f_{02}=f_{12}=2f_1$, and $f_{012}=4f_1$,
we obtain
$$
\left(M_{(4)}+2M_{(3,\,1)}\right)+f_0\left(M_{(1,\,3)}-M_{(3,\,1)}\right)+f_1\left(M_{(2,\,2)}+M_{(3,\,1)}+2\left(\sigma_1\sigma_3-2\sigma_4\right)\right),
$$
Then
$$
\F(\mathcal{RP}^8)=\EuScript{R}^*((\mathcal{D}^*)^8)=\mathbb
Z\langle M_{(4)}+2M_{(3,\,1)},M_{(1,\,3)}-M_{(3,\,1)},
M_{(2,\,2)}+M_{(3,\,1)}+2\left(\sigma_1\sigma_3-2\sigma_4\right)\rangle,
$$
where
\begin{gather*}
M_{(4)}+2M_{(3,\,1)}=\F(5\Delta^3-6CI^2+2B\Delta^2),\quad M_{(1,\,3)}-M_{(3,\,1)}=\F(-\Delta^3+3CI^2-2B\Delta^2)\\
M_{(2,\,2)}+M_{(3,\,1)}+2\left(\sigma_1\sigma_3-2\sigma_4\right)=\F(-CI^2+B\Delta^2)
\end{gather*}
\end{itemize}

\end{document}